\theoremstyle{plain}
\newtheorem{theorem}{Theorem}
\newtheorem{lemma}[theorem]{Lemma}
\newtheorem{corollary}[theorem]{Corollary}
\newtheorem{proposition}[theorem]{Proposition}
\theoremstyle{definition}
\newtheorem{definition}[theorem]{Definition}
\theoremstyle{remark}
\renewcommand{\emptyset}{\varnothing}
\title{Isotropic matroids II\@: Circle graphs}
\author{Robert Brijder\thanks{R.B.\ is a postdoctoral fellow of the Research Foundation -- Flanders (FWO).}\\
\small Hasselt University\\[-0.8ex]
\small Belgium\\
\small\tt robert.brijder@uhasselt.be\\
\and
Lorenzo Traldi\\
\small Lafayette College\\[-0.8ex]
\small Easton, Pennsylvania, U.S.A.\\
\small\tt traldil@lafayette.edu
}
\date{}
\begin{document}

\maketitle

\begin{abstract}
We present several characterizations of circle graphs, which follow from
Bouchet's circle graph obstructions theorem.

\bigskip\noindent \textbf{Keywords:} 4-regular graph, circle graph, delta-matroid, Euler circuit,
interlacement, isotropic system, local equivalence, matroid, multimatroid
\end{abstract}

\section{Introduction}

Let $F$ be a 4-regular graph and let $C$ be an \emph{Euler system} of $F$,
i.e., a set that includes precisely one Euler circuit of each connected
component of $F$. Then the \emph{interlacement graph} $\mathcal{I}(C)$ of $C$ is the
simple graph with vertex-set equal to the set $V(F)$ of vertices of $F$, in which $v_{i}$ and $v_{j}$ are adjacent
if and only if they are \emph{interlaced} with respect to $C$, i.e., they
appear in the order $v_{i} \ldots v_{j} \ldots v_{i} \ldots v_{j}$ or $v_{j} \ldots v_{i} \ldots v_{j} \ldots v_{i}$ on one of the circuits of $C$. A simple graph that arises
from this construction is called a \emph{circle graph}.

The idea of interlacement is almost 100 years old, as it was used by Brahana
in defining his separation matrix \cite{Br}. Interlacement graphs were first
discussed by Zelinka \cite{Z}, who credited the idea to Kotzig. But circle
graphs did not become well known until the 1970s, when Cohn and Lempel
\cite{CL} and Even and Itai \cite{EI} used them to analyze permutations, and
Bouchet \cite{Bold} and Read and Rosenstiehl \cite{RR} used them to study
Gauss' problem of characterizing generic self-intersecting curves in the
plane. Circle graphs were studied intensively during the next few decades.
Among the notable results of this intensive study are polynomial-time
recognition algorithms due to Bouchet \cite{Bec}, Gioan, Paul, Tedder and
Corneil \cite{GPTC}, Naji \cite{N} and Spinrad \cite{Sp}. In recent years, interest in 4-regular graphs (and indirectly on circle graphs) has also focused on their appearance as medials of graphs imbedded on surfaces~\cite{EMM1}.

See Figure \ref{circmf8} for an example. On the left is a 4-regular graph,
with an indicated Euler circuit. To trace the Euler circuit just
walk along the edges, making sure to preserve the dash style (dashed or plain)
when traversing a vertex; the dash style will sometimes change in the middle
of an edge, though. On the right is the resulting interlacement graph.%

\begin{figure}
[ptb]
\begin{center}
\includegraphics[
trim=1.333998in 7.762249in 0.672097in 1.075247in,
height=1.6535in,
width=4.8948in
]%
{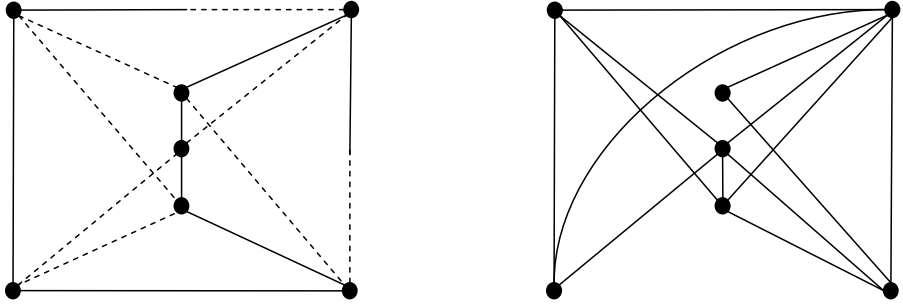}%
\caption{An Euler circuit and its interlacement graph.}%
\label{circmf8}%
\end{center}
\end{figure}

If $v\in V(F)$, then the $\kappa$\emph{-transform} $C\ast v$ is the Euler
system obtained from $C$ by reversing one of the $v$-to-$v$ walks within the
circuit of $C$ incident at $v$. As we do not distinguish between circuits that
differ only in starting point or orientation, the same Euler system will
result no matter which of the two $v$-to-$v$ walks is reversed. The $\kappa
$-transformations were introduced by Kotzig \cite{K}, who proved the
fundamental theorem that any two Euler systems of $F$ are connected through
some finite sequence of $\kappa$-transformations. As noted by Read and
Rosenstiehl \cite{RR}, the interlacement graph $\mathcal{I}(C\ast v)$ is the
\emph{simple local complement }$\mathcal{I}(C)_{s}^{v}$, the simple graph
obtained from $\mathcal{I}(C)$ by reversing all adjacencies among neighbors of $v$. Simple graphs that can be obtained from each other
through local complementations are said to be \emph{locally equivalent}, and an induced subgraph of a simple graph locally equivalent to $G$ is called a \emph{vertex-minor} of $G$.

We use the subscript $s$ to distinguish simple local complementation from a closely related operation $G \mapsto G_{ns}^{v}$ on looped simple graphs, which we call \emph{non-simple} local complementation. This operation reverses the adjacencies between every pair of vertices in the open neighborhood $N(v)$ and complements the loop status of every vertex in $N(v)$. Looped simple graphs that can be obtained from each other through non-simple local complementations and loop complementations are said to be locally equivalent. In particular, $G$ is a circle graph if and only if every graph obtained from $G$ using loop complementations is also a circle graph; consequently loops are irrelevant to characterizations of circle graphs, and most results regarding circle graphs are stated for simple graphs.

Bouchet \cite{Bco} gave a famous characterization of circle graphs: a simple graph is a circle graph if and only if none of the
three graphs pictured in Figure \ref{circmf4} is a vertex-minor. We refer to this famous result as \emph{Bouchet's theorem}. Bouchet's theorem resembles several well-known forbidden minors
characterizations of matroid classes: for instance a matroid is binary iff
$U_{2,4}$ is not a minor, a binary matroid is regular iff neither $F_{7}$ nor
$F_{7}^{\ast}$ is a minor, and a regular matroid is graphic iff neither
$M^{\ast}(K_{5})$ nor $M^{\ast}(K_{3,3})$ is a minor. But Bouchet's theorem
involves induced subgraphs rather than matroid minors, and including local
equivalence makes Bouchet's theorem seem more complicated than the classic
matroid results. The present paper was initially motivated by a couple of
questions suggested by this resemblance: Can Bouchet's theorem be rephrased to
characterize circle graphs using matroids? If so, is it possible to state such a characterization
without mentioning local equivalence? It turns out that both answers are
``yes''; we present several such
characterizations below. In the process of explaining them we also obtain
other circle graph characterizations, some of which involve local equivalence
and do not explicitly mention matroids.%

\begin{figure}
[ptb]
\begin{center}
\includegraphics[
trim=1.868446in 8.300424in 0.670397in 1.078549in,
height=1.2462in,
width=4.5065in
]%
{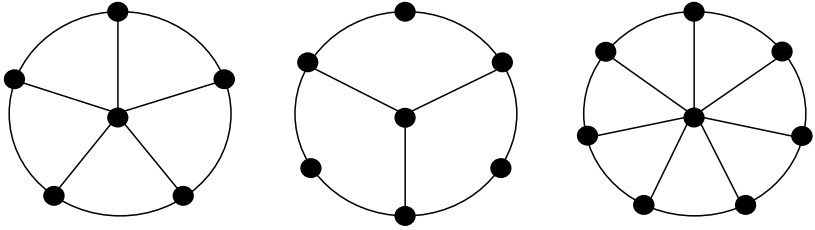}%
\caption{Bouchet's circle graph obstructions: $W_{5}$, $BW_{3}$ and $W_{7}$.}%
\label{circmf4}%
\end{center}
\end{figure}

To state these characterizations, we introduce some terminology. First, we note that we follow the convention that if $X$ and $Y$ are finite sets then an $X \times Y$ matrix has rows indexed by elements of $X$ and columns indexed by elements of $Y$. If $G$ is a simple graph then we consider the adjacency matrix $A=A(G)$ and the $n \times n$ identity matrix $I$ as $V(G) \times V(G)$ matrices over $GF(2)$. Let $IAS(G)$ be the matrix
\[
IAS(G)=%
\begin{pmatrix}
I & A & I+A
\end{pmatrix}
\text{.}%
\]
The rows of $IAS(G)$ inherit indices in $V(G)$ from the rows of $I$ and $A(G)$. Notation for the columns of $IAS(G)$ follows this
scheme: the $v$ column of $I$ is designated $\phi_{G}(v)$, the $v$ column of
$A$ is designated $\chi_{G}(v)$, and the $v$ column of $I+A$ is designated
$\psi_{G}(v)$. The set $\{\phi_{G}(v),\chi_{G}(v),\psi_{G}(v)\mid v\in V(G)\}$
is denoted $W(G)$, and the binary matroid on $W(G)$ represented by $IAS(G)$ is
the \emph{isotropic matroid} of $G$,\emph{ }$M[IAS(G)]$ \cite{Tnewnew}. If
$v\in V(G)$ then the subset $\{\phi_{G}(v),\chi_{G}(v),\psi_{G}(v)\}$ of
$W(G)$ is the \emph{vertex triple} corresponding to $v$. Notice that the three
columns of $IAS(G)$ corresponding to a vertex triple sum to $0$. If $v$ is not
isolated then each of these columns has a nonzero entry, so the vertex triple
is a circuit of $M[IAS(G)]$. If $v$ is isolated, instead, then $\{\chi
_{G}(v)\}$ and $\{\phi_{G}(v),\psi_{G}(v)\}$ are separate circuits of
$M[IAS(G)]$. A \emph{transversal} of $W(G)$ is a subset that includes
precisely one element of each vertex triple, and a subset of a transversal is
a \emph{subtransversal}. A \emph{transverse matroid} of $G$ is a matroid
obtained by restricting $M[IAS(G)]$ to a transversal. (We use
``transverse matroid'' to avoid confusion
with transversal matroids.) A \emph{transverse circuit} of $G$ is a circuit of a transverse matroid of $G$.

The\ general theory of isotropic matroids is presented in \cite{BT1} and
\cite{Tnewnew}. Part of this theory is the following result:

\begin{theorem}
\label{theory}(\cite{BT1}) Let $G$ and $H$ be simple graphs. Then any one of the
following conditions implies the others:

\begin{enumerate}
\item $G$ and $H$ are locally equivalent, up to isomorphism.

\item The isotropic matroids of $G$ and $H$ are isomorphic.

\item There is a bijection between $W(G)$ and $W(H)$, which defines
isomorphisms between the transverse matroids of $G$ and those of $H$.

\item There is a bijection between $W(G)$ and $W(H)$, under which vertex
triples and transverse circuits of $G$ and $H$ correspond.
\end{enumerate}
\end{theorem}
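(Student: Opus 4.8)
The plan is to establish the cycle $(1)\Rightarrow(4)\Rightarrow(3)\Rightarrow(2)\Rightarrow(1)$, front-loading the routine bookkeeping with transversals and circuits and isolating the substantive input — Bouchet's theory of isotropic systems — in the last implication.

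\textbf{$(1)\Rightarrow(4)$.} I would first note that a graph isomorphism $\sigma\colon V(G)\to V(H)$ induces the triple-respecting bijection $W(G)\to W(H)$ sending $\phi_G(v),\chi_G(v),\psi_G(v)$ to $\phi_H(\sigma(v)),\chi_H(\sigma(v)),\psi_H(\sigma(v))$; since conjugating $A(G)$ by $\sigma$ produces $A(H)$, this is a matroid isomorphism $M[IAS(G)]\to M[IAS(H)]$, so in particular it carries transversals to transversals and transverse circuits to transverse circuits. Thus it suffices to treat one local complementation. A short direct computation with $IAS(G)=(I\ A\ I+A)$ shows that, up to the column-naming convention, $IAS(G_{s}^{v})$ is obtained from $IAS(G)$ by the invertible row operation that adds row $v$ to every row indexed by a neighbour of $v$, followed by the column permutation that swaps $\phi_G(v)\leftrightarrow\psi_G(v)$ and swaps $\chi_G(w)\leftrightarrow\psi_G(w)$ for each $w\in N(v)$. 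Row operations do not change the represented matroid, and this column permutation respects vertex triples, so it is a matroid isomorphism of the required kind; composing over a finite sequence of local complementations and one graph isomorphism gives $(4)$.

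\textbf{$(4)\Rightarrow(3)$ and $(3)\Rightarrow(2)$.} Suppose $f\colon W(G)\to W(H)$ carries vertex triples and transverse circuits to vertex triples and transverse circuits. Then $f$ carries each transversal $T$ of $G$ (one element per triple) onto a transversal $f(T)$ of $H$, and likewise for $f^{-1}$, so $f$ induces a bijection between the two families of transversals; in particular $|V(G)|=|V(H)|$. For fixed $T$ the circuits of $M[IAS(G)]|_{T}$ are exactly the transverse circuits of $G$ contained in $T$, and $f$ maps these bijectively onto the transverse circuits of $H$ contained in $f(T)$, i.e.\ onto the circuits of $M[IAS(H)]|_{f(T)}$; since a matroid is determined by its circuits, this yields $(3)$. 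For $(3)\Rightarrow(2)$, I would first observe that a bijection defining isomorphisms of transverse matroids again carries transversals onto transversals, and that two elements of $W(G)$ lie in a common vertex triple precisely when no transversal contains both, so $f$ respects the vertex-triple structure. It remains to see that $f$ carries every circuit of $M[IAS(G)]$, not merely the subtransversal ones, to a circuit of $M[IAS(H)]$: a subtransversal circuit lies in some transversal and is covered by $(3)$, while a circuit meeting some triple in two or three elements is recovered from subtransversal circuits by circuit elimination, using that the three columns of each triple sum to $0$. (The one nuisance is isolated vertices, where the triple is not a single circuit; that $v$ is isolated, and which element of its triple is the loop $\chi_G(v)$, is visible in the transverse matroid $M[A(G)]$ on $\{\chi_G(u)\mid u\in V(G)\}$, which $f$ respects.) Hence $f$ is an isomorphism $M[IAS(G)]\to M[IAS(H)]$.

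\textbf{$(2)\Rightarrow(1)$: the crux, and the main obstacle.} Binary matroids are uniquely representable over $GF(2)$, so an isomorphism $M[IAS(G)]\cong M[IAS(H)]$ means precisely that $IAS(H)$ arises from $IAS(G)$ by an invertible row operation followed by a permutation of columns. That permutation need not respect vertex triples, but it transports the vertex-triple partition of the columns of $IAS(H)$ to a second partition of the columns of a matrix row-equivalent to $IAS(G)$ into zero-sum triples, with respect to which the matroid is again displayed in ``$IAS$ form'' — that is, $M[IAS(G)]$ is exhibited as the isotropic matroid of a graph isomorphic to $H$. The heart of the argument is then the structural fact, developed in \cite{Tnewnew} and \cite{BT1} on the basis of Bouchet's theory of isotropic systems, that any two such ``graphic'' triple-presentations of one isotropic matroid come from locally equivalent graphs: the isotropic matroid is equivalent data to the isotropic system together with a graphic section, and passing to a different section replaces the graph by a locally equivalent one. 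Applied to $G$ and the relabelled copy of $H$, this gives $(1)$ and closes the cycle. Every step except this one is essentially combinatorial bookkeeping; the real work — and the expected main obstacle — is precisely the assertion that the abstract matroid $M[IAS(G)]$ remembers exactly the local-equivalence class of $G$, which is where the isotropic-systems machinery (in its matroidal reformulation) is indispensable, with the separate treatment of isolated vertices a persistent minor irritation.
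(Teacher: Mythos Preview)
This theorem is not proved in the present paper; it is quoted from the companion paper \cite{BT1}, so there is no in-paper argument against which to compare your proposal.

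That said, your cycle $(1)\Rightarrow(4)\Rightarrow(3)\Rightarrow(2)\Rightarrow(1)$ is the expected route and is correct in outline. Two remarks. First, your $(3)\Rightarrow(2)$ step becomes cleaner if phrased in the binary cycle space rather than via circuit elimination: every vertex triple lies in the cycle space of $M[IAS(G)]$ (as a $3$-circuit when the vertex is non-isolated, and as the disjoint union of a loop and a $2$-circuit when it is isolated), so symmetric-differencing an arbitrary cycle-space element against triples reduces it to a subtransversal, which is then a disjoint union of transverse circuits. Thus transverse circuits together with vertex triples generate the cycle space, and since $f$ preserves both, it preserves the cycle space and hence the circuits; this also handles the isolated-vertex nuisance uniformly. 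Second, you are right that $(2)\Rightarrow(1)$ is where the substantive content lies, and your strategy of transporting $H$'s triple partition through the matroid isomorphism to obtain a second graphic presentation of $M[IAS(G)]$ is the correct one; the paper's Lemma~\ref{lem:unique_reconstruct_matrix} (itself imported from \cite{BT1}) is exactly the uniqueness statement that makes that step go through.
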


In particular, if $G$ and $H$ are locally equivalent then each sequence of
local complementations that may be used to obtain $H$ from $G$ yields an
\emph{induced isomorphism} $M[IAS(G)]\rightarrow M[IAS(H)]$, which is
compatible with the partitions of $W(G)$ and $W(H)$ into vertex triples.

All the interlacement graphs of Euler systems of a particular 4-regular graph
$F$ are locally equivalent (up to isomorphism); it follows that the class of circle graphs is
closed under local complementation. Theorem \ref{theory} then implies that
there must be matroidal characterizations of circle graphs using their
isotropic matroids, their transverse circuits and their transverse matroids.
Circle graph characterizations involving isotropic matroids are complicated by
the fact that the class of isotropic matroids is not closed under matroid
minors. (The order of an isotropic matroid is divisible by 3, so deleting or
contracting an element of an isotropic matroid cannot yield another isotropic
matroid.) In order to derive such characterizations we need a special minor
operation that is appropriate for isotropic matroids.

\begin{definition}
\label{isominor}Let $G$ be a looped simple graph, let $S$ be a subtransversal
of $W(G)$, and let $S^{\prime}$ contain the other $2\left\vert S\right\vert $
elements of $W(G)$ that\ correspond to the same vertices of $G$ as elements of
$S$. Then the \emph{isotropic minor }of $G$ obtained by contracting $S$ is the
matroid%
\[
(M[IAS(G)]/S)-S^{\prime}.
\]

\end{definition}

We use the term \emph{isotropic minor} because the definition is consistent
with Bouchet's definitions of minors of isotropic systems \cite{Bi1} and
multimatroids \cite{B2}.

\begin{theorem}
(\cite{Tnewnew}) The isotropic minors of $G$ are precisely the isotropic
matroids of vertex-minors of $G$.
\end{theorem}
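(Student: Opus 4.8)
My plan is to reduce both inclusions to a single computational observation about deleting one vertex. For a looped simple graph $G$ and a vertex $v$, I claim that
\[
M[IAS(G-v)] \;=\; \bigl(M[IAS(G)]/\{\phi_{G}(v)\}\bigr)-\{\chi_{G}(v),\psi_{G}(v)\},
\]
under the evident identification of $W(G-v)$ with $W(G)$ minus the vertex triple of $v$. Indeed, $IAS(G-v)$ is obtained from $IAS(G)$ by deleting the row indexed by $v$ together with the three columns of the vertex triple of $v$; and since $\phi_{G}(v)$ is the standard basis column $e_{v}$, whose only nonzero entry is in row $v$, contracting the matroid element $\phi_{G}(v)$ is accomplished simply by deleting row $v$ and the column $\phi_{G}(v)$ (no row operations are needed to clear that column). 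Deleting $\chi_{G}(v)$ and $\psi_{G}(v)$ then removes the remaining two columns, leaving exactly $IAS(G-v)$. Because deletions and contractions of disjoint elements commute, iterating over a set $X\subseteq V(G)$ gives
\[
M[IAS(G-X)] \;=\; \bigl(M[IAS(G)]/\Phi_{X}\bigr)-\{\chi_{G}(v),\psi_{G}(v):v\in X\},\qquad\text{where }\Phi_{X}:=\{\phi_{G}(v):v\in X\},
\]
so that $M[IAS(G-X)]$ is the isotropic minor of $G$ obtained by contracting the subtransversal $\Phi_{X}$.

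For the inclusion ``isotropic matroid of a vertex-minor $\Rightarrow$ isotropic minor,'' let $H'=H-X$ be a vertex-minor of $G$, so $H$ is locally equivalent to $G$. By the (looped-graph) version of Theorem~\ref{theory} from \cite{BT1}, a sequence of non-simple local and loop complementations carrying $G$ to $H$ yields an isomorphism $\beta\colon M[IAS(G)]\to M[IAS(H)]$ that maps vertex triples onto vertex triples. Then $S:=\beta^{-1}(\Phi_{X}^{H})$ is a subtransversal of $W(G)$, with associated set $S'=\beta^{-1}(\{\chi_{H}(v),\psi_{H}(v):v\in X\})$ of $2|S|$ further elements, and $\beta$ restricts to an isomorphism from the isotropic minor $\bigl(M[IAS(G)]/S\bigr)-S'$ of $G$ onto $\bigl(M[IAS(H)]/\Phi_{X}^{H}\bigr)-\{\chi_{H}(v),\psi_{H}(v):v\in X\}$, which by the displayed identity equals $M[IAS(H')]$.

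For the reverse inclusion I need to rotate an arbitrary subtransversal into ``$\phi$-position'' using local equivalence. The point is that, at a single vertex $v$, loop complementation and non-simple local complementation realize all of $\mathfrak{S}_{3}$ on the vertex triple of $v$ while acting trivially on every other vertex triple: loop complementation at $v$ induces the transposition $(\chi_{G}(v)\ \psi_{G}(v))$ (it merely interchanges two columns of $IAS$), and non-simple local complementation at $v$ induces the transposition $(\phi_{G}(v)\ \psi_{G}(v))$ (the relevant change of basis is $x\mapsto x+x_{v}\,\chi_{G}(v)$, using $A(G_{ns}^{v})=A(G)+\chi_{G}(v)\chi_{G}(v)^{\mathsf{T}}$ when $v$ is loopless; if $v$ carries a loop one first removes it by loop complementation). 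Given an isotropic minor $\bigl(M[IAS(G)]/S\bigr)-S'$ with $S$ a subtransversal on vertex set $X$, I apply such operations at each $v\in X$ in turn — choosing, for each $v$, a permutation carrying the element of $S$ in $v$'s triple to the $\phi$-element, which is possible precisely because these operations leave the other triples untouched — to produce a looped simple graph $H$ locally equivalent to $G$ and a triple-preserving isomorphism $\iota\colon M[IAS(G)]\to M[IAS(H)]$ with $\iota(S)=\Phi_{X}^{H}$ and $\iota(S')=\{\chi_{H}(v),\psi_{H}(v):v\in X\}$. As in the previous paragraph, $\iota$ then identifies $\bigl(M[IAS(G)]/S\bigr)-S'$ with $M[IAS(H-X)]$, the isotropic matroid of the vertex-minor $H-X$ of $G$; together with the first inclusion this gives the asserted equality of classes of matroids up to isomorphism.

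I expect the main obstacle to lie in the bookkeeping of the third paragraph: one must verify that the per-vertex operations are genuinely transparent to the other vertex triples, so that performing them for all $v\in X$ composes into a single triple-preserving isomorphism sending each chosen element of $S$ to the corresponding $\phi$-element of $H$ simultaneously. This rests on the explicit action of non-simple local and loop complementation on $IAS(G)$ (captured by the matrix identity above and its loop-complementation counterpart), which together with Theorem~\ref{theory} carries essentially all the weight; the remainder is the assembly described here.
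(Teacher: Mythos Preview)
The paper does not prove this theorem; it is quoted from \cite{Tnewnew} without argument (the only related discussion is the detachment calculation at the end of Section~\ref{sec:4reg_graphs}, which is your first displayed identity in the special case of circle graphs). Your proof is correct and is essentially the standard one: reduce to the observation that contracting $\phi_{G}(v)$ and deleting $\chi_{G}(v),\psi_{G}(v)$ yields $M[IAS(G-v)]$, and then use that loop complementation and non-simple local complementation at $v$ induce the transpositions $(\chi_{G}(v)\ \psi_{G}(v))$ and $(\phi_{G}(v)\ \psi_{G}(v))$ on the vertex triple of $v$ while fixing every other element of $W(G)$, so an arbitrary subtransversal can be rotated vertex by vertex into $\phi$-position.

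Your verification that non-simple local complementation at a loopless $v$ acts as the identity on $W(G)\setminus\{\phi_{G}(v),\chi_{G}(v),\psi_{G}(v)\}$---not merely setwise on triples---is the crucial point, and your row-operation computation with $P=I+\chi_{G}(v)e_{v}^{T}$ establishes it: one checks that $P\cdot IAS(G)$ equals $IAS(G_{ns}^{v})$ with only the $\phi$ and $\psi$ columns at $v$ interchanged. The one place to be slightly more explicit is the sequencing when $v$ already carries a loop after earlier steps: since loop complementation at $v$ is always available and also fixes all other elements, you can always first strip the loop, then perform the local complementation, and the composite still acts only on the triple of $v$. With that bookkeeping made precise, the argument is complete.
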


Bouchet's theorem now leads directly to a characterization of circle graphs by
excluded isotropic minors.

\begin{theorem}
\label{bouchet}Let $G$ be a simple graph. Then $G$ is a circle graph if, and
only if, the isotropic matroids of $W_{5}$, $BW_{3}$ and $W_{7}$ are not
isotropic minors of $G$.
\end{theorem}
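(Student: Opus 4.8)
The plan is to obtain Theorem~\ref{bouchet} as an immediate consequence of Bouchet's theorem together with Theorem~\ref{theory} and the theorem asserting that the isotropic minors of $G$ are precisely the isotropic matroids of the vertex-minors of $G$. The crux is a single equivalence: for $H$ equal to any one of $W_{5}$, $BW_{3}$, $W_{7}$ (in fact for any fixed graph $H$), the matroid $M[IAS(H)]$ is an isotropic minor of $G$ if and only if $H$ is a vertex-minor of $G$. Granting this equivalence, Theorem~\ref{bouchet} follows by applying it to each of the three graphs and invoking Bouchet's theorem, which says precisely that $G$ is a circle graph if and only if none of $W_{5}$, $BW_{3}$, $W_{7}$ is a vertex-minor of $G$.

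To prove the equivalence, one direction is just the quoted theorem on isotropic minors: if $H$ is a vertex-minor of $G$ then $M[IAS(H)]$, being the isotropic matroid of a vertex-minor of $G$, is an isotropic minor of $G$. For the converse I would proceed as follows. If $M[IAS(H)]$ is an isotropic minor of $G$, then by the same theorem it is isomorphic to $M[IAS(G')]$ for some vertex-minor $G'$ of $G$; the implication $(2)\Rightarrow(1)$ of Theorem~\ref{theory}, applied to $G'$ and $H$, then shows that $G'$ is locally equivalent to $H$ up to isomorphism. It therefore remains to check that a graph locally equivalent (up to isomorphism) to a vertex-minor of $G$ is again a vertex-minor of $G$; granting this, $H$ is a vertex-minor of $G$, completing the converse.

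That last check is the only place any real verification is needed, and is where I expect whatever small obstacle there is to lie. The key point is that local complementation on an induced subgraph is the restriction of local complementation on the ambient graph: if $G'$ is an induced subgraph of a graph $G_{1}$ locally equivalent to $G$, and $v\in V(G')$, then because $G'$ is induced, two vertices of $G'$ are common neighbors of $v$ in $G_{1}$ exactly when they are common neighbors of $v$ in $G'$, so the induced subgraph of $(G_{1})_{s}^{v}$ on $V(G')$ equals $(G')_{s}^{v}$. Iterating, every sequence of local complementations applied to $G'$ is realized by the same sequence applied to $G_{1}$ followed by restriction to $V(G')$; hence each graph locally equivalent to $G'$ is isomorphic to an induced subgraph of a graph locally equivalent to $G$, that is, a vertex-minor of $G$. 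This establishes the equivalence, and with it Theorem~\ref{bouchet}.
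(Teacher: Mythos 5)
Your proposal is correct and takes essentially the same route the paper intends: the paper states Theorem~\ref{bouchet} as an immediate corollary of Bouchet's theorem together with the preceding theorem that isotropic minors of $G$ are exactly the isotropic matroids of vertex-minors of $G$, and you simply make this deduction explicit. The only nontrivial step you add (that local complementation commutes with passage to induced subgraphs, so that any graph locally equivalent to a vertex-minor is itself a vertex-minor) is the standard fact needed to close the loop via the implication $(2)\Rightarrow(1)$ of Theorem~\ref{theory}, and your verification of it is sound.
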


It follows that we may try to gain insight into the special characteristics of
circle graphs by contrasting their isotropic minors of size $\leq24$ with the
isotropic matroids of $W_{5}$, $BW_{3}$ and $W_{7}$. Formulating and verifying
these contrasts is facilitated by the following four theorems, which show that
isotropic matroids reflect important structural properties of graphs.

\begin{theorem}
\label{smallcir2}Let $G$ be an interlacement graph of an Euler system of a
4-regular graph $F$, and let $k$ be a positive integer. If $F$ has a
$k$-circuit then $G$ has a transverse circuit of size $\leq k$.
\end{theorem}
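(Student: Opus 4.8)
The plan is to restate the conclusion as an assertion about $GF(2)$-linear dependence, and then read off the required dependence from a single, carefully chosen vertex of the circuit.

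I would first record the translation. Because $G$ is simple, every subset of $W(G)$ that lies in a transversal has the form $\{\phi_G(v):v\in P\setminus Q\}\cup\{\chi_G(v):v\in Q\setminus P\}\cup\{\psi_G(v):v\in P\cap Q\}$ for uniquely determined $P,Q\subseteq V(G)$; it has $|P\cup Q|$ elements, and since $\phi_G(v)$, $\chi_G(v)$, $\psi_G(v)$ are the $v$-columns of $I$, $A(G)$, $I+A(G)$, the sum of the corresponding columns of $IAS(G)$ is $\mathbf 1_P+A(G)\mathbf 1_Q$ over $GF(2)$. Hence $G$ has a transverse circuit of size $\le k$ if and only if there exist $P,Q\subseteq V(G)$, not both empty, with $A(G)\mathbf 1_Q=\mathbf 1_P$ and $|P\cup Q|\le k$: such a pair gives a nonempty, linearly dependent subset of a transversal with at most $k$ elements, which contains a transverse circuit of size $\le k$; and the converse is immediate. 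Finally, since a $k$-circuit of $F$ lies in one connected component, I would pass to that component and so assume $F$ is connected.

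Now the main step. Let $D$ be a $k$-circuit of $F$, viewed as a closed trail based at a vertex $s$ chosen so that $D$ does not use all four half-edges at $s$; such an $s$ exists unless $D$ is itself an Euler circuit of $F$. (In that exceptional case $k=2|V(G)|$ and we are done, because $G$ already has a transverse circuit of size $\le|V(G)|\le k$ — namely $\{\chi_G(v)\}$ if $G$ has an isolated vertex $v$, and otherwise $\{\phi_G(u):u\in N_G(v)\}\cup\{\chi_G(v)\}$ for a non-isolated $v$, whose columns sum to $\mathbf 1_{N_G(v)}+A(G)\mathbf 1_{\{v\}}=0$.) By Theorem~\ref{theory} and the fact that the Euler systems of $F$ are all locally equivalent, transverse circuits and their sizes are preserved if we replace $C$ by another Euler system of $F$. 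Using connectivity of $F$, I would splice $D$ — traversed from $s$ back to $s$ — with Euler circuits of the components of $F-E(D)$ to obtain an Euler circuit $C'=D\cdot R$ of $F$ having $D$ as its initial arc, and put $G'=\mathcal I(C')$. Both $D$ and $R$ begin and end at $s$, so in the cyclic double-occurrence word of $C'$ the two occurrences of $s$ are exactly the $D$-to-$R$ and $R$-to-$D$ junction slots; consequently every vertex not on $D$ has both of its occurrences strictly inside the $R$-arc, hence lies entirely on one of the two arcs into which $s$ divides the word, and is therefore not interlaced with $s$ in $C'$. Thus $N_{G'}(s)\subseteq V(D)$. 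Taking $Q=\{s\}$ and $P=N_{G'}(s)$, we get $A(G')\mathbf 1_Q=\mathbf 1_P$ with $Q\ne\varnothing$ and $P\cup Q\subseteq V(D)$, so $|P\cup Q|\le|V(D)|\le k$ because $D$ has $k$ edges and so passes through at most $k$ vertices. By the translation above, $G'$ — and therefore $G$ — has a transverse circuit of size $\le k$, as required.

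I do not expect a serious obstacle: the argument is short. The points needing care are the column bookkeeping behind the translation; the splicing construction, i.e.\ realizing $D$ as an initial arc of an Euler circuit of $F$, together with the degenerate case; and — the one I would be most careful about — the ``bracketing'' claim that, once $C'$ is written as $D\cdot R$ with both arcs based at $s$, the two occurrences of $s$ occupy precisely the two junction slots, so that the interior of the $R$-arc (which contains both occurrences of every vertex off $D$) lies on a single one of the two arcs determined by $s$.
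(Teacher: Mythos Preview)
Your approach is sound and genuinely different from the paper's, but there is one gap in the construction of $C'$ that you should repair.

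\textbf{The gap.} You assert that the splicing produces $C'=D\cdot R$ with $D$ itself as the initial arc and with every vertex off $D$ having both occurrences in the $R$-arc. This requires that \emph{all} edges of $F-E(D)$ lie in the component of $s$, which need not hold: $F-E(D)$ can have several components, each meeting $V(D)$ (by connectivity of $F$) but not containing $s$. In that case there is no closed trail $R$ at $s$ covering all of $F-E(D)$, and your parenthetical ``the interior of the $R$-arc (which contains both occurrences of every vertex off $D$)'' is false as stated. The fix is easy and costs nothing: splice the Euler circuit of each component $X\neq X_s$ of $F-E(D)$ into $D$ at a single vertex of $X\cap V(D)$, and splice the Euler circuit of $X_s$ at $s$. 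The resulting $C'$ has the form $s\,[\,D\text{ with detours}\,]\,s\,[\,X_s\,]$, the two occurrences of $s$ still sit at the two junction slots, and every vertex off $D$ lies entirely in one component's spliced arc, hence in one of the two $s$-to-$s$ arcs. So $N_{G'}(s)\subseteq V(D)$ survives, and your bound $|P\cup Q|\le|V(D)|\le k$ goes through. Your flagged ``bracketing'' worry is not the issue; the issue is getting $C'$ into the desired shape in the first place.

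\textbf{Comparison with the paper.} The paper does not build a special Euler circuit at all. It proves (Corollary~\ref{corekercor}) that for any non-Euler circuit $\gamma$, the set $\tau(\gamma)$ of transitions determined by $\gamma$ is dependent in $M_\tau(F)$, by extending $\gamma$ to a circuit partition $P$ and invoking Theorem~\ref{coreker}: the vertex cocycle of $\gamma$ in the touch-graph $Tch(P)$ --- namely the set $V_1$ of vertices visited exactly once by $\gamma$ --- lies in $\ker M(C,P)$, which gives a $GF(2)$-relation among the corresponding columns of $IAS(\mathcal I(C))$. This yields a transverse circuit of size at most $|V_1|\le k$, and the Euler-circuit case is handled by passing to a shorter circuit. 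Your route is more elementary --- it avoids the touch-graph machinery entirely and produces the neighbourhood circuit of a well-chosen vertex --- at the price of the Euler-circuit surgery above. The paper's route buys a slightly sharper count (the dependence lives on $V_1$, the once-visited vertices, rather than on all of $V(D)$) and ties the result into the broader circuit-partition framework used elsewhere in the paper.
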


Theorem \ref{smallcir2} is discussed in Section~\ref{sec:4reg_graphs}. The next three theorems are discussed in \cite{BT1}.

\begin{theorem}
\label{smallcir1}(\cite{BT1}) Let $G$ be a simple graph, and let $k$ be a
positive integer. Then $G$ has a transverse $k$-circuit if and only if some
graph locally equivalent to $G$ has a vertex of degree $k-1$.
\end{theorem}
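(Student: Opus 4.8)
The plan is to handle the two implications separately, working throughout with the linear‑algebraic description of the notion: a transverse circuit of $G$ is a subset $C\subseteq W(G)$ that meets each vertex triple in at most one element and whose columns of $IAS(G)$ form a circuit of the associated binary matroid, i.e.\ $\sum_{c\in C}c=0$ while no nonempty proper subset of $C$ sums to $0$. For the ``if'' direction, suppose $H$ is locally equivalent to $G$ and $v\in V(H)$ has degree $k-1$. Since $H$ is simple, $\chi_H(v)=\sum_{w\in N_H(v)}\phi_H(w)$, so $C=\{\chi_H(v)\}\cup\{\phi_H(w):w\in N_H(v)\}$ is a subtransversal whose $k$ columns sum to $0$; it is minimal because deleting $\chi_H(v)$ leaves distinct unit vectors, and deleting any $\phi_H(w)$ leaves an independent set (the surviving vectors all vanish in coordinate $w$, whereas $\chi_H(v)$ does not). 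Hence $H$ has a transverse $k$‑circuit, and Theorem~\ref{theory}(4) transports it to $G$.

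For the converse the engine is an explicit formula for the action of a simple local complementation on $W(G)$. Complementing at a vertex $u$ is realized on $IAS(G)$ by the row operation that adds row $u$ to every row indexed by a neighbour of $u$; after reinterpreting columns, the induced isomorphism of Theorem~\ref{theory} sends $\phi_G(u),\chi_G(u),\psi_G(u)$ to $\psi,\chi,\phi$ (respectively) of $u$ in $G_s^u$, sends $\phi,\chi,\psi$ of each neighbour of $u$ to $\phi,\psi,\chi$ (respectively), and fixes every other triple elementwise. In particular transverse circuits are carried to transverse circuits of the same size, so one may follow such a circuit while recording only whether each of its elements is a $\phi$, a $\chi$, or a $\psi$.

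Given a transverse $k$‑circuit $C$ of $G$, I would let $t(C)$ be the number of its elements that are not $\phi$'s and drive $t$ down to $1$ by repeated complementation; a transverse circuit with $t=1$ is necessarily $\{\chi_H(v)\}\cup\{\phi_H(w):w\in S\}$ with $\chi_H(v)=\sum_{w\in S}\phi_H(w)$ (it cannot be ``one $\psi$ plus $\phi$'s'', since $\psi_H(v)$ has a $1$ in coordinate $v$ and a subtransversal forbids $v\in S$), whence $N_H(v)=S$ and $\deg_H(v)=|S|=k-1$, as required. To reduce $t$: if $C$ contains some $\psi_H(u)$, complementing at $u$ turns that element into a $\phi$ and elsewhere merely interchanges $\chi$'s and $\psi$'s, so $t$ strictly drops. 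If $t(C)\ge 2$ and $C$ uses only $\chi$'s and $\phi$'s, let $U$ be the set of $\chi$‑vertices (so $|U|=t(C)\ge 2$): when two vertices of $U$ are adjacent, complementing at one turns the $\chi$ at the other into a $\psi$ without turning any $\phi$ into a non‑$\phi$, reducing to the previous case; when $U$ is independent, reading the relation $\sum_{v\in U}\chi_H(v)=\sum_{w\in S}\phi_H(w)$ ($S$ the set of $\phi$‑vertices) on coordinates outside $U\cup S$ shows some $z\notin U\cup S$ is adjacent to at least two vertices of $U$ --- otherwise every $N_H(v)$, $v\in U$, would lie in $S$, making $\{\chi_H(v)\}\cup\{\phi_H(w):w\in N_H(v)\}$ a proper subcircuit of $C$, against minimality --- and complementing at $z$ again produces a $\psi$ in the circuit without creating a non‑$\phi$ at any $\phi$‑vertex. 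Since a circuit cannot be a set of distinct unit vectors, $t=0$ never occurs, so this process terminates with $t(C)=1$.

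I expect the decisive point --- and the one needing the most care --- to be the action formula for simple local complementation on $W(G)$, i.e.\ the identification of the induced isomorphism of Theorem~\ref{theory} as a rank‑one row update of $IAS(G)$ followed by a permutation of columns within triples; it is a routine matrix computation, but everything downstream rests on it. The only genuinely delicate case in the reduction itself is the one in which the $\chi$‑vertices form an independent set, where minimality of the circuit is exactly what produces a vertex outside the support adjacent to two of them.
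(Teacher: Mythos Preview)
The paper does not actually prove Theorem~\ref{smallcir1}; it is quoted from the companion paper \cite{BT1} with the remark ``The next three theorems are discussed in \cite{BT1}.''  So there is no in-paper proof to compare your attempt against.

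That said, your argument is correct and self-contained.  The ``if'' direction is exactly the observation (used informally elsewhere in the paper, e.g.\ in the proof of Corollary~\ref{jaeger7}) that $\{\chi_H(v)\}\cup\{\phi_H(w):w\in N_H(v)\}$ is a transverse circuit of size $\deg_H(v)+1$, transported to $G$ via Theorem~\ref{theory}.  For the converse, your reduction of the ``non-$\phi$ count'' $t(C)$ works as written.  The action formula you quote for simple local complementation at $u$ (swap $\phi\leftrightarrow\psi$ at $u$, swap $\chi\leftrightarrow\psi$ at each neighbour of $u$, identity elsewhere) is precisely the induced isomorphism described in \cite{Tnewnew}; crucially, it never turns a $\phi$ into a non-$\phi$ except possibly at $u$ itself, which is what makes $t$ monotone under your moves.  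The delicate Subcase~2b (the $\chi$-vertices $U$ form an independent set) is handled correctly: the parity condition at coordinates outside $U\cup S$ forces each such $z$ to be adjacent to an even number of vertices of $U$, and if that number is always $0$ then every $N_H(v)\subseteq S$ for $v\in U$, producing a proper neighbourhood subcircuit and contradicting minimality.  (This also covers the degenerate case $U\cup S=V(H)$.)  The terminal analysis---$t=0$ impossible because unit vectors are independent, $t=1$ with a $\psi$ impossible because the $v$-coordinate does not vanish---is clean.

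In short: your proof is sound, and its two ingredients (neighbourhood circuits for one direction, the explicit label permutation under local complementation for the other) are the natural ones; this is very likely the approach of \cite{BT1} as well.
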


\begin{theorem}
\label{smallcir3}(\cite{BT1}) Let $G$ be a simple graph, and let $k_{1},k_{2}$
be positive integers. Then $G$ is locally equivalent to a graph with adjacent
vertices of degrees $k_{1}-1$ and $k_{2}-1$ if and only if $G$ has transverse
circuits $\gamma_{1},\gamma_{2}$ such that $\left\vert \gamma_{i}\right\vert
=k_{i}$, the largest subtransversals contained in $\gamma_{1}\cup\gamma_{2}$
are of cardinality $\left\vert \gamma_{1}\cup\gamma_{2}\right\vert -2$, and
two of these largest subtransversals are independent sets of $M[IAS(G)]$.
\end{theorem}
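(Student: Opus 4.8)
The plan is to prove the two implications separately; the forward implication (``only if'') is a direct construction, and the converse uses the correspondence between transversals of $W(G)$ that are bases of $M[IAS(G)]$ and graphs locally equivalent to $G$.

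\emph{Forward implication.} By Theorem~\ref{theory} local equivalence induces an isomorphism of isotropic matroids respecting vertex triples and transverse circuits, so it suffices to treat the case that $G$ itself has adjacent vertices $u,v$ with $\deg_G(u)=k_1-1$ and $\deg_G(v)=k_2-1$. I would take
\[
\gamma_1=\{\chi_G(u)\}\cup\{\phi_G(w):w\in N(u)\},\qquad \gamma_2=\{\chi_G(v)\}\cup\{\phi_G(w):w\in N(v)\}.
\]
Each $\gamma_i$ is a subtransversal, and since $\chi_G(u)=\sum_{w\in N(u)}\phi_G(w)$ with the $\phi_G(w)$ distinct columns of $I$, $\gamma_1$ is a circuit of $M[IAS(G)]$ of size $1+\deg_G(u)=k_1$, and likewise $|\gamma_2|=k_2$. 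Because $u\sim v$ we have $\phi_G(v)\in\gamma_1$ and $\phi_G(u)\in\gamma_2$, so $\gamma_1\cup\gamma_2$ meets the triples of $u$ and $v$ in two elements each and every other triple in at most one; hence its largest subtransversals have cardinality $|\gamma_1\cup\gamma_2|-2$. With $R=\{\phi_G(w):w\in(N(u)\cup N(v))\setminus\{u,v\}\}$, the subtransversal $\{\phi_G(u),\phi_G(v)\}\cup R$ is a set of distinct columns of $I$, hence independent; and $\{\chi_G(u),\chi_G(v)\}\cup R$ is independent because $\chi_G(u)\equiv\phi_G(v)$ and $\chi_G(v)\equiv\phi_G(u)$ modulo the span of $R$, while $\phi_G(u),\phi_G(v)$ are independent and not in that span. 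So two of the largest subtransversals are independent.

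\emph{Converse.} The cardinality condition forces $\gamma_1\cup\gamma_2$ to meet exactly two triples, say those of $u$ and $v$, in two elements each and every other met triple in one element. Let $a_i,b_i\in\gamma_i$ be the elements in the triples of $u,v$ respectively; then $a_1\ne a_2$, $b_1\ne b_2$, and $\gamma_i=\{a_i,b_i\}\cup C_i$ with $C_i\subseteq R$, where $R$ is the common set of remaining met elements, and $|C_i|=k_i-2$. The four largest subtransversals are $\sigma_{ij}=\{a_i,b_j\}\cup R$; as $\sigma_{11}\supseteq\gamma_1$ and $\sigma_{22}\supseteq\gamma_2$ are dependent, the hypothesis forces $\sigma_{12}$ and $\sigma_{21}$ to be independent. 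I would then extend the independent subtransversal $\sigma_{12}$ to a transversal $T$ of $W(G)$ that is a basis of $M[IAS(G)]$, and let $H$ be a looped simple graph locally equivalent to $G$ whose all-$\phi$ transversal corresponds to $T$ under an induced isomorphism $f\colon M[IAS(G)]\to M[IAS(H)]$ respecting the vertex labels. Then $f(a_1)=\phi_H(u)$, $f(b_2)=\phi_H(v)$, $f$ carries $C_1$ to $\phi_H$-columns indexed by vertices other than $u,v$, and $f(\gamma_1)=\{\phi_H(u),f(b_1)\}\cup f(C_1)$ is a transverse circuit of $H$. For this set of columns to be minimally dependent, $f(b_1)$ must be the element of the triple of $v$ whose column has a $0$ in position $v$ --- this column equals $\sum_{w\in N_H(v)}\phi_H(w)$, whether or not $v$ carries a loop in $H$ --- and minimality forces $N_H(v)=\{u\}\cup\{\text{vertices of }C_1\}$. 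Hence $u\sim v$ in $H$ and $\deg_H(v)=k_1-1$, and the symmetric analysis of $f(\gamma_2)$ gives $\deg_H(u)=k_2-1$; complementing loops if necessary yields a simple graph locally equivalent to $G$ with adjacent vertices of degrees $k_1-1$ and $k_2-1$.

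The main obstacle is the converse, specifically the two imported facts it rests on: that an independent subtransversal extends to a transversal that is a basis of $M[IAS(G)]$, and that such basis-transversals correspond to graphs locally equivalent to $G$ with the chosen transversal playing the role of the all-$\phi$ transversal. Both belong to the isotropic-system/multimatroid theory behind Theorem~\ref{theory}; granting them, the extraction of ``$u\sim v$ with the prescribed degrees'' from the circuit conditions on $f(\gamma_1),f(\gamma_2)$ is routine linear algebra. One should also dispatch the degenerate cases $k_1=1$ or $k_2=1$ separately: then no pair of transverse circuits can satisfy the right-hand conditions (a one-element circuit meets only one triple, so a second triple cannot reach multiplicity two), matching the fact that a vertex of degree $0$ has no neighbor.
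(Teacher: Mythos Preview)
The paper does not prove Theorem~\ref{smallcir3}; it merely quotes the result from the companion paper \cite{BT1}. So there is no in-paper proof to compare against, and your proposal must be judged on its own merits.

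Your argument is correct. The forward direction is the expected construction via neighborhood circuits, and your verification that exactly two of the four maximal subtransversals $\{\phi_G(u),\phi_G(v)\}\cup R$, $\{\chi_G(u),\chi_G(v)\}\cup R$, $\{\chi_G(u),\phi_G(v)\}\cup R$, $\{\phi_G(u),\chi_G(v)\}\cup R$ are independent is clean. For the converse, your key combinatorial observation---that since each $\gamma_i$ is a subtransversal, $\gamma_1\cup\gamma_2$ meets every vertex triple at most twice, so the ``$|\gamma_1\cup\gamma_2|-2$'' hypothesis forces exactly two triples to be hit twice, with $\gamma_1$ and $\gamma_2$ each contributing one distinct element to each---is exactly what is needed to set up the four subtransversals $\sigma_{ij}$ and pin down which two are independent. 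The extraction of adjacency and degrees from the images $f(\gamma_1),f(\gamma_2)$ in $M[IAS(H)]$ is the right linear-algebraic reading of the neighborhood-circuit structure, and your treatment of possible loops at $u$ or $v$ in $H$ (distinguishing which of $\chi_H$, $\psi_H$ has zero diagonal entry) is careful. The degenerate case $k_i=1$ is handled correctly.

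You are also right to flag the two imported facts---that an independent subtransversal extends to a transversal basis of $M[IAS(G)]$, and that every transversal basis arises as the $\phi$-transversal of some graph locally equivalent to $G$---as the load-bearing external inputs. Both are indeed established in \cite{BT1} (the first is a consequence of tightness of the isotropic $3$-matroid, the second is part of the machinery behind Theorem~\ref{theory}), so citing them is appropriate here and your proof is complete once they are granted.
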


\begin{theorem}
\label{smallcir4}(\cite{BT1}) Let $G$ be a simple graph, and let $k_{1},k_{2}$
be positive integers. Then these two conditions are equivalent:

\begin{itemize}
\item $G$ is locally equivalent to a graph with nonadjacent vertices of
degrees $k_{1}-1$ and $k_{2}-1$, which share no neighbor.

\item $G$ has disjoint transverse circuits $\gamma_{1},\gamma_{2}$ such that
$\left\vert \gamma_{i}\right\vert =k_{i}$ and $\gamma_{1}\cup\gamma_{2}$ is a
subtransversal, which contains no other circuit.
\end{itemize}
\end{theorem}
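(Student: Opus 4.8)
The plan is to prove the two implications separately: the direction from a locally equivalent graph to a pair of transverse circuits is a direct computation, while the converse is a realization argument whose coordination step is the crux, and that step is where I expect the real difficulty.

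\emph{From a locally equivalent graph to the circuits.} Suppose $G$ is locally equivalent to a graph $H$ having nonadjacent vertices $a,b$ with $\deg_H(a)=k_1-1$, $\deg_H(b)=k_2-1$, and $N_H(a)\cap N_H(b)=\emptyset$. By Theorem~\ref{theory} there is a bijection $W(G)\to W(H)$ carrying vertex triples to vertex triples and transverse circuits to transverse circuits, so it is enough to exhibit the required pair of circuits in $M[IAS(H)]$ and transport it back along the inverse bijection. I would take $\gamma_1=\{\chi_H(a)\}\cup\{\phi_H(u):u\in N_H(a)\}$ and $\gamma_2=\{\chi_H(b)\}\cup\{\phi_H(w):w\in N_H(b)\}$. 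The columns $\phi_H(x)$, $x\in V(H)$, form the standard basis of $GF(2)^{V(H)}$ and $\chi_H(a)=\sum_{u\in N_H(a)}\phi_H(u)$, so each $\gamma_i$ is a circuit of $M[IAS(H)]$ of size $1+(k_i-1)=k_i$ which, being a subtransversal, is a transverse circuit. Since $a\neq b$, $a\notin N_H(b)$, $b\notin N_H(a)$, and $N_H(a)\cap N_H(b)=\emptyset$, the vertex sets $\{a\}\cup N_H(a)$ and $\{b\}\cup N_H(b)$ are disjoint, whence $\gamma_1\cap\gamma_2=\emptyset$ and $\gamma_1\cup\gamma_2$ is a subtransversal. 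Finally the $GF(2)$-dependencies among the columns indexed by $\gamma_1\cup\gamma_2$ form a $2$-dimensional space spanned by the dependency supported on $\gamma_1$ and the one supported on $\gamma_2$, whose only further nonzero member is supported on all of $\gamma_1\cup\gamma_2$; hence $\gamma_1$ and $\gamma_2$ are the only circuits contained in $\gamma_1\cup\gamma_2$. (If some $k_i=1$ the corresponding $\gamma_i$ is the single zero column $\chi_H(\cdot)$, a loop of $M[IAS(H)]$, and the argument is unaffected.)

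\emph{From the circuits to a locally equivalent graph: reduction.} Suppose $G$ has disjoint transverse circuits $\gamma_1,\gamma_2$ with $|\gamma_i|=k_i$ and $\gamma_1\cup\gamma_2$ a subtransversal containing no circuit other than $\gamma_1,\gamma_2$. It suffices to produce a graph $H$ locally equivalent to $G$, with an induced isomorphism $M[IAS(G)]\to M[IAS(H)]$ carrying $\gamma_1$ to the standard circuit $\{\chi_H(a)\}\cup\{\phi_H(u):u\in N_H(a)\}$ of some vertex $a$ and $\gamma_2$ to the analogous standard circuit of some vertex $b$. Granting this, the four properties of $\gamma_1,\gamma_2$ translate into the desired properties of $a,b$: $|\gamma_i|=k_i$ gives $\deg_H(a)=k_1-1$ and $\deg_H(b)=k_2-1$; subtransversality of $\gamma_1\cup\gamma_2$ rules out $a\in N_H(b)$ (otherwise $\chi_H(a)\in\gamma_1$ and $\phi_H(a)\in\gamma_2$ would be two elements of the triple of $a$), so $a$ and $b$ are nonadjacent; and disjointness rules out $u\in N_H(a)\cap N_H(b)$ (otherwise $\phi_H(u)\in\gamma_1\cap\gamma_2$), so $a$ and $b$ share no neighbour. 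To build $H$, first apply Theorem~\ref{smallcir1} to $\gamma_1$; I expect its proof to yield not merely a locally equivalent graph with a vertex of degree $k_1-1$, but one in which the image of $\gamma_1$ is exactly the standard circuit of such a vertex. Replacing $G$ by that graph and $\gamma_2$ by its image --- which still satisfies all the hypotheses, by Theorem~\ref{theory} --- we may assume $\gamma_1$ is already the standard circuit at a vertex $a$ with $\deg_G(a)=k_1-1$. Subtransversality of $\gamma_1\cup\gamma_2$ then forces $\gamma_2$ to avoid the triples of $a$ and of every $u\in N_G(a)$, so $\gamma_2$ is supported on $V(G)\setminus(\{a\}\cup N_G(a))$; and any simple local complementation at a vertex outside $\{a\}\cup N_G(a)$ leaves $N_G(a)$ fixed, hence leaves $\gamma_1$ a standard circuit at $a$ with $\deg(a)=k_1-1$.

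\emph{From the circuits to a locally equivalent graph: the obstacle.} The remaining task --- realizing $\gamma_2$ as a standard circuit using only simple local complementations at vertices outside $\{a\}\cup N_G(a)$ --- is where the main work lies. The procedure behind Theorem~\ref{smallcir1} that straightens a transverse circuit into standard form may, in general, require local complementations at vertices outside the circuit's support, and a priori some of those could fall in $N_G(a)$ and destroy the structure already arranged at $a$. What must be shown is that, under our three hypotheses (disjointness of $\gamma_1,\gamma_2$, subtransversality of $\gamma_1\cup\gamma_2$, and the absence of any further circuit inside $\gamma_1\cup\gamma_2$), the straightening of $\gamma_2$ can always be routed through vertices disjoint from $\{a\}\cup N_G(a)$. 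I would establish this by rerunning the proof of Theorem~\ref{smallcir1} on $\gamma_2$ while tracking which vertex triples each local complementation touches, and using the hypotheses --- the no-further-circuit condition in particular, which forbids the kind of linear overlap between $\gamma_1$ and $\gamma_2$ that would otherwise compel a shared local complementation --- to confine those triples to the triples already met by $\gamma_2$ together with triples disjoint from $\{a\}\cup N_G(a)$. Once that confinement is in place, the resulting standard circuit at a vertex $b$ gives, via the translation recorded in the reduction step, the required nonadjacent and neighbour-disjoint pair $a,b$, completing the proof.
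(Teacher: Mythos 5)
Your forward direction is correct and complete: exhibiting the standard circuits at $a$ and $b$ in $H$, verifying they are transverse circuits of the right sizes, checking disjointness and subtransversality from $\bigl(\{a\}\cup N_H(a)\bigr)\cap\bigl(\{b\}\cup N_H(b)\bigr)=\emptyset$, and computing that the dependency space of the $k_1+k_2$ columns has dimension exactly two so no further circuit can lurk inside the union --- all of this works, and the $k_i=1$ degenerate case is handled.

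The converse, however, is a plan rather than a proof, and you say as much yourself. Two distinct steps are asserted without argument. First, you appeal to Theorem~\ref{smallcir1} but need a strictly stronger statement than what it says: Theorem~\ref{smallcir1} only asserts that \emph{some} graph locally equivalent to $G$ has a vertex of degree $k_1-1$; it does not say that the induced isomorphism carries your particular circuit $\gamma_1$ to the standard circuit at that vertex. Since $G$ may have many transverse $k_1$-circuits, the one that becomes standard need not be $\gamma_1$. You write ``I expect its proof to yield\ldots,'' which is an honest flag, but it is an assumption about a proof you have not examined. Second, and more seriously, the coordination step --- straightening $\gamma_2$ by local complementations supported away from $\{a\}\cup N_G(a)$ --- is described only as something that ``must be shown.'' The claim that the no-further-circuit hypothesis ``forbids the kind of linear overlap \ldots that would otherwise compel a shared local complementation'' is exactly the content of the theorem's hard direction, and no mechanism is offered connecting the algebraic hypothesis on circuits to the combinatorics of where the straightening moves must be applied. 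Without that argument there is no proof of the converse implication, only a reduction of it to a more refined and still unproved statement.
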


Here is an illustration of the usefulness of these properties. It is easy to
see that up to isomorphism, there are only two simple 4-regular graphs with
$\leq6$ vertices: one is $K_{5}$ and the other is obtained from $K_{6}$ by
removing the edges of a perfect matching. Each of these graphs contains
several 3-circuits. A non-simple 4-regular graph must contain a 1-circuit or a
2-circuit, of course, so Theorem \ref{smallcir2} tells us that every circle
graph with $\leq6$ vertices has a transverse circuit of size $\leq3$.
According to Theorem \ref{smallcir1}, this is equivalent to saying that every
circle graph with $\leq6$ vertices is locally equivalent to a graph with a
vertex of degree $\leq2$. Inspecting the matrix $IAS(W_{5})$, it is not hard
to see that the only circuits of size $\leq3$ in $M[IAS(W_{5})]$\ are vertex
triples; the smallest transverse circuits are of size 4. It follows from
Theorem \ref{smallcir1} that no simple graph locally equivalent to $W_{5}$ has
a vertex of degree $\leq2$. This is enough to verify the following.

\begin{corollary}
\label{smallchar}Let $G$ be a simple graph with $\leq6$ vertices. Then any one
of the following properties implies the others.

\begin{enumerate}
\item $G$ is a circle graph.

\item $G$ has a transverse circuit of size $\leq3$.

\item $G$ is locally equivalent to a graph with a vertex of degree $\leq2$.
\end{enumerate}
\end{corollary}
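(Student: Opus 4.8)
The plan is to prove $(2)\Leftrightarrow(3)$ for arbitrary simple graphs and then $(1)\Rightarrow(2)$ and $(2)\Rightarrow(1)$ under the hypothesis $|V(G)|\le 6$; together these three implications show that any one of the properties implies the others.

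The equivalence $(2)\Leftrightarrow(3)$ is immediate from Theorem \ref{smallcir1} and requires no bound on the number of vertices: a transverse circuit of size $\le 3$ is exactly a transverse $k$-circuit with $k\in\{1,2,3\}$, and a graph locally equivalent to $G$ has a vertex of degree at most $2$ precisely when it has a vertex of degree $k-1$ for some such $k$. So $(2)$ and $(3)$ are just two phrasings of the same condition.

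For $(1)\Rightarrow(2)$, I would write $G=\mathcal{I}(C)$ for an Euler system $C$ of a $4$-regular graph $F$ with $|V(F)|=|V(G)|\le 6$, and check that every such $F$ has a circuit of length at most $3$. If $F$ is non-simple it contains a loop (a $1$-circuit) or a pair of parallel edges (a $2$-circuit). If $F$ is simple then it is connected, since the smallest simple $4$-regular graph already has five vertices, and two disjoint copies would need ten; the only connected simple $4$-regular graphs on at most six vertices are $K_5$ and the cocktail-party graph $K_6$ minus a perfect matching, and both contain triangles. In every case Theorem \ref{smallcir2} produces a transverse circuit of size at most $3$ in $G$.

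The main work is $(2)\Rightarrow(1)$, which I would prove contrapositively using Bouchet's theorem. Suppose $G$ has at most six vertices and is not a circle graph; then one of $W_5$, $BW_3$, $W_7$ is a vertex-minor of $G$. Since $BW_3$ and $W_7$ each have more than six vertices, neither can be a vertex-minor of $G$, so $W_5$ is one; and since $W_5$ has exactly six vertices this forces $G$ to be locally equivalent to $W_5$. The crux is then a finite verification carried out on the matrix $IAS(W_5)$: that it has no zero column, no repeated column lying outside a single vertex triple, and no three linearly dependent columns drawn from three distinct vertex triples, so that the only circuits of $M[IAS(W_5)]$ of size at most $3$ are the vertex triples. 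Hence $W_5$, and therefore by Theorem \ref{theory} the locally equivalent graph $G$, has no transverse circuit of size at most $3$, contradicting $(2)$. I expect this $W_5$ computation — equivalently, via Theorem \ref{smallcir1}, the assertion that no graph locally equivalent to $W_5$ has a vertex of degree at most $2$ — to be the only real obstacle, and it is a bounded check on the eighteen columns of $IAS(W_5)$ rather than a genuine difficulty.
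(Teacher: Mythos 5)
Your proof is correct and follows essentially the same route as the paper: the equivalence $(2)\Leftrightarrow(3)$ via Theorem~\ref{smallcir1}, $(1)\Rightarrow(2)$ by classifying $4$-regular graphs on at most six vertices and applying Theorem~\ref{smallcir2}, and $(2)\Rightarrow(1)$ via Bouchet's theorem together with the finite check that $M[IAS(W_5)]$ has no transverse circuit of size at most $3$. You have merely made explicit some steps the paper leaves to the reader (e.g., that a non-circle graph on at most six vertices must be locally equivalent to $W_5$ because $BW_3$ and $W_7$ have too many vertices).
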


Further analysis leads to several characterizations of larger circle graphs. For instance, here is a characterization that involves both isotropic matroids and transverse matroids.

\begin{theorem}
\label{jaeger2}A simple graph $G$ is a circle graph if and only if $G$
satisfies all of the following conditions.

\begin{enumerate}
\item Every transverse matroid of $G$ is cographic.

\item Every isotropic minor of $G$ of size $<24$ has a loop or a pair of
intersecting 3-circuits.

\item Suppose an isotropic minor $M$ of $G$ of size $24$ has no loop and no
pair of intersecting 3-circuits. Then every transverse matroid of $M$ that
contains two disjoint circuits also contains other circuits.
\end{enumerate}
\end{theorem}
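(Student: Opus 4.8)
The plan is to deduce Theorem~\ref{jaeger2} from Bouchet's theorem in the form of Theorem~\ref{bouchet}, i.e.\ from the equivalence: $G$ is a circle graph $\iff$ none of $M[IAS(W_5)]$, $M[IAS(BW_3)]$, $M[IAS(W_7)]$ is an isotropic minor of $G$. Two general facts will do the bookkeeping. First, loops and $k$-circuits are matroid-isomorphism invariants, and by Theorem~\ref{theory} an isomorphism of isotropic matroids may be chosen to respect vertex triples and hence transverse matroids; so ``having a loop'', ``having a pair of intersecting $3$-circuits'', and ``having a transverse matroid of a prescribed isomorphism type'' depend only on the isomorphism class of an isotropic matroid. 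Second, every transverse matroid of an isotropic minor of $G$ is a minor of some transverse matroid of $G$: writing the isotropic minor as $M[IAS(G'[X])]$ with $G'$ locally equivalent to $G$, one identifies the transverse matroids of $G'$ with those of $G$ via Theorem~\ref{theory}, and notes that deleting a vertex $y$ from $G'$ amounts to contracting $\phi_{G'}(y)$ in $IAS(G')$.

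\emph{Conditions 1--3 $\Rightarrow$ circle graph.} I argue contrapositively. If $M[IAS(W_5)]$ is an isotropic minor of $G$, then inspection of $IAS(W_5)$ (cf.\ the discussion preceding Corollary~\ref{smallchar}) shows it has no loop and that its only $3$-circuits are the pairwise disjoint vertex triples; since $|W(W_5)|=18<24$, condition~2 fails. If $M[IAS(BW_3)]$ is an isotropic minor, a finite computation with $IAS(BW_3)$ produces a transverse matroid of $M[IAS(BW_3)]$ that is not cographic; it is a minor of a transverse matroid of $G$, and non-cographic binary matroids are minor-closed, so condition~1 fails. If $M[IAS(W_7)]$ is an isotropic minor, one checks that $M[IAS(W_7)]$ has no loop and no pair of intersecting $3$-circuits but has a transverse matroid isomorphic to a direct sum of two $4$-element circuits (which contains two disjoint circuits and no other circuit); since $|W(W_7)|=24$, condition~3 fails. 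Thus a graph satisfying all three conditions avoids the three obstructions, hence is a circle graph.

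\emph{Circle graph $\Rightarrow$ conditions 1--3.} Assume $G$ is a circle graph. Condition~1 holds because every transverse matroid of a circle graph is cographic, part of the theory of $4$-regular graphs recalled in Section~\ref{sec:4reg_graphs}. For condition~2, an isotropic minor of $G$ of size $<24$ equals $M[IAS(H)]$ for a vertex-minor $H$ of $G$ with $|V(H)|\le 7$; $H$ is again a circle graph, say $H=\mathcal{I}(C)$ for an Euler system $C$ of a $4$-regular graph $F$ with $V(F)=V(H)$. If $H$ has an isolated vertex $w$, then $\chi_H(w)=0$ is a loop of $M[IAS(H)]$. Otherwise each component of $F$ has at most $7$ vertices, hence contains a $k$-circuit with $k\le 3$ (it has a loop, a double edge, or --- being simple with at least $5$ vertices --- a triangle), so by Theorem~\ref{smallcir2} $H$ has a transverse circuit $\gamma$ with $|\gamma|\le 3$; since $H$ has no isolated vertex, $M[IAS(H)]$ has no zero column, so $|\gamma|\in\{2,3\}$, and then $\gamma$ together with the vertex triple of one of its elements (a $3$-circuit because $H$ has no isolated vertex) --- directly if $|\gamma|=3$, via a circuit elimination if $|\gamma|=2$ --- yields a pair of intersecting $3$-circuits.

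\emph{Condition~3, the main point.} Let $M=M[IAS(H)]$ be an isotropic minor of $G$ of size $24$ with no loop and no pair of intersecting $3$-circuits; then $|V(H)|=8$, $H$ has no isolated vertex, and (as in the previous paragraph) $H$ has no transverse circuit of size $\le 3$. Suppose a transverse matroid $M|_T$ has disjoint circuits $\gamma_1,\gamma_2$. Since $M|_T$ has no circuit of size $\le 3$, $|\gamma_1|=|\gamma_2|=4$ and $\gamma_1\cup\gamma_2=T$. If $M|_T$ had no further circuit, it would equal the direct sum of the two $4$-circuits, whence by Theorem~\ref{smallcir4} (with $k_1=k_2=4$) $H$ would be locally equivalent to a graph $H^\ast$ with nonadjacent degree-$3$ vertices $u,v$ sharing no neighbour; counting then forces $V(H^\ast)=\{u,v\}\sqcup N(u)\sqcup N(v)$. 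Being locally equivalent to the circle graph $H$, $H^\ast$ has no transverse circuit of size $\le 3$, so writing $H^\ast=\mathcal{I}(C^\ast)$ for a $4$-regular graph $F^\ast$ on these $8$ vertices, Theorem~\ref{smallcir2} forbids $F^\ast$ from having a $k$-circuit with $k\le 3$; thus $F^\ast$ is simple and triangle-free, and an elementary count shows the only such $4$-regular graph on $8$ vertices is $K_{4,4}$. The contradiction is completed by a finite verification that no interlacement graph of an Euler system of $K_{4,4}$ is locally equivalent to a graph of the form $\{u,v\}\sqcup N(u)\sqcup N(v)$ with $u,v$ of degree $3$ and without a common neighbour. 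Hence $M|_T$ has a third circuit, condition~3 holds, and the proof is complete. The steps I expect to require the most care are this last finite check and the two explicit computations about $M[IAS(BW_3)]$ and $M[IAS(W_7)]$ used above.
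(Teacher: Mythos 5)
Your proof is correct and follows the same strategy the paper uses: test each condition against Bouchet's obstructions $W_5$, $BW_3$, $W_7$ via Theorem~\ref{bouchet}. The paper derives the result tersely from Corollary~\ref{jaeger1} together with Theorem~\ref{smallcirc2}; you essentially re-derive the content of Corollary~\ref{jaeger1} rather than invoking it, with the case analysis for conditions~2 and~3 paralleling that corollary's proof, and with Theorem~\ref{smallcir4} doing the translation between ``transverse matroid equal to a direct sum of two $4$-circuits'' and ``locally equivalent to a graph with nonadjacent degree-$3$ vertices sharing no neighbour.''

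A few small corrections. The parenthetical ``being simple with at least $5$ vertices --- a triangle'' is not a valid justification on its own (cf.\ $K_{4,4}$); what is actually used is Proposition~\ref{smallcirc}, which covers simple $4$-regular graphs on at most $7$ (indeed at most $8$) vertices. The phrase ``non-cographic binary matroids are minor-closed'' is stated backwards; the point is that cographic matroids form a minor-closed class, so a matroid with a non-cographic minor is itself non-cographic. Finally, the concluding ``finite verification'' is not a new computation: once you have $F^\ast\cong K_{4,4}$, Corollary~\ref{smalldiam} already forbids an interlacement graph of $K_{4,4}$ from containing nonadjacent vertices with no common neighbour, or alternatively note that $M[IAS(H)]\cong M_\tau(K_{4,4})$ and apply Theorem~\ref{smallcirc2} directly, which says any transverse matroid of $M_\tau(K_{4,4})$ containing two disjoint circuits has six transverse $4$-circuits rather than two.
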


The three conditions of Theorem \ref{jaeger2} correspond directly to the three
obstructions of Bouchet's theorem: condition 1 excludes $BW_{3}$, condition 2
excludes $W_{5}$, and condition 3 excludes $W_{7}$. By the way, we state
condition 2 this way only for variety. As vertex triples are dependent sets,
it is not hard to see that an isotropic matroid has a transverse circuit of
size $\leq3$ if and only if it has a loop or a pair of intersecting
3-circuits. Details of the argument appear in the proof of Corollary
\ref{jaeger1}.

\begin{figure}
[ptb]
\begin{center}
\includegraphics[
trim=1.872695in 8.279513in 2.409692in 1.081850in,
height=1.2626in,
width=3.1877in
]%
{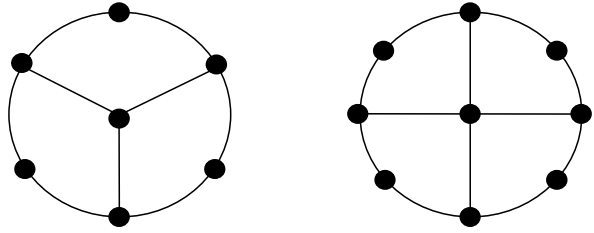}%
\caption{$BW_{3}$ and $BW_{4}$.}%
\label{circmf7}%
\end{center}
\end{figure}

Condition 1 is of particular interest for several reasons. One reason is
simply that the cographic property is more familiar than the small-circuit
properties mentioned in conditions 2 and 3. Another reason is that it is
possible to explicitly construct the graphs whose cocycle matroids are the
transverse matroids of a circle graph; see Section~\ref{sec:4reg_graphs} for details. Yet another
reason is that, as we show in Section~\ref{sec:bipartite_circle}, condition 1 suffices to characterize a special type of circle graph.

\begin{theorem}
\label{bipartite}Let $G$ be a simple graph that is locally equivalent to a
bipartite graph. Then any one of the following properties implies the others.

\begin{enumerate}
\item $G$ is a circle graph.

\item Every transverse matroid of $G$ is cographic.

\item Neither graph of Figure \ref{circmf7} is a vertex-minor of $G$.
\end{enumerate}
\end{theorem}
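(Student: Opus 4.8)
The plan is to establish the cycle of implications $1 \Rightarrow 2 \Rightarrow 3 \Rightarrow 1$. The implication $1 \Rightarrow 2$ is the easy direction and should hold without the bipartite hypothesis: one shows directly that a transverse matroid of a circle graph is cographic, presumably by combining Theorem \ref{theory} with the explicit construction (promised for Section~\ref{sec:4reg_graphs}) of a graph whose cocycle matroid realizes a given transverse matroid of a circle graph. Since transverse matroids are invariant under local equivalence in the appropriate sense, and circle graphs are closed under local complementation, it suffices to verify this for a single interlacement graph of a $4$-regular graph, where the relevant transverse matroid should turn out to be the cocycle matroid of a graph built from an Euler system of $F$.

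For $2 \Rightarrow 3$, I would argue contrapositively. If one of the two graphs of Figure~\ref{circmf7} — namely $BW_3$ or $BW_4$ — is a vertex-minor of $G$, then by Theorem \ref{theory} the corresponding isotropic matroid is an isotropic minor of $G$, so a transverse matroid of that graph is a transverse matroid of a vertex-minor and hence (by the minor operation on isotropic matroids) appears among the transverse matroids of $G$. It then remains to exhibit, for each of $BW_3$ and $BW_4$, a specific transverse matroid that fails to be cographic. For $BW_3$ this is essentially built into Bouchet's theorem via condition 1 of Theorem \ref{jaeger2}; the new content is the case $BW_4$, where I would inspect $IAS(BW_4)$ and identify a transversal on which the restriction of the isotropic matroid is, say, $M^{\ast}(K_{3,3})$ or contains it as a minor, hence not cographic.

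The main obstacle is the implication $3 \Rightarrow 1$: assuming $G$ is locally equivalent to a bipartite graph and that neither $BW_3$ nor $BW_4$ is a vertex-minor, deduce that $G$ is a circle graph. By Bouchet's theorem it suffices to show none of $W_5$, $BW_3$, $W_7$ is a vertex-minor of $G$. The graph $BW_3$ is excluded by hypothesis, so the heart of the matter is ruling out $W_5$ and $W_7$. Here the bipartite hypothesis is essential: a graph locally equivalent to a bipartite graph has its vertex-minors again locally equivalent to bipartite graphs, so it suffices to check that neither $W_5$ nor $W_7$ is locally equivalent to a bipartite graph, and moreover — since $G$ could instead contain a vertex-minor that \emph{is} locally equivalent to bipartite and then yields $W_5$ or $W_7$ — one must show that within the world of ``locally bipartite'' graphs, the obstruction $BW_3$ already forces the presence of $BW_3$ itself whenever $W_5$ or $W_7$ would otherwise appear. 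Concretely, I expect the argument to run: if a graph locally equivalent to a bipartite graph has $W_5$ (resp. $W_7$) as a vertex-minor, then since $W_5$ and $W_7$ are not locally equivalent to bipartite graphs, some vertex-minor of $G$ that is bipartite must itself have $BW_3$ as a vertex-minor — this is the step requiring care, presumably leaning on a structural result about the bipartite members of each local equivalence class (such graphs being controlled by the associated binary or isotropic data) together with the classification of small obstructions. I would also use that $BW_4$ enters as the ``bipartite shadow'' of a larger obstruction configuration, which is why it must be listed alongside $BW_3$ in condition 3.

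The cleanest route for this hard step is probably to reduce to a statement purely about transverse matroids via Theorem \ref{jaeger2}: under the bipartite hypothesis, condition 1 of Theorem \ref{jaeger2} (every transverse matroid cographic) should be shown to imply conditions 2 and 3 of that theorem automatically, so that Theorem \ref{jaeger2} collapses to just condition 1. That implication — cographic transverse matroids forcing the absence of intersecting-3-circuit failures and of the size-$24$ exceptional configuration, in the presence of local bipartiteness — is where I anticipate the real work, and it is likely handled by a direct analysis of what a non-cographic transverse matroid looks like inside the isotropic matroid of a bipartite graph, invoking the excluded-minor characterizations of cographic matroids ($M(K_5)$, $M(K_{3,3})$, $U_{2,4}$, $F_7$, $F_7^{\ast}$) and matching each forbidden minor back to $BW_3$ or $BW_4$.
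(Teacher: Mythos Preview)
Your implications $1\Rightarrow 2$ and $2\Rightarrow 3$ are essentially fine (with the minor caveat that a transverse matroid of a vertex-minor is not literally a transverse matroid of $G$, but rather a \emph{minor} of one; since cographic matroids are minor-closed, the contrapositive still goes through).

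The real gap is in $3\Rightarrow 1$, and it stems from missing the key lemma that makes the bipartite case work. First, your claim that ``a graph locally equivalent to a bipartite graph has its vertex-minors again locally equivalent to bipartite graphs'' is false: the paper itself exhibits $W_5$ as a vertex-minor of the bipartite graph $BW_4$. So the attempt to reduce to Bouchet's theorem by showing $W_5$ and $W_7$ cannot arise in the locally-bipartite world collapses immediately. Your fallback---collapsing Theorem~\ref{jaeger2} to just condition~1 under the bipartite hypothesis---is vague and, as you suspect, is where the ``real work'' would be; but you never identify a mechanism for doing it.

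What the paper actually uses is de~Fraysseix's result (Proposition~\ref{deF}): a bipartite graph $G$ with vertex classes $V_1,V_2$ is a circle graph if and only if the single transverse matroid $M=\phi_G(V_1)\cup\chi_G(V_2)$ is \emph{planar} (not merely cographic). The point is that the dual transverse matroid $M^*=\phi_G(V_2)\cup\chi_G(V_1)$ is literally the matroid dual of $M$, so ``all transverse matroids cographic'' forces $M$ to be both graphic and cographic, hence planar, giving $2\Rightarrow 1$ directly and without Bouchet's theorem. For $3\Rightarrow 1$, one argues contrapositively: if $G$ is bipartite and not a circle graph then $M$ is a non-planar binary matroid, so by induction on $|V(G)|$ (using edge pivots to stay within bipartite graphs) one reduces to the case where $M$ is a minor-minimal non-planar binary matroid, i.e., one of $F_7$, $F_7^*$, $M(K_5)$, $M^*(K_5)$, $M(K_{3,3})$, $M^*(K_{3,3})$. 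Then $G$ is a \emph{fundamental graph} of one of these, and one checks directly that $BW_3$ is a fundamental graph of $F_7$, $BW_4$ is a fundamental graph of $M(K_{3,3})$, and a fundamental graph of $M(K_5)$ has $BW_3$ as a vertex-minor. Your final sentence gestures at excluded-minor lists, but for cographic rather than planar matroids, and without the fundamental-graph connection that turns a matroid minor into a graph vertex-minor.
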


Bipartite circle graphs are important for two reasons. One reason is that bipartite circle graphs correspond to planar 4-regular graphs, and the other reason is that all circle graphs are vertex-minors of bipartite circle graphs. Details are given in Sections~\ref{sec:bipartite_circle} and \ref{sec:cross_nr_4reg}, along with some results about the connection between the crossing number of a 4-regular graph and the matroidal properties of its associated circle graphs.

Returning to the general case, note that conditions 2 and 3 of Theorem
\ref{jaeger2} indicate characteristic properties of the transverse circuits of small isotropic minors of circle graphs. Theorems \ref{smallcir1} -- \ref{smallcir4} tell us that these properties are related to the distribution of low-degree vertices in small vertex-minors of circle graphs.

\begin{theorem}
\label{jaeger6}Let $G$ be a simple graph, and let $\mathcal{VM}_{8}(G)$ denote
the set of graphs with 8 or fewer vertices, which are vertex-minors of $G$.
Then $G$ is a circle graph if and only if every $H\in\mathcal{VM}_{8}(G)$
satisfies at least one of the following conditions.

\begin{enumerate}
\item Some graph locally equivalent to $H$ has a vertex of degree $0$ or $1$.

\item Some graph locally equivalent to $H$ has a pair of adjacent degree-$2$ vertices.

\item Every graph locally equivalent to $H$ has a vertex of degree $5$.
\end{enumerate}
\end{theorem}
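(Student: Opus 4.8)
The plan is to derive the theorem from Bouchet's theorem together with two facts: \textbf{(A)} every circle graph on at most $8$ vertices satisfies at least one of conditions 1, 2, 3; and \textbf{(B)} none of $W_{5}$, $BW_{3}$, $W_{7}$ satisfies any of conditions 1, 2, 3. Conceptually, (B) says that conditions 1, 2, 3 encode the \emph{absence} of the three Bouchet obstructions, while (A) says that for graphs small enough to exhibit an obstruction this encoding is complete. Granting (A) and (B), the theorem follows immediately. If $G$ is a circle graph then so is every vertex-minor of $G$ (circle graphs are closed under local complementation and, obviously, under induced subgraphs), so every $H\in\mathcal{VM}_{8}(G)$ is a circle graph on at most $8$ vertices and hence, by (A), satisfies one of the conditions. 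Conversely, if $G$ is not a circle graph then, by Bouchet's theorem, one of $W_{5}$, $BW_{3}$, $W_{7}$ is a vertex-minor of $G$; each of these has at most $8$ vertices, so it belongs to $\mathcal{VM}_{8}(G)$, and by (B) it satisfies none of conditions 1, 2, 3, so the condition on $\mathcal{VM}_{8}(G)$ fails.

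Fact (B) is a finite verification, since conditions 1, 2, 3 are invariants of isomorphism and of local equivalence. Condition 3 fails for each obstruction because each is locally equivalent to a graph with no vertex of degree $5$: $W_{5}$ is locally equivalent to the triangular prism, and $W_{7}$ and $BW_{3}$ themselves have no vertex of degree $5$. Conditions 1 and 2 fail because one checks, by inspecting $IAS(W_{5})$, $IAS(BW_{3})$ and $IAS(W_{7})$, that none of the three isotropic matroids has a transverse circuit of size $\le 2$ or a pair of transverse $3$-circuits of the kind described in Theorem~\ref{smallcir3}; by Theorems~\ref{smallcir1} and~\ref{smallcir3} this means no graph locally equivalent to an obstruction has a vertex of degree $\le 1$ or a pair of adjacent degree-$2$ vertices. (For $W_{5}$ the stronger fact that no locally equivalent graph has a vertex of degree $\le 2$ is already noted in the discussion preceding Corollary~\ref{smallchar}.)

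Fact (A) is the real work, and I would deduce it from Theorem~\ref{jaeger2}, all of whose conditions hold for any circle graph $H$. If $H$ has at most $7$ vertices, then $M[IAS(H)]$ is an isotropic minor of $H$ of size less than $24$, so by condition 2 of Theorem~\ref{jaeger2} it has a loop or a pair of intersecting $3$-circuits; this is equivalent to $H$ having a transverse circuit of size $\le 3$ (the equivalence noted after Theorem~\ref{jaeger2}), hence by Theorem~\ref{smallcir1} some graph locally equivalent to $H$ has a vertex of degree $\le 2$. If $H$ has $8$ vertices, then $M[IAS(H)]$ has size $24$, and either it too has a loop or a pair of intersecting $3$-circuits (with the same consequence), or condition 3 of Theorem~\ref{jaeger2} applies to $M=M[IAS(H)]$ and constrains the pairs of disjoint transverse circuits of $H$, which by Theorem~\ref{smallcir4} constrains the pairs of nonadjacent low-degree vertices with no common neighbour in graphs locally equivalent to $H$. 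In every case one must then check that $H$ satisfies condition 1, 2 or 3 of the present theorem: if some locally equivalent graph has a vertex of degree $\le 1$, condition 1 holds; otherwise, either Theorem~\ref{smallcir3} produces a locally equivalent graph with a pair of adjacent degree-$2$ vertices (condition 2), or a finite case analysis of the circle graphs on at most $8$ vertices — organized through the $4$-regular graphs realizing them, with Theorem~\ref{smallcir2} bounding the sizes of their transverse circuits — shows that every graph locally equivalent to $H$ has a vertex of degree $5$ (condition 3).

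I expect that last step to be the main obstacle: showing that a circle graph on at most $8$ vertices whose local equivalence class contains no vertex of degree $\le 1$ and no pair of adjacent degree-$2$ vertices must, in every locally equivalent graph, have a vertex of degree $5$. This is precisely the degree-theoretic image, via Theorems~\ref{smallcir1}--\ref{smallcir4}, of the $W_{7}$-part of Theorem~\ref{jaeger2} (its condition 3); as with Theorem~\ref{jaeger2}, conditions 1 and 2 here jointly account for $W_{5}$ and $BW_{3}$, while condition 3 accounts for $W_{7}$.
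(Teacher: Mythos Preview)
Your high-level decomposition into (A) and (B) is correct and matches the paper's strategy, and your treatment of (B) is essentially right. The gap is in (A), and it is a real one.

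The paper does not derive (A) from Theorem~\ref{jaeger2}. Instead it establishes (A) via a dichotomy on the underlying 4-regular graph: by Proposition~\ref{smallcirc3}, every simple 4-regular graph on at most $8$ vertices other than $K_{4,4}$ has an Euler circuit of the form $v_1 w_1 v_2 v_1 w_2 v_2\ldots$, whose interlacement graph has $v_1,v_2$ as adjacent degree-$2$ vertices. This, together with the easy case where the 4-regular graph has a circuit of length $\le 2$, gives Corollary~\ref{jaeger8}: every circle graph on $\le 8$ vertices not associated with $K_{4,4}$ satisfies condition~1 or~2. For circle graphs associated with $K_{4,4}$, Corollary~\ref{degreefive} (proved in Section~6 by a separate and fairly delicate analysis) shows that \emph{every} interlacement graph of $K_{4,4}$ has a degree-$5$ vertex, giving condition~3.

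Your route via Theorem~\ref{jaeger2} stalls at precisely the point you flag as ``the main obstacle.'' Condition~2 of Theorem~\ref{jaeger2} yields only a transverse circuit of size $\le 3$, hence by Theorem~\ref{smallcir1} a locally equivalent graph with a vertex of degree $\le 2$; but a single degree-$2$ vertex is neither condition~1 nor condition~2 of Theorem~\ref{jaeger6}. You then invoke Theorem~\ref{smallcir3}, but nothing you have established supplies its hypothesis (a pair of transverse $3$-circuits with the required overlap structure). And your attempt to connect condition~3 of Theorem~\ref{jaeger2} with condition~3 of Theorem~\ref{jaeger6} via Theorem~\ref{smallcir4} does not make contact: the former concerns pairs of disjoint circuits in transverse matroids, the latter is about degree-$5$ vertices, and Theorem~\ref{smallcir4} translates disjoint circuits into \emph{nonadjacent low-degree} vertices with no common neighbour, which is neither condition. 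The ``finite case analysis'' you fall back on is exactly what the paper carries out, but its organizing principle is the $K_{4,4}$/non-$K_{4,4}$ split you do not mention, and the $K_{4,4}$ case (Corollary~\ref{degreefive}) requires genuine work.
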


As every $H\in\mathcal{VM}_{8}(G)$ must satisfy one of the
conditions, condition 3 is logically equivalent to the simpler requirement
that $H$ itself must have a vertex of degree 5. Condition 3 may also be
replaced by the weaker requirement that there be a vertex of degree $\geq4$,
because $W_{5}$ and $W_{7}$ are both locally equivalent to 3-regular graphs.
See Figure \ref{circmf20}.%

\begin{figure}
[ptb]
\begin{center}
\includegraphics[
trim=2.004395in 8.296021in 2.276293in 1.070845in,
height=1.2566in,
width=3.1903in
]%
{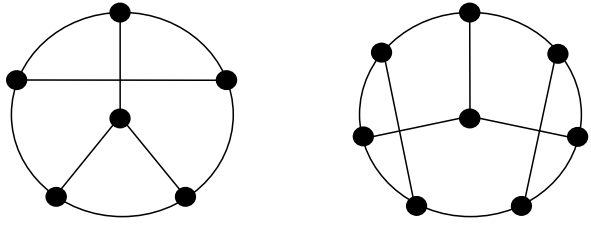}%
\caption{Cubic graphs locally equivalent to $W_{5}$ and $W_{7}$.}%
\label{circmf20}%
\end{center}
\end{figure}

Several other circle graph characterizations are presented in Sections~\ref{sec:circle_small_circuits}--\ref{sec:cross_nr_4reg}.
Although the details differ, most are variations on the theme
\textquotedblleft circle graphs have vertex-minors with distinctive
distributions of small transverse circuits.\textquotedblright\

Before deriving these matroidal characterizations of circle graphs, we discuss
a different kind of structural characterization of circle graphs, using
delta-matroids. It is shown in \cite{GeelenPhD} that a variant of
unimodularity called \emph{principal unimodularity} precisely corresponds to
representability of a delta-matroid $D$ over every field. This specializes to
the usual notion of unimodularity in case $D$ is a matroid. In this way, this
result generalizes the well-known result that unimodular representations of
matroids correspond precisely to matroids representable over every field. It
has been shown by Bouchet (see Geelen's PhD thesis \cite{GeelenPhD}) that
circle graphs are precisely the graphs $G$ such that for every graph
$G^{\prime}$ locally equivalent to $G$, $G^{\prime}$ allows for a principal
unimodular representation. In Section~\ref{sec:char_circle_dm_regularity} we
reformulate this characterization in terms of binary delta-matroids. Using
this characterization, we provide in
Section~\ref{sec:char_circle_mm_regularity} a characterization of isotropic
matroids of circle graphs in terms of principal unimodularity, but
\emph{without} mentioning local equivalence. The techniques used in
Section~\ref{sec:char_circle_mm_regularity} are from multimatroid theory
\cite{B1} and it is essentially shown that the natural $3$-matroid
generalization of principal unimodularity precisely characterizes isotropic
matroids of circle graphs.

More specifically, for an isotropic matroid $M[IAS(G)]$ and transversal $T$,
we say that $M[IAS(G)]-T$ is \emph{tight} if for every independent
subtransversal $S$ disjoint from $T$ with $|S|=|V(G)|-1$ there is an element
$x\in W(G)\setminus T$ such that $S\cup\{x\}$ is a dependent subtransversal.
Also, we say that $M[IAS(G)]-T$ is \emph{t-regular} (short for
\textquotedblleft transversal-regular\textquotedblright) if $M[IAS(G)]-T$ has
a representation $E$ such that for each transversal $T^{\prime}$ disjoint from
$T$, the determinant of the matrix obtained from $E$ by restricting to the
columns of $T^{\prime}$ is equal to $0$, $1$, or $-1$.

We show the following (cf.\ Theorem~\ref{thm:eulerian_dm_char_ias}).

\begin{theorem}
Let $G$ be a simple graph. Then $G$ is a circle graph if and only if for all
transversals $T$, if $M[IAS(G)] - T$ is tight, then $M[IAS(G)] - T$ is t-regular.
\end{theorem}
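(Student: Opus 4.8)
The plan is to connect the stated theorem to two earlier results: Bouchet's characterization that a graph $G$ is a circle graph iff every locally equivalent $G'$ admits a principal unimodular representation (attributed to Bouchet via \cite{GeelenPhD}), and the delta-matroid reformulation announced for Section~\ref{sec:char_circle_dm_regularity}. The bridge between ``$G'$ allows for a principal unimodular representation'' for every $G'$ locally equivalent to $G$, and the condition ``for all transversals $T$, tightness of $M[IAS(G)]-T$ implies t-regularity,'' is essentially the content of multimatroid/isotropic-system minor theory. So first I would make precise the dictionary: transversals $T$ of $W(G)$ correspond (via Theorem~\ref{theory} and the induced-isomorphism framework) to graphs $G'$ locally equivalent to $G$, with $M[IAS(G)]-T$ playing the role of the cocycle/adjacency-type matroid attached to $G'$; tightness of $M[IAS(G)]-T$ is the multimatroid translation of the nondegeneracy needed for a principal unimodular representation of the associated delta-matroid to make sense, and t-regularity is the $3$-matroid avatar of principal unimodularity.

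Second, I would prove the easy direction. If $G$ is a circle graph, then every $G'$ locally equivalent to $G$ is also a circle graph (the class is closed under local complementation, as noted after Theorem~\ref{theory}), hence each admits a principal unimodular representation by Bouchet's result. Transferring along the correspondence above, for every transversal $T$ the matroid $M[IAS(G)]-T$ is t-regular; in particular whenever it is tight it is t-regular. The subtlety here is only bookkeeping: one must check that the principal unimodular representation of the delta-matroid of $G'$ yields a representation $E$ of $M[IAS(G)]-T$ whose $T'$-restricted determinants lie in $\{0,\pm1\}$ for every transversal $T'$ disjoint from $T$. This is where one invokes the multimatroid machinery of \cite{B1} cited in the introduction.

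Third, for the converse, suppose the right-hand condition holds; I want to conclude $G$ is a circle graph, so by Bouchet's obstruction theorem (Theorem~\ref{bouchet}, really Theorem~\ref{theory} plus Bouchet's theorem) it suffices to show no $G'$ locally equivalent to $G$ fails to have a principal unimodular representation, equivalently that none of $W_5$, $BW_3$, $W_7$ occurs as a vertex-minor. Here is the key point: if some $G'$ locally equivalent to $G$ had no principal unimodular representation, I would locate a transversal $T$ for which $M[IAS(G)]-T$ is tight but not t-regular --- the tightness being guaranteed because among the locally equivalent graphs one can always choose a ``nondegenerate'' representative (this is exactly the role of tightness, and it should follow from the structure of the three obstructions and Corollary~\ref{smallchar}-type reasoning, or more directly from properties of binary delta-matroids worked out in Section~\ref{sec:char_circle_dm_regularity}). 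Then tightness holding while t-regularity fails contradicts the hypothesis.

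The main obstacle, I expect, is the converse direction, and specifically establishing that failure of principal unimodularity for \emph{some} locally equivalent graph can always be witnessed by a transversal $T$ at which $M[IAS(G)]-T$ is \emph{tight}. Without the tightness hypothesis the equivalence would be between t-regularity for all $T$ and the circle-graph property, matching Bouchet's statement almost verbatim; the content of the theorem is precisely that restricting attention to tight minors loses nothing. Proving this will require showing that if $M[IAS(G)]-T$ is not tight then its t-regularity (or lack thereof) is ``inherited'' from a smaller or locally-equivalent situation where the minor \emph{is} tight --- in multimatroid language, that non-tight t-minors are degenerate and reduce to tight ones. I would handle this with an induction on $|V(G)|$, peeling off a vertex using the isotropic minor operation of Definition~\ref{isominor} and Theorem~\ref{bouchet}, so that the base cases are exactly the obstruction graphs $W_5$, $BW_3$, $W_7$ and their vertex-minors, for which tight-but-not-t-regular t-minors can be exhibited by direct computation on $IAS(W_5)$, $IAS(BW_3)$, $IAS(W_7)$.
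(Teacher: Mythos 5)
Your top-level strategy — translate the statement into Geelen's (attributed to Bouchet) characterization via the delta-matroid dictionary, then use the correspondence between tight transversal-minors and even binary delta-matroids, and between t-regularity and principal unimodularity — is exactly the paper's approach, and the ``easy direction'' sketch is correct in spirit. But you misdiagnose where the difficulty lies. You treat it as a genuine obstacle to show that a failure of principal unimodularity for some $G'$ locally equivalent to $G$ can be ``witnessed by a transversal $T$ at which $M[IAS(G)]-T$ is \emph{tight},'' and you hint that one must ``choose a nondegenerate representative.'' In fact there is nothing to arrange: if $T$ is the transversal corresponding to $\phi_{G'}(V(G'))$ under the induced isomorphism $M[IAS(G)]\to M[IAS(G')]$, then the delta-matroid attached to the 2-matroid $\mathcal{Z}(Q-T)$ is $\mathcal{D}_{A(G')}$, and this is automatically even because the adjacency matrix of a simple graph has zero diagonal; evenness of the delta-matroid is precisely tightness of $Q-T$ (this is the \cite{B3} fact quoted after Lemma~\ref{lem:mm_inverse_dm}). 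So tightness is automatic, not something you must engineer, and the converse direction is just as direct as the forward one.

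Because you misidentify the obstacle, your proposed fix — an induction on $|V(G)|$ using isotropic minors, with $W_5$, $BW_3$, $W_7$ and their vertex-minors as base cases on which you would compute tight-but-not-t-regular minors by hand — is a wrong turn. The paper never invokes Bouchet's obstruction theorem (Theorem~\ref{bouchet}) in this argument at all, nor is there any induction. The genuine technical content, which your sketch glosses over, is the bookkeeping that handles an \emph{arbitrary} transversal $T$ (not just the $\psi$-transversal $T_3$ in which the standard representation $(I\mid A\mid I+A)$ is already in the form $(I\mid A)$): one needs the fact that twist $*$ and loop complementation $+$ on the delta-matroid correspond to a relabeling of the transversal $3$-tuple while leaving the $3$-matroid unchanged (Lemma~\ref{lem:dm_ops_mm_label_cng}), together with the identification $\mathcal{Z}(\mathcal{Q}(A,\tau,3))=\mathcal{Z}(\mathcal{D}_A,\tau,3)$ (Lemma~\ref{lem:3m_mat_dm}), the unique reconstruction of an isotropic $3$-sheltering matroid from its $3$-matroid (Lemma~\ref{lem:unique_reconstruct_matrix}), and the $+,*$-reformulation of Geelen's theorem (Theorem~\ref{thm:eulerian_dm_char_plus}). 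Without those lemmas your dictionary is not pinned down precisely enough to close either direction.
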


\section{Characterizing circle graphs by delta-matroid regularity}

\label{sec:char_circle_dm_regularity} In this section we recall a
characterization of circle graphs using the notion of regularity for
delta-matroids and multimatroids.

\subsection{Delta-matroids}

A \emph{set system} (over $V$) is a tuple $D=(V,S)$ such that $S\subseteq
2^{V}$ is a set of subsets of a \emph{ground set} $V$ of $D$. For notational
simplicity we write $X\in D$ to denote $X\in S$. We say that $D$ is
\emph{empty} if $S=\emptyset$. A \emph{delta-matroid} $D$ is a nonempty set
system $(V,S)$ that satisfies the following property: for all $X,Y\in S$ and
$x\in X\mathop{\mathrm{\Delta}}Y$, there is a $y\in
X\mathop{\mathrm{\Delta}}Y$ (we allow $y=x$) such that
$X\mathop{\mathrm{\Delta}}\{x,y\}\in S$ \cite{bouchet1987}. It turns out that
if all sets of $D$ have the same cardinality, then $D$ is a matroid
represented by its bases \cite{bouchet1987}. In this way, a delta-matroid can
be viewed as a generalization of the notion of matroid. A
delta-matroid $D$ is called \emph{even} if the cardinalities of the sets of
$D$ have equal parity. For $X\subseteq V$, we
define the \emph{twist} of $D$ by $X$ as the set system $D\ast X=(V,S^{\prime})$ with
$S^{\prime}=\{X\mathop{\mathrm{\Delta}}Y\mid Y\in S\}$. It turns out that
$D\ast X$ is an (even) delta-matroid if and only if $D$ is an (even) delta-matroid. 

\subsection{Representable and regular delta-matroids}

\label{ssec:repre_reg_dm}

For finite sets $X$ and $Y$, an $X\times Y$-matrix $A$ is a matrix where the
rows and columns of $A$ are indexed by $X$ and $Y$, respectively, and are not ordered. If
$W\subseteq X$ and $W\subseteq Y$, then $A[W]$ denotes the $W\times W$-matrix
obtained from $A$ by removing the entries outside $W$. We now fix a finite set
$V$. A $V\times V$-matrix $A$ over some field $\mathbb{F}$ is said to be
\emph{skew-symmetric} if $-A^{T}=A$ (note that we allow nonzero diagonal
entries in case $\mathbb{F}$ is of characteristic $2$). If $A$ is
skew-symmetric, then $\mathcal{D}_{A}=(V,S)$ with $S=\{X\mid
A[X]\mbox{ is nonsingular}\}$ is a delta-matroid \cite{bouchet1987}. For a
skew-symmetric $V\times V$-matrix $A$ over $\mathbb{F}$, $\mathcal{D}_{A}$ is
even if and only if all diagonal entries of $A$ are zero. A delta-matroid $D$
is said to be \emph{representable} over $\mathbb{F}$ if there is a
skew-symmetric $V\times V$-matrix $A$ over $\mathbb{F}$ such that
$D=\mathcal{D}_{A}\ast X$ for some $X\subseteq V$. This notion of
representability for delta-matroids generalizes the notion of representability
for matroids: a matroid $M$ is representable over $\mathbb{F}$ in the standard
matroid sense if and only if $M$ is representable over $\mathbb{F}$ in the
delta-matroid sense. Indeed, if $M$ is representable by a matrix
\[
\bordermatrix{
& X & Y \cr
X & I & E}
\]
in standard form, then one may verify that the delta-matroid $\mathcal{D}_{A}$
corresponding to skew-symmetric matrix $V\times V$-matrix
\[
A=\bordermatrix{
& X & Y \cr
X & 0 & E \cr
Y & -E^{\text{T}} & 0 \cr
}
\]
is such that $\mathcal{D}_{A}\ast X=M$. The converse also holds
\cite{bouchet1987}.

A $V\times V$-matrix $A$ over $\mathbb{R}$ is said to be \emph{principally
unimodular} if $\det(A[X])\in\{0,1,-1\}$ for all $X\subseteq V$. (In
particular, if $A$ is principally unimodular, then each entry of $A$ is equal
to $0$, $1$, or $-1$.) We say that $D$ is \emph{regular} if $D$ is
representable by a skew-symmetric principally unimodular matrix over
$\mathbb{R}$. The following result is shown by Geelen~\cite{GeelenPhD}.

\begin{theorem}
[\cite{GeelenPhD}]\label{thm:regular_dm_char} Let $D$ be an even
delta-matroid. Then the following statements are equivalent.

\begin{enumerate}
\item $D$ is regular,

\item $D$ is representable over every field, and

\item $D$ is representable over both $GF(2)$ and $GF(3)$.
\end{enumerate}
\end{theorem}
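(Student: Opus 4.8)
The implications $(1)\Rightarrow(2)\Rightarrow(3)$ are routine, and I would dispatch them first; only $(3)\Rightarrow(1)$ has real content. For $(2)\Rightarrow(3)$ there is nothing to prove. For $(1)\Rightarrow(2)$, write $D=\mathcal{D}_{A}\ast X$ with $A$ skew-symmetric and principally unimodular over $\mathbb{R}$, so every principal minor $\det(A[Y])$ lies in $\{0,1,-1\}$. Given a field $\mathbb{F}$, reduce $A$ modulo $p$ to a skew-symmetric matrix $\bar{A}$ over $GF(p)\subseteq\mathbb{F}$ when $\mathbb{F}$ has prime characteristic $p$, and take $\bar{A}=A$ over $\mathbb{Q}\subseteq\mathbb{F}$ when $\mathbb{F}$ has characteristic $0$. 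In either case $\det(\bar{A}[Y])$ is the image in $\mathbb{F}$ of the integer $\det(A[Y])\in\{0,1,-1\}$, hence vanishes exactly when $\det(A[Y])=0$; so $\mathcal{D}_{\bar{A}}=\mathcal{D}_{A}$ and $D=\mathcal{D}_{\bar{A}}\ast X$ is representable over $\mathbb{F}$.

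For $(3)\Rightarrow(1)$ the plan is to reduce to a single lemma about integer matrices. Twisting a representation by a set $Y$ only changes its twist set and keeps the representing matrix, so each of the three conditions is invariant under twisting $D$; fixing $Y_{0}\in D$ and replacing $D$ by $D\ast Y_{0}$, I may therefore assume $\emptyset\in D$. Now write $D=\mathcal{D}_{B}\ast X$ for some skew-symmetric $B$ over $GF(2)$; since $\emptyset\in D$ we have $X\in\mathcal{D}_{B}$, so $B[X]$ is nonsingular and the principal pivot transform of $B$ along $X$ is a skew-symmetric matrix over $GF(2)$ directly representing $D$. As $D$ is even this matrix has zero diagonal, so it is the adjacency matrix of a graph $G$ with $\mathcal{D}_{G}=D$. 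The same argument over $GF(3)$ produces a skew-symmetric matrix $A_{3}$ over $GF(3)$ (automatically of zero diagonal) with $\mathcal{D}_{A_{3}}=D$. Since $G$ and $A_{3}$ have the same support, namely the set of pairs $\{i,j\}\in D$, I may lift $A_{3}$ entrywise to a skew-symmetric integer matrix $A$ with entries in $\{0,1,-1\}$, replacing each entry equal to $2\in GF(3)$ by $-1$. Then $A\bmod 3=A_{3}$ and $A\bmod 2=G$, so $\mathcal{D}_{A\bmod 2}=D=\mathcal{D}_{A\bmod 3}$.

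Everything now rests on the following Camion-type statement: \emph{if $A$ is a skew-symmetric integer matrix with $\mathcal{D}_{A\bmod 2}=\mathcal{D}_{A\bmod 3}$, then $A$ is principally unimodular.} Granting it, the matrix $A$ just constructed is skew-symmetric and principally unimodular over $\mathbb{Z}\subseteq\mathbb{R}$; and since $\det(A[Y])\in\{0,1,-1\}$ is nonzero exactly when it is odd, i.e.\ exactly when $\det((A\bmod 2)[Y])\neq 0$, we get $\mathcal{D}_{A}=\mathcal{D}_{A\bmod 2}=D$, so $D$ is regular. Undoing the initial twist then shows the original delta-matroid is regular, regularity being twist-invariant.

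It remains to prove the Camion-type statement, and this is where I expect the real work to be. I would take a counterexample $A$ with $|V|$ minimal. Every principal submatrix of $A$ again satisfies the hypothesis, so by minimality $A$ itself must be the only bad principal submatrix: $\det A\notin\{0,1,-1\}$ while every proper principal submatrix is principally unimodular. In particular each $2\times 2$ principal minor $A_{ij}^{2}$ lies in $\{0,1\}$, so $A$ is a $0,\pm1$ matrix; moreover $|V|$ is even, since a skew-symmetric matrix of odd order is singular. Pick $i,j$ with $A_{ij}=\pm1$; then the pivot block $A[\{i,j\}]$ has determinant $1$, hence is unimodular with integral inverse, and the Schur complement $M=A/A[\{i,j\}]$ is a skew-symmetric integer matrix on $V\setminus\{i,j\}$ with $\det M=\det A\notin\{0,1,-1\}$. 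Because the pivot block has determinant $1$, forming $M$ commutes with reduction modulo $2$ and modulo $3$, and a short computation gives $\mathcal{D}_{M\bmod p}=(\mathcal{D}_{A\bmod p}\ast\{i,j\})$ restricted to $V\setminus\{i,j\}$ for $p\in\{2,3\}$; since $\mathcal{D}_{A\bmod 2}=\mathcal{D}_{A\bmod 3}$, the same equality holds for $M$, so $M$ is a smaller counterexample --- a contradiction. The crux is exactly this pivoting step: it is precisely because a minimal counterexample is forced to be a $0,\pm1$ matrix that a $\pm1$ entry is available to pivot on, keeping the $2\times 2$ pivot block unimodular so that the Schur complement reduces compatibly modulo both $2$ and $3$; without that observation the induction has nothing to grip on.
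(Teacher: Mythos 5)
The paper does not prove this theorem; it is stated with the attribution to Geelen's thesis and used as a black box throughout the rest of Section~2. So there is no ``paper's proof'' to compare against. That said, your reconstruction is a reasonable account of the argument one expects to find in Geelen's thesis: the forward implications are easy once you note that principal unimodularity forces $\{0,\pm1\}$ entries so integer reduction mod $p$ preserves the delta-matroid, and the hard direction $(3)\Rightarrow(1)$ proceeds by twisting to assume $\emptyset\in D$, using PPT to get zero-diagonal skew-symmetric representatives over $GF(2)$ and $GF(3)$ with the same support, lifting to a $\{0,\pm1\}$ integer matrix whose mod-$2$ and mod-$3$ delta-matroids agree, and finishing with a Camion-type lemma proved by Schur-complement induction.

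There is, however, a real flaw in the Camion-type lemma as you state it. ``If $A$ is a skew-symmetric integer matrix with $\mathcal{D}_{A\bmod 2}=\mathcal{D}_{A\bmod 3}$, then $A$ is principally unimodular'' is false: with $A=\left(\begin{smallmatrix}0&6\\-6&0\end{smallmatrix}\right)$ both reductions are the zero matrix, so both delta-matroids equal $\{\emptyset\}$, yet $\det A=36$. Correspondingly, the derivation ``by minimality every proper principal submatrix is PU, so each $2\times2$ principal minor lies in $\{0,1\}$, so $A$ is $\{0,\pm1\}$'' breaks at $|V|=2$, where the lone $2\times 2$ minor is $\det A$ itself rather than a proper minor, and the $6$'s example is a bona fide minimal counterexample. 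The fix is to state the lemma only for $\{0,\pm1\}$ skew-symmetric integer matrices (which is exactly what your lift of $A_3$ produces), after which you must handle $|V|\le 4$ directly --- for a $\{0,\pm1\}$ skew-symmetric $4\times4$ matrix $\det A=\mathrm{Pf}(A)^2\in\{0,1,4,9\}$, and both $4$ and $9$ violate the mod-$2$/mod-$3$ hypothesis --- and note that the Schur complement $M$ again has $\{0,\pm1\}$ entries because its $2\times2$ principal minors are proper principal minors of $A$ (so this requires $|V|\geq 6$, which the base cases provide). With those repairs the induction closes and the proof is sound.
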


The notion of regularity for delta-matroids generalizes the notion of
regularity for matroids. Recall that a matroid $M$ is called \emph{regular} if
$M$ is representable by a totally unimodular matrix over $\mathbb{R}$, where a
matrix $B$ over $\mathbb{R}$ is said to be \emph{totally unimodular} if the
determinant of every submatrix of $B$ is equal to $0$, $1$, or $-1$. We may
assume that $B$ is in standard form $(I\quad E)$. Now, it is easy to verify
that
\[
\bordermatrix{
& X & Y \cr
X & I & E}
\]
is totally unimodular if and only if the skew-symmetric matrix $V\times
V$-matrix
\[
A=\bordermatrix{
& X & Y \cr
X & 0 & E \cr
Y & -E^{\text{T}} & 0 \cr
}
\]
with $V=X\cup Y$ is principally unimodular \cite{PU:skewsymm:1998}.

If $A$ is a skew-symmetric matrix over $GF(2)$ (equivalently, $A$ is symmetric
over $GF(2)$), then $\mathcal{D}_{A}$ uniquely determines $A$
\cite{Bouchet_1991_67}. Indeed, for $v \in V$, $A[\{v\}] = 1$ if and only if
$\{v\} \in\mathcal{D}_{A}$. Moreover, for $v, w \in V$ with $v \neq w$, we
have $A[\{v,w\}] = 1$ if and only if we have either $(\{v,w\} \in\mathcal{D}_{A})
\land ((\{v\} \notin\mathcal{D}_{A}) \lor(\{w\} \notin\mathcal{D}
_{A})$ or $(\{v,w\} \notin\mathcal{D}_{A})
\land (\{v\} \in\mathcal{D}_{A}) \land(\{w\} \in\mathcal{D}_{A})$. We say that $D$ is \emph{binary} if $D$ is representable over $GF(2)$.

\subsection{Eulerian delta-matroids}

A delta-matroid $D$ is said to be \emph{Eulerian} if $D = \mathcal{D}_{A(G)} *
X$ where $G$ is a circle graph and $X \subseteq V(G)$ \cite{GeelenPhD}. For
notational convenience, this definition is slightly different from
\cite{GeelenPhD} as there it is required that $X = \emptyset$. The next lemma
shows that this difference is not essential.

\begin{lemma}
\label{lem:circle_graph_iff_eulerian} Let $G$ be a simple graph. Then $G$ is a
circle graph if and only if $\mathcal{D}_{A(G)}$ is Eulerian.
\end{lemma}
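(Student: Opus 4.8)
The plan is to prove the two directions separately; the forward direction is immediate and the converse carries essentially all the content. For the forward direction, if $G$ is a circle graph then $G' = G$ and $X = \emptyset$ witness that $\mathcal{D}_{A(G)} = \mathcal{D}_{A(G)} * \emptyset$ is Eulerian.

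For the converse, suppose $\mathcal{D}_{A(G)} = \mathcal{D}_{A(G')} * X$ with $G'$ a circle graph on the vertex set $V(G)$ and $X \subseteq V(G)$. The first step is to observe that $X$ is necessarily a set for which $A(G')[X]$ is nonsingular: the empty set belongs to $\mathcal{D}_B$ for every symmetric matrix $B$ (the empty principal submatrix being nonsingular by convention), so $\emptyset \in \mathcal{D}_{A(G)} = \mathcal{D}_{A(G')} * X$, which forces $X = X \mathop{\mathrm{\Delta}} \emptyset \in \mathcal{D}_{A(G')}$. Hence the principal pivot transform $A(G') * X$ is defined; it is again a symmetric matrix over $GF(2)$, and the standard relationship between pivoting and twisting gives $\mathcal{D}_{A(G') * X} = \mathcal{D}_{A(G')} * X = \mathcal{D}_{A(G)}$. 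Since a symmetric $GF(2)$-matrix is determined by its delta-matroid (as recalled in Section~\ref{ssec:repre_reg_dm}, following \cite{Bouchet_1991_67}), we conclude $A(G') * X = A(G)$.

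The second step is to translate the matrix identity $A(G) = A(G') * X$ into the statement that $G$ is a circle graph. A principal pivot transform by a set $X$ factors into a sequence of elementary pivots on singletons and pairs, and over $GF(2)$ each elementary pivot is realized on the underlying looped simple graph as a composition of non-simple local complementations and loop complementations (loops may appear at intermediate stages even though $G$ and $G'$ are loopless). Thus $G$ is obtained from $G'$ by a sequence of such operations, i.e., $G$ and $G'$ are locally equivalent as looped simple graphs. Since the class of circle graphs is closed under non-simple local complementation — these correspond to changing the Euler system of the underlying $4$-regular graph, in the spirit of the $\kappa$-transformations, cf.\ \cite{BT1} — and under loop complementation (noted in the Introduction), $G'$ being a circle graph forces $G$ to be one as well; being loopless, $G$ is then a circle graph in the usual sense. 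This completes the converse.

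The main obstacle is the second step: one must have at hand the identification of the twist of $\mathcal{D}_{A(G')}$ with the principal pivot transform of $A(G')$, the decomposition of principal pivot transforms over $GF(2)$ into local-complementation-type graph operations, and the resulting closure of the circle graph class under these operations. Everything else — the forward direction, the nonsingularity of $A(G')[X]$, and the uniqueness of a symmetric $GF(2)$-matrix from its delta-matroid — is routine or already recorded in the excerpt.
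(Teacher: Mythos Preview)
Your proof is correct and follows the same overall line as the paper: show that $\mathcal{D}_{A(G)}=\mathcal{D}_{A(G')}\ast X$ forces $G$ and $G'$ to be locally equivalent, and then conclude via closure of circle graphs under local equivalence. The only difference is that the paper handles the first step in one sentence by citing \cite{bouchet1987} (and finishes with Theorems~\ref{theory} and~\ref{bouchet}), whereas you unpack that citation explicitly through principal pivot transforms and the uniqueness of a symmetric $GF(2)$-matrix from its delta-matroid; your version is more self-contained, the paper's is terser.
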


\begin{proof}
The only if direction is trivial. To prove the converse, let $D =
\mathcal{D}_{A(G)}$ be Eulerian. Then $D * X = \mathcal{D}_{A(G^{\prime})}$
for some circle graph $G^{\prime}$. Hence, $\mathcal{D}_{A(G)} =
\mathcal{D}_{A(G^{\prime})}*X$. It is shown in \cite{bouchet1987} that this
implies that $G$ is locally equivalent to $G^{\prime}$. By
Theorems~\ref{theory} and \ref{bouchet}, $G$ is a circle graph as well.
\end{proof}

It follows from de~Fraysseix~\cite{F} that Eulerian delta-matroids are a
generalization of planar matroids (i.e., cycle matroids of planar graphs); see
also \cite[Theorem~4.16]{GeelenPhD}.

\begin{theorem}
[\cite{F}]\label{thm:eulerian_dm_gen_planar} Let $M$ be a matroid. Then $M$ is
planar if and only if $M$ is an Eulerian delta-matroid.
\end{theorem}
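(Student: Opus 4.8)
The plan is to deduce the theorem from de~Fraysseix's original circle-graph characterization of planarity, by means of one elementary delta-matroid computation. Recall that a matroid $M$, regarded as a delta-matroid, is the set system whose feasible sets are the bases of $M$; so ``$M$ is Eulerian'' asserts that for some circle graph $G$ and some $X\subseteq V(G)$ the bases of $M$ are exactly the sets $X\mathop{\mathrm{\Delta}} Z$ with $A(G)[Z]$ nonsingular over $GF(2)$. The computation I would record first concerns fundamental graphs: for a graph $H$ with spanning forest $T$, the fundamental graph $B_{T}(H)$ --- vertex set $E(H)$, parts $T$ and $E(H)\smallsetminus T$, with $e\in T$ adjacent to $f\notin T$ precisely when $e$ lies on the fundamental cycle of $f$ --- is bipartite with $A(B_{T}(H))=\left(\begin{smallmatrix}0&N\\ N^{\mathrm T}&0\end{smallmatrix}\right)$, where $N$ is the $T\times(E(H)\smallsetminus T)$ fundamental-cycle matrix; and the standard-form recipe of Section~\ref{ssec:repre_reg_dm} then gives $\mathcal D_{A(B_{T}(H))}\ast T=M[(\,I_{T}\mid N\,)]=M(H)$. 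Hence $M(H)$ is Eulerian as soon as $B_{T}(H)$ is a circle graph, and, as the converse argument below will show, this is the only way a matroid can be Eulerian.

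For the forward direction I would take $M=M(H)$ with $H$ planar. One reduces to $2$-connected $H$ because both $M(\cdot)$ and $\mathcal D_{A(B_{T}(\cdot))}$ turn blocks into direct summands, and a disjoint union of circle graphs is a circle graph. Fixing a plane embedding and a spanning tree $T$, de~Fraysseix's theorem says that for planar $H$ the fundamental graph $B_{T}(H)$ is a circle graph --- concretely it is realized as an interlacement graph on the medial graph $\mathcal M(H)$, which is $4$-regular and planar --- and the computation above then exhibits $M=M(H)=\mathcal D_{A(B_{T}(H))}\ast T$ as Eulerian.

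For the converse I would begin with $M=\mathcal D_{A(G)}\ast X$ for a circle graph $G$, with $M$ a matroid, and first show that $G$ is bipartite with parts $X$ and $V(G)\smallsetminus X$. Since $\varnothing$ is always feasible in $\mathcal D_{A(G)}$, the set $X$ is a basis of $M$, so every feasible $Z$ satisfies $|X\mathop{\mathrm{\Delta}} Z|=|X|$, i.e.\ $|Z\cap X|=|Z\smallsetminus X|$; applying this with $|Z|=2$ --- where $\{u,v\}$ is feasible exactly when $uv\in E(G)$, as $G$ is loopless --- forces every edge of $G$ to have exactly one endpoint in $X$. Then $A(G)=\left(\begin{smallmatrix}0&N\\ N^{\mathrm T}&0\end{smallmatrix}\right)$ with $T:=X$, and the recipe gives $M=\mathcal D_{A(G)}\ast X=M[(\,I_{T}\mid N\,)]$. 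Now $G$ is a bipartite circle graph, so by de~Fraysseix's theorem it is the fundamental graph $B_{T}(H)$ of a planar graph $H$ (equivalently, the $4$-regular graph carrying $G$ is $\mathcal M(H)$ and $T$ is a spanning tree of $H$), whence $M=M[(\,I_{T}\mid N\,)]=M(H)$ is planar. One also sees in passing that the Eulerian delta-matroids are closed under matroid duality, since $(\mathcal D_{A(G)}\ast X)\ast V(G)=\mathcal D_{A(G)}\ast(X\mathop{\mathrm{\Delta}} V(G))$ and twisting a matroid by its whole ground set is the matroid dual; this matches ``$M$ planar $\iff M^{\ast}$ planar''.

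The genuinely nontrivial ingredient --- and the only real obstacle --- is the combinatorial fact invoked in both directions: a graph $H$ is planar if and only if its fundamental graph $B_{T}(H)$ is a circle graph, the chord diagram coming from a plane embedding via the medial graph and, conversely, a plane embedding being reconstructed from the chord diagram using that in a bipartite circle graph the two colour classes of chords are each pairwise non-crossing. This is precisely de~Fraysseix's result \cite{F}; everything else above is bookkeeping. (Alternatively one could route the converse through ``Eulerian matroid $\Rightarrow$ graphic'', then invoke the duality remark together with the classical fact that a matroid is planar iff it is both graphic and cographic; but establishing ``Eulerian matroid $\Rightarrow$ graphic'' requires essentially the same plane reconstruction, so this is not a genuine shortcut.)
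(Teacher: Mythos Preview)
The paper does not supply its own proof of this theorem; it is quoted from de~Fraysseix \cite{F} (with a pointer to \cite[Theorem~4.16]{GeelenPhD}). Your argument is correct and is exactly the derivation one expects: both directions reduce to Proposition~\ref{deF} (de~Fraysseix's characterization of bipartite circle graphs), after the key observation that whenever $\mathcal D_{A(G)}\ast X$ happens to be a matroid, $G$ must be bipartite with parts $X$ and $V(G)\setminus X$. That observation --- using the feasibility of $\varnothing$ to see that $X$ is a basis, and then reading edges off the size-two feasible sets --- is the only content beyond the citation, and it is sound.

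Two minor remarks. The detour through $2$-connected blocks in the forward direction is unnecessary: Proposition~\ref{deF} applies to any bipartite simple graph, and $B_T(H)$ is bipartite regardless of the connectivity of $H$ (one may assume $H$ connected by identifying one vertex per component, which leaves $M(H)$ unchanged). And in the converse, once you have shown $G$ is a bipartite circle graph you can invoke Proposition~\ref{deF} directly to conclude that $M=M[(I_X\mid N)]$ is planar; the intermediate identification $G=B_T(H)$ for a specific planar $H$ is true but not needed for the conclusion.
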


Since $\mathcal{D}_{A}$ uniquely determines $A$, a characterization of
Eulerian delta-matroids directly implies a characterization of circle graphs.
The following characterization of Eulerian delta-matroids is from
\cite{GeelenPhD}.

\begin{theorem}
[\cite{GeelenPhD}]\label{thm:eulerian_dm_char} Let $D$ be an even binary
delta-matroid, i.e., $D = \mathcal{D}_{A(G)} * X$ for some simple graph $G$
and $X \subseteq V(G)$. Then $D$ is Eulerian if and only if, for every graph
$G^{\prime}$ locally equivalent to $G$, $\mathcal{D}_{A(G^{\prime})}$ is regular.
\end{theorem}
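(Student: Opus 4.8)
The plan is to reduce this statement to the two results we have already quoted: Geelen's characterization of regular delta-matroids (Theorem~\ref{thm:regular_dm_char}) and Bouchet's characterization of circle graphs via Theorems~\ref{theory}, \ref{bouchet}, together with Lemma~\ref{lem:circle_graph_iff_eulerian}. First I would record the easy direction. Suppose $D$ is Eulerian, so $D = \mathcal{D}_{A(H)}$ for some circle graph $H$ (after absorbing the twist $X$, which is legitimate since twisting by $X$ again returns an Eulerian delta-matroid by definition, and $D*X = \mathcal{D}_{A(G)}$ forces $\mathcal{D}_{A(G)}$ to be Eulerian, hence $G$ is a circle graph by Lemma~\ref{lem:circle_graph_iff_eulerian}). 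Then every $G'$ locally equivalent to $G$ is also a circle graph (the class of circle graphs is closed under local complementation, as noted after Theorem~\ref{theory}). So it suffices to check that $\mathcal{D}_{A(G')}$ is regular whenever $G'$ is a circle graph. This is where I would invoke Bouchet's result quoted from \cite{GeelenPhD}: circle graphs admit principal unimodular representations of their adjacency structure, i.e.\ $\mathcal{D}_{A(G')}$ is regular. (Concretely, one builds a PU skew-symmetric matrix from an Euler system presentation of $G'$.) That settles the ``only if'' direction.

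For the converse, suppose that for every $G'$ locally equivalent to $G$, the delta-matroid $\mathcal{D}_{A(G')}$ is regular; I want to conclude $D = \mathcal{D}_{A(G)}*X$ is Eulerian, equivalently (since $D$ is Eulerian iff $D*X$ is, and $D*X = \mathcal{D}_{A(G)}$) that $G$ is a circle graph. The strategy is contrapositive plus the excluded-vertex-minor theorem: if $G$ is not a circle graph, then by Bouchet's theorem one of $W_5$, $BW_3$, $W_7$ is a vertex-minor of $G$, so some graph $G'$ locally equivalent to $G$ has one of these three as an \emph{induced} subgraph. I would then show that having a bad induced subgraph propagates a failure of regularity, i.e.\ that regularity of $\mathcal{D}_{A(G')}$ is inherited by induced subgraphs — deleting a vertex $v$ from $G'$ corresponds to restricting $A(G')$ to $V(G')\setminus\{v\}$, and a principal submatrix of a skew-symmetric PU matrix is again skew-symmetric and PU, so $\mathcal{D}_{A(G'-v)}$ is regular. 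Therefore it is enough to exhibit, for each of $H\in\{W_5, BW_3, W_7\}$, a graph locally equivalent to $H$ whose adjacency delta-matroid is \emph{not} regular; since local equivalence of $G'$ extends to local equivalence of the relevant induced subgraph after suitable further local complementations (and induced-subgraph-closure again applies), this contradicts the hypothesis on $G$.

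The main obstacle is the last step: verifying that each of the three Bouchet obstructions carries a non-regular adjacency delta-matroid. By Geelen's Theorem~\ref{thm:regular_dm_char}, for an even binary delta-matroid regularity is equivalent to representability over $GF(3)$, so it suffices to show that for each $H\in\{W_5, BW_3, W_7\}$, no graph locally equivalent to $H$ has $\mathcal{D}_{A}$ representable over $GF(3)$. Here I expect to need the genuine content of \cite{GeelenPhD}: it is precisely Bouchet's theorem (or its delta-matroid translation) that these three obstructions fail $GF(3)$-representability in the ``for all local complements'' sense, and the cleanest route is probably to note that $\{\mathcal{D}_{A(G')} : G' \text{ locally equivalent to } G\}$ depends only on the \emph{isotropic system} (equivalently isotropic matroid) of $G$, that $GF(3)$-representability of all these delta-matroids is a property of that isotropic system, and then quote the isotropic-system version of Bouchet's theorem directly. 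An alternative, more self-contained finish is to do a finite computation: $W_5$, $BW_3$, $W_7$ have $5$, $6$, $7$ vertices and finitely many local equivalence classes, so one checks by hand (or cites such a check) that in each class some member's adjacency matrix has a principal submatrix with determinant $\pm 2$ over $\mathbb{Z}$ that cannot be repaired over $GF(3)$. Either way, the conceptual skeleton — easy direction from closure under local complementation plus Bouchet's PU representation of circle graphs; hard direction from Bouchet's obstruction theorem plus induced-subgraph-closure of regularity — is what I would write out, leaving the three-obstruction verification to a citation of \cite{GeelenPhD}.
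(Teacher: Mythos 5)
This statement is quoted in the paper as a result of \cite{GeelenPhD} and is \emph{not} proved there, so there is no paper proof to compare against; your plan is a plausible reconstruction of a proof, leaning on the same external sources (Bouchet via Geelen). Two remarks on substance. First, the two steps you explicitly delegate to \cite{GeelenPhD} --- that every circle graph admits a PU skew-symmetric representation of its adjacency delta-matroid, and that each of $W_5$, $BW_3$, $W_7$ has a local equivalent whose adjacency delta-matroid fails to be regular --- are not incidental lemmas but are the entire mathematical content of the theorem; once you cite them, the remaining work is bookkeeping (transporting local complementations between a graph and an induced subgraph, and passing through Lemma~\ref{lem:circle_graph_iff_eulerian}). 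That is consistent with how the paper handles it (a flat citation), but be clear-eyed that you have reduced the theorem to a citation of the theorem. Second, the step ``regularity of $\mathcal{D}_{A(G')}$ is inherited by induced subgraphs'' needs more care than ``a principal submatrix of a PU skew-symmetric matrix is PU.'' Regularity means $\mathcal{D}_{A(G')} = \mathcal{D}_B * Y$ for a PU skew-symmetric real matrix $B$ and some twist set $Y$, and if $Y \not\subseteq V(H)$ you cannot simply restrict. The fix is to observe that $\emptyset \in \mathcal{D}_{A(G')}$ forces $Y \in \mathcal{D}_B$, i.e.\ $B[Y]$ nonsingular, so you may replace $B$ by its principal pivot transform at $Y$; one then needs the fact that principal pivot transforms preserve principal unimodularity of skew-symmetric matrices (this is in \cite{PU:skewsymm:1998}) before restricting to $V(H)$. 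With that repair your outline goes through.
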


In particular, every Eulerian delta-matroid is regular. The converse does not
hold: any regular matroid $M$ that is not planar is a counterexample by
Theorem~\ref{thm:eulerian_dm_gen_planar}. So, e.g., the cycle matroids of
$K_{3,3}$ and $K_{5}$ are regular, but they are not Eulerian delta-matroids.

Since $\mathcal{D}_{A(G)}$ uniquely determines $G$, it is natural to formulate
the notion of local equivalence for binary delta-matroids. For this we require
an additional operation on delta-matroids. For a delta-matroid $D$ over $V$
and $X\subseteq V$, \emph{loop complementation} of $D$ by $X$, denoted by
$D+X$, is defined by $Y\in D+X$ if and only if there are an odd number of
$Z\in D$ with $(Y-X)\subseteq Z \subseteq Y$. In general $D+X$ need not be a delta-matroid, but it turns out that the family of binary
delta-matroids is closed under loop complementation \cite[Section~5]{BH/PivotLoopCompl/09}.

We now use loop complementation to reformulate
Theorem~\ref{thm:eulerian_dm_char}.

\begin{theorem}
\label{thm:eulerian_dm_char_plus} Let $D$ be an even binary delta-matroid over
$V$. Then the following statements are equivalent.

\begin{enumerate}
\item $D$ is Eulerian,

\item for all $X \subseteq Y \subseteq V$ with $D * X + Y$ even, we have that
$D * X + Y$ is regular,

\item every even delta-matroid obtainable from $D$ by applying a sequence of
$+$ and $*$ operations is regular.
\end{enumerate}
\end{theorem}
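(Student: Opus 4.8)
The plan is to establish the cycle of implications $(1)\Rightarrow(3)\Rightarrow(2)\Rightarrow(1)$, using Theorem~\ref{thm:eulerian_dm_char} as the bridge between the delta-matroid operations $\ast,+$ and local equivalence of graphs. The key dictionary is: twisting $\mathcal{D}_{A(G)}$ by a single vertex corresponds to a pivot, non-simple local complementation corresponds to loop complementation by a single vertex, and loop complementation by a single vertex together with these generates the full local equivalence class on looped simple graphs; since $\mathcal{D}_A$ determines $A$ for symmetric matrices over $GF(2)$, every even binary delta-matroid obtained from $D=\mathcal{D}_{A(G)}$ by a sequence of $\ast$ and $+$ operations is of the form $\mathcal{D}_{A(G')}$ for some looped simple graph $G'$ locally equivalent to $G$ (in the looped sense), and conversely. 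I will record this correspondence as the first step, citing \cite{BH/PivotLoopCompl/09} for the fact that binary delta-matroids are closed under $+$ and for the pivot/loop-complementation translation.

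For $(1)\Rightarrow(3)$: suppose $D$ is Eulerian. Let $D'$ be any even delta-matroid obtained from $D$ by a sequence of $\ast$ and $+$ operations; by the dictionary $D' = \mathcal{D}_{A(G')}$ for some looped simple graph $G'$ locally equivalent to $D$'s underlying graph $G$. Since $D'$ is even, $A(G')$ has zero diagonal, so $G'$ is a genuine simple graph. Because the class of circle graphs is closed under loop complementation and non-simple local complementation (as noted in the introduction, and by Theorems~\ref{theory} and \ref{bouchet}), $G'$ is again a circle graph, hence $\mathcal{D}_{A(G')} = D'$ is Eulerian and in particular regular by the remark following Theorem~\ref{thm:eulerian_dm_char}. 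The implication $(3)\Rightarrow(2)$ is immediate: the delta-matroids $D\ast X + Y$ with $X\subseteq Y\subseteq V$ and $D\ast X+Y$ even are a special case of even delta-matroids obtainable from $D$ by sequences of $\ast$ and $+$ operations, so regularity of all the former follows from regularity of all the latter.

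The substantive step is $(2)\Rightarrow(1)$, and this is where I expect the main obstacle. Here I want to deduce Eulerianness from regularity of the restricted family $\{D\ast X+Y : X\subseteq Y\subseteq V,\ D\ast X+Y\text{ even}\}$, using Theorem~\ref{thm:eulerian_dm_char}, which requires regularity of $\mathcal{D}_{A(G')}$ for \emph{every} simple graph $G'$ locally equivalent to $G$ (where $D = \mathcal{D}_{A(G)}\ast X_0$ for some simple $G$). The difficulty is that an arbitrary element of the local equivalence class of $G$ is reached by a long sequence of local complementations and loop complementations, which translates into a long alternating word in $\ast$ and $+$ applied to $D$, not obviously of the restricted form $D\ast X + Y$ with $X \subseteq Y$. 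The resolution I plan to use is a normal-form result for words in the $\ast$ and $+$ operations: any composition of twists and loop complementations can be rewritten, using the commutation and absorption identities among these operations (for instance $(D+X)+X = D$, $(D\ast X)\ast X = D$, and the known interaction between $+$ and $\ast$ from \cite{BH/PivotLoopCompl/09}), into the single normal form $D \ast X' + Y'$ for suitable sets $X', Y'$; one then arranges $X' \subseteq Y'$ by noting that $D\ast X' + Y' = D \ast X' + (Y'\cup X') \ast (X'\setminus Y')$ or a similar identity, absorbing the discrepancy. Combining this normal form with condition (2) shows every even member of the orbit is regular, which is exactly the hypothesis of Theorem~\ref{thm:eulerian_dm_char} once translated back through the dictionary; hence $D$ is Eulerian. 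The careful bookkeeping in this normal-form reduction — verifying the precise identities and that the evenness side-condition is preserved — is the part that will need the most attention.
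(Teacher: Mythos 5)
Your overall strategy is the same as the paper's: convert to the graph picture via Theorem~\ref{thm:eulerian_dm_char}, using the normal-form machinery of \cite{BH/PivotLoopCompl/09} for words in $\ast$ and $+$ together with the twist-invariance of regularity and evenness. However, two of your intermediate claims are false as stated and need to be repaired.

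In $(1)\Rightarrow(3)$ you assert that every even delta-matroid $D'$ in the $\ast,+$-orbit of $\mathcal{D}_{A(G)}$ is itself of the form $\mathcal{D}_{A(G')}$ for some simple $G'$. That is not so: $D'$ need not contain $\emptyset$ (already $\mathcal{D}_{A(K_2)}\ast\{v\}=\{\{v\},\{w\}\}$ is an even delta-matroid in the orbit that is not of the form $\mathcal{D}_{A(G')}$). The correct move (as in the paper, citing \cite[Proof of Theorem~8.2]{BH/NullityLoopcGraphsDM/10}) is to pick any $Y\in D'$, note that $D'\ast Y$ contains $\emptyset$ and hence equals some $\mathcal{D}_{A(G')}$ with $G'$ locally equivalent to $G$, and then conclude $D'=\mathcal{D}_{A(G')}\ast Y$ is regular because regularity is twist-invariant. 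In $(2)\Rightarrow(1)$ your proposed normal form $D\ast X'+Y'$ (without a trailing twist and without the $X'\subseteq Y'$ constraint) cannot represent the whole orbit: on a single element, $\ast_v$ and $+_v$ generate a copy of $S_3$ with six elements, whereas words of the form $\ast_{X'}+_{Y'}$ realize only four of them (you cannot obtain $+_v\ast_v$ or $\ast_v+_v\ast_v$). Relatedly, the identity $D\ast X'+Y'=D\ast X'+(Y'\cup X')\ast(X'\setminus Y')$ is incorrect; the actual reduction gives $D\ast X'+Y'=D\ast(X'\cap Y')+Y'\ast(X'\setminus Y')$, which is a special case of the correct normal form $D\ast X+Y\ast Z$ with $X\subseteq Y$ from \cite[Theorem~12]{BH/PivotLoopCompl/09}. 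The fix is not to ``absorb'' the trailing $\ast Z$ but simply to observe, exactly as the paper does, that $D\ast X+Y\ast Z$ is even (resp.\ regular) if and only if $D\ast X+Y$ is, since $\ast Z$ preserves both properties; this is what makes condition (2), which quantifies only over pairs $X\subseteq Y$, equivalent to condition (3).
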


\begin{proof}
Statement 3 implies statement 2 directly. For the converse, recall that it is shown in \cite[Theorem~12]{BH/PivotLoopCompl/09} that any delta-matroid
$D^{\prime}$ obtainable from $D$ by applying a sequence of $+$ and $*$ is of
the form $D * X + Y * Z$ with $X \subseteq Y \subseteq V$ and $Z \subseteq V$.
As twists preserve both evenness and regularity, $D * X + Y * Z$ is even (regular) if and only if $D * X + Y$ is
even (regular). Hence the last two statements are equivalent.

It is shown in \cite[Theorem~27]{BH/PivotLoopCompl/09} that for simple graphs $G$
and $G^{\prime}$, $G^{\prime}$ is locally equivalent to $G$ if and only if
$\mathcal{D}_{A(G^{\prime})}$ can be obtained from $\mathcal{D}_{A(G)}$ by
applying a sequence of $+$ and $*$ operations. By
Theorem~\ref{thm:eulerian_dm_char}, we obtain that statement 3 implies that
$D$ is Eulerian. Conversely, let $D$ be Eulerian. Then $D = \mathcal{D}_{A(G)}
* X$ for some simple graph $G$ and $X \subseteq V(G)$. Let $\varphi$ be a
sequence of $+$ and $*$ operations such that $D\varphi$ is even. Then
$(\mathcal{D}_{A(G)} * X) \varphi$ is even. Let $Y \in(\mathcal{D}_{A(G)} * X)
\varphi$. Then $(\mathcal{D}_{A(G)} * X) \varphi*Y$ contains the empty set and
so it is equal to $\mathcal{D}_{A(G^{\prime})}$ for some graph $G^{\prime}$ by
\cite[Proof of Theorem~8.2]{BH/NullityLoopcGraphsDM/10}. Hence $G^{\prime}$ is
locally equivalent to $G$ and so $\mathcal{D}_{A(G^{\prime})}$ is regular (as
$D$ is Eulerian). Thus $D\varphi= \mathcal{D}_{A(G^{\prime})}*Y$ is regular
and so statement 3 holds.
\end{proof}

\begin{corollary}
Let $M$ be a binary matroid. Then $M$ is planar if and only if every even
delta-matroid obtainable from $D$ by applying a sequence of $+$ and $*$
operations is regular.
\end{corollary}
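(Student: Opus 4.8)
The plan is to derive this corollary as a direct specialization of Theorem~\ref{thm:eulerian_dm_char_plus} combined with Theorem~\ref{thm:eulerian_dm_gen_planar}. First I would observe that a binary matroid $M$ is, in particular, an even binary delta-matroid: its set of bases is a collection of sets of equal cardinality (hence of equal parity), and since $M$ is binary it is representable over $GF(2)$, so $M = \mathcal{D}_{A} * X$ for a suitable symmetric $GF(2)$-matrix $A = A(G)$ and $X \subseteq V(G)$, by the standard-form argument recalled in Section~\ref{ssec:repre_reg_dm}. Thus $D := M$ satisfies the hypotheses of Theorem~\ref{thm:eulerian_dm_char_plus}.

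Next I would apply Theorem~\ref{thm:eulerian_dm_char_plus} with $D = M$: the equivalence of its statements~1 and~3 says that $M$ is Eulerian if and only if every even delta-matroid obtainable from $M$ by a sequence of $+$ and $*$ operations is regular. Then I would invoke Theorem~\ref{thm:eulerian_dm_gen_planar}, which states that a matroid is planar if and only if it is an Eulerian delta-matroid. Chaining these two equivalences yields exactly the claimed statement: $M$ is planar $\iff$ $M$ is Eulerian $\iff$ every even delta-matroid obtainable from $M$ by applying a sequence of $+$ and $*$ operations is regular.

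The only point requiring a word of care is the implicit ``$D$'' in the corollary's statement, which should be read as $D = M$ viewed as a delta-matroid via its bases (the same abuse of notation as in the statement of Theorem~\ref{thm:eulerian_dm_char_plus}, where a matroid is identified with the delta-matroid given by its basis family). One should also note that a matroid, regarded as a delta-matroid, is automatically even only in a degenerate sense — its sets all have literally the same cardinality — so the even hypothesis of Theorem~\ref{thm:eulerian_dm_char_plus} is satisfied trivially. There is no real obstacle here; the corollary is a formal consequence of the two cited theorems, and the proof amounts to no more than stringing the two biconditionals together after checking that $M$ meets the hypothesis of Theorem~\ref{thm:eulerian_dm_char_plus}.
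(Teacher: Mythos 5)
Your proof is correct and is exactly the intended derivation: the paper offers no explicit proof of this corollary because it is an immediate consequence of Theorems~\ref{thm:eulerian_dm_gen_planar} and~\ref{thm:eulerian_dm_char_plus}, obtained precisely by the chain of equivalences you write out, after checking that a binary matroid qualifies as an even binary delta-matroid. You are also right that the ``$D$'' in the corollary's statement is a typo for ``$M$'' (the matroid viewed as the delta-matroid of its bases), and your remark on this point is appropriate.
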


\section{Characterizing circle graphs by multimatroid regularity}

\label{sec:char_circle_mm_regularity}

In this section we reformulate Theorem~\ref{thm:eulerian_dm_char} in terms of
isotropic matroids.

\subsection{Sheltering matroids}

First we recall some notions and notation for the theory of multimatroids
\cite{B1}. Let $\Omega$ be a partition of a finite set $U$. A set $T \subseteq U$
is called a \emph{transversal} (\emph{subtransversal}, respectively) of $\Omega$ if
$|T \cap\omega|=1$ ($|T \cap\omega|\leq1$, respectively) for all $\omega\in\Omega$.
We denote the set of transversals of $\Omega$ by $\mathcal{T}(\Omega)$ and the
set of subtransversals of $\Omega$ by $\mathcal{S}(\Omega)$. A $p \subseteq U$
is called a \emph{skew pair} of $\omega\in\Omega$ if $|p|=2$ and $p
\subseteq\omega$. We say that $\Omega$ is a \emph{$q$-partition} if $q =
|\omega|$ for all $\omega\in\Omega$.

Multimatroids form a generalization of matroids. Like matroids, multimatroids
can be defined in terms of rank, circuits, independent sets, etc. Here they
are defined in terms of independent sets.

\begin{definition}
[\cite{B1}]\label{def:multimatroid} Let $\Omega$ be a partition of a finite
set of $U$. A \emph{multimatroid} $Z$ over $(U,\Omega)$, described by its
independent sets, is a triple $(U,\Omega,\mathcal{I})$, where $\mathcal{I}%
\subseteq\mathcal{S}(\Omega)$ is such that:

\begin{enumerate}
\item for each $T \in\mathcal{T}(\Omega)$, $(T,\mathcal{I}\cap2^{T})$ is a
matroid (described by its independent sets) and

\item for any $I \in\mathcal{I}$ and any skew pair $p = \{x,y\}$ of some
$\omega\in\Omega$ with $\omega\cap I = \emptyset$, $I \cup\{x\} \in
\mathcal{I}$ or $I \cup\{y\} \in\mathcal{I}$.
\end{enumerate}
\end{definition}

If $\Omega$ is a $q$-partition, then we say that $Q$ is a \emph{$q$-matroid}. Note that a 1-matroid is essentially a matroid because in this case the partition of $\Omega$ into singletons does not capture any additional information.
A \emph{basis} of $Z$ is a set in $\mathcal{I}$ maximal with respect to
inclusion. For $X\subseteq U$, we define $Z-X=(Z-X,\Omega^{\prime}%
,\mathcal{I})$ with $\Omega^{\prime}=\{\omega\setminus X\mid\omega\in\Omega\}$
and $\mathcal{I}^{\prime}=\{I\in\mathcal{I}\mid I\cap X=\emptyset\}$. A
multimatroid $Z$ is called \emph{nondegenerate} if $|\omega|>1$ for all
$\omega\in\Omega$. Also, nondegenerate multimatroid $Z$ is called \emph{tight} if for every independent $S\in\mathcal{S}(\Omega)$ with $|S|=|\Omega|-1$, there is an $x\in U\backslash
S$ such that $S\cup\{x\}$ is a dependent subtransversal.

We now consider the related notion of sheltering matroid introduced in
\cite{BT1}. It is a generalization of the matroid $M[IAS(G)]$.

\begin{definition}
A \emph{sheltering matroid} is a tuple $Q = (M,\Omega)$ where $M$ is a matroid
over some ground set $U$ and $\Omega$ is a partition of $U$ such that for any
independent set $I \in\mathcal{S}(\Omega)$ of $M$ and for any skew pair $p =
\{x,y\}$ of $\omega\in\Omega$ with $\omega\cap I = \emptyset$, $I \cup\{x\}$
or $I \cup\{y\}$ is an independent set of $M$.
\end{definition}

It is shown in \cite{BT1} that $(M[IAS(G)],\Omega)$ is a sheltering matroid
with $\Omega$ the set of vertex triples of $G$.

Matroid notions carry over straightforwardly to sheltering matroids. For
example, for $X \subseteq U$, we define the \emph{deletion} of $X$ from $Q$ by
$Q - X = (M - X,\Omega^{\prime})$ with $\Omega^{\prime}= \{ \omega\setminus X
\mid\omega\in\Omega\}$.

Note that if $Q=(M,\Omega)$ is a sheltering matroid, then $\mathcal{Z}%
({Q})=(U,\Omega,\mathcal{I})$ with $U$ the ground set of $M$ and
$\mathcal{I}=\{I\in\mathcal{S}(\Omega)\mid
I\mbox{ is an independent set of }M\}$ is a multimatroid. We say that
$\mathcal{Z}({Q})$ is the \emph{multimatroid corresponding} to $Q$. Also, we
say that $M$ \emph{shelters} the multimatroid $\mathcal{Z}({Q})$. Not every
multimatroid is sheltered by a matroid \cite{B1}. If $\mathcal{Z}({Q})$ is a
$q$-matroid, then $Q$ is called a \emph{$q$-sheltering matroid}. Moreover, $Q$
is called \emph{tight} when $\mathcal{Z}({Q})$ is tight.

Let $Q_{1} = (M_{1},\Omega_{1})$ and $Q_{2} = (M_{2},\Omega_{2})$ be
sheltering matroids. An \emph{isomorphism} $\varphi$ from $Q_{1}$ to $Q_{2}$
is an isomorphism from $M_{1}$ to $M_{2}$ that respects the skew classes,
i.e., if $x$ and $y$ are elements of the ground set of $M_{1}$, then $x$ and
$y$ are in a common skew class of $\Omega_{1}$ if and only if $\varphi(x)$ and
$\varphi(y)$ are in a common skew class of $\Omega_{2}$.

For a $q$-partition $\Omega$ (of some finite set $U$), a \emph{transversal
$q$-tuple} of $\Omega$ is a sequence $\tau=(T_{1},\ldots,T_{q})$ of $q$
mutually disjoint transversals of $Q$. Note that the transversals of $\tau$ are ordered.

\subsection{Strongly Representable and Regular 2-Sheltering Matroids}

\emph{Note:} for notational convenience, from now on we often assume a given
fixed $2$-partition $\Omega$ of some finite set $U$ and a transversal 2-tuple
$\tau=(T_{1},T_{2})$ of $\Omega$.

We say that a matrix $B$ over some field $\mathbb{F}$ \emph{strongly represents} the
$2$-sheltering matroid $Q=(M,\Omega)$ if
\[
B=\bordermatrix{
& T_1 & T_2 \cr
& I & A
}
\]
is a standard representation of $M$ with $A$ a skew-symmetric matrix for some
transversal $2$-tuple $\tau=(T_{1},T_{2})$. We denote $Q$ by $\mathcal{Q}%
(A,\tau,2)$. We use the word ``strongly'' because in the companion paper \cite{BT1}
we also consider weaker notions of representability. Also, a $2$-sheltering matroid $Q$ is said to be
\emph{strongly representable} over $\mathbb{F}$ if there is such a matrix $B=
\begin{pmatrix}
I & A
\end{pmatrix}
$ and
transversal $2$-tuple $\tau$ such that $Q=\mathcal{Q}(A,\tau,2)$.

We say that a 2-matroid $Z$ is \emph{strongly representable} over $\mathbb{F}$ if there
is a $2$-sheltering matroid $Q$ strongly representable over $\mathbb{F}$ such that
$\mathcal{Z}({Q}) = Z$. We say that $Q$ ($Z$, respectively) is \emph{strongly binary} if $Q$
($Z$, respectively) is strongly representable over $GF(2)$.

We now define a notion of regularity for $2$-sheltering matroids. We say that
a $2$-sheltering matroid $Q=(M,\Omega)$ is \emph{t-regular} if $Q$ has a
strong representation $B=%
\begin{pmatrix}
I & A
\end{pmatrix}
$ over $\mathbb{R}$ such that for each
$T\in\mathcal{T}(\Omega)$, the determinant of the matrix obtained from $B$ by
restricting to the columns of $T$ is equal to $0$, $1$, or $-1$. Similarly, we
say that a 2-matroid $Z$ is \emph{t-regular} if there is a t-regular
$2$-sheltering matroid $Q$ such that $\mathcal{Z}({Q})=Z$.

Note that t-regularity for a $2$-sheltering matroid $(M,\Omega)$ is not the
same as regularity of $M$. On the one hand, it can happen that $M$ is regular
but $(M,\Omega)$ is not t-regular, if no representation of $M$ has the
required skew symmetry with respect to $\Omega$. See \cite[Section 2.3]{BT1}
for an example. On the other hand, it can happen that $(M,\Omega)$ is
t-regular but $M$ is not regular. For example, let $M$ be represented by the
matrix
\[
B=\bordermatrix{
& t_{1,1} & t_{1,2} & t_{1,3} & t_{1,4} & t_{2,1} & t_{2,2} & t_{2,3} & t_{2,4} \cr
& 1 & 0 & 0 & 0 & 0 & 1 & 1 & 1 \cr
& 0 & 1 & 0 & 0 & -1 & 0 & 1 & 1 \cr
& 0 & 0 & 1 & 0 & -1 & -1 & 0 & 1 \cr
& 0 & 0 & 0 & 1 & -1 & -1 & -1 & 0
}
\]
and let $\Omega=\{\omega_{1},\omega_{2},\omega_{3},\omega_{4}\}$ where
$\omega_{j}=\{t_{1,j},t_{2,j}\}$. Then $B$ satisfies the definition of
t-regularity with respect to $\Omega$. The minor $M/\{t_{1,1},t_{1,3}\}$ is
represented by the matrix obtained from $B$ by removing the first and third
rows and columns:%
\[
\bordermatrix{
&  t_{1,2} & t_{1,4} & t_{2,1} & t_{2,2} & t_{2,3} & t_{2,4} \cr
& 1 & 0 & -1 & 0 & 1 & 1 \cr
& 0 & 1 & -1 & -1 & -1 & 0
}.
\]
Evidently $(M/\{t_{1,1},t_{1,3}\})-t_{2,2}-t_{2,4}$ is isomorphic to $U_{2,4}%
$, so $M$ is not regular.

We now make the following observation.

\begin{lemma}
\label{lem:pu_det_associated} Let $A$ be a $V\times V$-matrix over $\mathbb{R}$.
Then $A$ is principally unimodular if and only if, for each transversal $T$ of
$\Omega$, the determinant of the matrix obtained from
\[
\bordermatrix{
& \bar{V} & V \cr
& I & A
}
\]
by restricting to the columns of $T$ is equal to $0$, $1$, or $-1$.
\end{lemma}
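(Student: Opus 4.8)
The plan is to produce a bijection between the transversals of $\Omega$ and the subsets of $V$ under which the transversal determinant of $\begin{pmatrix} I & A \end{pmatrix}$ becomes, up to sign, the corresponding principal minor $\det(A[X])$; the lemma is then immediate, since $\{0,1,-1\}$ is closed under negation. To set this up, write $U$ for the column index set of $\begin{pmatrix} I & A \end{pmatrix}$, so that $U = \bar V \sqcup V$ with $\bar V$ indexing the columns of $I$ and $V$ indexing the columns of $A$, and $\Omega$ consisting of the classes $\omega_v = \{\bar v, v\}$ for $v \in V$. Given a transversal $T \in \mathcal{T}(\Omega)$, let $X_T := T \cap V$ be the set of indices at which $T$ chooses a column of $A$ rather than a column of $I$. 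Because $T$ contains exactly one of $\bar v$ and $v$ for each $v$, the map $T \mapsto X_T$ is a bijection $\mathcal{T}(\Omega) \to 2^V$. Moreover $|T| = |\Omega| = |V|$, so the matrix $B_T$ obtained from $\begin{pmatrix} I & A \end{pmatrix}$ by keeping only the columns in $T$ is square of size $|V|$.

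The heart of the argument is the identity $\det(B_T) = \pm \det(A[X_T])$. To see it, fix $T$, put $X = X_T$ and $Y = V \setminus X$, and note that in $B_T$ the column indexed by $v \in Y$ is the standard basis vector $e_v$ (inherited from $I$), while the column indexed by $v \in X$ is the $v$-column of $A$. Expanding $\det(B_T)$ by Laplace along each of the $|Y|$ unit-vector columns successively deletes, for every $v \in Y$, the row $v$ and that column, up to an overall sign; what remains is precisely the $X \times X$ submatrix $A[X]$. The degenerate case $X = \emptyset$ is consistent, both determinants being $1$. The sign depends on the orderings used to turn these determinants into numbers, but this is harmless: membership of $\det(B_T)$ in $\{0,1,-1\}$ and membership of $\det(A[X])$ in $\{0,1,-1\}$ are both sign-independent.

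Combining the two observations: $A$ is principally unimodular, i.e.\ $\det(A[X]) \in \{0,1,-1\}$ for every $X \subseteq V$, if and only if $\det(B_T) \in \{0,1,-1\}$ for every transversal $T$ of $\Omega$, which is exactly the stated equivalence. I expect no real obstacle here; the only point requiring a little care is the bookkeeping in the iterated Laplace expansion, together with the remark that its unavoidable sign ambiguity affects neither of the two conditions being compared.
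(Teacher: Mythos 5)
Your proof is correct, and since the paper states this lemma as an unproved ``observation,'' there is no competing argument to compare against; the bijection $T \mapsto T\cap V$ between transversals and subsets of $V$, combined with Laplace expansion along the unit-vector columns to obtain $\det(B_T)=\pm\det(A[T\cap V])$, is exactly the standard justification the authors leave implicit. The remark that the sign ambiguity is harmless because $\{0,1,-1\}$ is closed under negation is the one point worth making explicit, and you make it.
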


\subsection{Eulerian and t-regular 3-Sheltering Matroids}

\emph{Note:} for notational convenience, from now on we often assume a given
fixed $3$-partition $\Omega$ of some finite set $U$ and a transversal
$3$-tuple $\tau=(T_{1},T_{2},T_{3})$ of $\Omega$.

We observed that $(M[IAS(G)],\Omega)$ is a $3$-sheltering matroid. The following straightforward reformulation of \cite[Theorem~41]{Tnewnew} shows that $(M[IAS(G)],\Omega)$ is tight.

\begin{lemma}
[Theorem~41 of \cite{Tnewnew}]\label{lem:ias_tight_3matroid} Let $A$ be a
$V\times V$-symmetric matrix over $GF(2)$ and $|\Omega|=|V|$. Let $M$ be the
column matroid of
\[
B=\bordermatrix{
& T_1 & T_2 & T_3\cr
& I & A & A+I
}.
\]
Then $(M,\Omega)$ is a tight $3$-sheltering matroid.
\end{lemma}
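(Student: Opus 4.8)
The plan is to treat the two assertions of the lemma separately: that $(M,\Omega)$ is a $3$-sheltering matroid, and that it is tight.

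The first assertion is essentially already recorded in the excerpt. Since $A$ is symmetric over $GF(2)$ it is the adjacency matrix of a looped simple graph $G$ on the vertex set $V$; then $B=IAS(G)$, $M=M[IAS(G)]$, and $\Omega$ is the partition of $W(G)$ into vertex triples, so $(M,\Omega)$ is a $3$-sheltering matroid by the result of \cite{BT1} quoted above (it is a $3$-partition because each vertex triple has three elements). Consequently the associated multimatroid $\mathcal{Z}(M,\Omega)$ is well defined and nondegenerate, since every skew class has size $3>1$; so ``tight'' is meaningful for it, and it remains only to verify the tightness condition.

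For tightness I would argue purely with linear algebra over $GF(2)$, using the single structural fact that for each $v\in V$ the three columns of $B$ forming the skew class $\omega_{v}$ sum to $0$ (the $v$-column of $A+I$ being the sum of the $v$-columns of $I$ and of $A$). Let $S$ be an independent subtransversal of $\mathcal{Z}(M,\Omega)$ with $|S|=|\Omega|-1=|V|-1$. Then $S$ meets every skew class except exactly one, say $\omega_{v}$, and any $x$ for which $S\cup\{x\}$ is still a subtransversal must belong to $\omega_{v}$. Because $S$ is independent of size $|V|-1$, its span $H=\langle S\rangle$ has dimension $|V|-1$ and hence is a hyperplane of $GF(2)^{V}$, so $GF(2)^{V}/H\cong GF(2)$. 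For $x\in\omega_{v}$ the set $S\cup\{x\}$ is independent if and only if $x\notin H$, i.e.\ if and only if the image $\bar{x}$ of $x$ in $GF(2)^{V}/H$ equals $1$. If this held for all three $x\in\omega_{v}$, then the three images would sum to $1+1+1=1$ in $GF(2)$; but the three columns of $\omega_{v}$ sum to $0$ in $GF(2)^{V}$, so their images sum to $0$ --- a contradiction. Hence some $x\in\omega_{v}$ lies in $H=\langle S\rangle$, so $S\cup\{x\}$ is dependent, and it is a subtransversal since $x\in\omega_{v}$. This is precisely the tightness condition for $\mathcal{Z}(M,\Omega)$, equivalently for $(M,\Omega)$.

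I do not anticipate a real obstacle: the proof is a short definition-chase whose only active ingredient is the relation $t_{1,v}+t_{2,v}+t_{3,v}=0$. The point that deserves care is that tightness asks for a dependent extension of $S$ that remains a \emph{subtransversal}, which pins the new element down to the unique missing skew class $\omega_{v}$; the sum-to-zero relation is exactly what forces $\langle S\rangle$ to contain one of the three columns making up $\omega_{v}$. (If one wished to be fully self-contained one would also reprove the sheltering-matroid property of $(M,\Omega)$, but that is a more delicate, separate matter, and here it is simply imported from \cite{BT1}.)
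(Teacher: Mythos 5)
Your argument is correct and complete. The paper itself does not reprove this lemma — it imports the sheltering-matroid property from \cite{BT1} and defers tightness to \cite[Theorem~41]{Tnewnew} — so you are supplying a self-contained proof where the paper gives only a citation. Your tightness argument is the natural one: since $(I\mid A\mid A+I)$ has rank $|V|$, an independent subtransversal $S$ with $|S|=|V|-1$ spans a hyperplane $H$ of $GF(2)^{V}$; the relation that the three columns of the missing skew class $\omega_{v}$ sum to $0$ (the $v$-column of $A+I$ being the sum of the $v$-columns of $I$ and $A$) forces their images in $GF(2)^{V}/H\cong GF(2)$ to sum to $0$, so at least one of them lies in $H$, giving the required dependent subtransversal $S\cup\{x\}$. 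You also correctly flag the one point that makes the relation decisive: a subtransversal extension of $S$ by an element of $U\setminus S$ is forced to draw $x$ from $\omega_{v}$, so it suffices to show $H$ meets that particular triple. This is presumably the same substance as the cited Theorem~41, but since the paper only points to that reference, your write-up stands on its own as a valid alternative.
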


We denote the tight $3$-sheltering matroid $(M,\Omega)$ of
Lemma~\ref{lem:ias_tight_3matroid} by $\mathcal{Q}(A,\tau,3)$. We say that a
$3$-sheltering matroid $Q$ is \emph{isotropic} if $Q = \mathcal{Q}(A,\tau,3)$
for some symmetric matrix $A$ over $GF(2)$. Note that each isotropic
$3$-sheltering matroid is isomorphic to the isotropic $3$-sheltering matroid
$(M[IAS(G)],\Omega)$ for some graph $G$. Also note that $\mathcal{Q}%
(A,\tau,3)-T_{3} = \mathcal{Q}(A,(T_{1},T_{2}),2)$. We say that a $3$-matroid
$Z$ is \emph{isotropic} if $Z = \mathcal{Z}(Q)$ for some isotropic
$3$-sheltering matroid $Q$.

For convenience, we also write $\mathcal{Q}(G,\tau,3)$ ($\mathcal{Q}%
(G,\tau^{\prime},2)$, respectively) to denote $\mathcal{Q}(A(G),$ $\tau,3)$
($\mathcal{Q}(A(G),\tau^{\prime},2)$, respectively) for simple graphs $G$. For $i\in\{2,3\}$,
an $i$-sheltering matroid $Q$ is called \emph{Eulerian} if $Q=\mathcal{Q}%
(G,\tau,i)$ for some circle graph $G$ and transversal $i$-tuple $\tau$.

We now define the notion of t-regularity for $3$-sheltering matroids.

\begin{definition}
Let $Q = (M,\Omega)$ be a $3$-sheltering matroid. Then $Q$ is called
\emph{t-regular} if, for every $T \in\mathcal{T}(\Omega)$, $Q - T$ is
t-regular whenever $Q-T$ is tight.
\end{definition}

The main result of this section is as follows and is proved in the next subsection.

\begin{theorem}
\label{thm:eulerian_dm_char_ias} Let $Q$ be an isotropic $3$-sheltering
matroid. Then $Q$ is Eulerian if and only if $Q$ is t-regular.
\end{theorem}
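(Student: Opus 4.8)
The plan is to reduce the statement for isotropic $3$-sheltering matroids to the delta-matroid characterization already available in Theorem~\ref{thm:eulerian_dm_char} (equivalently, its $+$/$*$ reformulation Theorem~\ref{thm:eulerian_dm_char_plus}), by unwinding the definitions so that ``$Q-T$ is tight implies $Q-T$ is t-regular'' becomes a statement purely about the even binary delta-matroids obtainable from $\mathcal{D}_{A(G)}$ by $+$ and $*$ operations. Write $Q = \mathcal{Q}(A,\tau,3) = (M[IAS(G)],\Omega)$ for the appropriate graph $G$ (which exists since $Q$ is isotropic) and transversal $3$-tuple $\tau = (T_1,T_2,T_3)$. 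Since $Q$ is isomorphic to $(M[IAS(G)],\Omega)$, and Eulerianness and t-regularity are isomorphism invariants, I may assume $Q$ is exactly this object. The heart of the argument is a dictionary: transversals $T$ of $\Omega$ correspond to choices, for each vertex $v$, of one of $\phi_G(v),\chi_G(v),\psi_G(v)$; deleting $T$ from $Q$ and then looking at $Q-T = (M[IAS(G)]/\emptyset)\!-\!T$ restricted to a transversal $3$-tuple's remaining two classes gives a $2$-sheltering matroid of the form $\mathcal{Q}(A',(T_1',T_2'),2)$, where $A'$ is obtained from $A(G)$ by a pivot and/or loop-complementation — precisely the operations underlying local equivalence of binary delta-matroids (Theorems~27 and 12 of \cite{BH/PivotLoopCompl/09}). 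Concretely, $\mathcal{Q}(A,\tau,3)-T_3 = \mathcal{Q}(A,(T_1,T_2),2)$ is noted in the text, and deleting mixed transversals is handled by the standard pivot identity for the matrix $(I\ A\ A+I)$, together with Lemma~\ref{lem:pu_det_associated} linking t-regularity of a $2$-sheltering matroid to principal unimodularity of the associated skew-symmetric matrix.

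First I would establish the following reformulation of t-regularity: the $2$-sheltering matroid $\mathcal{Q}(A',(T_1',T_2'),2)$ is t-regular if and only if the binary delta-matroid $\mathcal{D}_{A'}$ is regular, i.e., representable by a principally unimodular skew-symmetric matrix. One direction is essentially Lemma~\ref{lem:pu_det_associated}: if $\mathcal{D}_{A'}$ is regular via a principally unimodular skew-symmetric matrix $A''$, then $(I\ A'')$ is a strong representation whose transversal determinants all lie in $\{0,\pm1\}$; the other direction uses that over $GF(2)$ a skew-symmetric matrix is determined by its delta-matroid (\cite{Bouchet_1991_67}), so a t-regular strong representation lifts the $GF(2)$-structure to $\mathbb{R}$ with the principal unimodularity property, and a twist may be needed to pass between the matroid in standard form and the delta-matroid. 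Next I would verify that tightness of $Q-T$ matches evenness of the corresponding delta-matroid $\mathcal{D}_{A'}*X+Y$: Lemma~\ref{lem:ias_tight_3matroid} gives that the full $Q$ is tight, and deleting a transversal $T$ and asking when the residual $2$-sheltering matroid is tight should translate, via the dictionary, exactly into the parity condition ``$D*X+Y$ is even'' in statement~2 of Theorem~\ref{thm:eulerian_dm_char_plus}. This is the step I expect to require the most care, because it demands showing that the combinatorial ``tight'' condition on subtransversals of size $|\Omega|-1$ coincides, under the pivot/loop-complementation correspondence, with the algebraic evenness of the twisted-and-loop-complemented delta-matroid; I would prove it by tracking how a pivot on $A(G)$ along an edge changes loop status (diagonal entries) and hence the parities of nonsingular principal submatrices, citing \cite[Section~5]{BH/PivotLoopCompl/09} and \cite[Theorem~8.2]{BH/NullityLoopcGraphsDM/10} for the bookkeeping.

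With the dictionary in place, the proof of Theorem~\ref{thm:eulerian_dm_char_ias} is a short assembly. For the forward direction: if $Q$ is Eulerian then $Q = \mathcal{Q}(G,\tau,3)$ with $G$ a circle graph, so $\mathcal{D}_{A(G)}$ is Eulerian (Lemma~\ref{lem:circle_graph_iff_eulerian}), hence by Theorem~\ref{thm:eulerian_dm_char_plus} every even delta-matroid obtainable from $\mathcal{D}_{A(G)}$ by $+$ and $*$ is regular; for any $T\in\mathcal{T}(\Omega)$ with $Q-T$ tight, the dictionary identifies $Q-T$ with such an even delta-matroid, which is therefore regular, so $Q-T$ is t-regular — i.e., $Q$ is t-regular. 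For the converse: if $Q$ is t-regular, then for every $T$ making $Q-T$ tight the associated even delta-matroid $\mathcal{D}_{A(G)}*X+Y$ is regular; since every even delta-matroid obtainable from $\mathcal{D}_{A(G)}$ by $+$ and $*$ arises (up to a further twist, which preserves evenness and regularity) as some $\mathcal{D}_{A(G)}*X+Y$ with $X\subseteq Y$, and since ``$Q-T$ tight'' is exactly ``$\mathcal{D}_{A(G)}*X+Y$ even,'' statement~2 of Theorem~\ref{thm:eulerian_dm_char_plus} holds, so $\mathcal{D}_{A(G)}$ is Eulerian, hence $G$ is a circle graph by Lemma~\ref{lem:circle_graph_iff_eulerian}, hence $Q = \mathcal{Q}(G,\tau,3)$ is Eulerian. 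The only genuinely delicate point — and the main obstacle — is the ``tight $\Leftrightarrow$ even'' and ``t-regular $\Leftrightarrow$ regular'' correspondence for the residual $2$-sheltering matroids; once that is nailed down, everything else is a direct translation between the two equivalent languages.
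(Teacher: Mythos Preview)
Your proposal is correct and follows essentially the same route as the paper: reduce to Theorem~\ref{thm:eulerian_dm_char_plus} via the dictionary ``$Q-T$ tight $\Leftrightarrow$ associated delta-matroid even'' and ``$Q-T$ t-regular $\Leftrightarrow$ associated delta-matroid regular,'' with Lemma~\ref{lem:pu_det_associated} supplying the latter and the $+$/$*$ correspondence supplying the translation between transversals $T$ and sequences of delta-matroid operations. The paper packages this into Lemmas~\ref{lem:regular_sm_dm}, \ref{lem:dm_ops_mm_label_cng}, \ref{lem:3m_mat_dm}, and \ref{lem:dm_eul_mm_treg}, and for the step you flag as most delicate (tight $\Leftrightarrow$ even for the residual $2$-matroid) it simply cites Bouchet~\cite{B3} rather than reproving it by tracking pivots, so you can shorten that part considerably.
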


\subsection{Proof of Theorem~\ref{thm:eulerian_dm_char_ias}}

To prove Theorem~\ref{thm:eulerian_dm_char_ias} we translate
Theorem~\ref{thm:eulerian_dm_char_plus} from delta-matroids to sheltering matroids.

We recall the following results from \cite{B1}.

\begin{lemma}
[\cite{B1}]\label{lem:2m_dm_section} Let $\tau= (T_{1},T_{2})$ be a
transversal $2$-tuple of $\Omega$. If $D = (V,S)$ is a delta-matroid with $|V|
= |\Omega|$, then $\mathcal{Z}(D,\tau,2) := (U,\Omega,\mathcal{B})$ with
$\mathcal{B} = \{ X \in\mathcal{T}(\Omega) \mid\pi(X \cap T_{2}) \in D\}$ is a 2-matroid.
\end{lemma}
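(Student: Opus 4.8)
We read $(U,\Omega,\mathcal{B})$ as a 2-matroid \emph{described by its bases}: the plan is to put $\mathcal{I}=\{I\in\mathcal{S}(\Omega):I\subseteq X$ for some $X\in\mathcal{B}\}$, verify the two clauses of Definition~\ref{def:multimatroid} for $(U,\Omega,\mathcal{I})$, and check that the bases of $(U,\Omega,\mathcal{I})$ are exactly $\mathcal{B}$. The last point is immediate: $\mathcal{B}\neq\emptyset$ since $D\neq\emptyset$, and because $\mathcal{B}\subseteq\mathcal{T}(\Omega)$ the maximal members of $\mathcal{I}$ are precisely the (equicardinal) sets in $\mathcal{B}$. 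Clause~2 of Definition~\ref{def:multimatroid} is also immediate, and uses nothing about $D$: given $I\in\mathcal{I}$ and a skew pair $p=\omega=\{x,y\}$ with $\omega\cap I=\emptyset$, pick $X\in\mathcal{B}$ with $I\subseteq X$; as $X$ is a transversal it contains exactly one of $x$ and $y$, and $I$ together with that element is still contained in $X$, hence lies in $\mathcal{I}$. So the delta-matroid hypothesis is needed only for clause~1.

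For clause~1 write $\omega_v=\{t_{1,v},t_{2,v}\}$ (with $t_{i,v}\in T_i$) for the skew class indexed by $v\in V$, so that $\pi(X\cap T_2)=\{v:t_{2,v}\in X\}$. Fix $T\in\mathcal{T}(\Omega)$ and set $J=\pi(T\cap T_2)\subseteq V$. The permutation $\hat\sigma$ of $U$ which is the identity on each $\omega_v$ with $v\notin J$ and interchanges $t_{1,v}$ and $t_{2,v}$ when $v\in J$ preserves $\Omega$, maps $T_1$ to $T$, and satisfies $\pi(\hat\sigma(X)\cap T_2)=\pi(X\cap T_2)\triangle J$; hence $\hat\sigma$ maps the bases of $\mathcal{Z}(D\ast J,\tau,2)$ bijectively onto those of $\mathcal{Z}(D,\tau,2)$, and so restricts to a bijection $T_1\to T$ carrying $\mathcal{I}_{D\ast J}\cap 2^{T_1}$ onto $\mathcal{I}_D\cap 2^T$. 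Since $D\ast J$ is again a delta-matroid, it therefore suffices to prove that $(T_1,\mathcal{I}\cap 2^{T_1})$ is a matroid for an arbitrary delta-matroid $E=(V,S)$ in place of $D$. Now a set $\{t_{1,v}:v\in P\}\subseteq T_1$ is contained in some basis of $\mathcal{Z}(E,\tau,2)$ precisely when some $Z\in E$ satisfies $Z\cap P=\emptyset$; so, under $P\mapsto\{t_{1,v}:v\in P\}$, the family $\mathcal{I}\cap 2^{T_1}$ is identified with
\[
\mathcal{J}(E)=\{P\subseteq V: Z\cap P=\emptyset\text{ for some }Z\in E\}=\{P\subseteq V: P\subseteq W\text{ for some }W\in E\ast V\}.
\]
Thus clause~1 reduces to showing that the downward closure $\mathcal{J}(E)$ of the sets of $E\ast V$ --- which is again a delta-matroid, since twists preserve the delta-matroid property --- is the family of independent sets of a matroid. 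I would obtain this from two standard consequences of the symmetric exchange axiom: (i) every set of a delta-matroid is contained in one of maximum size --- for a set $F$, choose a maximum set $W$ with $|F\cap W|$ as large as possible; if $F\not\subseteq W$ then one symmetric exchange step between $W$ and $F$ produces a maximum set meeting $F$ in strictly more elements, a contradiction --- and (ii) the maximum-size sets of a delta-matroid are the bases of a matroid, its \emph{upper matroid} \cite{bouchet1987}. By (i) the members of $\mathcal{J}(E)$ are exactly the subsets of maximum-size sets of $E\ast V$, which by (ii) are exactly the independent sets of the upper matroid of $E\ast V$; hence $\mathcal{J}(E)$ is a matroid, and clause~1 holds.

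The bijective bookkeeping in the first two paragraphs is routine; the real content is the reduction to $\mathcal{J}(E)$ together with facts~(i) and~(ii). If one prefers not to quote~(ii), the main obstacle is the exchange analysis behind it: a single symmetric exchange between two maximum-size sets need not be a basis exchange, and can delete an element one wants to keep, so the argument must be organized as an induction --- for instance on the size of the symmetric difference of the two maximum-size sets --- in which every exchange step is forced to make strict progress.
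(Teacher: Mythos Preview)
The paper does not give its own proof of this lemma; it is simply quoted from Bouchet~\cite{B1}. Your argument is a correct self-contained proof: the twist reduction to $T=T_1$ is clean, the identification of $\mathcal{I}\cap 2^{T_1}$ with the downward closure of $E\ast V$ is right, and your verification of fact~(i) (that every feasible set of a delta-matroid lies in one of maximum size) via the symmetric exchange axiom goes through --- the case analysis on whether $y$ lies in $W\setminus F$ or $F\setminus W$ forces a contradiction with either the maximality of $|W|$ or of $|W\cap F|$. Since there is no in-paper proof to compare against, the only remark is that your route (reduce to the upper matroid of a twist) is essentially the standard one underlying Bouchet's correspondence in~\cite{B1}; nothing is missing.
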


\begin{lemma}
[\cite{B1}]\label{lem:mm_inverse_dm} Let $\tau$ be a transversal 2-tuple of
$\Omega$. The mapping $D \mapsto\mathcal{Z}(D,\tau,2)$ is a one-to-one
correspondence from the family of delta-matroids $D$ over $V$ to the family of
2-matroids over $(U,\Omega)$.
\end{lemma}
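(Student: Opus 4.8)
The plan is to produce an explicit inverse for $D\mapsto\mathcal{Z}(D,\tau,2)$ and verify it is two‑sided; most of the work is carried by two elementary facts about an arbitrary $2$‑matroid $Z=(U,\Omega,\mathcal{I})$, which I would establish first. \emph{Fact (i): every basis of $Z$ is a transversal of $\Omega$.} If a maximal $B\in\mathcal{I}$ missed a block $\omega=\{x,y\}$, then condition~2 of Definition~\ref{def:multimatroid} applied to $I=B$ and the skew pair $\{x,y\}$ would give $B\cup\{x\}\in\mathcal{I}$ or $B\cup\{y\}\in\mathcal{I}$, contradicting maximality. \emph{Fact (ii): $Z$ is determined by its basis set $\mathcal{B}$, via $\mathcal{I}=\{I\in\mathcal{S}(\Omega):I\subseteq B$ for some $B\in\mathcal{B}\}$.} For ``$\subseteq$'', extend any $I\in\mathcal{I}$ one element at a time using condition~2 until it becomes a transversal, which by (i) is a basis; for ``$\supseteq$'', if $B\in\mathcal{I}$ is a transversal then condition~1 makes $(B,\mathcal{I}\cap 2^{B})$ a matroid in which $B$ itself is independent, hence free, so every subset of $B$ lies in $\mathcal{I}$.

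The candidate inverse is $Z\mapsto D_{Z}:=\{\pi(B\cap T_{2}):B$ a basis of $Z\}$, where $\pi$ is the bijection identifying $T_{2}$ with $V$. By Lemma~\ref{lem:2m_dm_section} the bases of $\mathcal{Z}(D,\tau,2)$ are exactly the transversals $X$ with $\pi(X\cap T_{2})\in D$, and since $\pi$ carries the $T_{2}$‑parts of transversals bijectively onto $2^{V}$ we get $D_{\mathcal{Z}(D,\tau,2)}=D$; this already gives injectivity. For surjectivity, fix a $2$‑matroid $Z$ with basis set $\mathcal{B}$. By Fact~(i) each $B\in\mathcal{B}$ is a transversal, and a transversal is determined by its $T_{2}$‑part, so the transversals $X$ with $\pi(X\cap T_{2})\in D_{Z}$ are precisely the members of $\mathcal{B}$; hence $\mathcal{Z}(D_{Z},\tau,2)$ and $Z$ have the same bases, so by Fact~(ii) they coincide --- provided $D_{Z}$ is a delta‑matroid. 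That single point is where the real argument lies.

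To see $D_{Z}$ is a delta‑matroid: it is nonempty since $\emptyset\in\mathcal{I}$ extends to a basis. For the symmetric exchange axiom, take bases $B_{1},B_{2}$ of $Z$ with $F_{i}=\pi(B_{i}\cap T_{2})$ and $x\in F_{1}\Delta F_{2}$. Through $\pi$, the elements of $F_{1}\Delta F_{2}$ correspond exactly to the blocks on which $B_{1}$ and $B_{2}$ disagree; let $\omega_{v}$ be the block for $x$ and let $B_{1}'$ be $B_{1}$ with its choice on $\omega_{v}$ switched to $B_{2}$'s. If $B_{1}'\in\mathcal{I}$ we are done with $y=x$. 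Otherwise the transversal $B_{1}'$ is dependent in $M_{B_{1}'}:=(B_{1}',\mathcal{I}\cap 2^{B_{1}'})$; since $M_{B_{1}'}$ and the free matroid $M_{B_{1}}$ restrict to the same matroid on $B_{1}\cap B_{1}'$, which is free of rank $|\Omega|-1$, we get $\operatorname{rank}M_{B_{1}'}=|\Omega|-1$, so $B_{1}\cap B_{1}'$ is a basis of $M_{B_{1}'}$ and the fundamental circuit $\gamma\subseteq B_{1}'$ of the switched element $B_{1}'\cap\omega_{v}$ with respect to it exists and contains that element. The key claim is that $\gamma$ also meets some disagreement block $\omega_{w}\neq\omega_{v}$: otherwise every element of $\gamma$ is a choice shared by $B_{1}$ and $B_{2}$, so $\gamma\subseteq B_{2}$, hence $\gamma$ is independent in the free matroid $M_{B_{2}}$, hence $\gamma\in\mathcal{I}$, contradicting that $\gamma$ is a circuit of $M_{B_{1}'}$. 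Let $B_{1}''$ be $B_{1}'$ with its choice on $\omega_{w}$ switched to $B_{2}$'s. Since $\gamma$ meets $\omega_{w}$, that element is not a coloop of $M_{B_{1}'}$, so $M_{B_{1}'}$ and $M_{B_{1}''}$ restrict to the same \emph{free} matroid of rank $|\Omega|-1$ on $B_{1}'\cap B_{1}''$; applying condition~2 to that common basis and the skew pair $\omega_{w}$, and using that $B_{1}'\notin\mathcal{I}$, forces $B_{1}''\in\mathcal{I}$. Then $F_{1}\Delta\{x,y\}=\pi(B_{1}''\cap T_{2})\in D_{Z}$, where $y\in F_{1}\Delta F_{2}$ is the element for $\omega_{w}$, which is exactly the required exchange.

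The only real obstacle is this last paragraph: everything else is routine bookkeeping with the $\pi$‑correspondence once Facts~(i) and~(ii) are in hand, whereas the symmetric exchange axiom needs the fundamental‑circuit analysis together with the careful use of condition~2 to link the transversal matroids $M_{B_{1}'}$ and $M_{B_{1}''}$ of the two adjacent transversals. The degenerate possibilities (for instance $|F_{1}\Delta F_{2}|=1$, or $\gamma$ being a single‑element circuit) are in fact excluded by the same reasoning, so no separate case analysis should be needed.
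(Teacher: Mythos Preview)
The paper does not prove this lemma; it is quoted from Bouchet~\cite{B1} without argument, so there is nothing in the paper to compare against. Your self-contained proof is correct and is essentially the standard one: Facts~(i) and~(ii) follow directly from the two multimatroid axioms, the candidate inverse $Z\mapsto D_Z$ is the obvious one, and the fundamental-circuit argument establishing the symmetric exchange axiom for $D_Z$ is sound (the unique circuit $\gamma$ in $B_1'$ must meet a second disagreement block, and then axiom~2 forces $B_1''\in\mathcal{I}$).

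One minor slip in phrasing: in the key claim you write ``every element of $\gamma$ is a choice shared by $B_1$ and $B_2$'', but the switched element $e\in\gamma\cap\omega_v$ lies in $B_2$ and $B_1'$, not in $B_1$. What you actually need, and what your argument uses, is simply $\gamma\subseteq B_2$: this holds because $e\in B_2$ by construction and every other element of $\gamma$ lies in an agreement block, hence in $B_1\cap B_2$. The conclusion is unaffected.
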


Moreover, it is shown in \cite{B3} that $D$ is even if and only if
$\mathcal{Z}(D,\tau,2)$ is tight.  By Lemma~\ref{lem:pu_det_associated}, we have the following.
\begin{lemma}
\label{lem:regular_sm_dm} For every binary delta-matroid $D$, the $2$-matroid
$\mathcal{Z}(D,\tau,2)$ is t-regular if and only if $D$ is regular.
\end{lemma}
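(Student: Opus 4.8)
The plan is to derive the statement from Lemma~\ref{lem:pu_det_associated} and the bijection of Lemma~\ref{lem:mm_inverse_dm}, after isolating two preliminary facts. First I would record: \textbf{(i)} for any skew-symmetric $V\times V$-matrix $A$ over a field and any transversal $2$-tuple $\sigma=(T_1^{\sigma},T_2^{\sigma})$ of $\Omega$, the $2$-sheltering matroid $\mathcal{Q}(A,\sigma,2)$ shelters the $2$-matroid $\mathcal{Z}(\mathcal{D}_A,\sigma,2)$, i.e.\ $\mathcal{Z}(\mathcal{Q}(A,\sigma,2))=\mathcal{Z}(\mathcal{D}_A,\sigma,2)$. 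This is immediate from the pivot criterion: a transversal $X$ with $Y=\pi(X\cap T_2^{\sigma})$ is a basis of the column matroid of $(I\ A)$ exactly when the columns $\{e_v: v\notin Y\}\cup\{A_v: v\in Y\}$ are independent, which by a standard determinant computation happens iff $A[Y]$ is nonsingular, i.e.\ iff $Y\in\mathcal{D}_A$; so the bases of $\mathcal{Z}(\mathcal{Q}(A,\sigma,2))$ are exactly the bases $\{X:\pi(X\cap T_2^{\sigma})\in\mathcal{D}_A\}$ prescribed by Lemma~\ref{lem:2m_dm_section}. And \textbf{(ii)}: if $\sigma$ is obtained from the fixed $\tau=(T_1,T_2)$ by exchanging the two elements of $\omega$ at each $v$ in some $X_0\subseteq V$, then $\mathcal{Z}(D,\sigma,2)=\mathcal{Z}(D*X_0,\tau,2)$, because for every transversal $X$ one has $\pi(X\cap T_2^{\sigma})=\pi(X\cap T_2)\mathbin{\Delta}X_0$, so the two descriptions of bases agree. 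Every transversal $2$-tuple of $\Omega$ is of this form for a unique $X_0$.

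For the ``if'' direction, suppose $D$ is regular, so $D=\mathcal{D}_A*X_0$ for a skew-symmetric principally unimodular matrix $A$ over $\mathbb{R}$ and some $X_0\subseteq V$. Let $\sigma$ be the transversal $2$-tuple obtained from $\tau$ by exchanging on $X_0$. By (i) and (ii), $\mathcal{Z}(D,\tau,2)=\mathcal{Z}(\mathcal{D}_A*X_0,\tau,2)=\mathcal{Z}(\mathcal{D}_A,\sigma,2)=\mathcal{Z}(\mathcal{Q}(A,\sigma,2))$. Now $\mathcal{Q}(A,\sigma,2)$ is by construction strongly represented over $\mathbb{R}$ by $(I\ A)$ with $A$ skew-symmetric, and since $A$ is principally unimodular, Lemma~\ref{lem:pu_det_associated} (applied with the partition $\Omega$ and transversal $2$-tuple $\sigma$, which is just a relabelling of the $\bar V,V$ picture in that lemma) shows that the determinant obtained from $(I\ A)$ by restricting to the columns of any $T\in\mathcal{T}(\Omega)$ lies in $\{0,1,-1\}$. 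Hence $\mathcal{Q}(A,\sigma,2)$ is t-regular, and therefore the $2$-matroid it shelters, namely $\mathcal{Z}(D,\tau,2)$, is t-regular.

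For the ``only if'' direction, suppose $\mathcal{Z}(D,\tau,2)$ is t-regular and pick a t-regular $2$-sheltering matroid $Q$ with $\mathcal{Z}(Q)=\mathcal{Z}(D,\tau,2)$. By definition $Q$ has a strong representation $(I\ A)$ over $\mathbb{R}$ with $A$ skew-symmetric, with respect to some transversal $2$-tuple $\sigma$, satisfying the transversal-determinant condition; so $Q=\mathcal{Q}(A,\sigma,2)$ and, by Lemma~\ref{lem:pu_det_associated}, $A$ is principally unimodular. By (i), $\mathcal{Z}(D,\tau,2)=\mathcal{Z}(Q)=\mathcal{Z}(\mathcal{D}_A,\sigma,2)$, and writing $\sigma$ as $\tau$ exchanged on some $X_0\subseteq V$ and invoking (ii), this equals $\mathcal{Z}(\mathcal{D}_A*X_0,\tau,2)$. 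Since $D\mapsto\mathcal{Z}(D,\tau,2)$ is injective (Lemma~\ref{lem:mm_inverse_dm}), $D=\mathcal{D}_A*X_0$; as $A$ is skew-symmetric and principally unimodular over $\mathbb{R}$, $D$ is regular.

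I expect the only delicate point to be the bookkeeping in step (ii): matching a change of transversal $2$-tuple with a twist of the delta-matroid and verifying the identity $\pi(X\cap T_2^{\sigma})=\pi(X\cap T_2)\mathbin{\Delta}X_0$; everything else is a direct application of Lemma~\ref{lem:pu_det_associated}, Lemma~\ref{lem:mm_inverse_dm}, and the definitions of t-regularity. (Incidentally, the binariness hypothesis on $D$ is not actually used in the argument; it is present only because the lemma will be applied to delta-matroids arising from isotropic matroids.)
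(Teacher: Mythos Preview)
Your proof is correct and follows the same approach as the paper, which simply states the lemma as an immediate consequence of Lemma~\ref{lem:pu_det_associated}; you have spelled out the bookkeeping (your facts (i) and (ii) and the appeal to Lemma~\ref{lem:mm_inverse_dm}) that the paper leaves implicit. Your closing remark that the binariness hypothesis is not actually needed is also accurate.
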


It is easy to verify that for every symmetric matrix $A$ over $GF(2)$ we have
$\mathcal{Z}(\mathcal{Q}(A,\tau,2))$ $ = \mathcal{Z}(\mathcal{D}_{A},\tau,2)$,
see also \cite{BT1}.

We now turn to $3$-matroids.

\begin{lemma}
[Lemma 20 and Theorem 22 of \cite{BT1}]\label{lem:unique_reconstruct_matrix} Let $Z$ be an isotropic
$3$-matroid. Then there is a unique isotropic $3$-sheltering matroid $Q$ such that $\mathcal{Z}({Q}) = Z$.
\end{lemma}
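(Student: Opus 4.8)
The plan is to prove Lemma~\ref{lem:unique_reconstruct_matrix} --- that an isotropic $3$-matroid $Z$ determines a unique isotropic $3$-sheltering matroid $Q$ with $\mathcal{Z}(Q)=Z$ --- by combining an existence statement (which holds essentially by definition of ``isotropic $3$-matroid'') with a uniqueness argument that reconstructs the underlying symmetric $GF(2)$-matrix $A$ from the combinatorial data of $Z$. The key point is that, once we fix a transversal $3$-tuple $\tau=(T_1,T_2,T_3)$, an isotropic $3$-sheltering matroid is $\mathcal{Q}(A,\tau,3)$ for a symmetric matrix $A$ over $GF(2)$, and $\mathcal{Q}(A,\tau,3)-T_3 = \mathcal{Q}(A,(T_1,T_2),2)$, whose corresponding $2$-matroid is $\mathcal{Z}(\mathcal{D}_A,\tau,2)$ by the identity noted just before the lemma. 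So the reconstruction reduces to: (i) recover the partition $\Omega$ and its decomposition into skew classes from $Z$; (ii) locate a transversal $T_3$ and the complementary $2$-tuple $(T_1,T_2)$; (iii) pass to the $2$-matroid $\mathcal{Z}(Q)-T_3 = \mathcal{Z}(\mathcal{D}_A,\tau,2)$ and invoke Lemma~\ref{lem:mm_inverse_dm} to recover $\mathcal{D}_A$ as a delta-matroid; and (iv) use the fact (recalled in Section~\ref{ssec:repre_reg_dm}, citing \cite{Bouchet_1991_67}) that a symmetric $GF(2)$-matrix $A$ is uniquely determined by $\mathcal{D}_A$, via the explicit rules $A[\{v\}]=1 \iff \{v\}\in\mathcal{D}_A$ and the stated criterion for $A[\{v,w\}]$.

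First I would set up the bijection between skew classes of $\Omega$ and ``vertices'': since $Z$ is isotropic, $\Omega$ is a $3$-partition, and the three elements of each class are exactly the $\{\phi_G(v),\chi_G(v),\psi_G(v)\}$ for the reconstructed graph; the partition $\Omega$ is part of the data needed, so strictly one must check it is recoverable from $\mathcal{Z}(Q)=(U,\Omega,\mathcal{I})$ --- but $\Omega$ is literally a component of the triple $Z$, so there is nothing to prove there. Next I would observe that the independent subtransversals of $Z$ determine, for each transversal $T$ of $\Omega$, the matroid $(T,\mathcal{I}\cap 2^T)$, hence in particular $Z - T_3$ as a $2$-matroid. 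Then, after choosing any transversal $T_3$, the $2$-matroid $Z-T_3$ corresponds under Lemma~\ref{lem:mm_inverse_dm} to a unique delta-matroid $D$ over the vertex set $V$; by the identity $\mathcal{Z}(\mathcal{Q}(A,\tau,2)) = \mathcal{Z}(\mathcal{D}_A,\tau,2)$ this delta-matroid must be $\mathcal{D}_A$ for the (unknown) symmetric matrix $A$. The uniqueness of $A$ given $\mathcal{D}_A$ (the result of \cite{Bouchet_1991_67}, stated in the excerpt) then pins down $A$ up to the labeling of $V$, and hence pins down $\mathcal{Q}(A,\tau,3)$ up to isomorphism of sheltering matroids.

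For existence: ``$Z$ is an isotropic $3$-matroid'' means by definition $Z=\mathcal{Z}(Q)$ for some isotropic $3$-sheltering matroid $Q$, so existence is immediate; the content of the lemma is uniqueness. I should make sure the uniqueness statement is the right one --- namely, that any two isotropic $3$-sheltering matroids $Q, Q'$ with $\mathcal{Z}(Q)=\mathcal{Z}(Q')=Z$ are equal (or isomorphic, as sheltering matroids, matching the convention under which ``unique'' is meant here and in \cite{BT1}). Given $Q=\mathcal{Q}(A,\tau,3)$ and $Q'=\mathcal{Q}(A',\tau',3)$, pick compatible transversal $3$-tuples (this is where one uses that transversals of $\Omega$ are determined by $Z$, and that $T_3$ may be chosen freely), restrict to the $2$-sheltering level, apply Lemma~\ref{lem:mm_inverse_dm} to get $\mathcal{D}_A=\mathcal{D}_{A'}$, and then get $A=A'$ from \cite{Bouchet_1991_67}.

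The main obstacle I anticipate is purely bookkeeping: being careful about what ``unique'' means --- unique $3$-sheltering matroid versus unique up to isomorphism, and how the choice of transversal $3$-tuple $\tau$ interacts with that. Concretely, different choices of $\tau$ give different-looking but isomorphic $\mathcal{Q}(A,\tau,3)$, so I would either (a) fix $\tau$ at the outset and prove strict uniqueness relative to $\tau$, then note that any isotropic $3$-sheltering matroid can be written with any prescribed $\tau$ after relabeling within skew classes, or (b) phrase everything up to isomorphism from the start. I expect option (b) is cleanest and matches the companion paper. The delta-matroid and symmetric-matrix reconstruction steps themselves are routine given the cited results (Lemmas~\ref{lem:2m_dm_section}--\ref{lem:mm_inverse_dm} and the uniqueness of $A$ from $\mathcal{D}_A$), so no genuinely hard estimate or construction is involved --- the lemma is essentially an assembly of already-established correspondences.
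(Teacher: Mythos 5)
The lemma is cited from the companion paper \cite{BT1} and not proved in this paper, so there is no in-paper proof to compare against; I can only assess the proposal on its own terms. Your reconstruction of $A$ via Lemmas~\ref{lem:2m_dm_section}--\ref{lem:mm_inverse_dm} and the Bouchet--Duchamp uniqueness of $A$ given $\mathcal{D}_A$ is a reasonable and correct ingredient, and you are right that existence is immediate from the definition of ``isotropic $3$-matroid.'' However, what you actually prove with this argument is: \emph{for a fixed transversal $3$-tuple $\tau=(T_1,T_2,T_3)$, there is at most one symmetric $A$ over $GF(2)$ with $\mathcal{Z}(\mathcal{Q}(A,\tau,3))=Z$.} That is weaker than the lemma, which asserts uniqueness of the sheltering matroid $Q=(M,\Omega)$ itself.

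The step you dismiss as ``purely bookkeeping''---aligning the transversal $3$-tuples when $Q=\mathcal{Q}(A,\tau,3)$ and $Q'=\mathcal{Q}(A',\tau',3)$ with $\tau\ne\tau'$---is where the real content lies, and as sketched it does not go through. First, the delta-matroid $D$ you extract from $Z-T_3$ via Lemma~\ref{lem:mm_inverse_dm} depends on the ordered pair $(T_1,T_2)$, not just on $T_3$, and it is not automatically of the form $\mathcal{D}_A$: it is of that form only if $\emptyset\in D$, equivalently only if $T_1$ is the distinguished basis, so the phrase ``$T_3$ may be chosen freely'' does not suffice to align $\tau$ and $\tau'$. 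Second, the operations that change $\tau$ (the $\tau*v$ and $\tau+v$ of Lemma~\ref{lem:dm_ops_mm_label_cng}) correspond on the matrix side to loop complementations and pivots on $A$; a pivot at $v$ is available only when $A_{vv}=1$, so one cannot freely permute the three elements of $\omega_v$ among $T_1,T_2,T_3$ while staying in the $\mathcal{Q}(\cdot,\cdot,3)$ form. Finally, even if you establish that $\mathcal{D}_A$ and $\mathcal{D}_{A'}$ are related by a sequence of $+$ and $*$ operations, you still need to show that the induced matrix operations leave the column matroid of $(I\;A\;A{+}I)$ on the ground set $U$ unchanged; this is a statement about the full $3|V|$-element matroid $M$, not just about its independent subtransversals (which are all that $\mathcal{Z}(Q)$ records). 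That invariance---equivalently, the fact that $M$ is determined by its restriction to subtransversals---is precisely what Theorem~22 of \cite{BT1} supplies, and it needs an argument, not a remark. As written, your proposal reduces the lemma to itself at exactly this point.
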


We say that a 3-matroid $Z$ is \emph{t-regular} if there is a t-regular
$3$-sheltering matroid $Q$ such that $\mathcal{Z}({Q}) = Z$.

\begin{lemma}
[Theorems~13 and 16 of \cite{BH3}]\label{lem:unique_3m} Let $\Omega$ be a
$3$-partition of some finite set $U$ and let $T \in\mathcal{T}(\Omega)$. If
$Z$ is a strongly binary 2-matroid over $(U \setminus T,\Omega^{\prime})$ with
$\Omega^{\prime}= \{ \omega\setminus T \mid\omega\in\Omega\}$, then there is a
unique tight 3-matroid $Z'$ over $(U,\Omega)$ with $Z'-T=Z$.
\end{lemma}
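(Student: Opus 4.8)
The plan is to prove both existence and uniqueness, handling uniqueness first since it is the more delicate half. For uniqueness, suppose $Z'_1$ and $Z'_2$ are tight $3$-matroids over $(U,\Omega)$ with $Z'_1 - T = Z = Z'_2 - T$. The idea is to show that a tight $3$-matroid is determined by any one of its ``coordinate sections'' $Z' - T$. Concretely, I would argue that for a transversal $T \in \mathcal{T}(\Omega)$ the independent subtransversals of $Z'$ that \emph{avoid} $T$ are exactly the independent sets of $Z' - T = Z$, so these are pinned down; the remaining independent subtransversals are those meeting $T$, and tightness is what forces these to be recoverable from $Z$. The key mechanism is the exchange axiom (axiom~2 of Definition~\ref{def:multimatroid}): given a subtransversal $I$ missing some $\omega$, and the skew pair consisting of the two elements of $\omega \setminus T$ (here we use that $\Omega$ is a $3$-partition, so $|\omega \setminus T| = 2$), independence of $I \cup \{x\}$ or $I \cup \{y\}$ is determined, and combined with tightness this lets one propagate the knowledge of $Z$'s independent sets to all of $\mathcal{S}(\Omega)$. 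I expect this step to require a careful induction on $|I \cap T|$, peeling off one element of $T$ at a time.

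For existence, the natural route is to use the strong binary representation of $Z$. Since $Z$ is strongly binary over $(U \setminus T, \Omega')$, by definition there is a $2$-sheltering matroid $Q = \mathcal{Q}(A,\tau',2)$ with $\mathcal{Z}(Q) = Z$, where $A$ is a symmetric matrix over $GF(2)$ indexed by $V$, $|V| = |\Omega'| = |\Omega|$, and $\tau' = (T_1, T_2)$ is a transversal $2$-tuple of $\Omega'$. The plan is then to \emph{extend} $\tau'$ to a transversal $3$-tuple $\tau = (T_1, T_2, T)$ of $\Omega$ — taking $T$ as the third transversal — and to set $Z' := \mathcal{Z}(\mathcal{Q}(A,\tau,3))$, the isotropic $3$-matroid built from the same matrix $A$ via the matrix $\begin{pmatrix} I & A & A+I \end{pmatrix}$ as in Lemma~\ref{lem:ias_tight_3matroid}. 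That lemma immediately gives that $Z'$ is a tight $3$-matroid over $(U,\Omega)$. It then remains to check that $Z' - T = Z$: since deleting $T$ from $\mathcal{Q}(A,\tau,3)$ yields $\mathcal{Q}(A,(T_1,T_2),2)$ (this identity is noted in the text just after Lemma~\ref{lem:ias_tight_3matroid}), we get $Z' - T = \mathcal{Z}(\mathcal{Q}(A,(T_1,T_2),2)) = \mathcal{Z}(\mathcal{D}_A, \tau', 2) = \mathcal{Z}(Q) = Z$, using the remark that $\mathcal{Z}(\mathcal{Q}(A,\tau',2)) = \mathcal{Z}(\mathcal{D}_A,\tau',2)$.

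A small technical point in the existence half is that the labels/identifications must be arranged so that the third transversal of the constructed $\mathcal{Q}(A,\tau,3)$ really is the given set $T$ and the induced partition on $U \setminus T$ really is $\Omega'$; this is bookkeeping about how $\omega \setminus T$ for $\omega \in \Omega$ sits inside a $2$-partition, and is routine once one fixes bijections between the three ``copies'' of $V$ and the three transversals. The main obstacle, as flagged above, is the uniqueness argument: one must show that the tightness hypothesis together with the multimatroid axioms genuinely rigidifies the structure, i.e., that no two distinct tight $3$-matroids can restrict to the same $Z$. I would expect to lean on the characterization of tightness in terms of maximal independent subtransversals (those of size $|\Omega| - 1$) and push an exchange-type argument; alternatively, one could cite or adapt the uniqueness statement already present for isotropic $3$-sheltering matroids in Lemma~\ref{lem:unique_reconstruct_matrix}, reducing the general tight case to the isotropic case via the strong binary representation, which is perhaps the cleanest packaging.
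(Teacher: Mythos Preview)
The paper does not give its own proof of this lemma; it is quoted from \cite{BH3} (Theorems~13 and~16) and used as a black box. So there is no in-paper argument to compare against.

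Assessing your proposal on its own: the existence half is correct and cleanly done. The strong binary hypothesis gives a symmetric $GF(2)$-matrix $A$ with $\mathcal{Z}(\mathcal{Q}(A,(T_1,T_2),2))=Z$; extending $(T_1,T_2)$ to the transversal $3$-tuple $(T_1,T_2,T)$ of $\Omega$ and invoking Lemma~\ref{lem:ias_tight_3matroid} yields a tight $Z'=\mathcal{Z}(\mathcal{Q}(A,\tau,3))$ with $Z'-T=Z$, exactly as you say.

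Your uniqueness sketch has the right skeleton (induction on the number of elements of $T$ used) but is missing the one fact that makes it go. What you need is: in any tight $3$-matroid, if $S$ is an independent subtransversal of size $|\Omega|-1$ and $\omega=\{a,b,t\}$ is the skew class disjoint from $S$, then \emph{exactly one} of $S\cup\{a\}$, $S\cup\{b\}$, $S\cup\{t\}$ fails to be a basis. At most one, because axiom~2 of Definition~\ref{def:multimatroid} applied to each skew pair in $\omega$ forces any two of the three extensions to include a basis; at least one, by tightness. Combined with the trivial observation that if $S$ is dependent then none of the three extensions is a basis, this shows that whether the transversal $B=S\cup\{t\}$ (with $t\in T$) is a basis is determined by whether $S\cup\{a\}$ and $S\cup\{b\}$ are, and those have strictly smaller intersection with $T$. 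Since a nondegenerate multimatroid is determined by its set of basis transversals, the induction on $|B\cap T|$ over transversals $B$ then goes through. Your proposed fallback through Lemma~\ref{lem:unique_reconstruct_matrix} does not help here: that lemma reconstructs the sheltering matroid from an isotropic $3$-matroid, an orthogonal uniqueness statement that does not by itself compare two tight $3$-matroids with the same $T$-deletion.
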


Let $D$ be a binary delta-matroid, then the unique tight 3-matroid
corresponding to $\mathcal{Z}(D,(T_{1},T_{2}),2)$ (with respect to $\tau$ and
$\Omega$), cf. Lemma~\ref{lem:unique_3m}, is denoted by $\mathcal{Z}%
(D,\tau,3)$. Lemma~\ref{lem:unique_3m} implies the following lemma.

A \emph{projection} $\pi$ of $\Omega$
is a function $U\rightarrow V$ with $|V|=|\Omega|$ such that $\pi(x)=\pi(y)$
if and only if $x,y\in\omega$ for some $\omega\in\Omega$.
Let $\tau= (T_{1},T_{2},T_{3})$ be a transversal 3-tuple and $\pi: U \to V$ be a projection. For $v \in
V$, let $p_{i}$ ($i \in\{1,2,3\}$) be the unique skew pair of $\omega=
\pi^{-1}(v)$ with $p_{i} \cap T_{i} = \emptyset$. We define $\tau*v = (T_{1}
\mathop{\mathrm{\Delta}} p_{3}, T_{2} \mathop{\mathrm{\Delta}} p_{3}, T_{3})$
and $\tau+v = (T_{1}, T_{2} \mathop{\mathrm{\Delta}} p_{1}, T_{3}
\mathop{\mathrm{\Delta}} p_{1})$

\begin{lemma}
[Lemma~15 of \cite{BH3}]\label{lem:dm_ops_mm_label_cng} Fix a $3$-partition
$\Omega$ of some finite set $U$, a transversal $3$-tuple $\tau$ of $\Omega$,
and a projection $\pi: U \to V$. Let $D$ be a binary delta-matroid over $V$.
Then $\mathcal{Z}(D,\tau,3) = \mathcal{Z}(D+v,\tau+v,3) = \mathcal{Z}%
(D*v,\tau*v,3)$.
\end{lemma}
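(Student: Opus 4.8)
### Proof proposal for Lemma~\ref{lem:dm_ops_mm_label_cng}

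The plan is to reduce everything to the already-established $2$-matroid statement and then invoke the uniqueness in Lemma~\ref{lem:unique_3m}. First I would recall that $\mathcal{Z}(D,\tau,3)$ is \emph{defined} as the unique tight $3$-matroid over $(U,\Omega)$ whose $T_3$-deletion equals the $2$-matroid $\mathcal{Z}(D,(T_1,T_2),2)$, so to prove the desired equalities it suffices to check two things for each of the operations $D \mapsto D+v$ and $D \mapsto D*v$: (a) the candidate $3$-tuple ($\tau+v$ or $\tau*v$) is again a transversal $3$-tuple of $\Omega$, and (b) the $2$-matroid obtained by deleting the \emph{last} transversal of that tuple from $\mathcal{Z}(D,\tau,3)$ coincides with $\mathcal{Z}(D+v,(T_1,T_2\,\Delta\,p_1),2)$, respectively $\mathcal{Z}(D*v,(T_1\,\Delta\,p_3,T_2\,\Delta\,p_3),2)$. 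Point (a) is immediate from the definition of $p_i$ as the skew pair of $\omega=\pi^{-1}(v)$ disjoint from $T_i$: for instance $T_3 \,\Delta\, p_1$ still meets $\omega$ in exactly one point (it drops the point of $T_3\cap\omega$, which lies in $p_1$, and adds the other point of $p_1$), and meets every other skew class exactly as $T_3$ does; similarly for the other coordinates.

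For point (b) in the twist case, note $\tau*v$ keeps the third transversal $T_3$ unchanged, so $\mathcal{Z}(D*v,\tau*v,3)-T_3$ is a $2$-matroid over $(U\setminus T_3,\Omega')$ whose transversals are indexed, via the projection $\pi$ restricted to $U\setminus T_3$, by subsets of $V$. The key identity is Bouchet's description of how twisting a delta-matroid interacts with the bijection $\pi$ between transversals of $\Omega$ avoiding $T_3$ and subsets of $V$: twisting $D$ by the single element $v$ corresponds exactly to the relabeling $T_1\mapsto T_1\,\Delta\,p_3$, $T_2\mapsto T_2\,\Delta\,p_3$ of the ``reference frame'' of the $2$-matroid. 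Concretely, one checks from Lemma~\ref{lem:2m_dm_section} that $X\in\mathcal{Z}(D,(T_1,T_2),2)$ iff $X\,\Delta\,p_3\in\mathcal{Z}(D*v,(T_1\,\Delta\,p_3,T_2\,\Delta\,p_3),2)$, which says precisely that these two $2$-matroids have the same underlying set system once the transversals are matched up by the obvious involution swapping $p_3$. Hence their unique tight $3$-matroid extensions (Lemma~\ref{lem:unique_3m}) agree, giving $\mathcal{Z}(D,\tau,3)=\mathcal{Z}(D*v,\tau*v,3)$.

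The loop-complementation case is the more delicate one, and I expect it to be the main obstacle. Here $\tau+v=(T_1,T_2\,\Delta\,p_1,T_3\,\Delta\,p_1)$ changes the \emph{third} transversal, so I cannot simply delete $T_3$ from both sides; instead I would use that $\mathcal{Z}(D,\tau,3)$, being tight, is equally determined by its deletion of \emph{any} of its three transversals (this is implicit in the symmetry of the tight-$3$-matroid construction, or can be derived by applying Lemma~\ref{lem:unique_3m} after a preliminary relabeling). So I would instead show that deleting $T_3\,\Delta\,p_1$ from $\mathcal{Z}(D,\tau,3)$ yields $\mathcal{Z}(D+v,(T_1,T_2\,\Delta\,p_1),2)$. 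This amounts to the companion of the twist identity, but for loop complementation: one shows $Y\in\mathcal{Z}(D,(T_1,T_2),2)$-style membership is governed, after the relabeling $T_2\mapsto T_2\,\Delta\,p_1$, $T_3\mapsto T_3\,\Delta\,p_1$, by $D+v$ rather than $D$. The proof of this is a direct computation with the definition of $D+v$ (an odd number of $Z\in D$ with $Y-\{v\}\subseteq Z\subseteq Y$) combined with the transversal-to-subset bijection; the bookkeeping of which skew pair gets symmetric-differenced into which transversal is exactly where care is needed, and it is worth doing a small low-rank example to fix the conventions. Once both relabeled $2$-matroids are seen to coincide, uniqueness of the tight $3$-matroid extension closes the argument. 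I would remark that, since the delta-matroid statement $D\ast v$ and $D+v$ being the ``single-element'' cases of twist and loop complementation is all that is used, the lemma is really a translation of the definitions of $\ast$ and $+$ through Bouchet's $2$-matroid/delta-matroid dictionary (Lemmas~\ref{lem:2m_dm_section} and \ref{lem:mm_inverse_dm}).
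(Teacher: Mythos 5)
There is no in-paper proof to compare against: the paper imports this result from Brijder--Hoogeboom \cite{BH3} (Lemma~15) as a black box, so your proposal can only be judged on its own.

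Your high-level plan is right: deduce the equalities from the uniqueness assertion in Lemma~\ref{lem:unique_3m} by comparing single-transversal deletions. For the twist case, however, your intermediate claim is off. You assert $X\in\mathcal{Z}(D,(T_1,T_2),2)$ iff $X\,\Delta\,p_3\in\mathcal{Z}(D*v,(T_1\Delta p_3,T_2\Delta p_3),2)$ and invoke an involution swapping $p_3$. Unwinding the definition in Lemma~\ref{lem:2m_dm_section} gives something cleaner and incompatible with that: with $\omega_v=\{a,b,c\}$, $a\in T_1$, $b\in T_2$, $c\in T_3$, $p_3=\{a,b\}$, for \emph{any} transversal $X$ avoiding $T_3$ one has $\pi(X\cap(T_2\Delta p_3))=\pi(X\cap T_2)\,\Delta\,\{v\}$, and $Y\in D*v$ iff $Y\Delta\{v\}\in D$, so the \emph{same} $X$ is a basis of both $2$-matroids with no relabeling at all. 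The two $2$-matroids are literally equal, which is exactly what uniqueness requires; your involution-based statement would only yield an isomorphism, which is not enough. This is a small but genuine slip, though the argument is easily repaired.

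The loop-complementation case is the real gap. You correctly note that $T_3$ changes, so the comparison must be made after deleting $T_3\Delta p_1$ from both sides. But $\mathcal{Z}(D,\tau,3)$ is \emph{defined} only through its $T_3$-deletion, and the deletion along $T_3\Delta p_1$ involves transversals of $\Omega$ that contain the element $c\in T_3\cap\omega_v$; whether these are bases is not given by any ``direct computation with the definition of $D+v$'' -- it is exactly the content of the tightness completion of Lemma~\ref{lem:unique_3m}, which your sketch invokes in the wrong direction (you need to \emph{compute} a different deletion of an already-constructed tight $3$-matroid, not re-extend a $2$-matroid). One way to make this honest, staying within the paper's toolkit, is to reduce via the (already proved) twist case to $D=\mathcal{D}_A$ with $\emptyset\in D$, then invoke Lemma~\ref{lem:3m_mat_dm}: $\mathcal{Z}(\mathcal{D}_A,\tau,3)=\mathcal{Z}(\mathcal{Q}(A,\tau,3))$ is the $3$-matroid of the column matroid of $(I\ A\ A+I)$, and one checks column-by-column that the $\tau$-labeled representation of $\mathcal{Q}(A,\tau,3)$ and the $(\tau+v)$-labeled representation of $\mathcal{Q}(A+E_{vv},\tau+v,3)$ assign the \emph{same} vector to every ground-set element, since the $v$-columns of $A+E_{vv}$ and $A+E_{vv}+I$ are $(A+I)_v$ and $A_v$ respectively, matching the swap of $b_v$ and $c_v$ in the relabeling. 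Even this clean route requires care when every member of $D$ contains $v$ (so no twist of $D$ avoiding $v$ reaches $\mathcal{D}_A$), and that corner of the argument is not addressed at all in your sketch. As written, the $+$ case is an outline of where a proof should live rather than a proof.
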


\begin{lemma}
\label{lem:3m_mat_dm} Let $A$ be a $V \times V$-symmetric matrix over $GF(2)$.
Then $\mathcal{Z}(\mathcal{Q}(A,\tau,3)) = \mathcal{Z}(\mathcal{D}_{A}%
,\tau,3)$.
\end{lemma}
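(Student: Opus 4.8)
The plan is to reduce Lemma~\ref{lem:3m_mat_dm} to the $2$-sheltering case, where the analogous identity $\mathcal{Z}(\mathcal{Q}(A,\tau,2)) = \mathcal{Z}(\mathcal{D}_A,\tau,2)$ has already been recorded, and then invoke the uniqueness statement for tight $3$-matroids (Lemma~\ref{lem:unique_3m}). First I would observe that both sides of the claimed equation are tight $3$-matroids over $(U,\Omega)$: the left side because $(M[IAS(A)],\Omega) = \mathcal{Q}(A,\tau,3)$ is a tight $3$-sheltering matroid by Lemma~\ref{lem:ias_tight_3matroid}, and the right side because $\mathcal{Z}(\mathcal{D}_A,\tau,3)$ is defined (via Lemma~\ref{lem:unique_3m}) to be the unique tight $3$-matroid whose $T_3$-deletion equals $\mathcal{Z}(\mathcal{D}_A,(T_1,T_2),2)$. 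So it suffices to show that the $T_3$-deletion of $\mathcal{Z}(\mathcal{Q}(A,\tau,3))$ equals $\mathcal{Z}(\mathcal{D}_A,(T_1,T_2),2)$; uniqueness then forces the two $3$-matroids to coincide.

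Next I would unwind the deletion on the left. We have $\mathcal{Q}(A,\tau,3) - T_3 = \mathcal{Q}(A,(T_1,T_2),2)$ — this identity is stated explicitly in the excerpt just after Lemma~\ref{lem:ias_tight_3matroid} (deleting the $A+I$ block of columns from $(I\ A\ A+I)$ leaves $(I\ A)$). Deletion of $T_3$ commutes with the operator $\mathcal{Z}(-)$ that passes from a sheltering matroid to its corresponding multimatroid — this is immediate from the definitions of $Q - X$ and $\mathcal{Z}(Q)$, since an independent subtransversal of $M$ disjoint from $T_3$ is the same thing as an independent subtransversal of $M - T_3$. Hence $\mathcal{Z}(\mathcal{Q}(A,\tau,3)) - T_3 = \mathcal{Z}(\mathcal{Q}(A,(T_1,T_2),2))$. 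Now apply the already-noted $2$-matroid identity $\mathcal{Z}(\mathcal{Q}(A,\tau,2)) = \mathcal{Z}(\mathcal{D}_A,\tau,2)$ to conclude that this equals $\mathcal{Z}(\mathcal{D}_A,(T_1,T_2),2)$, which is exactly the $2$-matroid whose unique tight $3$-matroid extension is, by definition, $\mathcal{Z}(\mathcal{D}_A,\tau,3)$. Comparing the two tight $3$-matroids via their common $T_3$-deletion and Lemma~\ref{lem:unique_3m} finishes the argument.

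The only point requiring a little care — and the step I expect to be the main obstacle — is verifying that $\mathcal{Z}(\mathcal{Q}(A,\tau,3))$ is genuinely \emph{tight} as a $3$-matroid, so that Lemma~\ref{lem:unique_3m} applies to it. This is where Lemma~\ref{lem:ias_tight_3matroid} (the reformulation of Theorem~41 of \cite{Tnewnew}) does the real work: it guarantees that $(M,\Omega)$ with $M = M[IAS(A)]$ is a tight $3$-sheltering matroid, and by definition $Q$ is tight exactly when $\mathcal{Z}(Q)$ is tight. Everything else is definitional bookkeeping: the commutation of $\mathcal{Z}$ with $T_3$-deletion, the block-column identity $\mathcal{Q}(A,\tau,3) - T_3 = \mathcal{Q}(A,(T_1,T_2),2)$, and the $2$-matroid identity relating $\mathcal{Q}(A,\tau,2)$ and $\mathcal{D}_A$. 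I would state the proof in three lines: $\mathcal{Z}(\mathcal{Q}(A,\tau,3))$ is tight by Lemma~\ref{lem:ias_tight_3matroid}; its $T_3$-deletion is $\mathcal{Z}(\mathcal{Q}(A,(T_1,T_2),2)) = \mathcal{Z}(\mathcal{D}_A,(T_1,T_2),2)$; hence by the uniqueness in Lemma~\ref{lem:unique_3m} it equals $\mathcal{Z}(\mathcal{D}_A,\tau,3)$.
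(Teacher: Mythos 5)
Your proof is correct and follows essentially the same route as the paper: verify the two-matroid identity $\mathcal{Z}(\mathcal{Q}(A,(T_1,T_2),2)) = \mathcal{Z}(\mathcal{D}_A,(T_1,T_2),2)$ and then extend to transversal $3$-tuples by the uniqueness statement of Lemma~\ref{lem:unique_3m}. You have simply made explicit the bookkeeping (tightness of $\mathcal{Z}(\mathcal{Q}(A,\tau,3))$ via Lemma~\ref{lem:ias_tight_3matroid}, commutation of $\mathcal{Z}$ with $T_3$-deletion, and the block-column identity $\mathcal{Q}(A,\tau,3)-T_3 = \mathcal{Q}(A,(T_1,T_2),2)$) that the paper leaves implicit.
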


\begin{proof}
It is easy to verify that $\mathcal{Z}(\mathcal{Q}(A,(T_{1},T_{2}),2)) =
\mathcal{Z}(\mathcal{D}_{A},(T_{1},T_{2}),2)$ and the extension to transversal
$3$-tuples follows from Lemma~\ref{lem:unique_3m}.
\end{proof}

\begin{lemma}
\label{lem:dm_eul_mm_treg} Let $D$ be a binary delta-matroid. Then $D$ is
Eulerian if and only if $\mathcal{Z}(D,\tau,3)$ is t-regular (again we assume
an arbitrary fixed transversal $3$-tuple $\tau$).
\end{lemma}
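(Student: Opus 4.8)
The plan is to reduce the statement to Theorem~\ref{thm:eulerian_dm_char_plus} by translating the delta-matroid characterization of Eulerian-ness into the language of $3$-sheltering matroids, using the dictionary built up in Lemmas~\ref{lem:2m_dm_section}--\ref{lem:3m_mat_dm}. First I would unwind the definition of t-regularity for the $3$-sheltering matroid underlying $\mathcal{Z}(D,\tau,3)$: by Lemma~\ref{lem:unique_reconstruct_matrix} there is a unique isotropic $3$-sheltering matroid $Q$ with $\mathcal{Z}(Q)=\mathcal{Z}(D,\tau,3)$, and $Q$ is t-regular precisely when $Q-T$ is t-regular for every $T\in\mathcal{T}(\Omega)$ such that $Q-T$ is tight. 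The key is that each such deletion $Q-T$ is a $2$-sheltering matroid, and by Lemmas~\ref{lem:dm_ops_mm_label_cng} and \ref{lem:3m_mat_dm} (together with the $*$/$+$ bookkeeping on transversal $3$-tuples), every deletion $Q-T$ of the canonical $Q$ associated to $D$ has the form $\mathcal{Z}(\mathcal{Q}(D*X+Y,\tau',2))$ for appropriate $X\subseteq Y\subseteq V$ and a suitable transversal $2$-tuple $\tau'$ obtained from $\tau$ by the corresponding sequence of $\tau*v$ and $\tau+v$ operations. Moreover the tightness of $Q-T$ corresponds, via the remark following Lemma~\ref{lem:mm_inverse_dm}, to evenness of $D*X+Y$, and the t-regularity of $Q-T$ corresponds, via Lemma~\ref{lem:regular_sm_dm}, to regularity of $D*X+Y$.

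Carrying this out in order: (1) fix $D$ and its canonical isotropic $3$-sheltering matroid $Q$; (2) show that the deletions $Q-T$, as $T$ ranges over $\mathcal{T}(\Omega)$, are exactly the $2$-sheltering matroids $\mathcal{Q}(D*X+Y,\tau',2)$ with $X\subseteq Y\subseteq V$, by iterating the label-changing identities of Lemma~\ref{lem:dm_ops_mm_label_cng} and using that any transversal can be reached from $T_3$ by a sequence of the skew-pair swaps that define $\tau*v$ and $\tau+v$; (3) match tightness of $Q-T$ with evenness of $D*X+Y$; (4) match t-regularity of $Q-T$ with regularity of $D*X+Y$ via Lemma~\ref{lem:regular_sm_dm} and the identity $\mathcal{Z}(\mathcal{Q}(A,\tau,2))=\mathcal{Z}(\mathcal{D}_A,\tau,2)$ applied to the matrix representing $D*X+Y$. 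Then $Q$ is t-regular $\iff$ for all $X\subseteq Y\subseteq V$ with $D*X+Y$ even, $D*X+Y$ is regular, which by Theorem~\ref{thm:eulerian_dm_char_plus}(2) is equivalent to $D$ being Eulerian. Finally, invoke the Eulerian-ness of $\mathcal{Z}(D,\tau,3)$ as a $3$-sheltering matroid being independent of the choice of $\tau$ (Lemma~\ref{lem:dm_ops_mm_label_cng} again) to discharge the parenthetical about the arbitrary fixed $\tau$.

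The main obstacle I anticipate is step (2): verifying that the combinatorics of which transversals $T$ of $\Omega$ arise, and which delta-matroid twists-and-loop-complementations $D*X+Y$ they correspond to, matches up exactly — in particular that every pair $X\subseteq Y\subseteq V$ is realized, and that the tightness/t-regularity conditions transport cleanly under the induced relabelling of the transversal $2$-tuple. This is essentially a careful chase through the definitions of $\tau*v$, $\tau+v$ and the projection $\pi$, checking that a single application of $\tau*v$ (resp.\ $\tau+v$) on the $3$-tuple side corresponds to a single $*v$ (resp.\ $+v$) on the delta-matroid side with the $T_3$-component of the transversal tracking which elements are deleted; the Brijder--Hoogeboom machinery cited in Lemmas~\ref{lem:dm_ops_mm_label_cng} and \ref{lem:unique_3m} does most of the work, but assembling it into the clean equivalence above requires some bookkeeping. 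Once that correspondence is pinned down, the rest is a direct substitution into Theorem~\ref{thm:eulerian_dm_char_plus}.
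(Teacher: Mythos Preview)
Your proposal is correct and follows essentially the same route as the paper's proof: both reduce to Theorem~\ref{thm:eulerian_dm_char_plus} via the dictionary Lemma~\ref{lem:dm_ops_mm_label_cng} (to move between transversals $T$ and sequences of $*$/$+$ on $D$), Lemma~\ref{lem:regular_sm_dm} (to match t-regularity of the resulting $2$-matroid with regularity of the corresponding delta-matroid), and the even $\leftrightarrow$ tight correspondence. The only cosmetic differences are that the paper invokes condition~(3) of Theorem~\ref{thm:eulerian_dm_char_plus} (arbitrary sequences $\varphi$ of $*$ and $+$) rather than condition~(2) (the normal form $D*X+Y$), and works directly with the $3$-matroid $\mathcal{Z}(D,\tau,3)-T$ rather than passing through the unique isotropic $3$-sheltering matroid; your anticipated obstacle in step~(2) is dispatched in the paper by a one-line appeal to Lemma~\ref{lem:dm_ops_mm_label_cng} to realize any given $T$ as the third component of some $\tau\varphi$.
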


\begin{proof}
By Theorem~\ref{thm:eulerian_dm_char_plus}, $D$ is Eulerian if and only if
every even delta-matroid obtainable from $D$ by applying a sequence of $+$ and
$*$ operations is regular.

First assume that $\mathcal{Z}(D,\tau,3)$ is t-regular. Let $\varphi$ be a
sequence of $+$ and $*$ operations such that $D\varphi$ is even. We have
$\mathcal{Z}(D,\tau,3) = \mathcal{Z}(D\varphi,\tau\varphi,3)$. Let
$\tau\varphi= (T_{1}^{\prime},T_{2}^{\prime},T_{3}^{\prime})$. Then
$\mathcal{Z}(D,\tau,3) - T_{3}^{\prime}= \mathcal{Z}(D\varphi,\tau\varphi,3) -
T_{3}^{\prime}= \mathcal{Z}(D\varphi,(T_{1}^{\prime},T_{2}^{\prime}),2)$ is
tight because $D\varphi$ is even. Thus $\mathcal{Z}(D\varphi,(T_{1}^{\prime
},T_{2}^{\prime}),2) = \mathcal{Z}(D,\tau,3) - T_{3}^{\prime}$ is t-regular
and by Lemma~\ref{lem:regular_sm_dm}, $D\varphi$ is regular. Consequently, $D$
is Eulerian.

The reverse implication is similar. Thereto, assume that $D$ is Eulerian. Let
$T \in\mathcal{T}(\Omega)$ such that $\mathcal{Z}(D,\tau,3)-T$ is tight. By
Lemma~\ref{lem:dm_ops_mm_label_cng} there is a sequence $\varphi$ of $+$ and
$*$ operations such that $\mathcal{Z}(D,\tau,3) = \mathcal{Z}(D\varphi
,\tau\varphi,3)$ and $\tau\varphi= (T_{1}^{\prime},T_{2}^{\prime},T)$. Now,
$\mathcal{Z}(D,\tau,3)-T = \mathcal{Z}(D\varphi,\tau\varphi,3) - T =
\mathcal{Z}(D\varphi,\tau^{\prime},2)$ where $\tau^{\prime}= (T_{1}^{\prime
},T_{2}^{\prime})$. Since $\mathcal{Z}(D,\tau,3)-T = \mathcal{Z}(D\varphi
,\tau^{\prime},2)$ is tight, $D\varphi$ is even. Since $D$ is Eulerian, we
have that $D\varphi$ is regular. By Lemma~\ref{lem:regular_sm_dm},
$\mathcal{Z}(D\varphi,\tau^{\prime},2) = \mathcal{Z}(D,\tau,3)-T$ is t-regular.
\end{proof}


We are now ready to prove Theorem~\ref{thm:eulerian_dm_char_ias}.

\begin{proof}
\emph{(of Theorem~\ref{thm:eulerian_dm_char_ias})} Let $\tau= (T_{1}%
,T_{2},T_{3})$ and let $Q = \mathcal{Q}(A,\tau,3)$ for some symmetric matrix
$A$ over $GF(2)$. By Lemma~\ref{lem:3m_mat_dm}, $\mathcal{Z}(Q) =
\mathcal{Z}(\mathcal{D}_{A},\tau,3)$.

Assume first that $Q = \mathcal{Q}(A,\tau,3)$ is t-regular. Then
$\mathcal{Z}(\mathcal{D}_{A},\tau,3) = \mathcal{Z}(Q)$ is t-regular. By
Lemma~\ref{lem:dm_eul_mm_treg}, $\mathcal{D}_{A}$ is Eulerian. Hence
$\mathcal{D}_{A} = \mathcal{D}_{A(G)}*X$ for some circle graph $G$ and $X
\subseteq V(G)$. By Lemma~\ref{lem:dm_ops_mm_label_cng}, $\mathcal{Z}%
(\mathcal{D}_{A(G)}*X,\tau,3) = \mathcal{Z}(\mathcal{D}_{A(G)},\tau*X,3)$. By
Lemma~\ref{lem:3m_mat_dm}, $\mathcal{Z}(\mathcal{D}_{A(G)},\tau*X,3) =
\mathcal{Z}(A(G),\tau*X,3)$. Consequently, $\mathcal{Z}({Q}) = \mathcal{Z}%
(\mathcal{Q}(A(G),\tau*X,3))$ for some circle graph $G$. By
Lemma~\ref{lem:unique_reconstruct_matrix}, $Q = \mathcal{Q}(A(G),\tau*X,3)$
and so $Q$ is Eulerian.

Now assume that $Q$ is Eulerian. Then $\mathcal{Z}({Q}) = \mathcal{Z}%
(\mathcal{Q}(A(G),\tau,3))$ for some circle graph $G$. Hence $\mathcal{D}%
_{A(G)}$ is Eulerian and by Lemma~\ref{lem:dm_eul_mm_treg} $\mathcal{Z}%
(\mathcal{D}_{A(G)},\tau,3)$ is t-regular. By Lemma~\ref{lem:3m_mat_dm},
$\mathcal{Z}(\mathcal{Q}(A(G),\tau,3)) = \mathcal{Z}({Q})$ is t-regular. By
Lemma~\ref{lem:unique_reconstruct_matrix}, $Q = \mathcal{Q}(A(G),\tau,3)$ is t-regular.
\end{proof}

\section{4-regular graphs}
\label{sec:4reg_graphs}

In this section we discuss the relationship between interlacement graphs and
circuits of 4-regular graphs.

We begin by establishing some notation and terminology. We think of an edge of
a graph as consisting of two distinct \emph{half-edges}, one incident on each
end-vertex of the original edge. A circuit of length $\ell$ is then a sequence
$v_{1},h_{1},h_{1}^{\prime},v_{2},h_{2},\ldots,h_{\ell}^{\prime},v_{\ell+1}%
=v_{1}$ of vertices and half-edges, with the property that for each $i$,
$h_{i-1}^{\prime}$ and $h_{i}$ are half-edges incident on $v_{i}$ and
$\{h_{i},h_{i}^{\prime}\}$ is an edge incident on $v_{i}$ and $v_{i+1}$.
Vertices may appear repeatedly on a circuit, but there must be $2\ell$
different half-edges. We do not distinguish between circuits that differ only
in orientation or starting point. That is, if $v_{1},h_{1},h_{1}^{\prime
},v_{2},h_{2},\ldots,h_{\ell}^{\prime},v_{\ell+1}=v_{1}$ is a circuit then for
each index $i$, the same circuit is represented by $v_{i},h_{i},\ldots,h_{\ell
}^{\prime},v_{\ell+1}=v_{1},h_{1},\ldots,h_{i-1}^{\prime},v_{i}$ and
$v_{i},h_{i-1}^{\prime},\ldots,h_{1},v_{1}=v_{\ell+1},h_{\ell}^{\prime}%
,\ldots,h_{i},v_{i}$.

\subsection{The transition matroid}

Notice that there are two ways to write a circuit as a sequence of pairs of
half-edges: one way is to pair consecutive half-edges of an edge, and the
other is to pair consecutive half-edges incident at a vertex. We call each of
these latter pairs $\{h_{i}^{\prime},h_{i+1}\}$ a \emph{single transition}. A
\emph{transition} at a vertex of a 4-regular graph is a pair of disjoint
single transitions at the same vertex; the $3\left\vert V(F)\right\vert
$-element set that contains all the transitions of $F$ is denoted
$\mathfrak{T}(F)$. (Our terminology is slightly nonstandard; for instance
Bouchet used \textquotedblleft transition\textquotedblright\ and
\textquotedblleft bitransition\textquotedblright\ rather than
\textquotedblleft single transition\textquotedblright\ and \textquotedblleft
transition\textquotedblright.)

An Euler system $C$ of $F$ gives rise to a notational scheme for
$\mathfrak{T}(F)$. First, arbitrarily choose an orientation for each circuit
of $C$. Then the three transitions at a vertex\ $v$ of $F$ may be described as
follows: one is used by $C$, one is not used by $C$ and is consistent with the
orientation of the circuit of $C$ incident at $v$, and the last one is
inconsistent with this orientation. (It is easy to see that changing the
orientations of some circuits of $C$ will not change the description of any
transition.) We label these three transitions $\phi_{C}(v)$, $\chi_{C}(v)$ and
$\psi_{C}(v)$ respectively.

It is important to note that different Euler systems of $F$ give rise to
different notational schemes for $\mathfrak{T}(F)$. That is, a particular
transition $\tau\in\mathfrak{T}(F)$ may be labeled $\phi$ for some Euler
systems, $\chi$ for others, and $\psi$ for the rest. For example, the reader
might take a moment to verify that if $C$ and $D$ are the Euler circuits of
$K_{5}$ indicated on the left and right in~Figure \ref{circmf16}
(respectively), and $v$ is the vertex at the bottom of the figure, then
$\phi_{C}(v)=\psi_{D}(v)$, $\chi_{C}(v)=\phi_{D}(v)$ and $\psi_{C}(v)=\chi
_{D}(v)$.%

\begin{figure}
[ptb]
\begin{center}
\includegraphics[
trim=2.271195in 8.024183in 1.340795in 1.070845in,
height=1.4607in,
width=3.6919in
]%
{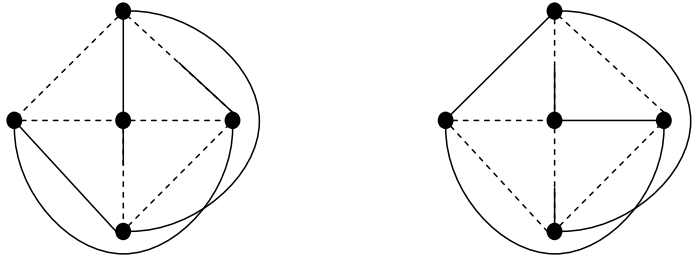}%
\caption{Two Euler circuits of $K_{5}$.}%
\label{circmf16}%
\end{center}
\end{figure}

Given an Euler system $C$ of a 4-regular graph $F$, we use the notational
scheme for $\mathfrak{T}(F)$ to associate transitions of $F$ with elements of
the isotropic matroid of the interlacement graph $\mathcal{I}(C)$. That is, if%
\[
IAS(\mathcal{I}(C))=%
\begin{pmatrix}
I & A(\mathcal{I}(C)) & I+A(\mathcal{I}(C))
\end{pmatrix}
\text{, }%
\]
where $I$ is an identity matrix and $A(\mathcal{I}(C))$ is the adjacency
matrix of $\mathcal{I}(C)$, then the $v$ column of $I$ is associated with the
transition $\phi_{C}(v)$, the $v$ column of $A(\mathcal{I}(C))$ is associated
with the transition $\chi_{C}(v)$, and the $v$ column of $I+A(\mathcal{I}(C))$
is associated with the transition $\psi_{C}(v)$.

This may to seem to define many different matroids on $\mathfrak{T}(F)$, but
in fact there is only one:

\begin{theorem}
(\cite{Tra}) Let $C$ and $D$ be any two Euler systems of a 4-regular graph $F$.
Then the matrices $IAS(\mathcal{I}(C))$ and $IAS(\mathcal{I}(D))$ represent
the same binary matroid on $\mathfrak{T}(F)$.
\end{theorem}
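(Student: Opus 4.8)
The plan is to reduce to a single $\kappa$-transformation and then perform one explicit change of basis over $GF(2)$. Since, by Kotzig's theorem \cite{K}, any two Euler systems of $F$ are joined by a finite sequence of $\kappa$-transformations, it suffices to treat the case $D=C\ast v$ for a fixed vertex $v$; iterating along such a sequence then handles arbitrary $C$ and $D$. Write $G=\mathcal{I}(C)$ and $G'=\mathcal{I}(C\ast v)=G^{v}_{s}$ (the last equality being the Read--Rosenstiehl observation recalled above), with $GF(2)$ adjacency matrices $A$ and $A'$. Let $N=N_{G}(v)$ and let $\mathbf{a}$ be the $v$-column of $A$, i.e.\ the indicator vector of $N$; so $\mathbf{a}$ is the column $\chi_{G}(v)$ of $IAS(G)$, and $\mathbf{a}_{v}=0$ since $G$ is simple. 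A direct check gives
\[
A'=A+\mathbf{a}\mathbf{a}^{T}+D,
\]
where $D$ is the diagonal matrix with diagonal $\mathbf{a}$: the rank-one term toggles the adjacencies inside $N$, and adding $D$ restores the zero diagonal. The notational schemes attached to $C$ and to $C\ast v$ are bijections $\lambda_{C}\colon W(G)\to\mathfrak{T}(F)$ and $\lambda_{C\ast v}\colon W(G')\to\mathfrak{T}(F)$ that identify $M[IAS(G)]$ and $M[IAS(G')]$ with matroids on $\mathfrak{T}(F)$, and the theorem (for $D=C\ast v$) is exactly the statement that the bijection $\rho:=\lambda_{C\ast v}^{-1}\circ\lambda_{C}\colon W(G)\to W(G')$ carries $M[IAS(G)]$ isomorphically onto $M[IAS(G')]$.

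First I would compute $\rho$. Both schemes respect vertex labels, so $\rho$ sends the vertex triple of $w$ in $W(G)$ to the vertex triple of $w$ in $W(G')$, and only the induced permutation of the three columns needs to be found. Writing the circuit of $C$ through $v$ as $v[W_{1}]v[W_{2}]$ and forming $C\ast v$ by reversing $W_{1}$, one reads off the new transitions from the way the reversal re-pairs half-edges. The outcome is: at $v$, the transition used changes from the one labelled $\phi_{C}(v)$ to the one labelled $\psi_{C}(v)$, and the old $\phi_{C}(v)$, $\chi_{C}(v)$ become the inconsistent and consistent unused transitions of the new scheme respectively, so $\rho(\phi_{G}(v))=\psi_{G'}(v)$, $\rho(\psi_{G}(v))=\phi_{G'}(v)$, $\rho(\chi_{G}(v))=\chi_{G'}(v)$; at a neighbour $w\in N$, the used transition is unchanged (the reversal does not alter the pairing of half-edges at $w$) but the consistent and inconsistent unused transitions are interchanged, so $\rho(\phi_{G}(w))=\phi_{G'}(w)$, $\rho(\chi_{G}(w))=\psi_{G'}(w)$, $\rho(\psi_{G}(w))=\chi_{G'}(w)$; and for $w\notin N\cup\{v\}$ the reversal merely interchanges globally the ``incoming'' and ``outgoing'' roles of the half-edges at $w$, which preserves both the used transition and the consistent/inconsistent labelling, so $\rho$ is the identity on $w$'s triple.

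The key step is to realise $\rho$ by a single invertible matrix. I claim $P=I+\mathbf{a}\,e_{v}^{T}$ works, in the sense that for every $x\in W(G)$ the column of $IAS(G')$ indexed by $\rho(x)$ equals $P$ times the column of $IAS(G)$ indexed by $x$ (here $e_{v}$ is the $v$-column of $I$, i.e.\ the column $\phi_{G}(v)$). Since $\det P=1+\mathbf{a}_{v}=1$ and in fact $P^{2}=I$, $P$ is invertible, so this column identity shows at once that $\rho$ is an isomorphism of binary matroids, completing the induction step. The verification is routine and I would not grind through it: the $\phi$-, $\chi$-, $\psi$-columns of $w$'s triple in $IAS(G)$ are $e_{w}$, $Ae_{w}$, $e_{w}+Ae_{w}$ and likewise in $IAS(G')$ with $A'$; one has $Pe_{w}=e_{w}$ for $w\neq v$, $Pe_{v}=e_{v}+\mathbf{a}$, $P(Ae_{w})=Ae_{w}+A_{vw}\mathbf{a}$, $P\mathbf{a}=\mathbf{a}$, and $A'e_{w}=Ae_{w}+\mathbf{a}_{w}(\mathbf{a}+e_{w})$; feeding these into the three cases $w=v$, $w\in N$, $w\notin N\cup\{v\}$ matches each equation demanded by the description of $\rho$ above (for instance, for $w\in N$ the requirement $\rho(\chi_{G}(w))=\psi_{G'}(w)$ reads $P(Ae_{w})=e_{w}+A'e_{w}$, and both sides equal $Ae_{w}+\mathbf{a}$).

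I expect the main obstacle to be pinning down $\rho$ exactly: the half-edge bookkeeping for the $\kappa$-transformation, and in particular getting right the consistent/inconsistent convention that separates $\chi_{C}$ from $\psi_{C}$ at $v$ and at its neighbours. Once $\rho$ is fixed, the matrix $P=I+\mathbf{a}e_{v}^{T}$ is forced by the columns at $v$ and outside $N$, and only the bookkeeping check above remains. Note that this argument is a concrete refinement of Theorem~\ref{theory}: it not merely reproves that $M[IAS(G)]\cong M[IAS(G')]$ whenever $G'=G^{v}_{s}$, but identifies the isomorphism as the one induced by the common labelling $\mathfrak{T}(F)$ of the columns, which is precisely what makes the matroid on $\mathfrak{T}(F)$ independent of $C$.
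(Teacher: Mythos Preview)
The paper does not prove this theorem; it merely quotes it from \cite{Tra}, so there is no ``paper's own proof'' to compare against. That said, your argument is correct and is essentially the standard one: reduce via Kotzig's theorem to a single $\kappa$-transformation $D=C\ast v$, track how the $\phi/\chi/\psi$ labels of $\mathfrak{T}(F)$ permute under the transformation, and exhibit the invertible $GF(2)$ matrix $P=I+\mathbf{a}\,e_{v}^{T}$ that realises this relabelling as a row operation on $IAS(\mathcal{I}(C))$. Your computation of $\rho$ (swap $\phi\leftrightarrow\psi$ at $v$, swap $\chi\leftrightarrow\psi$ at each neighbour of $v$, identity elsewhere) is right, and the nine column identities all check as you indicate; the formula $A'=A+\mathbf{a}\mathbf{a}^{T}+D$ for simple local complementation is also correct. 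This is exactly the change-of-basis calculation underlying the ``induced isomorphisms'' of \cite{Tnewnew} that the present paper invokes throughout, so your proof is in the same spirit as the cited source even though it is not reproduced here.
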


We call the matroid on $\mathfrak{T}(F)$ defined by any matrix
$IAS(\mathcal{I}(C))$ the \emph{transition matroid} of $F$, and denote it
$M_{\tau}(F)$. We refer to the transverse circuits, transverse matroids and
vertex triples of $\mathcal{I}(C)$ as transverse circuits, transverse matroids
and vertex triples of $M_{\tau}(F)$.

\subsection{Circuit partitions and touch-graphs}

As mentioned in Theorem \ref{jaeger2}, all transverse matroids of circle
graphs are cographic. The first special case of this property was observed by
Jaeger \cite{J}, who proved that if $G$ is a circle graph then the binary
matroid represented by the adjacency matrix of $G$ -- that is, the transverse
matroid consisting of all the $\chi$ elements of $W(G)$ -- is cographic.
Jaeger explained this result further in \cite{J1}, using the core vectors of
walks in a graph drawn on a surface. In his papers introducing isotropic
systems, Bouchet \cite{Bi1, Bi2} gave a different definition, related to
Jaeger's core vectors.

\begin{definition}
\label{touch}Let $F$ be a 4-regular graph, and $P$ a partition of the edge-set
of $F$ into edge-disjoint circuits. The \emph{touch-graph} $Tch(P)$ is the
graph with a vertex for each circuit of $P$ and an edge for each vertex of
$F$, the edge corresponding to $v$ incident on the vertex or vertices
corresponding to circuits of $P$ that are incident at $v$.
\end{definition}

Definition \ref{touch} is important for us because touch-graphs of circuit
partitions of a 4-regular graph $F$ are closely related to transverse matroids
of $M_{\tau}(F)$. This connection is given by the following result of
\cite{Tnew}. (See also \cite{BHT} and \cite{T5} for related results.) Recall that the \emph{vertex cocycle} associated with a vertex in a graph is the set of non-loop edges incident on that vertex.

\begin{theorem}
\label{coreker}(\cite{Tnew}) Let $P$ be a circuit partition of a 4-regular graph
$F$, let $C$ be an Euler system of $F$, and let $M(C,P)$ be the submatrix of
$IAS(\mathcal{I}(C))$ that includes those columns that correspond to
transitions included in $P$. Then the vertex cocycles of $Tch(P)$ span the
vector space%
\[
\ker M(C,P)=\{w\in GF(2)^{V(F)}\mid M(C,P)\cdot w=0\}\text{.}%
\]

\end{theorem}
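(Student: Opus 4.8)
The plan is to compute $\ker M(C,P)$ directly by understanding what it means, in combinatorial terms, for a vector $w \in GF(2)^{V(F)}$ to be orthogonal to every row of $M(C,P)$. Write $A = A(\mathcal{I}(C))$, so $IAS(\mathcal{I}(C)) = \begin{pmatrix} I & A & I+A\end{pmatrix}$ with rows indexed by $V(F)$. For each vertex $u \in V(F)$, the circuit partition $P$ uses exactly one transition at $u$, namely one of $\phi_C(u)$, $\chi_C(u)$, $\psi_C(u)$; correspondingly the column of $M(C,P)$ labelled by $u$ is the $u$-column of $I$, of $A$, or of $I+A$. Thus $M(C,P)\cdot w = 0$ is a system of $|V(F)|$ equations, one per vertex $u$, whose $u$-th equation is $w_u = 0$ if $P$ uses $\phi_C(u)$; is $\sum_{x \sim u} w_x = 0$ if $P$ uses $\chi_C(u)$; and is $w_u + \sum_{x \sim u} w_x = 0$ if $P$ uses $\psi_C(u)$.

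The first step is to give a clean description of the transitions used by $P$ in terms of how $P$ relates to the Euler system $C$. At a vertex $u$, the transition $\phi_C(u)$ is the one $C$ itself uses; $\chi_C(u)$ is the ``other'' non-$C$ transition consistent with the orientation; and $\psi_C(u)$ is the inconsistent one. The combinatorial heart of the matter is the classical fact (due to Bouchet, in the language of isotropic systems, and used in \cite{Tnew}) that the touch-graph $Tch(P)$ can be read off from $\mathcal{I}(C)$ together with the set of vertices at which $P$ differs from $C$: partitioning $V(F)$ according to which of the three transitions $P$ uses, one obtains $Tch(P)$ as a graph whose edges are the vertices of $F$ and whose vertices (the circuits of $P$) are the equivalence classes of an incidence relation determined by the interlacement data. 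I would make this precise and then observe that a vertex $u$ of $F$ — i.e.\ an edge of $Tch(P)$ — is a loop of $Tch(P)$ precisely when both half-edges-pairs of the transition used at $u$ lie on the same circuit of $P$, and is a non-loop edge joining circuits $c$ and $c'$ otherwise.

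The second step is to identify the vertex cocycles of $Tch(P)$ as vectors in $GF(2)^{V(F)}$: the cocycle of a circuit-vertex $c$ of $Tch(P)$ is the indicator vector of the set of vertices $u \in V(F)$ whose corresponding edge is a non-loop edge incident on $c$. I would show these cocycle vectors satisfy every equation of the system $M(C,P)\cdot w = 0$ — this is a direct check, handling the three cases ($P$ uses $\phi$, $\chi$, or $\psi$ at $u$) separately, using the description of incidence in $Tch(P)$ from Step 1 and the meaning of adjacency in $\mathcal{I}(C)$ as interlacement. This gives $\mathrm{span}(\text{vertex cocycles of } Tch(P)) \subseteq \ker M(C,P)$.

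The reverse inclusion, and with it the equality, I would obtain by a dimension count, which I expect to be the main obstacle. On one side, $\dim \ker M(C,P) = |V(F)| - \mathrm{rank}\, M(C,P)$, and $M(C,P)$ is a submatrix of $IAS(\mathcal{I}(C))$ obtained by selecting exactly one of the three columns in each vertex triple — so $\mathrm{rank}\, M(C,P)$ is the rank of a transverse matroid of $\mathcal{I}(C)$. On the other side, the span of the vertex cocycles of a graph $H$ is the cocycle space, of dimension $|V(H)| - (\text{number of connected components of } H)$; here $|V(Tch(P))|$ is the number of circuits in $P$, and one must show the number of components of $Tch(P)$ equals the number of connected components of $F$ that $P$ ``touches'' appropriately — in fact $Tch(P)$ is connected when $F$ is, since $P$ covers all edges of $F$. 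So the equality reduces to the numerical identity
\[
\mathrm{rank}(M(C,P)) \;=\; |V(F)| \;-\; \bigl(\,\#\{\text{circuits of }P\} - \#\{\text{components of }Tch(P)\}\,\bigr).
\]
I would prove this by an Euler-characteristic / interlacement-counting argument: both sides can be computed from the interlacement graph $\mathcal{I}(C)$ and the transition-type data, and the identity is an instance of the general principle relating the nullity of a submatrix of $IAS(G)$ to the structure of the associated graph (as in the nullity-of-looped-graphs results of Brijder–Hoogeboom cited in the excerpt). Concretely, I expect to induct on the size of the set $\{u : P \text{ does not use } \phi_C(u)\}$, changing one transition at a time and tracking how $\mathrm{rank}\, M(C,P)$ and the touch-graph both change by $0$ or $1$ at each step — the base case being $P = C$ itself, where $Tch(C)$ has one vertex per Euler circuit and $M(C,C) = I$, so both sides are $|V(F)|$ and the cocycle space is trivial, matching $\ker I = 0$.
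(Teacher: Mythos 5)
The paper does not prove this theorem; it cites it as a result from \cite{Tnew}. So there is no in-paper proof to compare against, and I can only assess the soundness of your argument on its own terms.

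There is a genuine error at the very first step: your system of equations does not describe $\ker M(C,P)$. You set up ``one equation per vertex $u$, whose $u$-th equation is $w_u = 0$ if $P$ uses $\phi_C(u)$; is $\sum_{x\sim u}w_x = 0$ if $P$ uses $\chi_C(u)$; and is $w_u + \sum_{x\sim u}w_x=0$ if $P$ uses $\psi_C(u)$.'' That system is exactly $M(C,P)^{\mathrm T}w = 0$, not $M(C,P)w=0$. The reason: the \emph{columns} of $M(C,P)$ are indexed by vertices according to the transition $P$ uses there (the $u$-column being $e_u$, $Ae_u$, or $(I+A)e_u$), but the \emph{rows} of $M(C,P)$ are just the rows of $IAS(\mathcal I(C))$, each one drawing its $v$-entry from whichever block the column $v$ came from. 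Working out $(M(C,P)w)_u$ directly gives
\[
(M(C,P)w)_u \;=\; [\tau(u)\in\{\phi,\psi\}]\,w_u \;+\; \sum_{\substack{v\sim u \\ \tau(v)\in\{\chi,\psi\}}} w_v,
\]
where $\tau(v)$ denotes the type ($\phi,\chi,\psi$) of the transition $P$ uses at $v$. Note this equation depends on the transition types at the \emph{neighbors} of $u$, not only at $u$ itself, and it is identical for $u\in V_\phi$ and $u\in V_\psi$. Your equations, by contrast, depend only on $\tau(u)$ and involve \emph{all} neighbors of $u$. Since $M(C,P)$ is not symmetric in general (mixing blocks of $I$, $A$, $I+A$ destroys symmetry --- indeed, the title of \cite{Tnew} emphasizes the nonsymmetric matrix), $\ker M(C,P)\neq\ker M(C,P)^{\mathrm T}$.

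The paper's own worked example after the theorem makes this concrete: for the $K_5$ circuit partition $P$ shown, the vector $(1,1,1,0,1)^{\mathrm T}$ (a vertex cocycle of $Tch(P)$) satisfies $M(C,P)w=0$ but fails $M(C,P)^{\mathrm T}w=0$ already in the first coordinate. In fact in that example your system forces $w_1=w_2=w_5=0$ and leaves $w_3,w_4$ free, so you would get $\ker M(C,P)^{\mathrm T}=\mathrm{span}\{e_3,e_4\}$, whereas the actual kernel is $\mathrm{span}\{(1,1,1,0,1),(1,1,0,1,0)\}$ --- same dimension, different subspace. So the two steps built on your setup (checking cocycles satisfy the equations, and the dimension count) cannot proceed as written; the cocycle vectors will not satisfy your equations.

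The higher-level outline --- check inclusion directly, then match dimensions using $\dim\ker M(C,P)=|V(F)|-\mathrm{rank}\,M(C,P)$ against the cocycle-space dimension $|V(Tch(P))|-c(Tch(P))$, with the base case $P=C$ giving $M(C,C)=I$ and an all-loops touch-graph --- is a reasonable strategy and that base case does check out. But everything at ground level needs to be redone with the correct row equations above. Once you have those, you will find that the ``correct'' equations are not local at $u$ in the way yours are, which is where the actual combinatorics of the touch-graph enters; a clean reference for the precise argument is \cite{Tnew} itself.
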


We state the following direct consequence explicitly, for ease of reference.

\begin{corollary}
\label{touchmat}
If $G$ is a circle graph then the transverse matroids of $G$ are all cographic.
\end{corollary}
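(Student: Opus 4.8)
The plan is to deduce Corollary~\ref{touchmat} directly from Theorem~\ref{coreker}, so the proof is essentially a matter of assembling the right translation between ``transverse matroid'' and ``cographic matroid.'' First I would observe that, by definition, every transverse matroid of $G$ is the restriction of $M[IAS(G)]$ to a transversal $T$ of $W(G)$, and since $G$ is a circle graph there is a $4$-regular graph $F$ and an Euler system $C$ with $\mathcal{I}(C)=G$; under the notational scheme for $\mathfrak{T}(F)$ induced by $C$, the transversal $T$ corresponds to a set of transitions, exactly one at each vertex of $F$. Choosing one single transition out of each transition and following the circuits determined by these transitions yields a circuit partition $P$ of $F$ with the property that the columns of $IAS(\mathcal{I}(C))$ indexed by $T$ are precisely the columns $M(C,P)$ of Theorem~\ref{coreker} --- indeed, a transversal picks one transition per vertex, and that is the same data as a circuit partition.

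Next I would use Theorem~\ref{coreker}: the transverse matroid $M[IAS(G)]|T = M[M(C,P)]$ is the column matroid of the matrix $M(C,P)$, and Theorem~\ref{coreker} identifies $\ker M(C,P)$ as the span of the vertex cocycles of $Tch(P)$. The key linear-algebra fact I would invoke is that for a binary matrix $N$ with columns indexed by a set $E$, the column matroid $M[N]$ has as its cocycle space (the row space of $N$) exactly the orthogonal complement of $\ker N$; dually, $M[N]^{*}$ is the matroid whose cocycle space is $\ker N$ itself. Since $\ker M(C,P)$ is spanned by the vertex cocycles of $Tch(P)$, and the edge set of $Tch(P)$ is $V(F)$ --- which is exactly the index set of the columns of $M(C,P)$ --- the space spanned by the vertex cocycles of $Tch(P)$ is precisely the cocycle space of the cycle matroid $M(Tch(P))$. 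Therefore $M[M(C,P)]^{*} = M(Tch(P))$, i.e.\ $M[M(C,P)] = M^{*}(Tch(P))$ is the cocycle matroid of a graph, hence cographic.

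The one point requiring a little care --- and the step I expect to be the main (minor) obstacle --- is the bookkeeping that identifies an arbitrary transversal $T$ of $W(G)$ with a circuit partition $P$ of $F$ so that the columns agree on the nose. A transversal selects, for each $v$, one of $\phi_G(v),\chi_G(v),\psi_G(v)$; under the scheme of Section~4.1 these correspond to the transitions $\phi_C(v),\chi_C(v),\psi_C(v)$ of $F$; and a choice of one transition at every vertex is exactly a circuit partition $P$, with the associated submatrix of $IAS(\mathcal{I}(C))$ being $M(C,P)$ by the definition preceding Theorem~\ref{coreker}. Once that correspondence is in place, the argument is immediate. I would therefore write the proof as: (i) fix $F,C$ with $\mathcal{I}(C)=G$; (ii) given a transverse matroid, realize it as $M[M(C,P)]$ for a circuit partition $P$; (iii) apply Theorem~\ref{coreker} to get $\ker M(C,P) = \langle\text{vertex cocycles of }Tch(P)\rangle$; (iv) conclude $M[M(C,P)]$ is the cocycle matroid of $Tch(P)$, hence cographic.
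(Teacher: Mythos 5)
Your proof is correct and follows the same route as the paper: both deduce the corollary from Theorem~\ref{coreker} by identifying a transversal $T$ of $W(G)$ with a circuit partition $P$ of $F$, so that the transverse matroid is the column matroid of $M(C,P)$, and then reading off from Theorem~\ref{coreker} that this matroid is $M^{*}(Tch(P))$. The paper states the duality step (cycle space of the column matroid equals $\ker M(C,P)$, hence the matroid is the bond matroid of $Tch(P)$) more tersely, but it is exactly the linear-algebra argument you spell out.
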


\begin{proof}
Let $C$ be an Euler system of a 4-regular graph $F$, with $G=\mathcal{I}(C)$. Theorem~\ref{coreker} tells us that for every circuit partition $P$ of $F$, the binary matroid represented by $M(C,P)$ is the cocycle matroid of $Tch(P)$. The corollary follows because the matroids represented by the various $M(C,P)$ matrices are the transverse matroids of $G$.
\end{proof}

Theorem \ref{coreker} generalizes and unifies ideas of Bouchet \cite{Bi1, B4}
and Jaeger \cite{J, J1}. Jaeger's core vector theory includes a special case
of Theorem \ref{coreker}, which requires (in our notation) that $P$ not follow
the $\phi_{C}(v)$ transition at any vertex. Bouchet's discussion of graphic
isotropic systems includes the cocycle spaces of touch-graphs, but does not
include an explicit matrix formulation.%

\begin{figure}
[ptb]
\begin{center}
\includegraphics[
trim=2.543092in 8.032988in 2.276293in 1.080750in,
height=1.4451in,
width=2.7856in
]%
{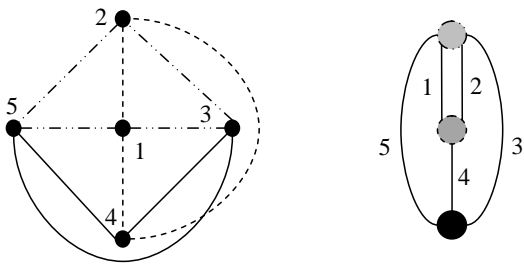}%
\caption{A circuit partition in $K_{5}$, and its touch-graph.}%
\label{circmf5}%
\end{center}
\end{figure}

As an example, consider the circuit partition $P$ of $K_{5}$ illustrated in
Figure \ref{circmf5}. The reader may verify that if $C$ and $D$ are the Euler
circuits of $K_{5}$ indicated in Figure \ref{circmf16} then the transitions
appearing in $P$ are $\phi_{C}(1)=\psi_{D}(1)$, $\phi_{C}(2)=\psi_{D}(2)$,
$\chi_{C}(3)=\chi_{D}(3)$, $\chi_{C}(4)=\phi_{D}(4)$ and $\phi_{C}(5)=\chi
_{D}(5)$. Consequently
\[
M(C,P)=%
\begin{pmatrix}
1 & 0 & 1 & 1 & 0\\
0 & 1 & 1 & 1 & 0\\
0 & 0 & 0 & 0 & 0\\
0 & 0 & 0 & 0 & 0\\
0 & 0 & 1 & 0 & 1
\end{pmatrix}
\text{ and }M(D,P)=%
\begin{pmatrix}
1 & 1 & 1 & 0 & 1\\
1 & 1 & 1 & 0 & 1\\
1 & 1 & 0 & 0 & 0\\
1 & 0 & 0 & 1 & 1\\
1 & 1 & 0 & 0 & 0
\end{pmatrix}
\text{.}%
\]
It is a simple matter to verify that the nonzero elements of $\ker M(C,P)=\ker
M(D,P)$ are the vertex cocycles of $Tch(P)$, i.e., the column vectors obtained
by transposing $(1,1,1,0,1)$, $(1,1,0,1,0)$ and $(0,0,1,1,1)$.

The following consequence of Theorem \ref{coreker} allows us to study the
transverse circuits of circle graphs using circuits of 4-regular graphs.

\begin{corollary}
\label{corekercor}Let $\gamma$ be a circuit of a 4-regular graph $F$, which is
not an Euler circuit of a connected component. Let $\tau(\gamma)$ denote the
set of transitions involved in $\gamma$. Then $\tau(\gamma)$ is a dependent
set of $M_{\tau}(F)$.
\end{corollary}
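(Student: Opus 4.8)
The plan is to enlarge $\gamma$ to a circuit partition of $F$ and then read off a dependency from Theorem~\ref{coreker}. Since $F$ is $4$-regular and $\gamma$ uses $0$, $2$, or $4$ half-edges at each vertex, the subgraph $F\setminus E(\gamma)$ has all even degrees, hence decomposes into edge-disjoint circuits; adjoining $\gamma$ produces a circuit partition $P$ of $F$. At every vertex $v$ met by $\gamma$, the transition that $P$ follows at $v$ is precisely the transition of $\tau(\gamma)$ at $v$: this is immediate when $\gamma$ passes through $v$ twice, and when $\gamma$ passes through $v$ once the two unused half-edges at $v$ admit only one pairing, so $P$ still follows the unique transition extending the single transition used by $\gamma$. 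Consequently, fixing any Euler system $C$ of $F$ and letting $M(C,P)$ be the submatrix of $IAS(\mathcal{I}(C))$ as in Theorem~\ref{coreker}, the columns of $M(C,P)$ that correspond to the transitions of $\tau(\gamma)$ are exactly the columns indexed by the vertices of $F$ met by $\gamma$.

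Next I would invoke Theorem~\ref{coreker}, which says that $\ker M(C,P)$ is spanned by the vertex cocycles of the touch-graph $Tch(P)$. Let $g$ be the vertex of $Tch(P)$ corresponding to $\gamma$. From the definition of $Tch(P)$, an edge of $Tch(P)$ is incident on $g$ exactly when the vertex of $F$ it represents is met by $\gamma$; hence the vertex cocycle of $g$, regarded as a vector over $GF(2)$ indexed by the columns of $M(C,P)$, has support contained in the set of columns corresponding to $\tau(\gamma)$. Padding this vector with zeros on the remaining columns of $IAS(\mathcal{I}(C))$ gives a vector $w$ with $IAS(\mathcal{I}(C))\cdot w=0$ whose support lies in $\tau(\gamma)$. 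Provided $w\neq 0$, this exhibits a nontrivial linear dependence among the columns of $IAS(\mathcal{I}(C))$ indexed by $\tau(\gamma)$, so $\tau(\gamma)$ is a dependent set of $M_{\tau}(F)=M[IAS(\mathcal{I}(C))]$, and we are done.

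The only place the hypothesis enters, and the one step I expect to require genuine (if modest) care, is verifying that $w\neq 0$, i.e., that the vertex cocycle of $g$ is nonempty, i.e., that $g$ is incident on at least one non-loop edge of $Tch(P)$. An edge of $Tch(P)$ at $g$ is a loop exactly when $\gamma$ passes through the corresponding vertex of $F$ twice, using all four half-edges there and leaving no half-edge for any other circuit of $P$. So if every edge at $g$ were a loop, $\gamma$ would use all edges incident on the set of vertices it meets; being a connected circuit, $\gamma$ would then be an Euler circuit of a connected component of $F$, contrary to hypothesis. Hence $g$ has a non-loop incident edge, $w\neq 0$, and the proof is complete.
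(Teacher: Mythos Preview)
Your proof is correct and follows essentially the same approach as the paper: extend $\gamma$ to a circuit partition $P$, apply Theorem~\ref{coreker} to identify the vertex cocycle of $\gamma$ in $Tch(P)$ as an element of $\ker M(C,P)$ supported on $\tau(\gamma)$, and use the hypothesis to ensure this cocycle is nonempty. The paper organizes the argument around the partition $V(F)=V_0\cup V_1\cup V_2$ by number of visits of $\gamma$, identifying the cocycle explicitly as $V_1$, but the substance is the same.
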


\begin{proof}
Let $\gamma$ be $v_{1},h_{1},h_{1}^{\prime},v_{2},h_{2},\ldots,h_{\ell}^{\prime
},v_{\ell+1}=v_{1}$. Then $V(F)=V_{0}\cup V_{1}\cup V_{2}$, where
$V_{k}=\{v\in V(F)\mid$ there are precisely $k$ values of $i\in\{1,\ldots,\ell\}$
with $v_{i}=v\}$. If $V_{1}=\emptyset$ then $\gamma$ includes all four
half-edges incident on each of $v_{1},\ldots,v_{\ell}$. Consequently no vertex of
$V_{2}$ neighbors any vertex outside $V_{2}$, so $\gamma$ is an Euler circuit
of a connected component with vertex-set $V_{2}$. By hypothesis, this is not
the case; hence $V_{1}\not =\emptyset$.

Arbitrarily choose transitions at vertices of $V_{0}$, and let $P$ denote the
circuit partition that involves these transitions in addition to the elements
of $\tau(\gamma)$. Let $C$ be any Euler system for $F$. Then Theorem
\ref{coreker} tells us that the vertex cocycle of $\gamma$ in $Tch(P)$ is an
element of $\ker M(C,P)$. This vertex cocycle is $V_{1}$, considered as a set
of edges in $Tch(P)$.

As $V_{1}\in\ker M(C,P)$, the columns of $M(C,P)$ corresponding to elements of
$V_{1}$ sum to $0$. These columns are the columns of $IAS(\mathcal{I}(C))$
corresponding to transitions of $P$ at vertices of $V_{1}$, so they correspond
to a subset of $\tau(\gamma)$. As $V_{1}\not =\emptyset$, it follows that
$\tau(\gamma)$ is dependent in $M_{\tau}(F)$.
\end{proof}

Theorem \ref{smallcir2} of the introduction follows readily. If $\gamma$ is
not an Euler circuit of a connected component, then Corollary \ref{corekercor}
tells us that $\tau(\gamma)$ contains a transverse circuit of $M_{\tau}(F)$.
If $\gamma$ is an Euler circuit of a connected component, then that connected
component has a non-Euler circuit $\gamma^{\prime}$; necessarily $\left\vert
\tau(\gamma^{\prime})\right\vert \leq\left\vert \tau(\gamma)\right\vert $, and
Corollary \ref{corekercor} tells us that $\tau(\gamma^{\prime})$ contains a
transverse circuit of $M_{\tau}(F)$.

%

\begin{figure}
[ptb]
\begin{center}
\includegraphics[
trim=1.331449in 8.021982in 0.671247in 1.204012in,
height=1.3638in,
width=4.8957in
]%
{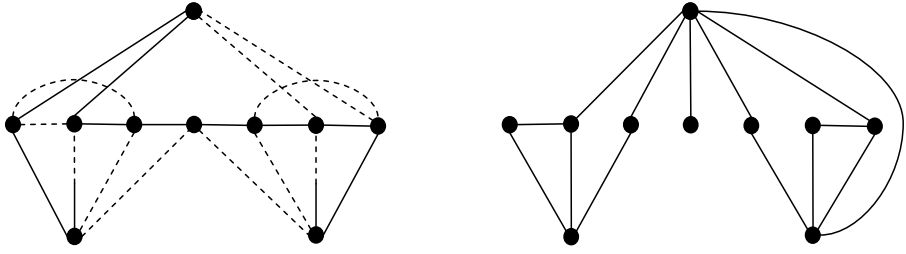}%
\caption{An Euler circuit and its interlacement graph.}%
\label{circmf2}%
\end{center}
\end{figure}

By the way, it can happen that a transverse circuit of $M_{\tau}(F)$ is
strictly smaller than every circuit in $F$. See Figure \ref{circmf2} for an
example. $F$ is a simple graph, so it has no circuit of size $<3$. But the
interlacement graph of the indicated Euler circuit $C$ has a transverse
circuit of size two: if $v$ is the vertex at the top of the figure and $w$ is
the vertex pendant on $v$ in $\mathcal{I}(C)$ then the columns of
$IAS(\mathcal{I}(C))$ representing $\phi_{C}(v)$ and $\chi_{C}(w)$ are equal,
so $\{\phi_{C}(v)$, $\chi_{C}(w)\}$ is a transverse circuit.

Before concluding this section we should explain the connection between
isotropic minors of circle graphs (as defined in Definition \ref{isominor})
and detachments of 4-regular graphs. Let $F$ be a 4-regular graph, and suppose
$t\in\mathfrak{T}(F)$ is a transition at a vertex $v$. Then the
\emph{detachment} of $F$\ along $t$ is the 4-regular graph $F^{\prime}$
obtained from $F$ by removing $v$ and forming two new edges from the four
half-edges incident at $v$, pairing together the half-edges according to
$t$. If $F$ has a loop at $v$, detachment along $t$ may result in one or
two \textquotedblleft free edges\textquotedblright\ that have no incident
vertex; any such free edge is simply discarded.

If $t$ is not a loop of $M_{\tau}(F)$, $F$ has an Euler system $C$ with
$\phi_{C}(v)=t$. As illustrated in Figure \ref{circmf10}, $F^{\prime}$ then
inherits an Euler system $C^{\prime}$ directly from $C$. Clearly
$\mathcal{I}(C^{\prime})$ is the induced subgraph of $\mathcal{I}(C)$ obtained
by removing $v$. Consequently $IAS(\mathcal{I}(C^{\prime}))$ is the matrix
obtained from $IAS(\mathcal{I}(C))$ by removing the $v$ row and all three
columns corresponding to $v$. As the only nonzero entry of the $\phi_{C}(v)$
column is a 1 in the $v$ row, the effect of these removals is that
\[
M[IAS(\mathcal{I}(C^{\prime}))]=(M[IAS(\mathcal{I}(C))]/\phi_{C}(v))-\chi
_{C}(v)-\psi_{C}(v)\text{.}%
\]
That is, $M_{\tau}(F^{\prime})$ is the isotropic minor of $M_{\tau}(F)$
obtained by contracting $t$.

If $t$ is a loop of $M_{\tau}(F)$, then $M_{\tau}(F)$ is the direct sum of
$M_{\tau}(F^{\prime})$ and the restriction of $M_{\tau}(F)$ to the vertex
triple of $v$. Consequently, $M_{\tau}(F^{\prime})$ is again the isotropic
minor of $M_{\tau}(F)$ obtained by contracting $t$.

%
\begin{figure}
[ptb]
\begin{center}
\includegraphics[
trim=1.336547in 7.762249in 0.803797in 1.075247in,
height=1.6535in,
width=4.7954in
]%
{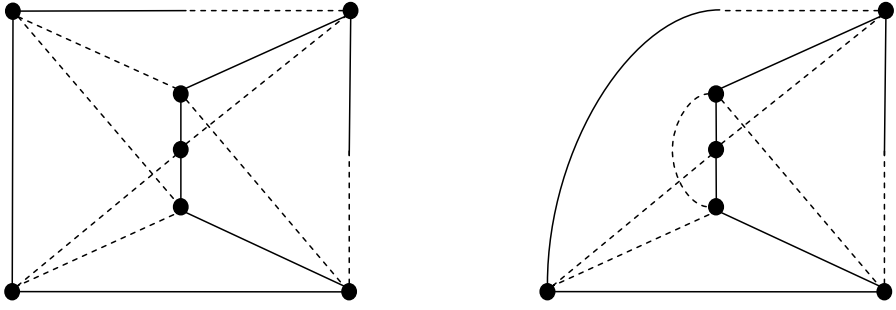}%
\caption{A detachment of a 4-regular graph inherits an Euler circuit.}%
\label{circmf10}%
\end{center}
\end{figure}

\section{Circle graphs with small transverse circuits}
\label{sec:circle_small_circuits}

Corollary \ref{corekercor} suggests that in order to characterize circle
graphs, we should obtain detailed information about small circuits in small
4-regular graphs.

\begin{proposition}
\label{smallcirc}Let $F$ be a 4-regular graph with $<9$ vertices. If $F$ has
no circuit of size $\leq3$, then $F$ is isomorphic to $K_{4,4}$.
\end{proposition}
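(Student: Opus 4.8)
The plan is to argue by a degree/girth counting argument, exploiting the fact that ``no circuit of size $\leq 3$'' is a very strong constraint on a 4-regular graph. First I would observe that a 4-regular graph with no circuit of size $\leq 3$ has no loop (a loop is a 1-circuit) and no multiple edge (a digon is a 2-circuit), so $F$ is a simple 4-regular graph; moreover it has no triangle, so its girth is at least $4$. Since $F$ is 4-regular and triangle-free, for any vertex $v$ the four neighbors of $v$ are pairwise nonadjacent, and (by girth $\geq 4$ again) no two of them share a common neighbor other than $v$. Counting the vertices at distance $\leq 2$ from $v$: we get $v$ itself, its $4$ neighbors, and their further neighbors. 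Each neighbor $u$ of $v$ has $3$ edges leaving it besides $uv$, and these $12$ edge-endpoints land on $12$ \emph{distinct} vertices, all distinct from $v$ and from the neighbors of $v$. So $|V(F)| \geq 1 + 4 + 12 = 17$ would follow \emph{unless} some of these distance-2 vertices coincide — but they can only coincide with each other, not collapse onto the first two shells. Here I must be careful: the bound $1+4+12$ is the Moore bound and would contradict $|V(F)| < 9$ outright, so in fact the distance-2 vertices must heavily overlap, i.e. $F$ must have many 4-cycles. The right way to organize this is: the distance-2 shell has size $\leq |V(F)| - 5 \leq 3$, yet it must ``absorb'' $12$ edge-endpoints coming from the neighbors of $v$, so every vertex in the distance-2 shell receives exactly $4$ of these endpoints (being 4-regular and already not adjacent to $v$), meaning every distance-2 vertex is adjacent to \emph{all four} neighbors of $v$.

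From this the structure is forced. Pick a vertex $v$ with neighborhood $N(v) = \{a_1, a_2, a_3, a_4\}$. The above shows the remaining vertices of $F$ form a set $B$ with $|B| \leq 3$, each element of $B$ adjacent to all of $a_1, a_2, a_3, a_4$; and since each $a_i$ has degree $4$ with one edge to $v$, it has exactly $3$ more edges, all going into $B$, forcing $|B| \geq 3$, hence $|B| = 3$ and $|V(F)| = 8$. Write $B = \{b_1, b_2, b_3\}$. Now every $a_i$ is adjacent to $v$ and to all of $b_1, b_2, b_3$ — that accounts for all $4$ of its edges, so the $a_i$ are pairwise nonadjacent (consistent with triangle-freeness). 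Similarly each $b_j$ is adjacent to $a_1, a_2, a_3, a_4$, which is already $4$ edges, so the $b_j$ are pairwise nonadjacent and none is adjacent to $v$. Thus $\{v, a_1, a_2, a_3, a_4\}$ splits as $\{v, b_1, b_2, b_3\} \,\dot\cup\, \{a_1, a_2, a_3, a_4\}$ wait — rather, the vertex set partitions as $\{a_1,a_2,a_3,a_4\}$ on one side and $\{v, b_1, b_2, b_3\}$ on the other, with every edge going between the two sides and every vertex having degree $4$: this is exactly $K_{4,4}$.

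The main obstacle is getting the shell-counting inequality tight and airtight — i.e. justifying cleanly that the $12$ edge-endpoints leaving $N(v)$ land on distinct vertices (this uses triangle-freeness to rule out landing back in $N(v)$, and girth $\geq 4$ to rule out two of them producing a 4-cycle through two \emph{common} further neighbors, though actually 4-cycles \emph{are} allowed here, so the distinctness claim needs the more careful phrasing that they cannot land on $v$ or on $N(v)$, only possibly coincide among themselves) and then that the bound $|V(F)| \leq 8$ forces each outer vertex to collect a full set of four such endpoints. Once that pigeonhole step is pinned down, the rest is the short deduction above that $F \cong K_{4,4}$. I would also double-check the trivial case analysis: if some vertex $v$ had a repeated neighbor or a loop we would already have a short circuit, so the assumption lets us start from a genuinely simple triangle-free 4-regular graph, and connectedness is not needed — a disconnected such graph would have a component that is a 4-regular triangle-free graph on fewer than $9$ vertices, and the same argument inside that component gives $8$ vertices, leaving no room for a second component, so $F$ itself is connected and equals $K_{4,4}$.
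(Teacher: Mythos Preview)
Your argument is correct, but it takes a genuinely different route from the paper. The paper's proof is a one-line appeal to an external enumeration: it cites Meringer's tables to list the ten simple 4-regular graphs of order $<9$, displays the nine other than $K_{4,4}$ in a figure, and observes that each visibly contains a triangle. Your proof is instead a self-contained Moore-bound-style counting argument: after reducing to the simple triangle-free case, you fix a vertex $v$, note that each of its four neighbours sends three further edges into the complement $B = V(F) \setminus (\{v\} \cup N(v))$, and use $|B| \leq 3$ together with simplicity to force $|B| = 3$ and every $b \in B$ adjacent to all of $N(v)$, which pins down $K_{4,4}$. This is essentially the classical proof that $K_{4,4}$ is the unique $4$-regular graph of girth $4$ on $8$ vertices (the $(4,4)$-cage). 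Your approach has the advantage of being elementary and not depending on computer enumeration; the paper's approach is terser but outsources the work. One expository remark: your draft contains mid-stream self-corrections (``wait --- rather'') and an early claim that neighbours of $v$ share no common neighbour besides $v$, which would need girth $\geq 5$; you rightly abandon that claim and the final argument does not use it, but in a cleaned-up version you should simply omit it.
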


\begin{proof}
Suppose $F$ has no circuit of size $<3$; then $F$ is a simple graph. According
to data tabulated by Meringer \cite{Me}, there are only ten simple, 4-regular
graphs of order $<9$, up to isomorphism. One is $K_{4,4}$. The other nine are
pictured in Figure \ref{circmf17}. It is evident that all nine have 3-circuits.
\end{proof}

%

\begin{figure}
[ht]
\begin{center}
\includegraphics[
trim=1.603346in 3.745206in 0.000000in 1.208415in,
height=4.5688in,
width=5.1958in
]%
{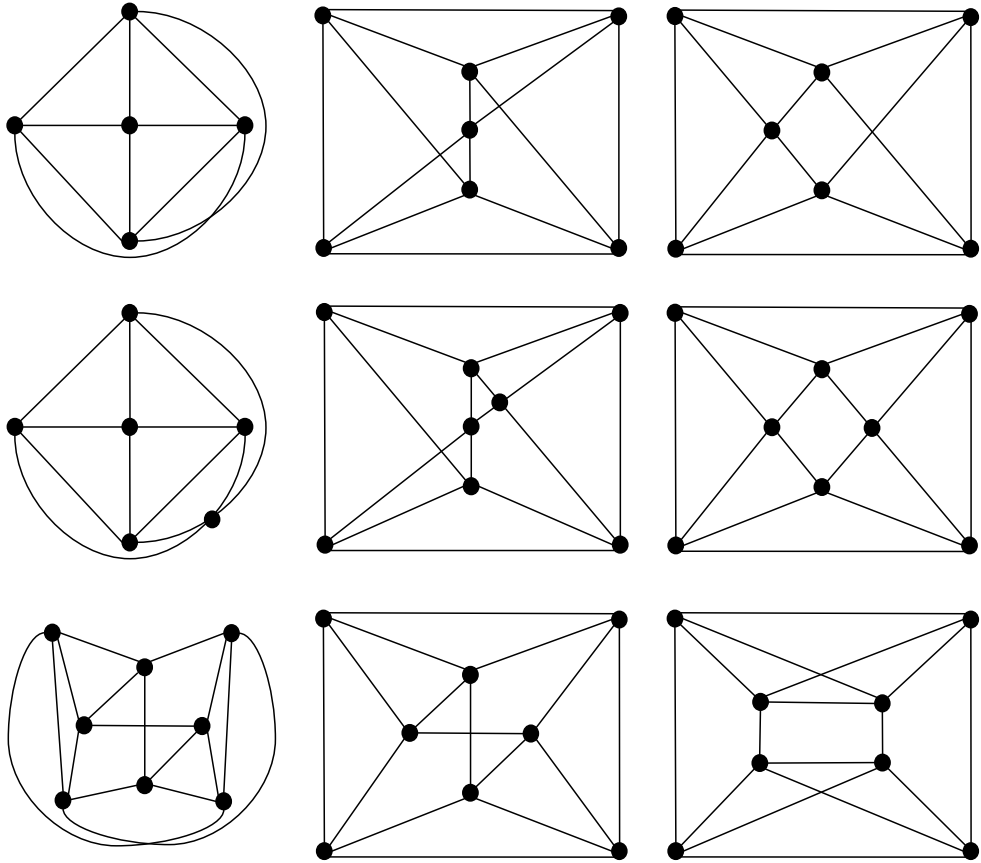}%
\caption{The simple 4-regular graphs of order $\leq8$, other than $K_{4,4}$.}%
\label{circmf17}%
\end{center}
\end{figure}

Close inspection of Figure \ref{circmf17} yields a more elaborate form of
Proposition \ref{smallcirc}, which will also be useful. In stating this
proposition we use a convenient shorthand, specifying a circuit by simply
listing the incident vertices in order.

\begin{proposition}
\label{smallcirc3}Let $F$ be a simple 4-regular graph with $<9$ vertices,
which is not isomorphic to $K_{4,4}$. Then $F$ has two distinct 3-circuits
$\gamma_{1}=v_{1}v_{2}w_{1}$ and $\gamma_{2}=v_{1}v_{2}w_{2}$, and an Euler
circuit of the form $v_{1}w_{1}v_{2}v_{1}w_{2}v_{2}$\ldots
\end{proposition}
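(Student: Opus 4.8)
The statement is purely about a finite list of small graphs, so the natural strategy is a finite case check built on Proposition~\ref{smallcirc} and the explicit catalogue in Figure~\ref{circmf17}. First I would record the structure we are after: we want two $3$-circuits sharing the \emph{edge} $v_1v_2$ (so $v_1v_2, v_2w_1, w_1v_1$ and $v_1v_2, v_2w_2, w_2v_1$ are all edges, with $w_1\neq w_2$), and then an Euler circuit that, read off as a cyclic sequence of vertices, contains the consecutive block $v_1\,w_1\,v_2\,v_1\,w_2\,v_2$. Equivalently, at $v_1$ the Euler circuit uses the transition pairing the half-edge to $w_1$ with the half-edge to $w_2$ (and separately the two half-edges on the edge $v_1v_2$), and symmetrically at $v_2$; the block then says the circuit goes $\dots v_1 w_1 v_2 v_1 w_2 v_2\dots$, i.e.\ it traverses $v_1w_1$, then $w_1v_2$, then the edge $v_2v_1$, then $v_1w_2$, then $w_2v_2$. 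So the combinatorial content to verify for each graph is: (i)~there is a pair of triangles sharing an edge; (ii)~having fixed such a pair, deleting the two half-edges at $v_1$ going to $w_1,w_2$ and pairing them, and likewise at $v_2$, and doing nothing elsewhere, the resulting transition system yields a single Euler circuit (not two or more disjoint circuits), and that circuit contains the stated block.

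\textbf{Key steps.} Step 1: invoke Proposition~\ref{smallcirc} to reduce to the nine graphs of Figure~\ref{circmf17}, since any simple $4$-regular graph on $<9$ vertices not isomorphic to $K_{4,4}$ has a $3$-circuit --- but we actually need more, namely a pair of edge-sharing triangles, so I would first argue that each of the nine graphs has such a pair. The cleanest way is to note that in a simple $4$-regular graph a single triangle $v_1v_2w_1$ already forces many edges, and by inspection of the nine pictures each contains two triangles on a common edge (this is a finite visual check; for a few of the more symmetric graphs, like the complement of $C_8$ or the $3$-cube-plus-diagonals type graphs, one just points to an explicit pair). Step 2: for each graph, fix one such pair $\gamma_1=v_1v_2w_1$, $\gamma_2=v_1v_2w_2$ and let $z_1$ (resp.\ $z_2$) be the fourth neighbour of $v_1$ (resp.\ $v_2$), i.e.\ the neighbour not among $\{v_2,w_1,w_2\}$. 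Define the transition system $T$: at $v_1$, pair $\{$half-edge to $w_1$, half-edge to $w_2\}$ and $\{$half-edge to $v_2$, half-edge to $z_1\}$; at $v_2$, pair $\{$to $w_1$, to $w_2\}$ and $\{$to $v_1$, to $z_2\}$; at every other vertex pick the transitions arbitrarily (or in whatever way makes the bookkeeping easiest). Step 3: check that the resulting Eulerian decomposition of $F$ is a single closed walk. This is where the finite check has teeth: a priori $T$ could split $F$ into two or more circuits. I would handle it by observing that the ``local'' part $v_1 w_1 v_2 v_1 w_2 v_2$ is forced to be a sub-walk of one circuit of the decomposition (it uses both edges $v_1w_1, v_1w_2$ at $v_1$ on one pass and the edge $v_1v_2$ on the return), and then tracing the remaining half-edges through the at-most-six other vertices; since $F$ is connected on $\le 8$ vertices, with care in Step~2 one can always choose the remaining transitions so that nothing closes off prematurely --- in fact, since we have freedom at the other vertices, we can use Kotzig-type $\kappa$-switching to merge any extra circuits into the main one. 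So Step 3 reduces to: start from \emph{any} $T$ with the prescribed transitions at $v_1,v_2$, and then repeatedly apply $\kappa$-transformations at the other vertices to merge components, which is always possible as long as two components share a vertex, and they must since $F$ is connected.

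\textbf{The main obstacle.} The one genuinely delicate point is Step~3: guaranteeing that we can realize \emph{an Euler circuit} (a single one) with the prescribed transitions at $v_1$ and $v_2$ simultaneously. The local configuration pins down transitions at two vertices; the subtlety is that these two pins might be incompatible with connectivity --- e.g.\ they might force $\{v_1,w_1,w_2,v_2\}$ (or that set plus one more vertex) to be an invariant subset traversed as its own circuit. This is exactly why the hypothesis ``not isomorphic to $K_{4,4}$'' and ``$<9$ vertices'' is being used in a finite way rather than abstractly. I expect the honest resolution is: for each of the nine graphs, exhibit the Euler circuit explicitly as a vertex sequence starting $v_1w_1v_2v_1w_2v_2\dots$ (each such verification is a one-line sequence once the vertex labels from Figure~\ref{circmf17} are fixed), and check it is an Euler circuit by counting edges. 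So the ``proof'' is really nine explicit Euler circuits plus the remark that Proposition~\ref{smallcirc} (via Figure~\ref{circmf17}) covers all cases; the write-up should acknowledge that it is a finite check ``close inspection of Figure~\ref{circmf17}'' in the spirit of the surrounding text, rather than pretend there is a slick uniform argument.
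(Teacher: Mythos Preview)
Your proposal is correct and takes essentially the same approach as the paper: the paper offers no formal proof of this proposition at all, merely the sentence ``Close inspection of Figure~\ref{circmf17} yields a more elaborate form of Proposition~\ref{smallcirc},'' so your plan to reduce to the nine graphs and exhibit the required pair of triangles and Euler circuit by direct inspection is exactly what is intended. One small slip in your working: in the block $v_1 w_1 v_2 v_1 w_2 v_2$, the transitions at $v_1$ pair $\{z_1\text{-half-edge},\, w_1\text{-half-edge}\}$ and $\{v_2\text{-half-edge},\, w_2\text{-half-edge}\}$, not $\{w_1,w_2\}$ together as you wrote; but since you ultimately fall back on explicit per-graph verification this does not affect your conclusion.
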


\begin{corollary}
\label{jaeger1}A simple graph $G$ is a circle graph if and only if $G$
satisfies these two conditions.

\begin{enumerate}
\item Every transverse matroid of $G$ is cographic.

\item If an isotropic minor of $G$ of size $\leq24$ does not have a loop or a
pair of intersecting $3$-circuits, then it is isomorphic to $M_{\tau}%
(K_{4,4})$.
\end{enumerate}
\end{corollary}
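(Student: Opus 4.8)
The plan is to prove the equivalence in three moves: first an auxiliary lemma (the one the remark after Theorem~\ref{jaeger2} defers to here), then the two implications, both routed through Bouchet's theorem (Theorem~\ref{bouchet}) and the identification of isotropic minors of $G$ with isotropic matroids of vertex-minors of $G$.

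The auxiliary lemma I would establish first is: for a simple graph $H$, the matroid $M[IAS(H)]$ has a transverse circuit of size $\leq 3$ if and only if it has a loop or a pair of distinct intersecting $3$-circuits. The heart of the lemma is the structural observation that when $M[IAS(H)]$ is loopless (equivalently, $H$ has no isolated vertex) every $3$-circuit of $M[IAS(H)]$ is either a transverse circuit --- three elements lying in three distinct vertex triples --- or an entire vertex triple: if a $3$-circuit $\{a,b,c\}$ has $a,b$ in one vertex triple whose third element is $d$, then $a+b=d$ since vertex triples sum to $0$, while $a+b=c$ since $\{a,b,c\}$ is a circuit, so $c=d$ and $\{a,b,c\}$ is that triple (here looplessness is used to see $a,b,c$ are distinct). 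Since distinct vertex triples are disjoint, two distinct intersecting $3$-circuits cannot both be triples, so at least one is a transverse $3$-circuit; conversely a transverse $3$-circuit shares an element with, and is distinct from, the vertex triple of any one of its three vertices, a transverse $2$-circuit $\{x,y\}$ (so $x=y$ as columns, $x$ in a triple $\{x,x_1,x_2\}$, $y$ in a triple $\{y,y_1,y_2\}$) yields the two distinct $3$-circuits $\{y,x_1,x_2\}$ and $\{y,y_1,y_2\}$ through $y$, and a transverse $1$-circuit is a loop, while a loop is itself a transverse $1$-circuit. This lemma is short but I expect the ``every $3$-circuit is transverse or a triple'' step to be the conceptual core of the whole argument.

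For the forward direction, suppose $G$ is a circle graph. Condition~1 is exactly Corollary~\ref{touchmat}. For condition~2, let $M$ be an isotropic minor of $G$ of size $\leq 24$ with no loop and no pair of intersecting $3$-circuits; by the lemma $M$ has no transverse circuit of size $\leq 3$. Write $M=M[IAS(H)]$ for a vertex-minor $H$ of $G$; then $H$ is a circle graph with $|V(H)|=|M|/3\leq 8$, so $H=\mathcal{I}(C)$ for an Euler system $C$ of a $4$-regular graph $F$ with $|V(F)|=|V(H)|\leq 8$, and $M=M_\tau(F)$ as matroids. By Theorem~\ref{smallcir2}, since $M_\tau(F)$ has no transverse circuit of size $\leq 3$, the graph $F$ has no circuit of size $\leq 3$; Proposition~\ref{smallcirc} then forces $F\cong K_{4,4}$, whence $M=M_\tau(F)\cong M_\tau(K_{4,4})$. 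For the reverse direction, assume $G$ satisfies conditions~1 and~2; by Theorem~\ref{bouchet} it suffices to show none of $M[IAS(W_5)]$, $M[IAS(BW_3)]$, $M[IAS(W_7)]$ is an isotropic minor of $G$. For $W\in\{W_5,W_7\}$ the matroid $M[IAS(W)]$ has size $\leq 24$ and (as noted for $W_5$ in the discussion preceding Corollary~\ref{smallchar}, and checked similarly for $W_7$) has no loop and no pair of intersecting $3$-circuits, so were it an isotropic minor of $G$, condition~2 would give $M[IAS(W)]\cong M_\tau(K_{4,4})=M[IAS(\mathcal{I}(C))]$ for an Euler system $C$ of $K_{4,4}$, and Theorem~\ref{theory} would make $W$ locally equivalent up to isomorphism to the circle graph $\mathcal{I}(C)$, hence a circle graph --- contradicting Bouchet's theorem (for $W_5$ one may instead simply note $18\neq 24$). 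For $BW_3$: if $M[IAS(BW_3)]$ were an isotropic minor of $G$ then $BW_3$ is a vertex-minor of $G$, and since every transverse matroid of a vertex-minor of $G$ is a minor of a transverse matroid of $G$ and cographic matroids are minor-closed, condition~1 would force every transverse matroid of $BW_3$ to be cographic, contradicting the fact (the content of ``condition~1 excludes $BW_3$'') that $BW_3$ has a non-cographic transverse matroid.

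The main obstacle, beyond getting the bookkeeping of the lemma exactly right, is the reverse direction for $W_7$: because $|M[IAS(W_7)]|=24=|M_\tau(K_{4,4})|$, one cannot dispose of $W_7$ by a cardinality count and must instead convert the putative isomorphism $M[IAS(W_7)]\cong M_\tau(K_{4,4})$ into the contradiction ``$W_7$ is a circle graph'' via Theorem~\ref{theory}. The remaining inputs --- the small-circuit structure of $M[IAS(W_5)]$ and $M[IAS(W_7)]$, and the existence of a non-cographic transverse matroid of $BW_3$ --- are routine finite verifications.
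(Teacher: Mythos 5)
Your overall structure is the paper's: condition~1 from Corollary~\ref{touchmat}; condition~2 from Proposition~\ref{smallcirc}, Theorem~\ref{smallcir2}, and the ``transverse circuit of size $\leq 3$ iff loop or pair of intersecting $3$-circuits'' equivalence; and the converse routed through Theorem~\ref{bouchet}. One stylistic difference worth noting: for $W_7$ you derive $M[IAS(W_7)]\not\cong M_{\tau}(K_{4,4})$ from Theorem~\ref{theory} together with Bouchet's theorem, rather than citing the non-isomorphism as a separately verified fact, which is a little cleaner since it avoids an extra finite check.

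There is, however, one small but genuine gap in your auxiliary lemma. The claim you call the conceptual core --- that in a loopless $M[IAS(H)]$ every $3$-circuit is either transverse or an entire vertex triple --- is false as stated. The identity $a+b=c=d$ holds only at the level of column vectors and does not force $c=d$ as ground-set elements: $c$ and $d$ may be parallel elements lying in distinct vertex triples. For example, if $H$ is the $3$-vertex path with end vertex $v$ adjacent to middle vertex $u$, then $\chi_H(v)$ and $\phi_H(u)$ are equal columns, and $\{\chi_H(v),\chi_H(u),\psi_H(u)\}$ is a $3$-circuit that is neither transverse nor the vertex triple of $u$. Your lemma \emph{statement} still survives, because in this exceptional case $\{c,d\}$ is a transverse $2$-circuit and so there is a transverse circuit of size $\leq 3$ by another route; but the $(\Leftarrow)$ argument must record that possibility rather than conclude that one of the two intersecting $3$-circuits is itself transverse. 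The paper sidesteps the issue entirely by arguing the relevant direction as a direct case analysis on loops, transverse $2$-circuits, and transverse $3$-circuits, never asserting the problematic structural claim.
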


\begin{proof}
Suppose $G$ is an interlacement graph of a 4-regular graph $F$. Condition 1
follows from Corollary~\ref{touchmat}.

As discussed at the end of Section 4, the isotropic minors of $G$ are
isotropic matroids of interlacement graphs of 4-regular graphs obtained from
$F$ through detachment. Consequently, if $M$ is an isotropic minor of $G$ of
size $\leq24$ then $M$ is the isotropic matroid of a circle graph $H$ with
$\leq8$ vertices. Proposition \ref{smallcirc} tells us that if $H$ is not an
interlacement graph of $K_{4,4}$ then it is an interlacement graph of a
4-regular graph with a circuit of size $\leq3$. Theorem \ref{smallcir2} tells
us that $H$ has a transverse circuit of size $\leq3$.

If $M[IAS(H)]$ has a loop, then of course condition 2 is satisfied. Suppose
$M[IAS(H)]$ has no loop; then every vertex triple is a 3-circuit. If $H$ has a
transverse 2-circuit, then $H$ has two vertices $v$ and $w$ whose vertex
triples include elements that are parallel in $M[IAS(H)]$; there is a
3-circuit obtained by replacing one element of the vertex triple of $v$ with a
parallel from the vertex triple of $w$. If $H$ has no transverse circuit of
size $\leq2$ then it must have a transverse 3-circuit, which certainly
intersects the three corresponding vertex triples.

Suppose conversely that $G$ satisfies both conditions. Condition 1 guarantees
that no isotropic minor of $M[IAS(G)]$ is isomorphic to $M[IAS(BW_{3})]$,
which has the Fano matroid and the dual Fano matroid as non-cographic
transverse matroids. Condition 2 guarantees that no isotropic minor of $G$ is
isomorphic to $M[IAS(W_{5})]$ or $M[IAS(W_{7})]$, neither of which is
isomorphic to $M_{\tau}(K_{4,4})$ or has a transverse circuit of size $\leq3$.
It follows from Theorem \ref{bouchet} that $G$ is a circle graph.
\end{proof}

Conceptually, condition 1 of Corollary \ref{jaeger1} may seem to be more
interesting than condition 2, because cographic matroids are well-studied and
the fact that transverse matroids of circle graphs are cographic is explained
by their connection with touch-graphs. However, we have not been able to
formulate a matroidal characterization of circle graphs that references only
broad properties like \textquotedblleft cographic.\textquotedblright\ Instead,
Proposition \ref{smallcirc3} allows us to replace condition 1 of Corollary
\ref{jaeger1} with a requirement involving 3-circuits.

\begin{corollary}
\label{jaeger7}A simple graph $G$ is a circle graph if and only if every
isotropic minor $M$ of $G$, of size $\leq24$, satisfies at least one of the
following conditions:

\begin{enumerate}
\item $M$ has a transverse circuit of size $\leq2$.

\item $M$ has a pair of transverse 3-circuits that are not both contained in
any one transversal of $M$.

\item $M$ is isomorphic to $M_{\tau}(K_{4,4})$.
\end{enumerate}
\end{corollary}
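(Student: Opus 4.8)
The plan is to mirror the proof of Corollary~\ref{jaeger1}, but with Proposition~\ref{smallcirc3} taking the place of the cographic hypothesis so that the whole statement is phrased in terms of small transverse circuits; I would prove the two implications separately.

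For the ``only if'' direction, take a circle graph $G$ and an isotropic minor $M$ of $G$ of size $\le 24$. Then $M=M[IAS(H)]$ for a vertex-minor $H$ of $G$; since circle graphs are closed under vertex-minors, $H$ is a circle graph, and $|M|\le 24$ forces $|V(H)|\le 8$, so $H=\mathcal{I}(C')$ for an Euler system $C'$ of a $4$-regular graph $F'$ with $|V(F')|=|V(H)|$ and $M\cong M_{\tau}(F')$. I would split into three cases. If $F'$ is not simple it has a loop or a pair of parallel edges, hence a circuit $\gamma$ of length $\le 2$ that is not an Euler circuit of a component; Corollary~\ref{corekercor} then makes the subtransversal $\tau(\gamma)$ dependent, so $M$ has a transverse circuit of size $\le 2$ and condition~1 holds. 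If $F'\cong K_{4,4}$ then $M\cong M_{\tau}(K_{4,4})$ by the definition of the transition matroid, so condition~3 holds. Otherwise $F'$ is simple and not $K_{4,4}$, and Proposition~\ref{smallcirc3} provides distinct $3$-circuits $\gamma_1=v_1v_2w_1$ and $\gamma_2=v_1v_2w_2$; by Corollary~\ref{corekercor} each $\tau(\gamma_i)$ is a dependent $3$-element subtransversal, hence contains a transverse circuit of size $\le 3$. If either has size $\le 2$, condition~1 holds; otherwise $\tau(\gamma_1)$ and $\tau(\gamma_2)$ are themselves transverse $3$-circuits, they are distinct (only $\tau(\gamma_1)$ involves $w_1$, and $w_1\ne w_2$ lies off $\gamma_2$), and they lie in no common transversal because at the shared vertex $v_1$ they use different transitions of $\mathfrak{T}(F')$: one pairs the half-edge toward $v_2$ with the half-edge toward $w_1$, the other pairs it with the half-edge toward $w_2$, and $v_1$ has four distinct neighbours. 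So condition~2 holds, and since $M$ was arbitrary the ``only if'' direction follows.

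For the ``if'' direction I would argue the contrapositive. If $G$ is not a circle graph then by Theorem~\ref{bouchet} one of $M[IAS(W_5)]$, $M[IAS(BW_3)]$, $M[IAS(W_7)]$ is an isotropic minor of $G$, each of size $\le 24$, and it is enough to check that none of these three matroids satisfies any of conditions~1--3. Condition~3 fails for the first two because $W_5$ and $BW_3$ have fewer than eight vertices, so their isotropic matroids have size $<24=|M_{\tau}(K_{4,4})|$; and it fails for $M[IAS(W_7)]$ because an isomorphism $M[IAS(W_7)]\cong M_{\tau}(K_{4,4})=M[IAS(\mathcal{I}(C))]$, with $C$ an Euler system of $K_{4,4}$, would by Theorem~\ref{theory} make $W_7$ locally equivalent up to isomorphism to the circle graph $\mathcal{I}(C)$, hence a circle graph. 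Conditions~1 and~2 both entail a transverse circuit of size $\le 3$, so it remains to check that $M[IAS(W_5)]$ and $M[IAS(W_7)]$ have no transverse circuit of size $\le 3$ at all --- the former by Corollary~\ref{smallchar} (as $W_5$ has six vertices and is not a circle graph), the latter as noted in the proof of Corollary~\ref{jaeger1} --- and that $M[IAS(BW_3)]$ has no transverse circuit of size $\le 2$ and no pair of transverse $3$-circuits that fail to lie in a common transversal, which one verifies by inspecting $IAS(BW_3)$ (see \cite{BT1}). Then one of the three obstruction matroids occurs as a small isotropic minor of $G$ satisfying none of conditions~1--3, which is what was required.

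I expect the step needing the most care to be the last part of the ``only if'' analysis: passing from the two triangles $\gamma_1,\gamma_2$ of $F'$ to the transition set $\mathfrak{T}(F')$, and confirming simultaneously that $\tau(\gamma_1)$ and $\tau(\gamma_2)$ are distinct transverse $3$-circuits and that they cannot be placed in a single transversal. (The Euler circuit produced by Proposition~\ref{smallcirc3} is not needed here, only in the more delicate arguments elsewhere.) The rest is the finite verification that the three obstruction matroids fail conditions~1--3, together with a routine assembly of Corollary~\ref{corekercor}, Corollary~\ref{smallchar}, Theorem~\ref{theory}, and the census of small $4$-regular graphs behind Propositions~\ref{smallcirc}--\ref{smallcirc3}.
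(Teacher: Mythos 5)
Your proof is correct and follows essentially the same route as the paper's: both directions reduce to Bouchet's theorem, Proposition~\ref{smallcirc3}, and Corollary~\ref{corekercor}, with the ``only if'' case split governed by whether the associated $4$-regular graph is simple, $K_{4,4}$, or neither, and the ``if'' direction checked by a finite inspection of $W_5$, $BW_3$ and $W_7$. The one genuinely added touch is your clean justification that $M[IAS(W_7)]\not\cong M_\tau(K_{4,4})$ via Theorem~\ref{theory}, which the paper simply asserts; otherwise the argument is the same, including the key observation that $\tau(\gamma_1)$ and $\tau(\gamma_2)$ use different transitions at the shared vertex $v_1$ and hence cannot both sit in a single transversal.
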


\begin{proof}
Suppose $G$ is an interlacement graph of a 4-regular graph $F$, and $M$ is an
isotropic minor of $G$ of size $\leq24$. Then $M$ is the isotropic matroid of
an interlacement graph $H$ of a 4-regular graph $F^{\prime}$ of order $\leq8$.
Suppose $M$ has no loop or transverse 2-circuit, and $H$ is not an
interlacement graph of $K_{4,4}$. Then Theorem~\ref{smallcir2} tells us that $F^{\prime}$ is simple, and Proposition \ref{smallcirc3} tells us
that $F^{\prime}$ has two distinct 3-circuits $\gamma_{1}=v_{1}v_{2}w_{1}$ and
$\gamma_{2}=v_{1}v_{2}w_{2}$ that share precisely one edge. Corollary
\ref{corekercor} tells us that $\tau(\gamma_{1})$ and $\tau(\gamma_{2})$ are
transverse 3-circuits of $M$. As $\gamma_{1}$ and $\gamma_{2}$ do not involve
the same transition at $v_{1}$ or $v_{2}$, no subtransversal of $H$ contains
more than four elements of $\tau(\gamma_{1})\cup\tau(\gamma_{2})$.

For the converse, suppose $G$ satisfies the statement. Then no isotropic minor
of $G$ is isomorphic to $M[IAS(W_{5})]$ or $M[IAS(W_{7})]$, because neither of
these isotropic matroids is isomorphic to $M_{\tau}(K_{4,4})$ and neither has
any transverse circuit of size $\leq3$. $BW_{3}$ does have transverse
3-circuits: the neighborhood circuits of the degree-2 vertices are of size 3,
and the $\chi_{G}$ elements of the three degree-2 vertices also constitute a
transverse circuit. A computer search using the matroid module for Sage
\cite{sageMatroid, sage} verifies that these four are the only transverse
3-circuits of $BW_{3}$, though, and clearly their union is a transversal.
\end{proof}

Recall the notation used in the introduction: If $G$ is a simple graph, then
$\mathcal{VM}_{8}(G)$ denotes the set of vertex-minors of $G$ with 8 or fewer
vertices. Using this notation and the fact that the isotropic minors of $G$
are the isotropic matroids of vertex-minors of $G$, we may rephrase Corollary
\ref{jaeger7} as follows.

\begin{corollary}
\label{jaeger8}A simple graph $G$ is a circle graph if and only if every
$H\in\mathcal{VM}_{8}(G)$ that is not an interlacement graph of $K_{4,4}$, is
locally equivalent to a graph with a vertex of degree $0$ or $1$ or with two
adjacent degree-$2$ vertices.
\end{corollary}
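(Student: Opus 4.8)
The plan is to obtain Corollary~\ref{jaeger8} from Corollary~\ref{jaeger7} by rewriting each of the three conditions there as a statement about the vertex-minor $H$ rather than about its isotropic matroid $M=M[IAS(H)]$. Since an isotropic matroid has order $3|V|$, the isotropic minors of $G$ of size $\le 24$ are exactly the $M[IAS(H)]$ with $H\in\mathcal{VM}_8(G)$ (by the cited fact that isotropic minors of $G$ are the isotropic matroids of vertex-minors of $G$), and $M_{\tau}(K_{4,4})$ has order $24$. I would then prove three ``dictionary entries''. First, condition~3 of Corollary~\ref{jaeger7} holds for $M[IAS(H)]$ if and only if $H$ is an interlacement graph of $K_{4,4}$: indeed $M_{\tau}(K_{4,4})=M[IAS(\mathcal{I}(C))]$ for any Euler system $C$ of $K_{4,4}$, so by Theorem~\ref{theory} $M[IAS(H)]\cong M_{\tau}(K_{4,4})$ iff $H$ is locally equivalent up to isomorphism to $\mathcal{I}(C)$, and by Kotzig's theorem every graph locally equivalent to such an interlacement graph is again an interlacement graph of $K_{4,4}$. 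Second, condition~1 holds iff $H$ is locally equivalent to a graph with a vertex of degree $0$ or $1$, by Theorem~\ref{smallcir1} with $k\in\{1,2\}$. Third, condition~2 holds iff $H$ is locally equivalent to a graph with two adjacent degree-$2$ vertices. Granting these, Corollary~\ref{jaeger8} is the logical rearrangement ``$P\vee Q\vee R$'' $\equiv$ ``$\neg R\Rightarrow (P\vee Q)$'' (with $R$ the ``interlacement graph of $K_{4,4}$'' alternative), quantified over $H\in\mathcal{VM}_8(G)$, fed into Corollary~\ref{jaeger7}.

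The only nontrivial dictionary entry is the third, and only its ``only if'' half. The ``if'' half is immediate from Theorem~\ref{smallcir3} with $k_1=k_2=3$: it produces transverse $3$-circuits $\gamma_1,\gamma_2$ whose union has all of its largest subtransversals of size $|\gamma_1\cup\gamma_2|-2<|\gamma_1\cup\gamma_2|$, so $\gamma_1\cup\gamma_2$ is not a subtransversal and $\gamma_1,\gamma_2$ lie in no common transversal, which is condition~2. The ``only if'' half is the crux: condition~2 only says that \emph{some} pair of transverse $3$-circuits $\gamma_1,\gamma_2$ fails to lie in a common transversal, whereas Theorem~\ref{smallcir3} wants the sharper data that the largest subtransversals in $\gamma_1\cup\gamma_2$ have size exactly $|\gamma_1\cup\gamma_2|-2$ and that two of them are independent in $M[IAS(H)]$. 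I expect the real work to be in bridging this gap: one should show that, when condition~1 fails (so $M[IAS(H)]$ has no loop and no two parallel elements), a witnessing pair with the sharper properties must exist --- for instance by observing that $|\gamma_1\cap\gamma_2|=2$ would force a pair of parallel elements, and then analysing the skew-pair structure of $\gamma_1\cup\gamma_2$ and, if necessary, trading $\gamma_1,\gamma_2$ for a better pair.

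A cleaner way around this crux --- the route I would actually take --- is to prove Corollary~\ref{jaeger8} directly rather than routing condition~2 through Theorem~\ref{smallcir3}. For the forward implication, $H\in\mathcal{VM}_8(G)$ is a circle graph on at most $8$ vertices, hence $H=\mathcal{I}(C)$ for an Euler system $C$ of a $4$-regular graph $F'$ with $|V(F')|=|V(H)|\le 8$, and we may take $F'\not\cong K_{4,4}$ because $H$ is not an interlacement graph of $K_{4,4}$. If $F'$ is not simple it has a circuit of size $\le 2$, so by Theorems~\ref{smallcir2} and~\ref{smallcir1}, $H$ is locally equivalent to a graph with a vertex of degree $\le 1$; if $F'$ is simple it is connected (two disjoint simple $4$-regular graphs need $\ge 10$ vertices), and Proposition~\ref{smallcirc3} supplies an Euler circuit $v_1w_1v_2v_1w_2v_2\ldots$ of $F'$ in whose interlacement graph --- by a short check of which occurrences fall strictly between the two occurrences of $v_1$, and of $v_2$ --- the vertices $v_1,v_2$ become adjacent vertices of degree $2$, so $H$ is locally equivalent to a graph with two adjacent degree-$2$ vertices. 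For the backward implication I would take the contrapositive via Bouchet's theorem (Theorem~\ref{bouchet}): if $G$ is not a circle graph, one of $W_5$, $BW_3$, $W_7$ is a vertex-minor of $G$, hence lies in $\mathcal{VM}_8(G)$, is not an interlacement graph of $K_{4,4}$ (none of them is a circle graph), and is not locally equivalent to a graph with a vertex of degree $\le 1$ nor with two adjacent degree-$2$ vertices --- for $W_5$ and $W_7$ because their isotropic matroids have no transverse circuit of size $\le 3$ (as noted in the text), and for $BW_3$ because one verifies that $M[IAS(BW_3)]$ has no transverse circuit of size $\le 2$ and that its only transverse $3$-circuits (the four found by the Sage search) have union a transversal, so Theorems~\ref{smallcir1} and~\ref{smallcir3} exclude both possibilities. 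The points I expect to need genuine care are the choice of $F'\not\cong K_{4,4}$, the occurrence-counting on $v_1w_1v_2v_1w_2v_2\ldots$, and the explicit transverse-circuit bookkeeping for $BW_3$ (and for $W_7$).
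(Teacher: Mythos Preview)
Your Route~2 is correct and closely matches the paper's proof. The forward implication (circle $\Rightarrow$ RHS) is essentially identical to the paper's: both reduce to Proposition~\ref{smallcirc3} and the observation that in the Euler circuit $v_1w_1v_2v_1w_2v_2\ldots$ the vertices $v_1,v_2$ are adjacent of degree~$2$ in the interlacement graph. (Your remark that a simple $4$-regular graph on $\le 8$ vertices is automatically connected is a useful sanity check, though Proposition~\ref{smallcirc3} is verified by inspection of Figure~\ref{circmf17} anyway.)

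The backward implication differs slightly. The paper routes through Corollary~\ref{jaeger7} using only the \emph{easy} halves of your dictionary entries: if $H$ is locally equivalent to a graph with adjacent degree-$2$ vertices, Theorem~\ref{smallcir3} gives transverse $3$-circuits not contained in a common transversal (condition~2 of Corollary~\ref{jaeger7}); similarly for the other two cases. Then Corollary~\ref{jaeger7}, which already encapsulates the obstruction analysis for $W_5$, $BW_3$, $W_7$, finishes the job. Your Route~2 instead unpacks Bouchet's theorem directly and re-checks the obstructions at the graph level, which obliges you to verify separately that $BW_3$ has no transverse circuit of size $\le 2$ --- a fact the paper never needs to state. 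Both arguments are valid; the paper's is more modular because it reuses Corollary~\ref{jaeger7} rather than re-deriving its content, and thereby sidesteps the extra bookkeeping you flag at the end.

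Your identification of the gap in Route~1 is accurate: the paper avoids precisely that ``only if'' half of the third dictionary entry by not attempting a full iff translation.
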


\begin{proof}
We first prove the if direction. Assume the right-hand side of the equivalence
holds. We invoke Corollary \ref{jaeger7}. Let $M[IAS(H)]$ be an isotropic
minor of $G$ of size $\leq24$. We have $H\in\mathcal{VM}_{8}(G)$. If $H$ is an
interlacement graph of $K_{4,4}$, then by Theorem \ref{theory} $M[IAS(H)]$
satisfies condition 3 of Corollary \ref{jaeger7}. If $H$ is locally equivalent
to a graph with a vertex of degree $0$ or $1$, then by Theorem \ref{smallcir1}
$M[IAS(H)]$ satisfies condition 1 of Corollary \ref{jaeger7}. Finally, if $H$
is locally equivalent to a graph with two adjacent degree-$2$ vertices, then
by Theorem \ref{smallcir3} $M[IAS(H)]$ satisfies condition 2 of Corollary
\ref{jaeger7}. Thus, by Corollary \ref{jaeger7}, $G$ is a circle graph.

For the converse, suppose $G$ is a circle graph and $H\in\mathcal{VM}_{8}(G)$
is not an interlacement graph of $K_{4,4}$ and not locally equivalent to a
graph with a vertex of degree $0$ or $1$. Then $H$ is isomorphic to an interlacement graph
of a 4-regular graph $F$ pictured in Figure \ref{circmf17}. As noted in
Proposition \ref{smallcirc3}, $F$ has an Euler circuit $C$ of the form
$v_{1}w_{1}v_{2}v_{1}w_{2}v_{2}\ldots$. Then $v_{1}$ and $v_{2}$ are adjacent
degree-$2$ vertices in $\mathcal{I}(C)$, and $H$ is locally equivalent to a graph isomorphic to $\mathcal{I}(C)$.
\end{proof}

\section{$K_{4,4}$ vs. $W_{7}$}

The results of the preceding section leave us with the task of distinguishing $W_{7}$ from an interlacement graph of $K_{4,4}$. This task is a bit more difficult than one might expect. For instance, it turns out that both $W_{7}$ and an interlacement graph of $K_{4,4}$ have 42 transverse 4-circuits, 168 transverse 6-circuits and no other transverse circuit of size $\leq7$. Nevertheless, there are several ways to verify that $W_{7}$ is not an interlacement graph of $K_{4,4}$. We mention four in this section, and several more in the next section.

\subsection{A distinctive transverse matroid of $W_{7}$}

Notice that the transverse matroid $\{\phi_{1}$, $\phi_{2}$, $\chi_{3}$,
$\phi_{4}$, $\phi_{5}$, $\psi_{6}$, $\psi_{7}$, $\phi_{8}\}$ of $W_{7}$
contains only two circuits, $\{\phi_{1}$, $\phi_{2}$, $\chi_{3}$, $\phi_{4}\}$
and $\{\phi_{5}$, $\psi_{6}$, $\psi_{7}$, $\phi_{8}\}$. (Here vertex 1 is the
central vertex of the wheel, and the outer vertices are numbered consecutively
around the rim.) In contrast, it turns out that no transverse matroid of an
interlacement graph of $K_{4,4}$ contains precisely two circuits. As a first
step in verifying this assertion, note that if a matroid has precisely two
circuits then the circuit elimination property guarantees that the two
circuits must be disjoint.%

\begin{theorem}
\label{smallcirc2}The smallest transverse circuits of $M_{\tau}(K_{4,4})$ are
of size 4, and there is no non-transverse 4-circuit. If a transverse matroid
$M$ of $M_{\tau}(K_{4,4})$ contains two disjoint circuits then they are of
size 4, and $M$ contains six transverse circuits of size 4.
\end{theorem}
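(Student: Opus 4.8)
The plan is to deduce all three assertions from one combinatorial fact about $K_{4,4}$, together with Theorems~\ref{smallcir2}, \ref{smallcir1} and the touch-graph description of transverse matroids (Theorem~\ref{coreker} and Corollary~\ref{touchmat}). The fact is: \emph{every interlacement graph of $K_{4,4}$ has minimum degree at least $3$.} To prove it, fix an Euler circuit $C$ of $K_{4,4}$ and a vertex $v$. The two passages of $C$ through $v$ cut $C$ into two closed $v$-walks $P_1,P_2$ whose edge sets partition $E(K_{4,4})$; then $\deg_{\mathcal{I}(C)}(v)$ is the number $d$ of vertices traversed once by $P_1$ and once by $P_2$, while every other vertex $w$ has all four incident edges inside whichever $P_i$ contains both passages of $w$. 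Hence no edge joins the set $A_1$ of vertices lying wholly in $P_1$ to the set $A_2$ of those lying wholly in $P_2$. Since $K_{4,4}$ is complete bipartite and $|A_1|+|A_2|=7-d$, the assumption $d\le 2$ (so $7-d>4$) forces one of $A_1,A_2$ to be empty, say $A_2$; then $P_2$ is a closed $v$-walk with exactly $d$ distinct interior vertices, hence with $d+1$ edges, and $d=0,1,2$ produce respectively a loop at $v$, a repeated edge, and a triangle --- none of which occur in the simple bipartite graph $K_{4,4}$. Thus $d\ge 3$.

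Granting this, I would deduce the first assertion as follows. By the Read--Rosenstiehl correspondence every graph locally equivalent to $\mathcal{I}(C)$ is again an interlacement graph of $K_{4,4}$, so Theorem~\ref{smallcir1} shows $M_{\tau}(K_{4,4})$ has no transverse circuit of size $2$ or $3$; it has no loop because $K_{4,4}$ is connected with more than one vertex; and it does have a transverse circuit of size $\le 4$ by Theorem~\ref{smallcir2}, $K_{4,4}$ having girth $4$. For the second assertion, suppose $\gamma$ is a $4$-circuit meeting some vertex triple in more than one element. It cannot meet a triple in three elements (it would then properly contain that triple), so $\gamma=\{x,y,a,b\}$ with $x,y$ in a common triple whose third member $z$ satisfies $z=x+y$; since $\gamma$ is a circuit, $x+y+a+b=0$, so $z=a+b$ and $\{z,a,b\}$ is dependent. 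If $a,b$ lie in a common triple this exhibits a pair of parallel elements, i.e.\ a transverse $2$-circuit; otherwise $\{z,a,b\}$ is a dependent subtransversal of size $\le 3$ and hence contains a transverse circuit of size $\le 3$. Either case contradicts the first assertion, so every $4$-circuit is transverse.

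For the third assertion, I would pass to touch-graphs: realize the given transverse matroid $M$ as the cocycle matroid $M^{\ast}(H)$ of the touch-graph $H=Tch(P)$ of a circuit partition $P$ of $K_{4,4}$ (Theorem~\ref{coreker} and its corollary), so $H$ is connected with $8$ edges and the circuits of $M$ are exactly the bonds (minimal edge cuts) of $H$. Two disjoint circuits of $M$ are two disjoint bonds; by the first assertion each has size $\ge 4$, and as $|E(H)|=8$ they have size exactly $4$ and partition $E(H)$. The first assertion also says $H$ has no cut of size $\le 3$ (every cut is a disjoint union of bonds); in particular every vertex has degree $\ge 4$, so $3\le |V(H)|\le 4$. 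A short case analysis now pins down $H$: if $|V(H)|=3$, the only connected $3$-vertex multigraph possessing two disjoint bonds is a ``path'', whose two edge-bundles, forced to have size $4$ each, give two bundles of four parallel edges sharing a vertex; if $|V(H)|=4$, then $H$ is $4$-regular and the conditions force it to be a $4$-cycle with every edge doubled. The first case is impossible for $K_{4,4}$: decoding $P$, it would express $K_{4,4}$ as the edge-disjoint union of two $4$-cycles on complementary four-vertex sets and an $8$-cycle through all eight vertices, yet deleting those two $4$-cycles from $K_{4,4}$ leaves two disjoint $4$-cycles rather than a connected $8$-cycle. So $H$ is a $4$-cycle with doubled edges; its bonds are its four vertex-stars and its two ``opposite-pair'' cuts, all of size $4$, so $M$ has exactly six circuits, each of size $4$.

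The crux is the minimum-degree fact of the first paragraph; once it is available, everything afterward is bookkeeping with the first assertion. Within the third assertion the one delicate point is excluding the three-vertex touch-graph --- that configuration is precisely the one yielding $U_{3,4}\oplus U_{3,4}$ as a transverse matroid, the feature that distinguishes $W_{7}$ from $K_{4,4}$ --- and the ``leftover two $4$-cycles'' argument is the genuinely $K_{4,4}$-specific step.
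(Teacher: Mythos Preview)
Your argument is correct and takes a genuinely different route from the paper's.  The paper proceeds computationally: it writes out the $8\times 24$ matrix $IAS(\mathcal{I}(C))$ for one explicit Euler circuit of $K_{4,4}$, checks (partly by computer) that no column is zero, no two columns are equal, and the only $3$-element zero-sums are vertex triples, then lists all $42$ transverse $4$-circuits explicitly --- classifying them into the $36$ ``square'' type coming from $4$-cycles $ixjy$ in $K_{4,4}$ and the $6$ ``matching'' type coming from pairs $\{i,j\}$ or $\{x,y\}$ --- and finishes with a case analysis on which pairs of these $42$ circuits can be disjoint inside a single transversal.

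Your approach replaces all of this with structural arguments.  The key new ingredient is the direct proof that every interlacement graph of $K_{4,4}$ has minimum degree $\ge 3$; combined with Kotzig/Read--Rosenstiehl and Theorem~\ref{smallcir1} this yields the ``no small transverse circuit'' statement without any matrix computation.  Your vertex-triple trick ($z=x+y=a+b$) then disposes of non-transverse $4$-circuits cleanly.  For the last assertion you work on the touch-graph side rather than the circuit-enumeration side: instead of knowing all $42$ transverse $4$-circuits, you only use that bonds of $Tch(P)$ have size $\ge 4$, which forces $Tch(P)$ to have $3$ or $4$ vertices, and the $K_{4,4}$-specific ``leftover two $4$-cycles'' observation kills the $3$-vertex case.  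What the paper's enumeration buys is the precise count of $42$ and the explicit two-type description of the $4$-circuits, which feeds into later remarks (e.g.\ the comparison with $W_{7}$'s $42$ transverse $4$-circuits); what your approach buys is a self-contained, computer-free proof of exactly the theorem as stated.
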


\begin{proof}
Let $V(K_{4,4})=\{1,2,3,4,a,b,c,d\}$, and consider the Euler circuit $C$ given
by the double occurrence word $a1b2c3b4a3d4c1d2$. If $I$ is the $V(\mathcal{I}(C))\times V(\mathcal{I}(C))$
identity matrix then $IAS(\mathcal{I}(C))$ is the $V(\mathcal{I}(C))\times W(\mathcal{I}(C))$ matrix
\[
\noindent%
\begin{pmatrix}
&  & 0 & 1 & 0 & 0 & 1 & 0 & 0 & 1 & 1 & 1 & 0 & 0 & 1 & 0 & 0 & 1\\
&  & 1 & 0 & 0 & 0 & 1 & 1 & 0 & 0 & 1 & 1 & 0 & 0 & 1 & 1 & 0 & 0\\
&  & 0 & 0 & 0 & 1 & 1 & 1 & 0 & 0 & 0 & 0 & 1 & 1 & 1 & 1 & 0 & 0\\
&  & 0 & 0 & 1 & 0 & 1 & 0 & 0 & 1 & 0 & 0 & 1 & 1 & 1 & 0 & 0 & 1\\
I &  & 1 & 1 & 1 & 1 & 0 & 0 & 1 & 0 & 1 & 1 & 1 & 1 & 1 & 0 & 1 & 0\\
&  & 0 & 1 & 1 & 0 & 0 & 0 & 1 & 0 & 0 & 1 & 1 & 0 & 0 & 1 & 1 & 0\\
&  & 0 & 0 & 0 & 0 & 1 & 1 & 0 & 1 & 0 & 0 & 0 & 0 & 1 & 1 & 1 & 1\\
&  & 1 & 0 & 0 & 1 & 0 & 0 & 1 & 0 & 1 & 0 & 0 & 1 & 0 & 0 & 1 & 1
\end{pmatrix}
\text{.}%
\]
The rows are indexed in $1,2,3,4,a,b,c,d$ order, as are the columns in each
part of $%
\begin{pmatrix}
I & \mathcal{I}(C) & I+\mathcal{I}(C)
\end{pmatrix}
$.

Computer programs and visual inspection indicate that no single column is
identically 0, no two columns sum to 0, and every set of three columns that
sum to 0 corresponds to a vertex triple. The circuits of size 4 are all
transverse circuits, and there are 42 of them. These circuits fall into two sets:

\begin{itemize}
\item For each of the six 2-element subsets $\{i,j\}$ of $\{1,2,3,4\}$ and
each of the six 2-element subsets $\{x,y\}$ of $\{a,b,c,d\}$, there is a
transverse circuit that includes the transitions determined by the single
transitions $ixj$, $xjy$, $jyi$ and $yix$. For instance, $\{\phi_{C}(1)$,
$\phi_{C}(a)$, $\chi_{C}(2)$, $\phi_{C}(b)\}$ and $\{\chi_{C}(3)$, $\chi
_{C}(c)$, $\phi_{C}(4)$, $\phi_{C}(d)\}$ are transverse circuits of this type.

\item For each of the six 2-element subsets $\{i,j\}$ of $\{1,2,3,4\}$, there
is a transverse circuit that includes the transitions determined by the single
transitions $iaj$, $ibj$, $icj$ and $idj$. Notice that $\{i,j\}$ and its
complement in $\{1,2,3,4\}$ determine the same four transitions, so there are
only three transverse circuits of this type. Similarly, there are three
transverse circuits corresponding to 2-element subsets of $\{a,b,c,d\}$. For
instance, $\{i,j\}=\{1,2\}$ and $\{3,4\}$ both yield $\{\phi_{C}(a)$,
$\phi_{C}(b)$, $\chi_{C}(c)$, $\phi_{C}(d)\}$ and $\{x,y\}=\{a,b\}$ and
$\{c,d\}$ both yield $\{\phi_{C}(1)$, $\chi_{C}(2)$, $\chi_{C}(3)$, $\phi
_{C}(4)\}$.
\end{itemize}

\begin{figure}
[ptb]
\begin{center}
\includegraphics[
trim=2.541393in 8.642698in 1.472496in 1.616723in,
height=0.5872in,
width=3.3918in
]%
{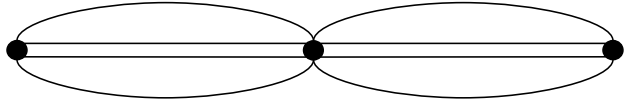}%
\caption{The cocycle matroid of this graph is isomorphic to a transverse
matroid of $W_{7}$, but not of $M_{\tau}(K_{4,4})$.}%
\label{circmf13}%
\end{center}
\end{figure}

Now, suppose a transverse matroid $M$ of $M_{\tau}(K_{4,4})$ contains two
disjoint circuits. As $M$ has 8 elements and $M_{\tau}(K_{4,4})$ has no
transverse circuit of size $<4$, the two circuits must both be of size 4. If
the two disjoint circuits are of the first type, then after re-indexing we may
suppose they arise from the circuits $1a2b$ and $3c4d$. Then $M$ also contains
transverse circuits of the first type corresponding to the circuits $1c2d$ and
$3a4b$. Moreover, $M$ contains the transverse circuits of the second type
corresponding to $\{1,2\}$ (or $\{3,4\}$) and $\{a,b\}$ (or $\{c,d\}$). If the
two disjoint transverse circuits are of the second type, we may presume one
corresponds to $\{1,2\}$ (or $\{3,4\}$) and the other to $\{a,b\}$ (or
$\{c,d\}$). Then $M$ contains the transverse circuits of the first type that
correspond to the circuits $1a2b$, $3c4d$, $1c2d$ and $3a4b$. Finally, it is
impossible for the two disjoint circuits to include one of the first type and
one of the second type.
\end{proof}

\begin{corollary}
\label{smalldiam}Every interlacement graph of an Euler circuit of $K_{4,4}$ is
of diameter $\leq2$.
\end{corollary}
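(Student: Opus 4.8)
The plan is to deduce the diameter bound from the structure of transverse circuits of $M_\tau(K_{4,4})$ established in Theorem \ref{smallcirc2}, combined with Theorem \ref{smallcir1}. First I would recall that if $G = \mathcal{I}(C)$ is an interlacement graph of an Euler circuit $C$ of $K_{4,4}$, then two vertices $v,w$ of $G$ are non-adjacent and share no common neighbor precisely when there is no transverse circuit of size $\le 2$ involving their vertex triples and, more relevantly, when the ``distance-$2$'' condition fails. The cleanest route is via Theorem \ref{smallcir4}: if $G$ had two vertices at distance $>2$ (hence non-adjacent with no shared neighbor), then $G$ itself (taking the trivial local equivalence) exhibits non-adjacent vertices of some degrees $k_1-1, k_2-1$ sharing no neighbor, so $M_\tau(K_{4,4}) = M[IAS(G)]$ would have disjoint transverse circuits $\gamma_1, \gamma_2$ with $\gamma_1 \cup \gamma_2$ a subtransversal containing no other circuit.

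Next I would invoke Theorem \ref{smallcirc2}: since $M_\tau(K_{4,4})$ has no transverse circuit of size $<4$ and $|V(K_{4,4})| = 8$, any two disjoint transverse circuits must both have size exactly $4$, and their union is a full transversal. But Theorem \ref{smallcirc2} asserts that whenever a transverse matroid $M$ of $M_\tau(K_{4,4})$ contains two disjoint circuits, it in fact contains \emph{six} transverse $4$-circuits. In particular $\gamma_1 \cup \gamma_2$, being a transversal whose transverse matroid is exactly such an $M$, contains circuits other than $\gamma_1$ and $\gamma_2$ — contradicting the ``contains no other circuit'' clause of Theorem \ref{smallcir4}. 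Hence no such pair $v,w$ exists, i.e.\ every two vertices of $G$ are at distance $\le 2$.

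The one point requiring care is the degenerate case where $G$ is disconnected or has isolated vertices: if $G$ has an isolated vertex, the diameter statement would be vacuous or false in the usual convention, so I should first note that $K_{4,4}$ is connected and $4$-regular with a single Euler circuit, so $\mathcal{I}(C)$ has $8$ vertices and — by a direct check on the adjacency matrix displayed in the proof of Theorem \ref{smallcirc2} — is connected with no isolated vertex; alternatively, any interlacement graph locally equivalent to it inherits this. The main obstacle is simply making the translation between ``distance $> 2$'' and the hypothesis of Theorem \ref{smallcir4} airtight: two vertices at distance exactly $3$ or more are non-adjacent and share no neighbor, which is exactly what Theorem \ref{smallcir4} needs, with the local equivalence taken to be the identity. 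Once that translation is in place, Theorem \ref{smallcirc2} does all the real work and the contradiction is immediate.
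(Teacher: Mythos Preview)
Your proposal is correct and follows essentially the same route as the paper: invoke Theorem~\ref{smallcir4} to convert a pair of vertices at distance $>2$ into two disjoint transverse circuits whose union is a subtransversal containing no other circuit, then cite Theorem~\ref{smallcirc2} to obtain the contradiction. Your extra paragraph about connectedness and isolated vertices is unnecessary caution, since Theorem~\ref{smallcirc2} already rules out transverse circuits of size $<4$ (hence no isolated or pendant vertices via Theorem~\ref{smallcir1}), but it does no harm.
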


\begin{proof}
According to\ Theorem \ref{smallcir4}, a graph of diameter $>2$ has a
subtransversal that contains two disjoint circuits, whose union contains no
other circuit.\ Theorem \ref{smallcirc2} tells us that $M_{\tau}(K_{4,4})$ has
no such subtransversal.
\end{proof}

\subsection{Interlacement graphs of $K_{4,4}$ are not 3-regular}

We call a simple graph \emph{strictly supercubic} if all the vertices are of
degree $\geq3$, and there is at least one vertex of degree $>3$.\

\begin{proposition}
\label{smallcirc5}Every interlacement graph of an Euler circuit of $K_{4,4}$
is strictly supercubic.
\end{proposition}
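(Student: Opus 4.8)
The plan is to prove two things about $G=\mathcal{I}(C)$, where $C$ is an arbitrary Euler circuit of $K_{4,4}$: first, that every vertex of $G$ has degree at least $3$; and second, that $G$ is not $3$-regular. By the definition of \emph{strictly supercubic}, these two statements together are exactly what is wanted.

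For the first statement, recall from Theorem~\ref{smallcirc2} that the smallest transverse circuits of $M_{\tau}(K_{4,4})\cong M[IAS(G)]$ have size $4$; equivalently (as noted in the proof of that theorem) $M[IAS(G)]$ has no zero column, no two equal columns, and no $3$-element circuit other than a vertex triple, so it has no transverse circuit of size $\le 3$. On the other hand, for any vertex $v$ of $G$ the column $\chi_{G}(v)$ of $A(G)$ equals the sum of the columns $\phi_{G}(u)$ with $u\in N(v)$, so $\{\chi_{G}(v)\}\cup\{\phi_{G}(u)\mid u\in N(v)\}$ is a subtransversal whose columns sum to $0$; if $v$ is not isolated this is a transverse circuit of size $\deg_{G}(v)+1$ (this also follows from Theorem~\ref{smallcir1} applied with $H=G$). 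Hence $\deg_{G}(v)+1\ge 4$, i.e.\ $\deg_{G}(v)\ge 3$, for every $v$.

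For the second statement, suppose to the contrary that $G$ is $3$-regular. By Corollary~\ref{smalldiam}, $G$ has diameter $\le 2$; in particular $G$ is connected, so $G$ is one of the (very few) connected cubic graphs on $8$ vertices of diameter $\le 2$. The plan is to eliminate each such candidate $H$ by confronting $M[IAS(H)]$ with the data about $M_{\tau}(K_{4,4})$ recorded in Theorem~\ref{smallcirc2}: $M_{\tau}(K_{4,4})$ has no non-transverse $4$-circuit, has exactly $42$ transverse $4$-circuits, and each of its transverse matroids that contains two disjoint circuits contains exactly six transverse $4$-circuits. For each cubic $H$ one enumerates the transverse $4$-circuits of $M[IAS(H)]$ --- the $\phi$-neighbourhood circuits $\{\chi_{H}(v)\}\cup\{\phi_{H}(u)\mid u\in N(v)\}$ already give $8$ of them, and the remaining ones arise from the degree-$3$ vertices of the simple local complements of $H$, hence appear in ``$\psi$'' and ``$\chi$'' coordinates --- and checks that the resulting count, or the pattern of disjoint pairs, does not match the one above. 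Equivalently, and probably more efficiently, one enumerates the Euler circuits of $K_{4,4}$ up to the action of $\mathrm{Aut}(K_{4,4})$ and up to reversal and choice of basepoint, a finite list, and verifies directly that every associated interlacement graph has a vertex of degree $\ge 4$; for instance the Euler circuit $a1b2c3b4a3d4c1d2$ of Theorem~\ref{smallcirc2} produces a graph with one vertex of degree $5$ and seven of degree $3$.

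The main obstacle is the second statement. Being $3$-regular is not invariant under local equivalence, so one cannot replace $G$ by a single convenient representative of its class and must argue about $G$ itself; and although Corollary~\ref{smalldiam} reduces the problem to finitely many cubic graphs, deciding for each of them that its isotropic matroid is not isomorphic to $M_{\tau}(K_{4,4})$ still amounts to a computation with isotropic matroids. So, as with the proofs of Theorem~\ref{smallcirc2} and Corollary~\ref{jaeger7}, a small computer verification seems hard to avoid at this step.
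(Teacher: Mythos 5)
Your first part (every vertex of $G$ has degree $\geq 3$) is correct and coincides with the paper's: both derive it from Theorem~\ref{smallcirc2} together with Theorem~\ref{smallcir1}, and your explicit identification of the neighbourhood circuit $\{\chi_G(v)\}\cup\{\phi_G(u)\mid u\in N(v)\}$ is a fine way to see it directly. Your second part is also correctly set up --- you invoke Corollary~\ref{smalldiam} to get connectivity and diameter $\leq 2$ and reduce to a finite list of cubic candidates --- but you then conclude that a fresh computer verification ``seems hard to avoid.'' The paper shows it \emph{can} be avoided. After citing Meringer's tabulation for the five connected cubic simple graphs on $8$ vertices, the paper eliminates each by elementary structural observations: if two nonadjacent vertices $v_1,v_2$ share \emph{all} their neighbours then $\{\chi_G(v_1),\chi_G(v_2)\}$ is a transverse $2$-circuit (impossible by Theorem~\ref{smallcirc2}); if they share \emph{no} neighbour then $G$ has diameter $>2$ (impossible by Corollary~\ref{smalldiam}); and the one remaining candidate is shown, by an explicit short chain of local complementations, to be locally equivalent to one already ruled out, whence its isotropic matroid also fails to be $M_\tau(K_{4,4})$. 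This is the key step you miss: your proposal stops at ``enumerate transverse $4$-circuits or Euler circuits and check by computer,'' whereas the paper's elimination of the cubic candidates is done by hand using exactly the tools you already have in play (no transverse $2$-circuits and the diameter bound), plus one visual local-equivalence check. Your route would work, but it is more computational and less informative than the paper's.
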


\begin{proof}
Let $G$ be an interlacement graph of $K_{4,4}$; then $M[IAS(G)]\cong M_{\tau
}(K_{4,4})$, so Theorem \ref{smallcirc2} tells us that the smallest transverse
circuits in $M[IAS(G)]$ are of size 4. By Theorem \ref{smallcir1}, this
implies that all the vertices of $G$ are of degree $\geq3$. If any vertex is of degree $>3$ we are done, so we may suppose $G$ is 3-regular.

If $G$ is not connected it has two components, each isomorphic to $K_4$. But then if $v$ and $w$ are adjacent vertices of $G$, $\{\psi_{G}(v),\psi_{G}(w)\}$ is a transverse 2-circuit of $M[IAS(G)]$, contradicting Theorem \ref{smallcirc2}. We conclude that $G$ is connected.

Let $v_{1}$ and $v_{2}$ be nonadjacent vertices of $G$. If they share all
their neighbors, then $\{\chi_{G}(v_{1}),\chi_{G}(v_{2})\}$ is a transverse
2-circuit of $M[IAS(G)]$. Theorem \ref{smallcirc2} tells us that this is
impossible. On the other hand, if they share no neighbor then the diameter of $G$ is $>2$. Corollary~\ref{smalldiam} tells us that this is impossible.

\begin{figure}
[tb]
\begin{center}
\includegraphics[
trim=1.468247in 7.896518in 0.602423in 1.140180in,
height=1.5022in,
width=4.8464in
]%
{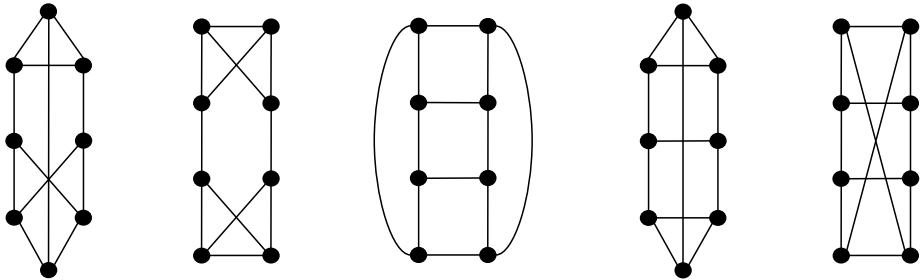}%
\caption{The connected, cubic simple graphs of order 8.}%
\label{circmf24}%
\end{center}
\end{figure}

According to Meringer \cite{Me}, there are five connected, simple 3-regular graphs with 8 vertices. They are displayed in Figure \ref{circmf24}. We observe that the first of these graphs has a pair of nonadjacent vertices that share all their neighbors, and each of the next three has a pair of nonadjacent vertices that share no neighbor. It is not immediately apparent, but the fifth graph pictured in Figure \ref{circmf24} is locally equivalent to the fourth (up to isomorphism); see
Figure \ref{circmf23} for details. (It is easy to see that the last graph in Figure \ref{circmf23} is isomorphic to the fourth graph in Figure \ref{circmf24}, if you first observe that each has precisely two 3-circuits.)
\end{proof}

\begin{figure}
[tb]
\begin{center}
\includegraphics[
trim=1.335697in 8.027484in 0.536998in 1.075247in,
height=1.4555in,
width=4.9943in
]%
{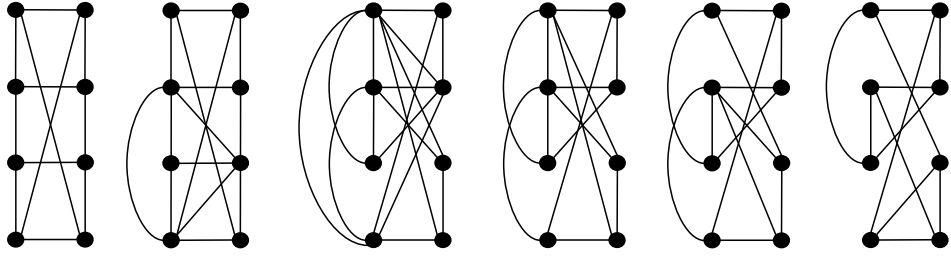}%
\caption{A local equivalence.}%
\label{circmf23}%
\end{center}
\end{figure}

By the way, there is a considerably longer proof of Proposition
\ref{smallcirc5} that may be of interest in spite of its length. Theorems
\ref{smallcir1} and \ref{smallcirc2} tell us that if $G$ is an interlacement
graph of $K_{4,4}$\ then all the transverse circuits of $G$ are of size
$\geq4$, and all the vertices of $G$ are of degree $\geq3$. Consequently, $G$
is either cubic or strictly supercubic. According to \cite{Tcub}, though,
every 3-regular circle graph has transverse circuits of size $2$; hence $G$ is
not 3-regular.

\begin{corollary}
\label{degreefive}Every interlacement graph of an Euler circuit of $K_{4,4}$
has a vertex of degree 5.
\end{corollary}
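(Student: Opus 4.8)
The plan is to combine Proposition~\ref{smallcirc5} with the known list of small transverse circuits of $M_{\tau}(K_{4,4})$ to restrict the degree sequence, and then eliminate the one remaining bad possibility by a short local complementation argument. Let $G$ be an interlacement graph of an Euler circuit $C$ of $K_{4,4}$. First I would record that, by Theorem~\ref{smallcirc2} (and the remark in its proof that every triple of columns of $IAS(\mathcal{I}(C))$ summing to $0$ is a vertex triple) together with the observation stated at the start of this section, $M_{\tau}(K_{4,4})\cong M[IAS(G)]$ has no transverse circuit of size $1$, $2$, $3$, $5$, or $7$. By Theorem~\ref{smallcir1}, no graph locally equivalent to $G$ has a vertex of degree $0$, $1$, $2$, $4$, or $6$. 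Since $\mathcal{I}(C\ast v)=\mathcal{I}(C)_{s}^{v}$ and $C\ast v$ is again an Euler circuit of $K_{4,4}$, the class of interlacement graphs of Euler circuits of $K_{4,4}$ is closed under local complementation, so \emph{every} graph in this class has all its vertex degrees in $\{3,5,7\}$. By Proposition~\ref{smallcirc5}, $G$ is moreover strictly supercubic, hence has a vertex of degree $5$ or $7$; it therefore suffices to derive a contradiction from the assumption that $G$ has no vertex of degree~$5$.

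Under that assumption every vertex of $G$ has degree $3$ or $7$, and there is at least one vertex $v$ of degree $7$, which is then universal. A routine computation shows that if $G$ has $8$ vertices, $v$ is universal in $G$, and $u\neq v$ has degree $d$ in $G$, then $u$ has degree $8-d$ in $G_{s}^{v}$; in particular a second vertex of degree $7$ would become a vertex of degree $1$ in $G_{s}^{v}$, which is impossible since $G_{s}^{v}$ is again an interlacement graph of an Euler circuit of $K_{4,4}$. Hence $v$ is the only vertex of degree $7$, every other vertex has degree $3$, and $G-v$ is a $2$-regular graph on $7$ vertices, i.e.\ a disjoint union of cycles. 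The only possibilities are $G-v=C_{7}$ (so $G=W_{7}$) and $G-v=C_{3}\sqcup C_{4}$.

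To finish I would treat these two cases. If $G-v=C_{3}\sqcup C_{4}$, let $u$ be a vertex of the triangle and $a,b$ the other two triangle vertices, so $N_{G}(u)=\{v,a,b\}$, all three pairs in $\{v,a,b\}$ are edges, and $N_{G}(a)=\{v,u,b\}$; then local complementation at $u$ deletes the edges $va$ and $ab$ and leaves $a$ adjacent only to $u$, producing a degree-$1$ vertex in $G_{s}^{u}$ --- impossible, as above. If instead $G=W_{7}$, then, as recalled in the discussion of Figure~\ref{circmf20}, $W_{7}$ is locally equivalent to a cubic graph; by the closure of the class under local complementation, that cubic graph would also be an interlacement graph of an Euler circuit of $K_{4,4}$, contradicting the strict supercubicity asserted in Proposition~\ref{smallcirc5}. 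Either way we reach a contradiction, so $G$ has a vertex of degree~$5$.

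I do not expect a serious obstacle: every ingredient is already in hand (Proposition~\ref{smallcirc5}, Theorem~\ref{smallcir1}, Theorem~\ref{smallcirc2}, the closure of the class under local complementation, and the fact that $W_{7}$ is locally equivalent to a cubic graph). The only points requiring care are the bookkeeping of degrees under local complementation at a universal vertex and at a triangle vertex, and verifying that the reduction really leaves only the two graphs $W_{7}$ and $\{v\}$-plus-$(C_{3}\sqcup C_{4})$; both are elementary.
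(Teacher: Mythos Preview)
Your proof is correct and follows the same overall skeleton as the paper's: restrict the degree set to $\{3,5,7\}$ via Theorem~\ref{smallcir1}, assume no degree-$5$ vertex, argue there is a unique universal vertex, deduce $G-v$ is $2$-regular on seven vertices, and eliminate the two resulting cases $C_7$ and $C_3\sqcup C_4$.

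The local arguments differ, however. Where you use local complementation throughout (at the universal vertex to rule out a second degree-$7$ vertex; at a triangle vertex to rule out $C_3\sqcup C_4$; and via Proposition~\ref{smallcirc5} together with the cubic graph of Figure~\ref{circmf20} to rule out $W_7$), the paper argues directly with transverse $2$-circuits: two degree-$7$ vertices give parallel $\psi$-columns; two antipodal vertices on the $4$-cycle in the $C_3\sqcup C_4$ case share an open neighborhood and hence give parallel $\chi$-columns; and the $W_7$ case is dismissed as impossible outright (implicitly because $W_7$ is already known not to be an interlacement graph of $K_{4,4}$). Your handling of the $W_7$ case is arguably a bit more self-contained within the section, since it reduces to the just-proved Proposition~\ref{smallcirc5} rather than appealing to Bouchet's obstructions or to Theorem~\ref{smallcirc2}; conversely, the paper's parallel-column arguments avoid the small degree computations under local complementation. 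Both routes are short and equally elementary.
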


\begin{proof}
As observed at the beginning of this section, a computer search (again using
the matroid module for Sage \cite{sageMatroid, sage}) indicates that the
transverse circuits of $M_{\tau}(K_{4,4})$ of size $<9$ are all of size 4, 6
or 8. Consequently Theorem \ref{smallcir1} guarantees that if $G$ is an
interlacement graph of an Euler circuit of $K_{4,4}$ then the vertex-degrees
in $G$ are elements of the set $\{3,5,7\}$. Proposition \ref{smallcirc5}
assures us that $G$ must have at least one vertex of degree 5 or 7.

Suppose $G$ has no vertex of degree 5. If $v_{1}$ and $v_{2}$ both have degree
$7$ then the $\psi_{G}(v_{1})$ and $\psi_{G}(v_{2})$ columns of $IAS(G)$ are
the same, so $\{\psi_{G}(v_{1}),\psi_{G}(v_{2})\}$ is a transverse 2-circuit
of $G$. But $M_{\tau}(K_{4,4})$ has no transverse 2-circuit, so this cannot be
the case. That is, $G$ must have one vertex $v_{1}$ of degree 7, and seven
vertices of degree $3$. The induced subgraph $G-v_{1}$ is then $2$-regular. As
$G$ is simple, there are only two possibilities: $G-v_{1}$ is either a
7-circuit or the disjoint union of a 3-circuit and a 4-circuit. The former is
impossible, as $v_{1}$ would neighbor all the vertices on the 7-circuit and
consequently $G$ would be isomorphic to $W_{7}$. The latter is also
impossible: two nonadjacent vertices $v$ and $w$ on the 4-cycle would have
identical open neighborhoods in $G$, as $v$ and $w$ would have the same
neighbors on the 4-cycle and $v$ and $w$ would also be adjacent to $v_{1}$;
consequently $\{\chi_{G}(v),\chi_{G}(w)\}$ would be a transverse 2-circuit in
$M[IAS(G)]$.

We conclude that $G$ must have a vertex of degree 5.
\end{proof}

\subsection{A distinctive transverse matroid of $M_{\tau}(K_{4\text{,}4})$}

Here is another way to distinguish $W_{7}$ from an interlacement graph of
$K_{4,4}$.%

\begin{proposition}
\label{smallcirc4}Let $P$ be a 4-element circuit partition of $K_{4,4}$. Then
either $Tch(P)$ is obtained from a 4-cycle by doubling all edges, or $Tch(P)$
is obtained from a complete graph by doubling two non-incident edges. In
contrast, the nullity-3 transverse matroids of $W_{7}$ are all isomorphic to
the cocycle matroid of a complete graph with two non-incident edges doubled.
\end{proposition}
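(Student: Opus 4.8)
The plan is to treat the two halves of the statement separately: the first by a direct combinatorial analysis of $4$-element circuit partitions of $K_{4,4}$, the second by a structural reduction followed by a finite verification.

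\emph{The touch-graphs of $4$-element circuit partitions of $K_{4,4}$.} First I would show that every circuit of such a partition $P$ has length $4$: circuits of the bipartite graph $K_{4,4}$ have even length, and since $K_{4,4}$ is simple there is no $2$-circuit, so the four circuit lengths are even, each at least $4$, and sum to $|E(K_{4,4})|=16$; hence all equal $4$. A $4$-circuit of $K_{4,4}$ is determined by a $2$-subset $\{i,j\}$ of one part $X$ and a $2$-subset $\{x,y\}$ of the other part $Y$, using exactly the four edges between $\{i,j\}$ and $\{x,y\}$; two circuits share an edge exactly when their $X$-pairs and their $Y$-pairs both overlap, and no $4$-circuit passes through a vertex twice (so $Tch(P)$ has no loop, and edge-disjoint $4$-circuits share at most two vertices, so $Tch(P)$ has no edge of multiplicity $>2$). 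Next I would note that each vertex of $X$ lies on exactly two circuits of $P$ (its four half-edges are distributed among the single transitions of the circuits through it, and a $4$-circuit contributes one single transition at each of its vertices), so the multiset of $X$-pairs, read as a multigraph on the $4$-element set $X$ with four edges, is $2$-regular; such a multigraph is either a $4$-cycle or a disjoint union of two doubled edges.

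I would then resolve these two cases. If the $X$-pairs form a $4$-cycle, the four circuits can be cyclically ordered so that consecutive ones share an $X$-vertex; edge-disjointness forces consecutive $Y$-pairs to be disjoint, hence complementary, so the first and third circuits carry one $Y$-pair and the second and fourth the complementary one, and bookkeeping which vertex of $K_{4,4}$ produces which edge of $Tch(P)$ shows $Tch(P)$ is $K_4$ with the two ``opposite'' (non-incident) edges doubled. If the $X$-pairs are $\{\{i,j\},\{i,j\},\{k,l\},\{k,l\}\}$, covering the half-edges at $i$ and at $j$ forces the two circuits with $X$-pair $\{i,j\}$ to have complementary $Y$-pairs, and likewise for $\{k,l\}$; depending on whether the two resulting partitions of $Y$ coincide or not, the same bookkeeping produces either the $4$-cycle with all edges doubled or $K_4$ with two non-incident edges doubled. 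Since these are the only possibilities, the first assertion follows.

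\emph{The nullity-$3$ transverse matroids of $W_7$.} Since $W_7$ is not a circle graph the touch-graph machinery does not apply to it directly, so here I would reduce to a finite check. By Theorem~\ref{theory} the transverse matroids of $W_7$ agree up to isomorphism with those of any locally equivalent graph, e.g.\ the cubic graph of Figure~\ref{circmf20}; and for any simple graph $G$ the transverse matroid obtained by choosing $\phi_G(v)$, $\chi_G(v)$ or $\psi_G(v)$ at each vertex has nullity equal to the $GF(2)$-nullity of the symmetric matrix $A(G)[S]+\Delta$, where $S$ is the set of vertices at which $\chi$ or $\psi$ was chosen and $\Delta$ is the diagonal $\{0,1\}$-matrix recording the vertices at which $\psi$ was chosen. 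This reduces the problem to listing, up to the dihedral symmetry of $W_7$, the pairs $(S,\Delta)$ with $A(W_7)[S]+\Delta$ of nullity $3$; there are only finitely many, and for each I would compute the resulting transverse matroid and verify (in the spirit of the other Sage-assisted computations of this section) that it has exactly five circuits of size $4$ and two of size $6$ and is isomorphic to the cocycle matroid of $K_4$ with two non-incident edges doubled --- the matroid whose bonds are the four stars, the cut separating the two doubled edges, and the two ``mixed'' cuts of size $6$. The resulting contrast with the first part is that some $4$-element circuit partitions of $K_{4,4}$ yield the $4$-cycle-with-all-edges-doubled touch-graph, whose cocycle matroid has all circuits of size $4$, and this matroid is never a transverse matroid of $W_7$.

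\emph{Main obstacle.} The deepest ingredients are already available (Theorems~\ref{theory} and \ref{coreker}); the real work in the first part is organizing the case analysis and the error-free translation from ``circuits meeting at a vertex of $K_{4,4}$'' into edges of $Tch(P)$, and in the second part the only genuine obstacle is that the classification of nullity-$3$ transverse matroids of $W_7$ appears to need a (small) finite case check rather than a one-line structural argument, precisely because $W_7$ itself is not a circle graph.
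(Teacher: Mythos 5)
Your proposal is correct, and for the first half it gives genuinely more than the paper does. The paper's proof of Proposition~\ref{smallcirc4} is a single sentence: the claim is verified by computer using Sage. You instead give a complete hand argument for the $K_{4,4}$ part: the observation that in a $4$-element circuit partition of $K_{4,4}$ every circuit is a $4$-cycle, the encoding of each $4$-cycle by an $X$-pair and a $Y$-pair, the reduction to the fact that the multiset of $X$-pairs is a loopless $2$-regular multigraph on $4$ vertices (hence $C_4$ or two disjoint doubled edges), and the case-by-case translation into $Tch(P)$ --- I checked the bookkeeping in each branch and it produces exactly the two advertised touch-graphs ($K_4$ with a pair of non-incident edges doubled in the $C_4$ case and in the ``mismatched partitions of $Y$'' subcase; $C_4$ with all edges doubled in the ``matched partitions'' subcase). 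For the $W_7$ half you, like the paper, fall back on a finite check, but you frame it more explicitly: the nullity of the transverse matroid determined by a choice function equals the $GF(2)$-nullity of $A(W_7)[S]+\Delta$ (this Schur-complement-type reduction is correct, since the $\phi$-columns give an identity block above zeros), and the dihedral symmetry of $W_7$ cuts the enumeration down to a small list, after which one verifies isomorphism with $M^*$ of $K_4$-with-two-non-incident-edges-doubled. Your description of that matroid's circuits (four vertex stars, the cut between the two doubled edges, and two size-$6$ bonds) is accurate. So: same result, but where the paper delegates everything to software, you prove the $K_{4,4}$ statement structurally and confine computation to where it is genuinely needed ($W_7$ is not a circle graph, so Theorem~\ref{coreker} gives no touch-graph model there).
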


\begin{proof} 
The proposition has been verified with computer programs
(again using Sage \cite{sageMatroid, sage}).
\end{proof}

\begin{figure}
[ptb]
\begin{center}
\includegraphics[
trim=4.148988in 8.706531in 3.079240in 1.342683in,
height=0.7446in,
width=0.9781in
]%
{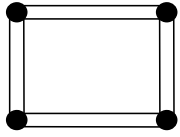}%
\caption{The cocycle matroid of this graph is isomorphic to a transverse
matroid of $M_{\tau}(K_{4,4})$, but not of $W_{7}$.}%
\label{circmf14}%
\end{center}
\end{figure}

\section{Circle graph characterizations}

As discussed in the introduction, we deduce circle graph characterizations from Bouchet's theorem and the observations of Sections 5 and 6 by combining conditions that exclude $W_5$, $BW_3$ and $W_7$ as vertex-minors, and do not exclude any circle graph. For instance, Corollary \ref{jaeger1} and Theorem \ref{smallcirc2} yield Theorem~\ref{jaeger2} in this way, and Corollaries \ref{jaeger8} and \ref{degreefive} yield Theorem \ref{jaeger6}. Other characterizations are obtained using different combinations of the properties listed below. Properties not mentioned earlier in the paper have been verified using Sage \cite{sageMatroid, sage}.

Properties of $W_5$: It is locally equivalent to a cubic graph, it has no transverse circuit of size $\leq3$, and it is not locally equivalent to any graph with a vertex of degree $\leq2$.

Properties of $BW_3$: It has some non-cographic transverse matroids, and it is not locally equivalent to a graph with either a vertex of degree $\leq 1$ or adjacent vertices of degree 2.

Properties of $W_7$: It is locally equivalent to a 3-regular graph of diameter 3, it is not locally equivalent to any graph with a vertex of degree $\leq2$, it has no transverse circuit of size $\leq3$, it has the cocycle matroid of the graph of Figure \ref{circmf13} as a transverse matroid, it does not have the cocycle matroid of the graph of Figure \ref{circmf14} as a transverse matroid, it has 42 transverse matroids of nullity 3, and its isotropic matroid has 336 automorphisms.

Properties of a circle graph of order $\leq8$ not associated with $K_{4,4}$: It has transverse circuits of size $\leq3$, it has cographic transverse matroids, and it is locally equivalent to a graph with a vertex of degree $\leq1$ or a pair of adjacent degree-2 vertices.

Properties of a circle graph associated with $K_{4,4}$: It has cographic transverse matroids, it has a vertex of degree 5, its diameter is $\leq2$, it does not have the cocycle matroid of the graph of Figure \ref{circmf13} as a transverse matroid, it does not have a transverse matroid with two disjoint circuits and no other circuit, it has the cocycle matroid of the graph of Figure \ref{circmf14} as a transverse matroid, it has 45 transverse matroids of nullity 3, and its isotropic matroid has 1152 automorphisms.

\section{Bipartite circle graphs}
\label{sec:bipartite_circle}

In this section we discuss Theorem \ref{bipartite} of the introduction, and
several associated results. These results differ from the characterizations of
general circle graphs discussed above in that they do not rely on\ Bouchet's
theorem. Their foundation is an earlier result of de Fraysseix \cite{F}; a
proof is included for the reader's convenience.

\begin{proposition}
\label{deF}(\cite[Proposition 6]{F}) Let $G$ be a bipartite simple graph, with
vertex classes $V_{1}$ and $V_{2}$. Then $G$ is a circle graph if and only if
the transverse matroid $\phi_{G}(V_{1})\cup\chi_{G}(V_{2})$ is a planar matroid.
\end{proposition}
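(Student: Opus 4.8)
The plan is to prove Proposition~\ref{deF} by connecting the two notions of ``Eulerian'' we have available: combinatorially via circle graphs and Euler systems, and matroidally via planar matroids as Eulerian delta-matroids (Theorem~\ref{thm:eulerian_dm_gen_planar}). Since $G$ is bipartite with classes $V_1, V_2$, the adjacency matrix has the block form
\[
A(G)=\bordermatrix{& V_1 & V_2 \cr V_1 & 0 & B \cr V_2 & B^{\mathrm{T}} & 0},
\]
so the transverse matroid $\phi_G(V_1)\cup\chi_G(V_2)$ is the binary matroid represented by the $V(G)\times V(G)$ matrix whose $V_1$-columns are the corresponding columns of $I$ and whose $V_2$-columns are the corresponding columns of $A(G)$. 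Writing this matrix out, after reordering rows so that $V_1$-rows come first, it has the shape $\bigl(\begin{smallmatrix} I & B \\ 0 & B^{\mathrm{T}}\end{smallmatrix}\bigr)$; row-reducing, this represents the same binary matroid as $(I\ \ B^{\mathrm{T}})$ viewed as a matrix with rows indexed by $V_2$ (the $V_1$ part is free, contributing coloops that get cancelled against the identity). Actually the cleanest observation is: the transverse matroid $\phi_G(V_1)\cup\chi_G(V_2)$ is isomorphic, after deleting the coloops coming from $V_1$, to the binary matroid $M[(I\ \ B^{\mathrm{T}})]$ on ground set indexed by $V(G)$, and this is precisely the matroid $\mathcal{D}_{A(G)}\ast V_1$ in the sense of the delta-matroid-to-matroid correspondence recalled in Section~\ref{ssec:repre_reg_dm}, which is a matroid exactly when all its sets have the same size.

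Next I would invoke Lemma~\ref{lem:circle_graph_iff_eulerian} and Theorem~\ref{thm:eulerian_dm_gen_planar}. By Lemma~\ref{lem:circle_graph_iff_eulerian}, $G$ is a circle graph if and only if $\mathcal{D}_{A(G)}$ is Eulerian, i.e.\ $\mathcal{D}_{A(G)}=\mathcal{D}_{A(G')}\ast X$ for some circle graph $G'$ and $X\subseteq V(G')$. The key point is to relate $\phi_G(V_1)\cup\chi_G(V_2)$ being a matroid at all (equivalently: planar, via the de~Fraysseix theorem) to $\mathcal{D}_{A(G)}\ast V_1$ being a matroid, hence Eulerian, hence $\mathcal{D}_{A(G)}$ Eulerian. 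Concretely: if $\phi_G(V_1)\cup\chi_G(V_2)$ is a planar matroid, then by Theorem~\ref{thm:eulerian_dm_gen_planar} it is an Eulerian delta-matroid; since it equals $\mathcal{D}_{A(G)}\ast V_1$ (up to the coloop cleanup above), and twists preserve the Eulerian property by definition, $\mathcal{D}_{A(G)}$ itself is Eulerian, so $G$ is a circle graph. Conversely, if $G$ is a circle graph, then $\mathcal{D}_{A(G)}$ is Eulerian, so $\mathcal{D}_{A(G)}\ast V_1$ is Eulerian; the bipartite structure forces all sets in $\mathcal{D}_{A(G)}\ast V_1$ to have the same cardinality (this is exactly where bipartiteness is used—the rank function is constant on the relevant family because the $V_1\times V_1$ and $V_2\times V_2$ blocks of $A(G)$ vanish), so this Eulerian delta-matroid is a matroid, hence planar by Theorem~\ref{thm:eulerian_dm_gen_planar}, and unwinding the identification it is $\phi_G(V_1)\cup\chi_G(V_2)$.

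The main obstacle I expect is the bookkeeping in the first paragraph: verifying carefully that the transverse matroid $\phi_G(V_1)\cup\chi_G(V_2)$, which lives on a ground set naturally indexed by $V(G)$, is the \emph{same matroid} as $\mathcal{D}_{A(G)}\ast V_1$ under the standard-form-to-skew-symmetric dictionary, including getting the coloop/free-variable cancellation exactly right and checking that the ``all sets same cardinality'' condition is equivalent to the transverse matroid actually being a matroid as opposed to a proper delta-matroid. One must also be careful that $\mathcal{D}_{A(G)}\ast V_1$ is automatically a delta-matroid (it is, since $\mathcal{D}_{A(G)}$ is and twisting preserves delta-matroids), and that the bipartiteness hypothesis is genuinely what makes it a matroid rather than just a delta-matroid: concretely, $A(G)[X]$ for $X\subseteq V(G)$ is nonsingular iff $|X\cap V_1|=|X\cap V_2|$ and the corresponding submatrix of $B$ is nonsingular, so every maximal nonsingular principal submatrix, after twisting by $V_1$, yields a basis of the fixed size $|V_1|$. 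Once that identification is nailed down, everything else is a direct appeal to Theorem~\ref{thm:eulerian_dm_gen_planar}, Lemma~\ref{lem:circle_graph_iff_eulerian}, and the invariance of the Eulerian property under twists.
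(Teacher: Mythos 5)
Your high-level chain of deductions is sound: identify the transverse matroid $\phi_G(V_1)\cup\chi_G(V_2)$ with $\mathcal{D}_{A(G)}*V_1$, observe that bipartiteness forces this delta-matroid to actually be a matroid, and then bounce between ``circle graph'' and ``planar'' via Lemma~\ref{lem:circle_graph_iff_eulerian} and Theorem~\ref{thm:eulerian_dm_gen_planar}. The identification does hold, and the bipartiteness observation (all nonsingular principal submatrices $A(G)[X]$ have $|X\cap V_1|=|X\cap V_2|$, so twisting by $V_1$ produces a constant set size $|V_1|$) is exactly right.

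However, this is not the paper's route, and more importantly it is essentially circular \emph{within the paper's own logical organization}. Both Proposition~\ref{deF} and Theorem~\ref{thm:eulerian_dm_gen_planar} are attributed to the same source \cite{F}; the paper explicitly says of Theorem~\ref{thm:eulerian_dm_gen_planar} that it ``follows from de~Fraysseix,'' and of Proposition~\ref{deF} that it is de~Fraysseix's Proposition~6, with ``a proof included for the reader's convenience'' so that the bipartite results in Section~\ref{sec:bipartite_circle} do not rely on Bouchet's obstruction theorem or the delta-matroid apparatus. Theorem~\ref{thm:eulerian_dm_gen_planar} is best regarded as the delta-matroid reformulation of Proposition~\ref{deF}; re-deriving the concrete statement from the abstract repackaging does not produce an independent proof. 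The paper's actual proof is direct: the ``only if'' direction uses Corollary~\ref{touchmat} (circle graphs have cographic transverse matroids) plus the duality of $M_1=\phi_G(V_1)\cup\chi_G(V_2)$ and $M_2=\chi_G(V_1)\cup\phi_G(V_2)$, and the ``if'' direction is a geometric construction: embed a plane graph $H$ with $M(H)\cong M_1$, take an $\varepsilon$-neighborhood of a spanning tree, read a double occurrence word around its boundary, and verify that the resulting interlacement graph is $G$. That argument is what actually carries the mathematical content that your appeal to Theorem~\ref{thm:eulerian_dm_gen_planar} takes for granted.

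There is also a computational slip in your first paragraph. The $V(G)\times(V_1\sqcup V_2)$ representing matrix of $\phi_G(V_1)\cup\chi_G(V_2)$, with rows ordered $V_1$ then $V_2$, is
\[
\bordermatrix{ & V_1 & V_2 \cr V_1 & I & A_{12} \cr V_2 & 0 & 0 },
\]
not $\bigl(\begin{smallmatrix}I & B\\ 0 & B^{\mathrm T}\end{smallmatrix}\bigr)$: the $V_2\times V_2$ block of $A(G)$ is zero precisely because $G$ is bipartite. There are no coloops to ``delete''; the matroid is simply $M[(I\; A_{12})]$ of rank $|V_1|$, and the identification with $\mathcal{D}_{A(G)}*V_1$ then falls out immediately from the standard-form-to-skew-symmetric dictionary recalled in Section~\ref{ssec:repre_reg_dm} with $E=A_{12}$, $X=V_1$, $Y=V_2$. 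Your final conclusion in that paragraph is correct, but the route there via coloop cancellation and a putative $(I\;B^{\mathrm T})$ representation indexed by $V_2$ is mistaken.
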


\begin{proof}
Let the adjacency matrix of $G$ be
\[
A=%
\begin{pmatrix}
0 & A_{12}\\
A_{21} & 0
\end{pmatrix}
.
\]
Then the transverse matroids $M_{1}=\phi_{G}(V_{1})\cup\chi_{G}(V_{2})$ and
$M_{2}=\chi_{G}(V_{1})\cup\phi_{G}(V_{2})$ are represented by $%
\begin{pmatrix}
I_{1} & A_{12}%
\end{pmatrix}
$ and $%
\begin{pmatrix}
A_{21} & I_{2}%
\end{pmatrix}
$, where $I_{1}$ and $I_{2}$ are identity matrices. As $A_{12}$ and $A_{21}$
are transposes of each other, $M_{1}$ and $M_{2}$ are dual matroids.

If $G$ is a circle graph then Corollary~\ref{touchmat} tells us that $M_{1}$ and
$M_{2}$ are both cographic. They are each other's duals, so they must be planar.

Suppose conversely that $M_{1}$ is a planar matroid; then $M_{2}$ is planar
too. Let $H$ be a plane graph whose cycle matroid is $M_{1}$. We presume that
$H$ is connected, its dual $H^{\ast}$ is connected, and $H$ and $H^{\ast}$ may be drawn together in the plane in such a way that all their edges are
smooth curves, in general position. We identify $E(H)$ and $E(H^{\ast})$ with
$V(G)$.

$H$ has a spanning tree $T$ corresponding to the basis $\phi_{G}(V_{1})$ of
$M_{1}$. Choose an $\varepsilon$-neighborhood $D$ of $T$; $D$ is homeomorphic
to a disk, so its boundary $\partial D$ is homeomorphic to a circle. As long
as $\varepsilon$ is sufficiently small, each element $e\in E(H)-E(T)$
intersects $D$ in two short arcs. Use $e$ to label the end-points of these
arcs on $\partial D$. Also choose two points of $\partial D$ for each edge of
$T$, one on each side of the midpoint; label these two points with that edge.
If we trace the circle $\partial D$ and read off the labeled points in order
then we obtain a double occurrence word $W$, in which each edge of $H$ appears twice.

Consider an edge $e\in E(T)$, and write $W$ as $eW_{1}eW_{2}$. Let $\gamma$ be
a simple closed curve obtained by following $\partial D$ from one point
labeled $e$ to the other point labeled $e$, and then crossing through $D$ back
to the first point labeled $e$. Clearly then $\gamma$ encloses one connected
component of $T-e$. If $e^{\prime}$ is an edge of the connected component of
$T-e$ enclosed by $\gamma$, then both points of $\partial D$ labeled
$e^{\prime}$ appear on $\gamma$. If $e^{\prime}$ is an edge of the connected
component of $T-e$ not enclosed by $\gamma$, then neither point of $\partial
D$ labeled $e^{\prime}$ appears on $\gamma$. Either way, we see that $e$ and
$e^{\prime}$ are not interlaced with respect to $W$.

Now consider an edge $e\in E(H)-E(T)$, and write $W$ as $eW_{1}eW_{2}$.
Suppose $e^{\prime}\neq e\in E(H)-E(T)$. If $e^{\prime}$ appears precisely
once in $W_{1}$, then it is not possible to draw $e$ and $e^{\prime}$ without
a crossing outside $D$; as there is no such crossing, it cannot be that
$e^{\prime}$ appears precisely once in $W_{1}$. That is, $e$ and $e^{\prime}$
are not interlaced with respect to $W$.\ Considering the preceding paragraph,
we deduce that the interlacement graph $\mathcal{I}(W)$ is bipartite, with
$E(T)$ and $E(H)-E(T)$ as vertex-classes.

Again, let $e\in E(H)-E(T)$, and write $W$ as $eW_{1}eW_{2}$. Remove all
appearances of edges not in $T$ from $W_{1}$ and $W_{2}$; the result is two
walks in $T$ connecting the end-vertices of $e$. If we remove the edges that
appear twice in either of these walks, we must obtain the unique path in $T$
connecting the end-vertices of $e$. We conclude that the fundamental circuit
of $e$ with respect to $T$ in $H$ coincides with the\ closed neighborhood of
$e$ in $\mathcal{I}(W)$. As $T$ is a spanning tree of $H$ corresponding to the
basis $\phi_{G}(V_{1})$ of $M_{1}$, the fundamental circuit of $e$ with
respect to $T$ in $H$ is the same as the closed neighborhood of $e$ in $G$.

As $G$ and $\mathcal{I}(W)$ are both bipartite simple graphs with a
vertex-class corresponding to $V_{1}=E(T)$, it follows that $G=\mathcal{I}(W)$.
\end{proof}

Note that in the situation of Proposition \ref{deF}, $G$ is a special kind of
circle graph: it is an interlacement graph of a planar 4-regular graph $F$. To
construct such an $F$, start with the closed curve $\partial D$ mentioned in
the proof of Proposition \ref{deF}. $F$ has a vertex at the midpoint of each
edge of $T$, and a vertex outside $D$ on each edge of $E(H)-E(T)$; edges are
inserted so that the word $W$ corresponds to an Euler circuit of $F$. It is a
simple matter to draw $F$ in the plane using the closed curve $\partial D$ as
a guide.

Proposition \ref{deF} implies the following.

\begin{theorem}
\label{planar}Let $G$ be a simple graph. Then each of these conditions implies the others.

\begin{enumerate}
\item $G$\ is the interlacement graph of an Euler system of a planar 4-regular graph.

\item There are disjoint transversals $T_{1},T_{2}$ of $W(G)$ such that
$r(T_{1})+r(T_{2})=\left\vert V(G)\right\vert $ and
\[
M[IAS(G)]\mid(T_{1}\cup T_{2})
\]
\ is a planar matroid.

\item $G$ is locally equivalent to a bipartite graph, and all the transverse
matroids of $G$ are cographic.
\end{enumerate}
\end{theorem}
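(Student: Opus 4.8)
The plan is to reduce the whole equivalence to the case where $G$ is itself bipartite, where Proposition~\ref{deF} applies directly, and then to combine Proposition~\ref{deF} with Whitney's theorem that a matroid is planar if and only if it is simultaneously graphic and cographic. I would first record that each of (1), (2), (3) is invariant under local equivalence (up to isomorphism). For (3) this is immediate: ``locally equivalent to a bipartite graph'' is invariant by definition, and the cographic clause is invariant because by Theorem~\ref{theory} local equivalence induces an isomorphism of isotropic matroids carrying vertex triples to vertex triples, hence transversals to transversals and transverse matroids to transverse matroids. For (2) the same induced isomorphism carries a pair of disjoint transversals obeying the rank condition to another such pair and preserves matroid planarity. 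For (1) one uses Kotzig's theorem \cite{K} together with the identity $\mathcal{I}(C\ast v)=\mathcal{I}(C)_{s}^{v}$, which show that the graphs locally equivalent (up to isomorphism) to an interlacement graph of a 4-regular graph $F$ are exactly the interlacement graphs of Euler systems of $F$. I would also note an elementary reformulation: since $M[IAS(G)]$ has rank $|V(G)|$ and $T_{1}\cup T_{2}$ spans it (the third column of each vertex triple is the sum of the other two), $r(T_{1})+r(T_{2})=|V(G)|$ holds exactly when $T_{1}$ and $T_{2}$ are skew, i.e.\ when $M[IAS(G)]\mid(T_{1}\cup T_{2})=(M[IAS(G)]\mid T_{1})\oplus(M[IAS(G)]\mid T_{2})$; and a direct sum is planar precisely when both summands are.

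Next I would dispose of the bipartite case. Let $G$ be bipartite with vertex classes $V_{1},V_{2}$, put $T_{1}=\phi_{G}(V_{1})\cup\chi_{G}(V_{2})$ and $T_{2}=\chi_{G}(V_{1})\cup\phi_{G}(V_{2})$ (disjoint transversals, with complement the all-$\psi$ transversal), and let $M_{1},M_{2}$ be the transverse matroids of Proposition~\ref{deF}, represented by $(I_{1}\ A_{12})$ and $(A_{21}\ I_{2})$, so that $M_{2}=M_{1}^{\ast}$. Because bipartiteness makes the relevant submatrix of $IAS(G)$ block-diagonal, $M[IAS(G)]\mid(T_{1}\cup T_{2})=M_{1}\oplus M_{2}$, and $r(T_{1})+r(T_{2})=|V_{1}|+|V_{2}|=|V(G)|$. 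Now the following are all equivalent: $M_{1}$ is planar; $M_{1}$ is graphic and cographic (Whitney); $M_{1}$ and $M_{2}=M_{1}^{\ast}$ are both cographic. Moreover $M_{1}$ is planar if and only if $G$ is a circle graph (Proposition~\ref{deF}), in which case the construction in the discussion following the proof of Proposition~\ref{deF} realizes $G$ as an interlacement graph of a planar 4-regular graph; conversely a circle graph has all transverse matroids cographic (Corollary~\ref{touchmat}), and if $G$ is an interlacement graph of a planar 4-regular $F$ then by Theorem~\ref{coreker} $M_{1}$ is the cocycle matroid of a touch-graph of a circuit partition of the plane graph $F$, which is planar. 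Thus for bipartite $G$, condition (1), condition (3), ``$M_{1}$ planar'', and ``condition (2) holds for this particular $T_{1},T_{2}$'' are all equivalent. The same analysis then applies to any $G$ that is locally equivalent to a bipartite graph: for (3) this is part of the hypothesis, and for (1) it holds because a planar 4-regular graph admits an Euler system whose interlacement graph is bipartite (\cite{F}; cf.\ the planar/bipartite correspondence), which by Kotzig is locally equivalent to $G$.

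The remaining task is to show that (2) also forces $G$ to be locally equivalent to a bipartite graph; this is where I expect the main obstacle. After the reductions one has a transversal 3-tuple $(T_{1},T_{2},T_{3})$ with $M[IAS(G)]\mid T_{1}$ and $M[IAS(G)]\mid T_{2}$ planar. Using the self-duality of the 2-sheltering matroid $M[IAS(G)]-T_{3}$ with respect to its skew-class pairing (a structural fact from \cite{BT1}), one gets $M[IAS(G)]\mid T_{2}\cong(M[IAS(G)]\mid T_{1})^{\ast}$. Choosing a plane graph $H_{1}$ with $M(H_{1})=M[IAS(G)]\mid T_{1}$ and running the de~Fraysseix construction of the proof of Proposition~\ref{deF} produces a planar 4-regular graph $F'$ with an Euler system $C'$ such that $\mathcal{I}(C')$ is bipartite and its two bipartite-natural transverse matroids are $M[IAS(G)]\mid T_{1}$ and $M[IAS(G)]\mid T_{2}$. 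Hence $M[IAS(\mathcal{I}(C'))]$ minus its all-$\psi$ transversal and $M[IAS(G)]-T_{3}$ agree as 2-sheltering matroids, and since both are tight and isotropic the rigidity of tight isotropic 3-sheltering matroids (Lemmas~\ref{lem:ias_tight_3matroid} and \ref{lem:unique_reconstruct_matrix}) forces $M[IAS(G)]\cong M[IAS(\mathcal{I}(C'))]$; so $G$ is locally equivalent to the bipartite circle graph $\mathcal{I}(C')$, and the bipartite case of the previous paragraph then yields all of (1), (2), (3). The genuinely delicate point is to track the skew-class pairings through the self-duality, so that one obtains an isomorphism of 2-sheltering matroids rather than merely an abstract matroid isomorphism of the underlying direct sum $M_{1}\oplus M_{2}$.
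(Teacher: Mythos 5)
Your handling of the implications $1 \Rightarrow 3$ and $3 \Rightarrow 2$ (the bipartite block-diagonal computation, Whitney's graphic-and-cographic characterization of planarity, Proposition~\ref{deF}, Corollary~\ref{touchmat}) matches the paper's argument essentially exactly. The genuine divergence is in the crucial direction: showing that condition 2 alone forces $G$ to be locally equivalent to a bipartite graph. The paper proves this directly by a basis-exchange argument from \cite[Section~4]{BT1}: starting from a basis $B_1$ of $M[IAS(G)]\mid T_1$, it sets $V_1$ to be the set of vertices meeting $B_1$, completes $B_1$ to a transversal basis $B$ using elements of $T_2$ at the remaining vertices, and then invokes the result that $B$ determines a bipartite graph $H$ locally equivalent to $G$ with vertex classes $\beta(V_1),\beta(V_2)$ under the induced isomorphism; the transversals $T_1,T_2$ then land on the bipartite-natural transversals of $H$ and Proposition~\ref{deF} finishes. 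Your route instead tries to \emph{rebuild} an isomorphic isotropic matroid from scratch: extract $M_1\cong (M_2)^\ast$, apply de~Fraysseix to a plane graph representing $M_1$ to get a bipartite interlacement graph $\mathcal{I}(C')$, and then argue via rigidity of tight isotropic $3$-sheltering matroids that $M[IAS(G)]\cong M[IAS(\mathcal{I}(C'))]$.

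There are three unfilled holes in that alternative route, two of which you do not flag. First, the claim $M[IAS(G)]\mid T_2 \cong \bigl(M[IAS(G)]\mid T_1\bigr)^\ast$ for arbitrary disjoint transversals satisfying the rank condition is not simply a quotable fact from \cite{BT1}; it has to be derived from the isotropic (Lagrangian) structure. It is true — one can show that the swap of corresponding elements of $T_1$ and $T_2$ carries $M[IAS(G)]\mid(T_1\cup T_2)$ to its dual, and the rank condition then forces the direct-sum pieces to swap — but this argument needs to be written out, not asserted. Second, to invoke the $3$-matroid reconstruction lemmas you rely on $\mathcal{Z}(G)-T_3$ being strongly binary (so that Lemma~\ref{lem:unique_3m} applies), and this too requires justification when $T_3$ is not the all-$\psi$ transversal. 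Third, the skew-class-pairing issue that you do flag is exactly the crux: an abstract isomorphism $M_1\oplus M_2 \cong M_1'\oplus M_2'$ is not enough, and making the pairing agree requires choosing the isomorphism $M_2 \to M_2'$ as the conjugate of your chosen $M_1 \to M_1'$ by the two swap maps; since you do not carry this out, the rigidity step does not yet go through. In short, the outline is plausible and genuinely different, but in its current form the proof of $2 \Rightarrow 1$ has a real gap precisely where the paper instead cites a packaged result from \cite{BT1}.
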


\begin{proof}
If $G$ is a circle graph associated with a planar 4-regular graph then it is
well known that $G$ is locally equivalent to a bipartite circle graph; see
\cite{RR} for instance. This and Corollary~\ref{touchmat} give us the implication
$1\Rightarrow3$.

Suppose condition 3 holds, and let $H$ be a bipartite graph locally equivalent
to $G$. As there is an induced isomorphism between the isotropic matroids of
$G$ and $H$, we may verify condition 2 for $G$ by verifying it for $H$. Let
$V(H)=V_{1}\cup V_{2}$ with $V_{1}$ and $V_{2}$ both stable sets of $H$. Then
the adjacency matrix of $H$ is%
\[
A=%
\begin{pmatrix}
0 & A_{12}\\
A_{21} & 0
\end{pmatrix}
.
\]
Consider the transversals $T_{1}=\phi_{H}(V_{1})\cup\chi_{H}(V_{2})$ and
$T_{2}=\phi_{H}(V_{2})\cup\chi_{H}(V_{1})$. Notice that $r(T_{i})=\left\vert
V_{i}\right\vert $ for each $i$, so $r(T_{1})+r(T_{2})=\left\vert
V(G)\right\vert $. Moreover, the transverse matroids $M[IAS(H)]\mid T_{1}$ and
$M[IAS(H)]\mid T_{2}$ are represented by $%
\begin{pmatrix}
I_{1} & A_{12}%
\end{pmatrix}
$ and $%
\begin{pmatrix}
A_{21} & I_{2}%
\end{pmatrix}
$, where $I_{1}$ and $I_{2}$ are identity matrices. As $A_{12}$ and $A_{21}$
are transposes, these two matroids are duals. Both are cographic by condition
3, so both are planar. Notice that
\[%
\begin{pmatrix}
I & A
\end{pmatrix}
=%
\begin{pmatrix}
I_{1} & 0 & 0 & A_{12}\\
0 & I_{2} & A_{21} & 0
\end{pmatrix}
,
\]
and consequently
\[
M[IAS(H)]\mid(T_{1}\cup T_{2})=(M[IAS(H)]\mid T_{1})\oplus(M[IAS(H)]\mid
T_{2})
\]
is a direct sum of planar matroids. It follows that $M[IAS(H)]\mid(T_{1}\cup
T_{2})$ is a planar matroid.

Suppose now that condition 2 holds, and $G$ has transversals $T_{1}$,
\thinspace$T_{2}$ as described. Let $B_{1}$ be a basis of $M[IAS(G)]\mid
T_{1}$, let $V_{1}=\{v\in v(G)\mid B_{1}$ contains an element of the vertex
triple of $v\}$, and let $V_{2}=V(G)-V_{1}$. Let $B_{2}\subseteq T_{2}$
consist of the elements of $T_{2}$ corresponding to elements of $V_{2}$. As
discussed in \cite[Section 4]{BT1}, $B=B_{1}\cup B_{2}$ is a basis of $M[IAS(G)]$ and there is a graph $H$ that is locally equivalent to $G$,
such that (a) an induced isomorphism $\beta:M[IAS(G)]\rightarrow M[IAS(H)]$
maps $B$ to $\{\phi_{H}(v)\mid v\in V(H)\}$ and (b) $H$ is bipartite with
vertex-classes $\beta(V_{1})$ and $\beta(V_{2})$. It follows from (b) that for
$i\neq j\in\{1,2\}$ the induced isomorphism maps $T_{i}$ to $\phi_{H}%
(V_{i})\cup\chi_{H}(V_{j})$. Proposition \ref{deF} and the subsequent
discussion imply that condition 1 holds.
\end{proof}

It is important to realize that although $BW_{3}$ is the only one of Bouchet's circle graph obstructions with non-cographic transverse matroids, condition 3 of Theorem \ref{planar} does not imply that a bipartite non-circle graph must have $BW_{3}$ as a vertex-minor. For instance, a computer search using Sage \cite{sageMatroid, sage} indicates that $BW_{3}$ is not a vertex-minor of the bipartite graph\ $BW_{4}$ pictured on the left in Figure \ref{circmf6}. Nevertheless $BW_{4}$ is not a circle graph. One way to verify this assertion is to observe that the transverse matroid $\{\chi_{1}$, $\phi_{2}$, $\chi_{3}$, $\phi_{4}$, $\chi_{5}$, $\phi_{6}$, $\chi_{7}$, $\phi_{8}$, $\phi_{9}\}$ is not cographic. Indeed, this transverse matroid is isomorphic to the cycle matroid $M(K_{3,3})$; an isomorphism is indicated by the labels $1, 2, \ldots, 9$ in the figure. (Notice that the labels denote vertices of $BW_{4}$ and edges of $K_{3,3}$.) Another way to verify that $BW_{4}$ is not a circle graph is to obtain $W_{5}$ as a vertex-minor; this can be done by performing local complementations with respect to vertices 2, 4 and 7, and then removing them.

\begin{figure}
[ptb]
\begin{center}
\includegraphics[
trim=2.141194in 7.627981in 1.206546in 1.340482in,
height=1.5532in,
width=3.8891in
]%
{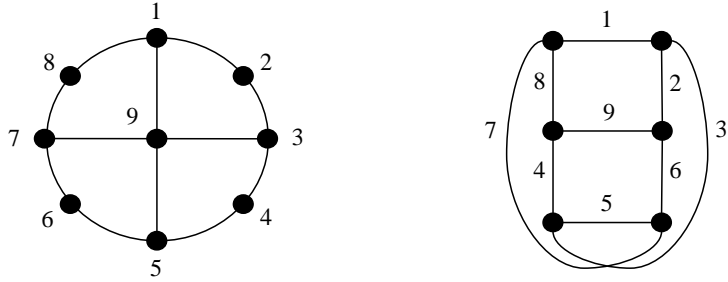}%
\caption{$BW_{4}$ and $K_{3,3}$.}%
\label{circmf6}%
\end{center}
\end{figure}

Observe that the implication $3\Rightarrow1$ of Theorem \ref{planar} tells us that the converse of Corollary~\ref{touchmat} holds for graphs that are locally equivalent to bipartite graphs. This observation yields the equivalence between the first two properties mentioned in Theorem \ref{bipartite} of the introduction. The next result implies that the third property mentioned in Theorem \ref{bipartite} is equivalent to the the first two.

\begin{corollary}
Let $G$ be a bipartite simple graph. Then $G$ is a circle graph if and only if neither $BW_{3}$ nor $BW_{4}$ is a vertex-minor.
\end{corollary}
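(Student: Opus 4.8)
I would prove the two implications separately. For the forward implication, recall that the class of circle graphs is closed under vertex-minors: it is closed under local complementation, and deleting a vertex of an interlacement graph yields the interlacement graph of a detachment, as explained at the end of Section~\ref{sec:4reg_graphs}. Now $BW_3$ is not a circle graph by Bouchet's theorem (Theorem~\ref{bouchet}), and $BW_4$ is not a circle graph either, since it has a transverse matroid isomorphic to $M(K_{3,3})$, which is non-cographic and hence forbidden by Corollary~\ref{touchmat}. Therefore, if $G$ is a circle graph, neither $BW_3$ nor $BW_4$ can be a vertex-minor of $G$.

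For the converse I would argue the contrapositive: assuming $G$ is bipartite with vertex classes $V_1,V_2$ and is not a circle graph, I would produce $BW_3$ or $BW_4$ as a vertex-minor. By Proposition~\ref{deF} the transverse matroid $M_1=\phi_G(V_1)\cup\chi_G(V_2)$ is then not a planar matroid, and, as in the proof of that proposition, $M_2=\chi_G(V_1)\cup\phi_G(V_2)$ is its dual $M_1^{*}$; since a non-planar binary matroid is non-graphic or non-cographic, at least one of $M_1,M_2$ is non-cographic, and after interchanging the roles of $V_1$ and $V_2$ if necessary I may assume that $M_1$ itself is non-cographic. By Tutte's excluded-minor characterization of cographic matroids, the binary matroid $M_1$ then has a minor $N$ isomorphic to one of $F_7$, $F_7^{*}$, $M(K_5)$, $M(K_{3,3})$.

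The heart of the argument is to convert this matroid minor into a vertex-minor. Here $G$ is the fundamental graph of $M_1$ with respect to the basis $V_1$; pivoting $G$ on an edge is a composition of local complementations, keeps $G$ bipartite, and realizes a change of basis of $M_1$, while passing to an induced subgraph of a pivot-equivalent graph realizes a minor of $M_1$. Since any minor $M_1/C\setminus D$ can be presented with $C$ contained in, and $D$ disjoint from, a common basis, every minor of $M_1$ occurs as the fundamental matroid of an induced subgraph of some graph pivot-equivalent to $G$, hence of some vertex-minor of $G$. Thus there is a bipartite vertex-minor $G'$ of $G$ whose fundamental matroid is isomorphic to $N$. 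Now $BW_3$ has transverse matroids isomorphic to $F_7$ and to $F_7^{*}$ (as noted in the discussion around Corollary~\ref{touchmat}), and $BW_4$ has one isomorphic to $M(K_{3,3})$; since the fundamental graphs of a fixed binary matroid form a single pivot-equivalence class, if $N\in\{F_7,F_7^{*}\}$ then $G'$ is locally equivalent to $BW_3$, and if $N=M(K_{3,3})$ then $G'$ is locally equivalent to $BW_4$. In either case the relevant one of $BW_3$, $BW_4$ is a vertex-minor of $G'$, hence of $G$.

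The remaining case $N=M(K_5)$ is what I expect to be the main obstacle, because $M(K_5)$ is minor-minimal non-cographic and in particular has no minor isomorphic to $F_7$, $F_7^{*}$, $M(K_{3,3})$, or $M^{*}(K_{3,3})$, so the matroid dictionary above cannot reduce it to $BW_3$ or $BW_4$. The essentially unique fundamental graph of $M(K_5)$ is the graph obtained from $K_4$ by subdividing each edge once, so every bipartite $G'$ with fundamental matroid $M(K_5)$ is locally equivalent to this subdivided $K_4$. I would finish by a direct verification — exhibiting an explicit sequence of local complementations and vertex deletions, in the computer-assisted spirit of the rest of the paper — showing that the once-subdivided $K_4$ has $BW_4$ (equivalently, up to local equivalence, $BW_3$) as a vertex-minor; then, since $G'$ is locally equivalent to it, $BW_4$ is a vertex-minor of $G$, completing the proof.
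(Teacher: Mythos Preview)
Your approach is essentially the paper's. Both argue the contrapositive, invoke Proposition~\ref{deF} to obtain a non-planar transverse matroid, use the correspondence between matroid minors of $M_1$ and pivots/induced subgraphs of the fundamental graph $G$ to reduce to the excluded minors, and then handle the base cases $F_7$, $M(K_{3,3})$, $M(K_5)$ individually. The paper organizes the reduction as an explicit induction on minor-minimality (contracting one element at a time and, when that element is a $\chi$-element, first pivoting on an incident edge so it becomes a $\phi$-element) rather than your one-sentence packaging, but the content is the same.

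One correction in your $M(K_5)$ case: the parenthetical ``(equivalently, up to local equivalence, $BW_3$)'' is wrong as written, since $BW_3$ and $BW_4$ have $7$ and $9$ vertices respectively and cannot be locally equivalent. The paper in fact extracts $BW_3$, not $BW_4$, from the fundamental graph of $M(K_5)$: it performs local complementations at three suitably chosen degree-$2$ vertices of the once-subdivided $K_4$ and then deletes them. So the direct verification you anticipate does succeed, but with $BW_3$ as the target.
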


\begin{proof}
Of course, if $BW_{3}$ or $BW_{4}$\ is a vertex-minor then $G$ is not a circle graph.

For the converse, suppose $G$ is not a circle graph. If $V_{1}$ and $V_{2}$
are the vertex-classes of $G$ then Proposition \ref{deF} tells us that the
transverse matroid $M=\phi_{G}(V_{1})\cup\chi_{G}(V_{2})$ is not a planar
matroid. As noted in the proof of Proposition \ref{deF}, the transverse
matroid $M^{\ast}=\phi_{G}(V_{2})\cup\chi_{G}(V_{1})$ is the dual of $M$; so
of course $M^{\ast}$ is not planar either.

Suppose for the moment that $M$ is minor-minimal with regard to nonplanarity.
Then $M$ or $M^{\ast}$\ is isomorphic to one of $F_{7}$, $M(K_{3,3})$,
$M(K_{5})$, so $G$ is a fundamental graph of one of these matroids. The
fundamental graphs of a binary matroid are all equivalent under edge pivots,
so they are certainly locally equivalent; hence it suffices to verify that one
fundamental graph of each of $F_{7}$, $M(K_{3,3})$, and $M(K_{5})$ has
$BW_{3}$ or $BW_{4}$ as a vertex-minor. $BW_{3}$ is a fundamental graph of
$F_{7}$, and $BW_{4}$ is a fundamental graph of $M(K_{3,3})$. \ For $K_{5}$,
consider the fundamental graph pictured on the left in Figure \ref{circmf12}.
(The labels indicate an isomorphism between the transverse matroid $\{\phi
_{1}$, $\chi_{2}$, $\chi_{3}$, $\chi_{4}$, $\chi_{5}$, $\chi_{6}$, $\chi_{7}$,
$\phi_{8}$, $\phi_{9}$, $\phi_{10}\}$ and $M(K_{5})$.) Clearly $BW_{3}$ is a
vertex-minor, obtained by first performing local complementations at the
vertices $2$, $3$, $4$, and then removing them.

Proceeding inductively, suppose that $M$ is not minor-minimal with regard to
nonplanarity. Then there is an $m\in M$ such that $M/m$ or $M-m$\ is also
nonplanar. If $M-m$ is nonplanar, then $M^{\ast}/m=(M-m)^{\ast}$ is nonplanar.
Consequently by interchanging $V_{1}$ and $V_{2}$ if necessary, we may presume
that $M/m$ is nonplanar.

%

\begin{figure}
[ptb]
\begin{center}
\includegraphics[
trim=2.407144in 7.624680in 1.070597in 1.344884in,
height=1.5558in,
width=3.7922in
]%
{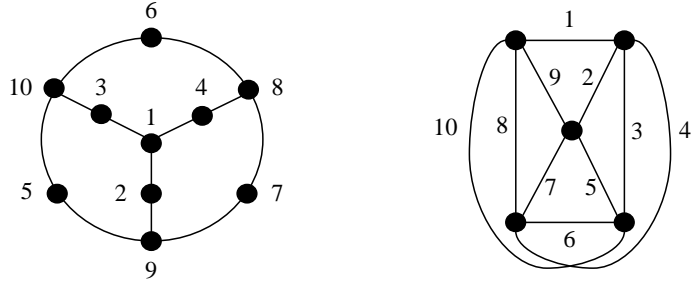}%
\caption{A fundamental graph of $K_{5}$.}%
\label{circmf12}%
\end{center}
\end{figure}

If $m=\phi_{G}(v)\in\phi_{G}(V_{1})$ then $M/m$ is a transverse matroid of
$G-v\,$, so applying the inductive hypothesis to $G-v$ implies that $BW_{3}$
or $BW_{4}$ is a vertex-minor of $G$. If $m=\chi_{G}(v)\in\chi_{G}(V_{2})$ is
a loop of $M$, then $v$ is an isolated vertex of $G$, and again we may apply
the inductive hypothesis to $G-v$. If $m=\chi_{G}(v)\in\chi_{G}(V_{2})$ is not
a loop of $M$, then $v$ has a neighbor $w\in V_{1}$. In this case the edge
pivot $G^{vw}$ is a bipartite graph with vertex classes $V_{1}\Delta\{v,w\}$
and $V_{2}\Delta\{v,w\}$, and an induced isomorphism $\beta
:M[IAS(G)]\rightarrow M[IAS(G^{vw})]$ has $\beta(\phi_{G}(w))=\chi_{G^{vw}%
}(w)$ and $\beta(\chi_{G}(v))=\phi_{G^{vw}}(v)$ \cite{Tnewnew}. As $\beta(M)$
is isomorphic to $M$, it too is nonplanar; and so is $\beta(M)/\phi_{G^{vw}%
}(v)\cong M/\chi_{G^{vw}}(v)$. As $\beta(M)/\phi_{G^{vw}}(v)$ is a transverse
matroid of $G^{vw}-v$, the inductive hypothesis implies that $BW_{3}$ or
$BW_{4}$ is a vertex-minor of $G^{vw}$, and hence of $G$.
\end{proof}

\section{Crossing numbers of 4-regular graphs}
\label{sec:cross_nr_4reg}

A simple construction indicates that every 4-regular graph can be obtained from a planar 4-regular graph through detachment. Examples appear in Figure \ref{circmf17}: the graphs in the top corner positions are detachments of the planar graphs directly below them.

\begin{theorem}
\label{detach}Every 4-regular graph is a detachment of a planar 4-regular graph.
\end{theorem}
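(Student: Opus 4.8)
The plan is to give a direct construction. Start with an arbitrary 4-regular graph $F$ and, at each vertex $v$, record the cyclic arrangement of the four half-edges incident on $v$; equivalently, fix an Euler system $C$ of $F$ and at each vertex $v$ use $C$ to split the four half-edges into the two single transitions $\{h_1',h_2\}$ and $\{h_3',h_4\}$ that make up the transition $\phi_C(v)$. The idea is to build a planar 4-regular graph $\widehat{F}$ whose detachment recovers $F$. Concretely, I would take a planar rotation system by drawing $F$ with its vertices in general position in the plane, and wherever two edges cross, insert a new degree-4 vertex at the crossing point, pairing the half-edges at that new vertex according to the way the two edges actually cross (the ``straight-through'' transition). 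The resulting graph $\widehat{F}$ is drawn without crossings, hence planar, and it is 4-regular: the old vertices of $F$ still have degree 4, and each inserted vertex has degree 4 by construction.

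The second step is to check that detaching $\widehat{F}$ at all the inserted vertices, along the straight-through transitions used to define them, returns exactly $F$. This is essentially the definition of detachment given at the end of Section~\ref{sec:4reg_graphs}: detaching at an inserted crossing vertex removes it and re-joins the two pairs of half-edges that were ``going straight'' through the crossing, which is precisely the operation of erasing the crossing and restoring the two original edge-arcs of $F$. Since each such detachment is at a transition of $M_\tau(\widehat F)$, and detachment at several vertices can be performed one at a time in any order, after detaching at every inserted vertex we recover $F$ on the original vertex set with the original edges. (If one prefers, one can phrase this via the double occurrence word: the planar drawing of $\widehat F$ together with the boundary walk gives an Euler system whose interlacement graph is bipartite, and successively deleting the crossing vertices from this Euler system yields an Euler system of $F$; but the geometric description above is cleaner.)

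A minor point to address is the possibility that $F$ is not simple or not connected. Loops and parallel edges of $F$ cause no difficulty: a loop drawn in the plane can still be put in general position, any self-crossings are replaced by inserted degree-4 vertices exactly as before, and detachment at those vertices restores the loop (with free edges discarded as in the definition). Disconnected $F$ is handled componentwise, or simply by drawing all components simultaneously in general position so that inter-component crossings also become inserted vertices. So the construction applies verbatim.

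I do not expect a serious obstacle here; the one place that needs care is the bookkeeping at an inserted crossing vertex --- making sure the transition used both to define the vertex and to detach it is the ``straight'' one, so that detachment genuinely undoes the crossing rather than producing one of the other two smoothings. Once that pairing is fixed consistently, the verification that detaching all inserted vertices returns $F$, and that $\widehat F$ is planar and 4-regular, is routine. Thus every 4-regular graph is a detachment of a planar 4-regular graph, as claimed.
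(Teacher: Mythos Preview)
Your proposal is correct and follows essentially the same approach as the paper: draw $F$ in the plane in general position and replace each edge-crossing by a new degree-4 vertex, obtaining a planar 4-regular graph $\widehat F$ from which $F$ is recovered by detaching along the straight-through transitions at the inserted vertices. The paper's proof is just a terser statement of exactly this construction; your added remarks about loops, disconnectedness, and choosing the correct transition at each crossing are reasonable elaborations but not required for the argument.
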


\begin{proof}
Draw a 4-regular graph $F$ in the plane, with its edges in general position.
That is, the only failures of planarity are points where two edges cross. To
obtain a planar graph with $F$ as a detachment, replace each edge-crossing
with a vertex.
\end{proof}

This leads to yet another characterization of circle graphs:

\begin{corollary}
\label{vmb}A graph is a circle graph if and only if it is a vertex-minor of a bipartite circle graph.
\end{corollary}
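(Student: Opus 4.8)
The plan is to prove the two directions of the equivalence separately; the reverse direction is immediate, and the forward direction combines Theorem~\ref{detach} with the detachment/isotropic-minor dictionary from the end of Section~\ref{sec:4reg_graphs}.

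For the reverse direction, suppose $G$ is a vertex-minor of a bipartite circle graph $H$. The class of circle graphs is closed under local complementation (as noted in the introduction) and clearly under induced subgraphs, hence under taking vertex-minors; thus $G$ is a circle graph. (Bipartiteness of $H$ is not used here.)

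For the forward direction, let $G$ be a circle graph, say $G=\mathcal{I}(C)$ for an Euler system $C$ of a 4-regular graph $F$. First I would apply Theorem~\ref{detach} to write $F$ as a detachment of a planar 4-regular graph $\tilde{F}$, obtained from $\tilde{F}$ by detaching at each vertex that arose from a crossing in a drawing of $F$. Fix any Euler system $\tilde{C}$ of $\tilde{F}$ and set $\tilde{G}=\mathcal{I}(\tilde{C})$, so $\tilde{G}$ is a circle graph with $M[IAS(\tilde{G})]=M_{\tau}(\tilde{F})$. Iterating the identity from Section~\ref{sec:4reg_graphs} that a single detachment along a transition $t$ replaces $M_{\tau}$ by the isotropic minor obtained by contracting $t$ (the loop and free-edge cases are handled there too), we see that $M_{\tau}(F)$ is an isotropic minor of $M[IAS(\tilde{G})]$; since the contracted transitions lie in pairwise disjoint vertex triples, this is a single isotropic minor in the sense of Definition~\ref{isominor}. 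Because the isotropic minors of $\tilde{G}$ are exactly the isotropic matroids of its vertex-minors, there is a vertex-minor $H'$ of $\tilde{G}$ with $M[IAS(H')]\cong M_{\tau}(F)=M[IAS(G)]$, so by Theorem~\ref{theory} $H'$ is locally equivalent to $G$ up to isomorphism; hence $G$ is, up to isomorphism, a vertex-minor of $\tilde{G}$. Finally, since $\tilde{F}$ is planar, $\tilde{G}$ satisfies the first condition of Theorem~\ref{planar}, hence the third: $\tilde{G}$ is locally equivalent to a bipartite graph $\tilde{G}_{b}$, which is itself a circle graph as it is locally equivalent to one. Because local equivalence is a special case of taking a vertex-minor and vertex-minors compose, $G$ is (up to isomorphism) a vertex-minor of the bipartite circle graph $\tilde{G}_{b}$, as required.

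I expect the only delicate point to be the bookkeeping in the previous paragraph: confirming that a sequence of single detachments amounts to exactly one isotropic minor in the sense of Definition~\ref{isominor}, and that being a vertex-minor is transitive and behaves well under passage to isomorphic and locally equivalent graphs, so that one may move freely from $\tilde{G}$ to $\tilde{G}_{b}$. A more hands-on alternative would bypass matroids: realize $G$ by a chord diagram, form a planar 4-regular graph by inserting a vertex at each crossing of the associated immersed curve, and read a bipartite interlacement graph containing $G$ as a vertex-minor directly off the planar drawing; the algebraic route above is preferred only because it reuses machinery the paper has already established.
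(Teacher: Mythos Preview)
Your proof is correct and follows essentially the same route as the paper: close the class of circle graphs under vertex-minors for the reverse direction, and for the forward direction apply Theorem~\ref{detach} to realize $F$ as a detachment of a planar $4$-regular graph, use the Section~\ref{sec:4reg_graphs} dictionary to see that $G$ is a vertex-minor of an interlacement graph of that planar graph, and then invoke Theorem~\ref{planar} to pass to a bipartite interlacement graph. The paper's version is a bit more direct in that it cites the end of Section~\ref{sec:4reg_graphs} to obtain $G$ as a vertex-minor of the interlacement graph of $\tilde F$ without detouring through isotropic matroids and Theorem~\ref{theory}, but your matroidal bookkeeping is sound and leads to the same conclusion.
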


\begin{proof}
If $G$ is a vertex-minor of any circle graph (bipartite or not), then $G$ itself is a circle graph. For the converse, suppose $G$ is a circle graph associated with a 4-regular graph $F$. According to Theorem~\ref{detach}, $F$ is a detachment of a plane 4-regular graph $\hat F$; then $G$ is a vertex-minor of every circle graph associated with $\hat F$, as discussed at the end of Section 4. Theorem~\ref{planar} tells us that some circle graph associated with $\hat F$ is bipartite.
\end{proof}

Consequently, a simple graph is a circle graph if and only if it is a
vertex-minor of some graph that satisfies Theorem \ref{planar}. Considering
condition 2 of Theorem \ref{planar}, one might hope that
Corollary \ref{vmb} would lead to a characterization of circle graphs using
some matroidal property of the unions of pairs of transversals. We do not know what such a property might be, though. 

We close with some comments about the connection between pairs of transverse matroids of circle graphs and a well-known measure of  non-planarity, the crossing number.

\begin{proposition}
\label{cross}Let $G$ be a simple graph. If $G$ is the interlacement graph of an Euler system of a 4-regular graph of crossing number $k$, then there are
disjoint transversals $T_{1},T_{2}$ of $W(G)$ such that $r(T_{1})+r(T_{2}%
)\leq\left\vert V(G)\right\vert +k$.
\end{proposition}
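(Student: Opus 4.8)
The plan is to transport the rank equality of condition~2 of Theorem~\ref{planar} along a sequence of isotropic-minor operations, absorbing the ``defect'' $k$ coming from the $k$ crossings. Write $G=\mathcal{I}(C)$ for an Euler system $C$ of a $4$-regular graph $F$ of crossing number $k$, and put $n=|V(F)|=|V(G)|$. Drawing $F$ in the plane with exactly $k$ crossings (in general position) and replacing each crossing by a vertex, exactly as in the proof of Theorem~\ref{detach}, produces a \emph{planar} $4$-regular graph $\hat F$ with $V(\hat F)=V(F)\cup\{v_{1},\dots,v_{k}\}$ whose detachment along the ``straight-through'' transition $t_{i}$ at each $v_{i}$ is $F$. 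Thus $\mathfrak{T}(F)$ is obtained from $\mathfrak{T}(\hat F)$ by deleting the three transitions at each $v_{i}$, and iterating the description of detachment at the end of Section~\ref{sec:4reg_graphs} (which holds whether or not the $t_{i}$ are loops of $M_{\tau}(\hat F)$) yields
\[
M_{\tau}(F)=\bigl(M_{\tau}(\hat F)/X\bigr)-\Psi ,
\]
where $X=\{t_{1},\dots,t_{k}\}$ and $\Psi$ consists of the $2k$ transitions at $v_{1},\dots,v_{k}$ other than the $t_{i}$. Here $\mathfrak{T}(F)=\mathfrak{T}(\hat F)\setminus(X\cup\Psi)$ is identified with $W(G)$ in the usual way, so transversals of $\mathfrak{T}(F)$ are precisely transversals of $W(G)$ and ranks in $M_{\tau}(F)$ coincide with ranks in $M[IAS(G)]$.

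Next I would pick any Euler system of the planar graph $\hat F$; its interlacement graph satisfies condition~1 of Theorem~\ref{planar}, hence condition~2, which transported to $\mathfrak{T}(\hat F)$ gives disjoint transversals $\hat T_{1},\hat T_{2}$ of $\mathfrak{T}(\hat F)$ with $r(\hat T_{1})+r(\hat T_{2})=|V(\hat F)|=n+k$, where $r$ denotes rank in $M_{\tau}(\hat F)$. (We use only this equality, not the stated planarity of the corresponding transverse matroid.) Set $T_{j}=\hat T_{j}\cap\mathfrak{T}(F)$ for $j=1,2$; these are disjoint transversals of $\mathfrak{T}(F)$, obtained from $\hat T_{j}$ by discarding the $k$ transitions it selects at $v_{1},\dots,v_{k}$.

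Finally I would do the rank bookkeeping. Since $T_{j}$ avoids $X$ and $\Psi$, deletion and contraction give
\[
r_{M_{\tau}(F)}(T_{j})=r_{M_{\tau}(\hat F)/X}(T_{j})=r(T_{j}\cup X)-r(X).
\]
The set $T_{j}\cup X$ is a transversal of $\mathfrak{T}(\hat F)$ agreeing with $\hat T_{j}$ away from $v_{1},\dots,v_{k}$ and selecting $t_{i}$ at each $v_{i}$, so by subadditivity and monotonicity of matroid rank $r(T_{j}\cup X)\le r(T_{j})+r(X)\le r(\hat T_{j})+r(X)$. Adding the two identities for $j=1,2$ then gives
\[
r_{M_{\tau}(F)}(T_{1})+r_{M_{\tau}(F)}(T_{2})\le\bigl(r(\hat T_{1})+r(\hat T_{2})\bigr)+2r(X)-2r(X)=n+k ,
\]
and reading $T_{1},T_{2}$ back as disjoint transversals of $W(G)$ finishes the proof.

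I expect the only delicate point to be the first display: pinning down that iterated detachment at the $k$ crossing vertices realizes $M_{\tau}(F)$ as $(M_{\tau}(\hat F)/X)-\Psi$ with the correct identification of ground sets, and observing that we may use \emph{any} Euler system of $\hat F$ because the bound $r(T_{j}\cup X)\le r(\hat T_{j})+r(X)$ absorbs $r(X)$ whatever its value (so we never need to know, e.g., that the contracted set $X$ is independent). After that, the argument is just standard rank arithmetic for matroid minors.
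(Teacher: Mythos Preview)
Your proof is correct and follows essentially the same approach as the paper's. The paper argues by induction on $k$, resolving one crossing at a time, whereas you resolve all $k$ crossings simultaneously; but the underlying idea is identical: pass to the planar $4$-regular graph $\hat F$ obtained by placing vertices at the crossings (Theorem~\ref{detach}), invoke Theorem~\ref{planar} to obtain disjoint transversals whose ranks sum to $|V(\hat F)|=n+k$, and then restrict to $\mathfrak{T}(F)$, using that the isotropic-minor operation cannot increase transversal ranks. The paper compresses your explicit rank arithmetic into the single phrase ``contraction and deletion cannot raise ranks, so $r(T_i)\le r(T_i')$,'' which is exactly what your computation $r_{M_\tau(F)}(T_j)=r(T_j\cup X)-r(X)\le r(\hat T_j)$ establishes.
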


\begin{proof}
If $k=0$, the result follows from Theorem \ref{planar}. Proceeding
inductively, suppose $k\geq1$ and $F$ is a 4-regular graph of crossing number
$k$ with $G$ as interlacement graph. Let $F^{\prime}$ be the graph obtained by
replacing one crossing with a vertex $v$. Then by the inductive hypothesis
$M_{\tau}(F^{\prime})$ has disjoint transversals $T_{1}^{\prime},T_{2}%
^{\prime}$ such that $r(T_{1}^{\prime})+r(T_{2}^{\prime})\leq\left\vert
V(G)\right\vert +1+k-1$. As $M[IAS(G)]=M_{\tau
}(F)$ is an isotropic minor of $M_{\tau}(F^{\prime})$ obtained by removing the
vertex triple of $v$, $T_{1}^{\prime}$ and $T_{2}^{\prime}$ yield disjoint transversals $T_{1},T_{2}$ of $W(G)$. Contraction and deletion cannot raise ranks, so $r(T_{1})\leq r(T_{1}^{\prime})$ and $r(T_{2})\leq
r(T_{2}^{\prime})$.
\end{proof}

Proposition \ref{cross} detects the crossing numbers of some 4-regular graphs.
For instance, consider the 4-regular graph $F$ pictured in the lower left-hand
corner of Figure \ref{circmf17}. Observe that $F$ has precisely four
3-circuits, in two pairs each of which has a shared edge. No circuit partition
can include two 3-circuits that share an edge, so a circuit partition of $F$
includes at most two 3-circuits. As $F$ has 16 edges, it follows that every
circuit partition includes $\leq4$ circuits. Theorem \ref{coreker} then tells
us that every transversal of $M_{\tau}(F)$ is of rank $\geq5$, so the smallest
possible value of $r(T_{1})+r(T_{2})$ is 10. According to Proposition
\ref{cross}, this fact guarantees that the crossing number of $F$ is $\geq2$;
as the drawing in Figure \ref{circmf17} has two crossings, we conclude that
the crossing number of $F$ is 2.

Proposition \ref{cross} is not always so precise. For instance, the
reader will have little trouble finding a pair of 4-element circuit partitions
in $K_{4,4}$ that do not share any transition. The corresponding transversals
of $M_{\tau}(K_{4,4})$ are of rank 5, so they satisfy Proposition \ref{cross}
with $k=2$. But it is well known that the crossing number of $K_{4,4}$ is 4,
not 2.

Let $F$ now denote the 7-vertex 4-regular graph pictured in the middle of the
top row of Figure \ref{circmf17}. The figure suggests that the crossing number
of $F$ is 2, and it is not very hard to show that this is indeed the case.
However Figure \ref{circmf9} indicates two partitions of $E(F)$ into four
circuits. According to Theorem \ref{coreker} both of the corresponding
transversals of $M_{\tau}(F)$ have rank 4, so they satisfy the necessary
condition of Proposition \ref{cross} with $k=1$. They do not satisfy the
stronger necessary condition of Proposition \ref{onecross}, though.%

\begin{figure}
[ptb]
\begin{center}
\includegraphics[
trim=1.471646in 7.764451in 0.669548in 1.073046in,
height=1.6535in,
width=4.7945in
]%
{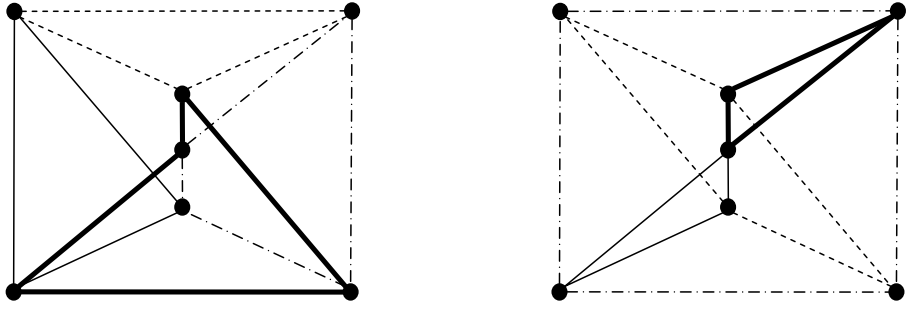}%
\caption{Two circuit partitions in a graph of crossing number 2.}%
\label{circmf9}%
\end{center}
\end{figure}

\begin{proposition}
\label{onecross}Suppose $G$ is the interlacement graph
of an Euler system of a 4-regular graph of crossing number $1$, but is not
the interlacement graph of an Euler system of any planar 4-regular graph. Then
there are disjoint transversals $T_{1},T_{2}$ of $W(G)$ such that
$r(T_{1})+r(T_{2})=\left\vert V(G)\right\vert +1$ and $M[IAS(G)]\mid(T_{1}\cup
T_{2})$ is a planar matroid.
\end{proposition}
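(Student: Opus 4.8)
The plan is to mimic the inductive argument of Proposition~\ref{cross}, but to extract more precise structural information at the single crossing. Let $F$ be a $4$-regular graph of crossing number $1$ with $G$ as an interlacement graph, and fix a drawing of $F$ with exactly one crossing. Replace that crossing by a new vertex $v$ to obtain a \emph{planar} $4$-regular graph $\hat F$ on $|V(G)|+1$ vertices. As discussed at the end of Section~\ref{sec:4reg_graphs}, $F$ is the detachment of $\hat F$ along one of the transitions $t\in\mathfrak{T}(\hat F)$ at $v$, and $M_\tau(F)=M[IAS(G)]$ is the isotropic minor of $M_\tau(\hat F)$ obtained by contracting $t$ and deleting the other two elements of the vertex triple of $v$.

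Now apply Theorem~\ref{planar} to $\hat F$: there are disjoint transversals $\hat T_1,\hat T_2$ of $W$ of the interlacement graph $\hat G$ of some Euler system of $\hat F$ with $r(\hat T_1)+r(\hat T_2)=|V(\hat G)|=|V(G)|+1$ and $M[IAS(\hat G)]\mid(\hat T_1\cup\hat T_2)$ a planar matroid. The key point is that I get to \emph{choose} the Euler system used to present $M_\tau(\hat F)$; in particular I may choose one in which the distinguished transition $t$ at $v$ is labelled $\phi$, i.e.\ $t=\phi(v)$, so that contracting $t$ and deleting $\chi(v),\psi(v)$ literally deletes $v$ from the interlacement graph and leaves $G$ behind. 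The remaining task is to arrange that $t=\phi(v)$ lies in one of the two transversals $\hat T_i$, say $\hat T_1$, and that it is \emph{not} a coloop/is handled so that deleting the vertex triple of $v$ drops the combined rank by exactly one. Since $\hat T_1\cup\hat T_2$ already determines a planar matroid and $t\in\hat T_1$, restricting to $(\hat T_1\setminus\{t\})\cup\hat T_2 = T_1\cup T_2$ deletes one element from a planar matroid, which is again planar; and it drops the rank sum by either $0$ or $1$. If it drops by $1$ we are done with equality $r(T_1)+r(T_2)=|V(G)|+1$. If $t$ is a coloop of the restriction (rank drops by $0$), then $r(T_1)+r(T_2)=|V(G)|+2$, one too many; but in that case I would argue, using the hypothesis that $G$ is \emph{not} an interlacement graph of any planar $4$-regular graph together with Theorem~\ref{planar}, that a different choice of transition label at $v$, or a reduction moving a coloop out, gives the required drop — a coloop corresponds to an isolated-type or pendant-type configuration at $v$ that can be absorbed.

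The main obstacle is exactly this last point: guaranteeing that after deleting the vertex triple of $v$ the rank sum falls by precisely one, not zero or two. Deletion and contraction cannot raise ranks (as noted in the proof of Proposition~\ref{cross}), and contracting $t$ from a matroid in which $t=\phi(v)$ is not a loop drops the rank by one on the union of \emph{all} of $W$; the subtlety is localizing this to the pair $T_1\cup T_2$ and ruling out the ``coloop'' degeneracy. The cleanest way to handle it is to pick the transversals $\hat T_1,\hat T_2$ for $\hat F$ with some care: using the construction in the proof of Theorem~\ref{planar} (spanning tree $T$ of the plane graph $H$ whose cycle matroid is $M_1$), one can ensure the contracted element $t=\phi(v)$ is a tree edge, hence lies in the basis giving $r(\hat T_1)=|V_1|$, and that removing it genuinely drops $r(\hat T_1)$ by one while keeping $M_1\setminus t$ — the cycle matroid of $H/$(that edge) — planar. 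That $\hat G$ is genuinely \emph{not} planar-realizable in the stronger sense forbidden to $G$ is what prevents $t$ from being a coloop of $M[IAS(\hat G)]\mid(\hat T_1\cup\hat T_2)$, and delivers the strict equality $r(T_1)+r(T_2)=|V(G)|+1$ together with planarity of $M[IAS(G)]\mid(T_1\cup T_2)$.
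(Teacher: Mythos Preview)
Your setup has the key geometric relationship backwards, and this is a genuine gap. The transversals $\hat T_1,\hat T_2$ supplied by Theorem~\ref{planar} for the planar graph $\hat F$ are the two face-boundary circuit partitions of the plane drawing (this is what the proof of Theorem~\ref{planar} via Proposition~\ref{deF} actually produces). The detached transition $t=\tau_0(v)$ is, by definition of a crossing, the \emph{non-face-boundary} transition at $v$: if $t$ were a face-boundary transition, detaching along it would leave a planar graph, contradicting the hypothesis on $G$. Hence $t\notin\hat T_1\cup\hat T_2$; you cannot ``arrange'' $t\in\hat T_1$ by choosing an Euler system, because membership of a transition in $\hat T_i$ is a statement about $\mathfrak{T}(\hat F)$, independent of labelling. (Relatedly, your formula $T_1\cup T_2=(\hat T_1\setminus\{t\})\cup\hat T_2$ is wrong in any case: each of $\hat T_1,\hat T_2$ contains exactly one element of the vertex triple of $v$, so both lose an element.)

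Once one sees that $\tau_0(v)$ is the \emph{third} transition, the paper's argument is quite different from a simple deletion. One lets $\tau_1(v)\in\hat T_1$, $\tau_2(v)\in\hat T_2$ be the other two transitions at $v$, analyzes the circuits of $M_\tau(\hat F)\mid(\hat T_1\cup\hat T_2\cup\{\tau_0(v)\})$, and shows that after contracting $\tau_0(v)$ and deleting $\tau_1(v),\tau_2(v)$ the restriction $M_\tau(F)\mid(T_1\cup T_2)$ is the $2$-sum of the two planar matroids $M_\tau(\hat F)\mid\hat T_1$ and $M_\tau(\hat F)\mid\hat T_2$ along the identified element $\tau_1(v)\sim\tau_2(v)$; a $2$-sum of planar matroids is planar. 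The rank equality is then immediate: $r(T_1)+r(T_2)\leq r(\hat T_1)+r(\hat T_2)=|V(G)|+1$, while submodularity gives $r(T_1)+r(T_2)\geq r(T_1\cup T_2)=|V(G)|$, and equality with $|V(G)|$ together with the planarity just established would force condition~2 of Theorem~\ref{planar}, contradicting the hypothesis that $G$ does not arise from a planar $4$-regular graph. No coloop case analysis is needed.
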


\begin{proof}
Suppose $F$ is a 4-regular graph with an Euler system $C$ such that
$\mathcal{I}(C)=G$, and $F$ has crossing number 1. Let $F^{\prime}$ be a
planar 4-regular graph with a vertex $v$ such that detachment at $v$ yields
$F$. Let $\tau_{0}(v)$ be the transition that is detached. It is not part of
the boundary circuit of a face of $F^{\prime}$; if it were, the detachment
would be planar. $M_{\tau}(F^{\prime})$ has transversals $T_{1}^{\prime}%
,T_{2}^{\prime}$ as in Theorem \ref{planar}, and $\tau_{0}(v)$ is not included
in either $T_{1}^{\prime}$ or $T_{2}^{\prime}$; let $\tau_{1}(v)\in
T_{1}^{\prime}$ and $\tau_{2}(v)\in T_{2}^{\prime}$ be the other two
transitions at $v$. Then $M_{\tau}(F)=(M_{\tau}(F^{\prime})/\tau_{0}%
(v))-\tau_{1}(v)-\tau_{2}(v)$.

Let $S=T_{1}^{\prime}\cup T_{2}^{\prime}\cup\{\tau_{0}(v)\}$, and recall that%
\[
M_{\tau}(F^{\prime})\mid(T_{1}^{\prime}\cup T_{2}^{\prime})=(M_{\tau
}(F^{\prime})\mid T_{1}^{\prime})\oplus(M_{\tau}(F^{\prime})\mid T_{2}%
^{\prime}).
\]
Then the matroid $M=M_{\tau}(F^{\prime})\mid S$ has five kinds of circuits:
(i) Each circuit of $M_{\tau}(F^{\prime})\mid T_{1}^{\prime}$ or $M_{\tau
}(F^{\prime})\mid T_{2}^{\prime}$ is also a circuit of $M$. (ii) The vertex
triple $\{\tau_{0}(v)$, $\tau_{1}(v)$, $\tau_{2}(v)\}$ is a circuit of $M$.
(iii) For each circuit $\gamma_{1}$ of $M_{\tau}(F^{\prime})\mid T_{1}%
^{\prime}$ that contains $\tau_{1}(v)$, $M$ also has the circuit $(\gamma
_{1}-\{\tau_{1}(v)\})\cup\{\tau_{0}(v)$, $\tau_{2}(v)\}$. (iv) For each
circuit $\gamma_{2}$ of $M_{\tau}(F^{\prime})\mid T_{1}^{\prime}$ that
contains $\tau_{2}(v)$, $M$ also has the circuit $(\gamma_{2}-\{\tau
_{2}(v)\})\cup\{\tau_{0}(v)$, $\tau_{1}(v)\}$. (v) For each pair of circuits
$\gamma_{1}$, $\gamma_{2}$ as in (iii) and (iv), $M$ has the circuit
$(\gamma_{1}-\{\tau_{1}(v)\})\cup(\gamma_{2}-\{\tau_{2}(v)\})\cup\{\tau
_{0}(v)\}$.

Let $T_{1},T_{2}$ be the transversals in $M_{\tau}(F)$ that correspond to
$T_{1}^{\prime}$ and $T_{2}^{\prime}$. Then $r(T_{1})+r(T_{2})\leq
r(T_{1}^{\prime})+r(T_{2}^{\prime})=\left\vert V(F^{\prime})\right\vert
=\left\vert V(F)\right\vert +1$. Also,%
\[
M_{\tau}(F)\mid(T_{1}\cup T_{2})=(M-\tau_{1}(v)-\tau_{2}(v))/\tau_{0}(v)
\]
has only two kinds of circuits: (i) Each circuit of $M_{\tau}(F^{\prime})\mid
T_{1}^{\prime}$ or $M_{\tau}(F^{\prime})\mid T_{2}^{\prime}$ that does not
intersect the vertex triple $\{\tau_{0}(v)$, $\tau_{1}(v)$, $\tau_{2}(v)\}$ is
a circuit of $M$. (ii) Each circuit $\gamma$ listed under (v) above yields a
circuit $\gamma-\{\tau_{0}(v)\}$ in $M$. That is to say, if $M_{1}$ and
$M_{2}$ are the matroids obtained by modifying $M_{\tau}(F^{\prime})\mid
T_{1}^{\prime}$ and $M_{\tau}(F^{\prime})\mid T_{2}^{\prime}$ by using a
single label for both $\tau_{1}(v)$ and $\tau_{2}(v)$, then $M_{\tau}%
(F)\mid(T_{1}\cup T_{2})$ is the 2-sum $M_{1}\oplus_{2}M_{2}$. As $M_{\tau
}(F^{\prime})\mid T_{1}^{\prime}$ and $M_{\tau}(F^{\prime})\mid T_{2}^{\prime
}$ are both planar matroids, so is $M_{\tau}(F)\mid(T_{1}\cup T_{2})$.

It remains only to note that Theorem \ref{planar} tells us that the inequality
$r(T_{1})+r(T_{2})\leq\left\vert V(F)\right\vert +1$ must be an equality, for
otherwise $G$ would be the interlacement graph of an Euler system of a planar
4-regular graph.
\end{proof}

We do not know whether it is possible to significantly sharpen Proposition
\ref{cross} for $k>1$. If it is possible, examples indicate that the sharpened
version must be quite different from Proposition \ref{onecross}. For the graph
$F$ pictured in Figure \ref{circmf9}, a computer search using Sage
\cite{sageMatroid, sage} finds that there are no two disjoint transversals of
$M_{\tau}(F)$ whose union is a planar matroid. The situation in $M_{\tau
}(K_{4,4})$ is even more restrictive: there are no two disjoint transversals
whose union is a regular matroid.

\subsection*{Acknowledgements}
We thank the referee for various useful comments. 

\end{document}